\newtheorem{thm}{Theorem}
\numberwithin{thm}{section} 
\newtheorem{lem}[thm]{Lemma}
\newtheorem{cor}[thm]{Corollary}
\newtheorem{prop}[thm]{Proposition}
\newtheorem*{claim*}{Claim}
\newtheorem*{con*}{Conjecture}
\theoremstyle{definition}
\newtheorem{rem}[thm]{Remark}
\newtheorem*{exa*}{Example}
\newcommand{\floor}[1]{\left\lfloor #1 \right\rfloor}
\newcommand{\ceil}[1]{\left\lceil #1 \right\rceil}
\newcommand{\on}[1]{\operatorname{#1}}
\renewcommand{\mid}{:}
\newcommandx{\bm}[1]{ \begin{bmatrix} #1 \end{bmatrix}  }
\newcommand{\be}{\coloneqq}
\renewcommand{\vec}[1]{\underline{#1}}
\renewcommand{\epsilon}{\varepsilon} 
\let\emptyset\varnothing
   \def\bE{{\mathbb E}}
   \def\bN{{\mathbb N}}   
\def\bP{{\mathbb P}}      \def\bR{{\mathbb R}}
   \def\bZ{{\mathbb Z}}
   \def\cN{{\mathcal N}}
  \renewenvironment{proof}{\vspace{1ex}\noindent{\bf Proof.}}{\hfill$\blacksquare$\vspace{1ex}}
  \newenvironment{proofof}[1]{\vspace{1ex}\noindent{\bf Proof of #1.}}{\hspace*{\fill}$\blacksquare$\vspace{1ex}}
\newcommand{\Xcal}[0]{\ensuremath{{\mathcal X}}}
\newcommand{\Ycal}[0]{\ensuremath{{\mathcal Y}}}
\newcommand{\eR}[0]{\ensuremath{\mathbb R}}
\newcommand{\eN}[0]{\ensuremath{ \mathbb N}}
\newcommand{\Zed}[0]{\ensuremath{ \mathbb Z}}
\newcommand{\Pee}[0]{\ensuremath{{\mathbb P}}}
\newcommand{\Ee}[0]{\ensuremath{{\mathbb E}}}
\newcommand{\isd}[0]{\hspace{.2ex} \raisebox{-.1ex}{$=$} \hspace{-1.5ex} 
\raisebox{1ex}{{$\scriptstyle d$}} \hspace{.8ex} }
 \newcommand{\eps}{\varepsilon}
\newcommand{\dtv}{d_{\text{TV}}}
\DeclareMathOperator{\Bi}{Bi}
\definecolor{orange}{RGB}{255,127,0}
\definecolor{pink}{RGB}{255,150,150}
\DeclareMathOperator{\inv}{inv}
\DeclareMathOperator{\Mallows}{Mallows}
\DeclareMathOperator{\Geo}{Geo}
\DeclareMathOperator{\TGeo}{TruncGeo}
\newcommand{\nulfieen}{\ensuremath{{}_{0}\phi_{1}}}
\DeclareMathOperator{\Var}{Var}
\DeclareMathOperator{\Cov}{Cov}
\title{Cycles in Mallows random permutations}
\author{
Jimmy He\thanks{Department of mathematics, MIT, USA.
E-mail: {\tt jimmyhe@mit.edu}}
\and
Tobias M\"uller\thanks{Bernoulli Institute, % for Mathematics, CS and AI, 
Groningen University, The Netherlands. 
E-mail: {\tt tobias.muller@rug.nl}.}%
\and 
Teun W.~Verstraaten\thanks{Bernoulli Institute, Groningen University, The Netherlands. 
E-mail: {\tt t.w.verstraaten@rug.nl}.}
}
\begin{document}

\maketitle

\begin{abstract}
We study cycle counts in permutations of $1,\dots,n$ drawn at random according to the Mallows distribution.
Under this distribution, each permutation $\pi \in S_n$ is selected with probability proportional to 
$q^{\inv(\pi)}$, where $q>0$ is a parameter and $\inv(\pi)$ denotes the number of inversions of $\pi$.
For $\ell$ fixed, we study the vector $(C_1(\Pi_n),\dots,C_\ell(\Pi_n))$ where $C_i(\pi)$ denotes the 
number of cycles of length $i$ in $\pi$ and $\Pi_n$ is sampled according to the Mallows distribution.
When $q=1$ the Mallows distribution simply samples a permutation
of $1,\dots,n$ uniformly at random.
A classical result going back to Kolchin and Goncharoff states that in this case, the vector
of cycle counts tends in distribution to a vector of independent Poisson random variables, with means 
$1,\frac12,\frac13,\dots,\frac{1}{\ell}$.

Here we show that if $0<q<1$ is fixed and $n\to\infty$ then there are positive constants $m_i$ such that 
each $C_i(\Pi_n)$ has mean $(1+o(1)) \cdot m_i\cdot n$ and the vector of cycle counts can be suitably rescaled to tend to a 
joint Gaussian distribution.
Our results also show that when $q>1$ there is a striking difference between the behaviour of the even and the odd 
cycles.
The even cycle counts still have linear means, and when properly rescaled tend to a multivariate Gaussian distribution. 
For the odd cycle counts on the other hand, the limiting behaviour depends on the parity of $n$ when $q>1$.
Both $(C_1(\Pi_{2n}),C_3(\Pi_{2n}),\dots)$ and 
$(C_1(\Pi_{2n+1}),C_3(\Pi_{2n+1}),\dots)$ have discrete limiting distributions -- they do not 
need to be renormalized -- but the two limiting distributions are distinct for all $q>1$.
We describe these limiting distributions in terms of Gnedin and Olshanski's bi-infinite extension of the Mallows model.

We investigate these limiting distributions further, and study the behaviour of the constants involved in the Gaussian limit laws.
We for example show that as $q\downarrow 1$ the expected number of 1-cycles tends to $1/2$ -- which, curiously, differs 
from the value corresponding to $q=1$.
In addition we exhibit an interesting ``oscillating'' behaviour in the limiting probability measures 
for $q>1$ and $n$ odd versus $n$ even.
\end{abstract}

\section{Introduction and statement of main results}

Let $S_n$ denote the set of permutations of $[n] := \{1,\dots,n\}$. 
For a permutation $\pi \in S_n$ the ordered pair $(i,j) \in [n]^2$ is an inversion of $\pi$ if $i<j$ and $\pi(i) > \pi(j)$. 
We denote the number of inversions of a permutation $\pi$ by $\inv(\pi)$. 
For $n \in \eN$ and $q>0$, the Mallows distribution $\Mallows(n,q)$ samples a random element $\Pi_n$ of $S_n$ in such a way that
each $\pi \in S_n$ has probability proportional to $q^{\inv(\pi)}$. 
That is, 

\begin{equation}\label{eq:Mallowsdef} 
\Pee( \Pi_n = \pi ) = \frac{q^{\inv(\pi)}}{\sum_{\sigma\in S_n} q^{\inv(\sigma)}}. 
\end{equation}

This distribution on $S_n$ was introduced in the late fifties by C.L. Mallows~\cite{Mallows} in the context of statistical ranking models.
It has since been studied in connection with a diverse range of topics, including 
mixing times of Markov chains~\cite{Benjamini2005, Diaconis2000},
finitely dependent colorings of the integers~\cite{HolroydHutchcroftLevy2020}, 
stable matchings~\cite{AngelEtAl}, random binary search trees~\cite{Louigi}, learning theory~\cite{BravermanMossel,Tang19}, 
 $q$-analogs of exchangeability~\cite{GnedinOlshanski2010,Gnedin}, determinantal point processes~\cite{BorodinDiaconisFulman2010},
statistical physics~\cite{Starr2009,StarrWalters2018} and genomics~\cite{FangEtAl2021}.

Aspects of the Mallows distribution that have been studied include the longest 
increasing subsequence~\cite{basu2016limit,BhatnagarPeled2015, MuellerStarr2013}, longest common subsequences~\cite{Jin2019}, 
pattern avoidance~\cite{CraneDesalvo2017, CraneDesalvoElizalde2018, Pinsky2021}, 
the number of descents~\cite{HeDescents} and the cycle structure~\cite{Peled}.

In the special case when $q=1$ the Mallows distribution coincides with the uniform distribution on $S_n$.
A classical result going back to Gontcharoff~\cite{Gontcharoff42} and Kolchin~\cite{Kolchin71}  
states that in this case, for every fixed $\ell$:

$$ \left( C_1( \Pi_n ), \dots, C_\ell(\Pi_n) \right) \xrightarrow[n\to\infty]{\text{d}}
\left(X_1,\dots,X_\ell\right), $$

\noindent
where $C_i(\pi)$ denotes the number of cycles of length $i$ in the permutation $\pi$, and $X_1,\dots,X_\ell$ are independent 
and $X_i$ is Poisson distributed with mean $1/i$ for each $i=1,\dots,\ell$.
In spite of the long history and considerable attention received by the Mallows distribution, 
until very recently the problem of determining analogues of this result for the $\Mallows(n,q)$ distribution with 
$q \neq 1$ seems to have escaped attention. In a recent paper, Gladkich and Peled~\cite{Peled} studied the cycle structure of 
the Mallows distribution when $q=q(n)$ depends on $n$ and approaches one as $n\to\infty$. 
Here we will focus instead on the limiting distribution of the cycle counts when $q\neq 1$ is fixed
and $n$ tends to infinity.

Our first result shows that for $0<q<1$ each $C_i(\Pi_n)$ has a mean that is linear in $n$, and that 
for every fixed $\ell$, the vector $(C_1(\Pi_n),\dots,C_\ell(\Pi_n))$ can be suitably rescaled 
so that it tends to a jointly normal limiting distribution.

\begin{thm}\label{thm:normal}
Fix $0<q<1$ and let $\Pi_n \sim \Mallows(n,q)$. 
There exist positive constants $m_1,m_2,\ldots $ and an infinite matrix $P\in \bR^{\bN \times \bN}$ such that for all 
$\ell\geq 1$ we have

\begin{equation}
\frac{1}{\sqrt{n}} \left(C_1(\Pi_n)-m_1 n, \ldots , C_\ell (\Pi_n) - m_{\ell} n \right) \quad 
\xrightarrow[n\to\infty]{d}\quad \cN_\ell(\vec{0}, P_\ell),
\end{equation}

\noindent
where $\cN_\ell(\cdot,\cdot)$ denotes the $\ell$--dimensional multivariate normal distribution and $P_\ell$ is 
the submatrix of $P$ on the indices $[\ell] \times [\ell]$.
\end{thm}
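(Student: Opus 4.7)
The plan is to decompose each cycle count $C_k(\Pi_n)$ into a sum of local indicators that are approximately stationary in $i$ and weakly dependent, and then apply a multivariate central limit theorem for stationary mixing sequences. The structural input that makes $0<q<1$ tractable is the exponential displacement estimate: there exist constants $C_q, c_q>0$ with $\Pee(|\Pi_n(i)-i|\geq t)\leq C_q e^{-c_q t}$ uniformly in $n$ and $i\in[n]$. This is a classical consequence of the sequential construction of the Mallows distribution via truncated geometric random variables. The same estimate holds for the shift-invariant bi-infinite Mallows permutation $\hat\Pi$ on $\bZ$ of Gnedin--Olshanski, which for $0<q<1$ can be obtained as a limit of appropriately shifted $\Mallows(n,q)$ laws and will serve as our stationary reference.

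I would write $C_k(\Pi_n)=\sum_{i=1}^n X_i^{(k)}$, where $X_i^{(k)}$ is the indicator that $i$ is the smallest element of a cycle of length exactly $k$ in $\Pi_n$, and analogously define $\hat X_i^{(k)}$ on $\hat\Pi$. A union bound over the at most $k$ elements visited when iterating $\Pi_n$ from $i$, combined with the displacement estimate, shows that $X_i^{(k)}$ is determined by the restriction of $\Pi_n$ to the window $[i-M,i+M]$ except on an event of probability $O(e^{-cM})$ (with $c$ depending on $k$ and $q$). Using the sequential construction to couple $\Pi_n$ with $\hat\Pi$, one obtains $X_i^{(k)}=\hat X_i^{(k)}$ with probability $1-O(e^{-c\min(i,n-i)})$. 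By shift-invariance of $\hat\Pi$, $\bE[\hat X_i^{(k)}]$ equals a constant $m_k$; positivity of $m_k$ is verified by exhibiting an explicit local configuration of positive probability that produces a $k$-cycle with minimum at the reference point. Summing gives $\bE[C_k(\Pi_n)]=m_k n+O(1)$ and, more importantly,
\[
C_k(\Pi_n)=\sum_{i=1}^n \hat X_i^{(k)} + O_{\Pee}(1),
\]
which is negligible after dividing by $\sqrt{n}$.

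The $\bR^\ell$-valued sequence $\hat Y_i:=(\hat X_i^{(1)},\ldots,\hat X_i^{(\ell)})$ is then stationary, and the locality argument shows that $\hat Y_i$ can be approximated by a function of $\hat\Pi$ on $[i-M,i+M]$ with exponentially small error in $M$, forcing exponential decay of the $\alpha$-mixing coefficients of $(\hat Y_i)_{i\in\bZ}$. A standard CLT for stationary $\alpha$-mixing sequences of $\bR^\ell$-valued random vectors (via Bernstein blocking, or via an $m$-dependent approximation combined with the Cram\'er--Wold device) then yields
\[
\frac{1}{\sqrt{n}}\sum_{i=1}^n (\hat Y_i-\bE[\hat Y_i])\;\xrightarrow[n\to\infty]{d}\;\cN_\ell(\vec{0},P_\ell), \qquad P_\ell=\sum_{j\in\bZ}\Cov(\hat Y_0,\hat Y_j),
\]
the series converging absolutely by exponential mixing. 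Combined with the approximation in the previous paragraph this gives the theorem, with the infinite covariance matrix $P$ defined entrywise by $P_{k,k'}=\sum_{j\in\bZ}\Cov(\hat X_0^{(k)},\hat X_j^{(k')})$.

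The main technical obstacle is making the coupling and locality quantitative enough: one has to show both that $\Pi_n$ and $\hat\Pi$ agree on typical local windows, and that the cycle-detection indicator $X_i^{(k)}$ is genuinely a local function of $\Pi_n$, so that the $\alpha$-mixing estimate holds with an exponential rate. Both reduce to the displacement tail bound, but care is required to control the compounding of errors over the (up to $k$) iterations of $\Pi_n$ needed to determine cycle length and to handle boundary effects near $1$ and $n$ where stationarity fails.
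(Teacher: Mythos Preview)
Your route is genuinely different from the paper's. The paper does not use a mixing CLT at all: it exploits the \emph{regenerative} structure of the one-sided Mallows process $\Pi\sim\Mallows(\eN,q)$. Setting $T_0=0$ and $T_i=\inf\{j>T_{i-1}:\Pi([j])=[j]\}$, the blocks $\Pi|_{(T_{i-1},T_i]}$ are i.i.d., and every cycle lies entirely in one block, so for any linear combination $\varphi=\sum a_j C_j$ one has $\varphi(\Pi_n)\approx\sum_{i\leq\tau(n)} Y_i$ with $(X_i,Y_i)=(T_i-T_{i-1},\varphi(\Sigma_i))$ i.i.d.; a stopped random walk CLT of Gut--Janson then gives the one-dimensional CLT, and Cram\'er--Wold finishes. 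The constants come out as $m_k=\Ee C_k(\Sigma_1)/\Ee X_1$. This avoids any discussion of mixing coefficients and needs only $\Ee X_1^2<\infty$, which Basu--Bhatnagar had already proved.

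Your proposal can be made to work, but there is a real gap at the step where you write ``forcing exponential decay of the $\alpha$-mixing coefficients of $(\hat Y_i)$''. Locality of $\hat X_i^{(k)}$ in the window $[i-M,i+M]$ of $\hat\Pi$ is \emph{not} enough: you also need that $\hat\Pi$ restricted to windows at distance $\gg M$ is approximately independent, i.e.\ a mixing statement for the bi-infinite Mallows permutation \emph{itself}. The displacement tail $\Pee(|\hat\Pi(i)-i|\geq t)\leq C_q e^{-c_q t}$ does not by itself imply this (one can cook up stationary processes with small displacements and no mixing). The cleanest way to supply it is precisely the regeneration: with high probability there is a $j$ in any long gap with $\hat\Pi((-\infty,j])=(-\infty,j]$, and conditionally on such a $j$ the two sides are independent. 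Once you invoke that, you are essentially using the same structural input as the paper, just packaged through a mixing CLT rather than a renewal CLT. A second point to tighten: your coupling line ``using the sequential construction to couple $\Pi_n$ with $\hat\Pi$'' produces $\Mallows(\eN,q)$, not $\Mallows(\Zed,q)$; passing from the one-sided to the two-sided stationary process needs an extra argument (the paper does this separately).
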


As it happens, for $q>1$, there is a major difference between the behaviour of even cycles and odd cycles.
For even cycle counts we have a result analogous to the previous theorem.

\begin{thm}\label{thm:clt_q_gt_one}
Fix $q>1$ and let $\Pi_n \sim \on{Mallows}(n,q)$. 
There exists constants $\mu_2,\mu_4,\ldots $ and an infinite matrix $Q\in \bR^{\bN \times \bN}$ such that for all $\ell\geq 1$ we have

\begin{equation}
\frac{1}{\sqrt{n}}(C_{2}(\Pi_n)  - \mu_{2}  n,\ldots, C_{2 \ell}(\Pi_n)  - \mu_{2\ell}  n) \quad 
\xrightarrow[n\to\infty]{d}\quad \cN_\ell(\vec{0}, Q_\ell),
\end{equation}

\noindent
where $\cN_\ell(\cdot,\cdot)$ denotes the $\ell$--dimensional multivariate normal distribution and $Q_\ell$ is the submatrix of 
$Q$ on the indices $[\ell] \times [\ell]$.
\end{thm}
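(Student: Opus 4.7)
My plan is to reduce Theorem~\ref{thm:clt_q_gt_one} to a CLT for the cycle counts of the squared permutation $\tau_n := \Pi_n^2$. The starting point is the elementary identity
\begin{equation}\label{eq:square-cycles}
C_k(\pi^2) \;=\; 2\, C_{2k}(\pi) \;+\; \mathbf{1}[k\text{ odd}]\cdot C_k(\pi),
\end{equation}
valid for every $\pi\in S_n$ and $k\geq 1$: a cycle of length $2k$ in $\pi$ splits into two $k$-cycles of $\pi^2$, a cycle of odd length $k$ is preserved as a single $k$-cycle of $\pi^2$, and no other cycle of $\pi$ contributes. Rearranging,
\[
C_{2k}(\Pi_n) \;=\; \tfrac12\bigl(C_k(\tau_n)-\mathbf{1}[k\text{ odd}]\, C_k(\Pi_n)\bigr).
\]
For $q>1$ an odd-length cycle $(a_1,\dots,a_k)$ of $\Pi_n$ has $a_{i+1}+a_i\approx n+1$ at each step (since $\Pi_n$ concentrates near the reverse permutation $w_0$), and summing around the cycle produces a rigid integer constraint that, combined with standard tail bounds on the Mallows displacement, shows that $C_k(\Pi_n)$ is bounded in $L^1$ uniformly in $n$ for every odd $k$. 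Hence $C_k(\Pi_n)/\sqrt n\to 0$ in probability, the odd-$k$ correction above is negligible at the CLT scale, and it suffices to prove a joint CLT for $(C_1(\tau_n),\dots,C_\ell(\tau_n))$.

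To analyse $\tau_n$, set $\Sigma_n:=w_0\Pi_n$ where $w_0$ is the reverse permutation $i\mapsto n+1-i$. The identity $\inv(w_0\pi)=\binom{n}{2}-\inv(\pi)$ yields $\Sigma_n\sim\Mallows(n,1/q)$ with $1/q<1$, and
\[
\tau_n \;=\; (w_0\Sigma_n)(w_0\Sigma_n) \;=\; (w_0\Sigma_n w_0)\,\Sigma_n \;=:\; \tilde\Sigma_n\,\Sigma_n .
\]
Because $\inv(w_0\pi w_0)=\inv(\pi)$, the reflected copy $\tilde\Sigma_n$ is again marginally $\Mallows(n,1/q)$, although coupled to $\Sigma_n$ via $i\leftrightarrow n+1-i$. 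Since in $\Mallows(n,1/q)$ the displacement $\Sigma_n(i)-i$ has geometric-type tails uniformly in $n$ and $i$, this locality is inherited by the composition: $\tau_n(i)-i$ has exponentially decaying tails and the $\tau_n$-orbit of any fixed $i$ is contained in a window of size $O(\log n)$ around $i$ with overwhelming probability.

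With locality established I would apply a CLT for weakly-dependent indicators. Writing
\[
C_k(\tau_n) \;=\; \frac1k\sum_{i=1}^n \mathbf{1}\bigl[i\text{ lies in a }k\text{-cycle of }\tau_n\bigr],
\]
one constructs a shift-invariant bi-infinite analogue of $\tau$ on $\bZ$ from the Gnedin--Olshanski extension of $\Mallows(1/q)$, and couples each summand, up to an error that is negligible after summing over $i$, with its bi-infinite counterpart read off a window of size $O(\log n)$ around $i$. Exponential decay of correlations then yields a scalar CLT for the resulting approximately $m(n)$-dependent sum via, e.g., a classical $m$-dependent CLT or a Bolthausen-type blocking argument, with the limiting mean and variance computed from the bi-infinite model. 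The Cram\'er--Wold device promotes this to a multivariate CLT for $(C_1(\tau_n),\dots,C_\ell(\tau_n))$, which via~\eqref{eq:square-cycles} transports to the stated CLT for $(C_2(\Pi_n),\dots,C_{2\ell}(\Pi_n))$ and identifies the matrix $Q_\ell$.

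The main obstacle is that $\tau_n$ is \emph{not} itself Mallows-distributed, so Theorem~\ref{thm:normal} cannot be cited as a black box; the whole locality/decay-of-correlations/CLT package must be rerun for the product $\tilde\Sigma_n\,\Sigma_n$. The subtle ingredient is the reflective coupling between $\Sigma_n$ and $\tilde\Sigma_n$: it links positions $i$ and $n+1-i$, which are macroscopically far apart, so local cycle statistics in the bulk should remain effectively independent of the reflection, but verifying this cleanly and identifying the correct bi-infinite object from which to read off $Q_\ell$ is the technical heart of the proof.
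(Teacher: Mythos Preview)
Your reduction to the square $\tau_n=\Pi_n^2$ via~\eqref{eq:square-cycles}, together with the observation that odd cycle counts are tight and hence negligible at scale $\sqrt n$, is exactly how the paper proceeds as well. The divergence is in how the product structure is handled, and here your proposal has an acknowledged gap that the paper fills in a concrete and rather different way.

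You correctly flag that writing $\tau_n=\tilde\Sigma_n\Sigma_n$ (your $w_0$ is the paper's $r_n$) introduces a reflective coupling between positions $i$ and $n+1-i$, and you say that ``verifying this cleanly \dots\ is the technical heart of the proof'' without doing it. The paper's key device here is a \emph{two-sided} iterative sampling of $\Mallows(n,1/q)$, alternately generating the images of $1,n,2,n-1,\dots$, which leads to a coupling (Lemma~\ref{lem:PinPicpl2sided} and Corollary~\ref{cor:PinPicpl2sidedq_geq_one}) of $r_n\circ\Pi_n$ and $\Pi_n\circ r_n$ on $\{1,\dots,\lfloor n/2\rfloor\}$ with two \emph{independent} $\Mallows(\eN,1/q)$ processes $\Pi,\Pi'$, valid with probability $1-o(1)$. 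On the first half one then has $\Pi_n^2=(\Pi_n\circ r_n)\circ(r_n\circ\Pi_n)=\Pi'\circ\Pi$, so the reflection disappears entirely and one is left with the composition of two genuinely independent one-sided Mallows processes. This also pins down the ``correct bi-infinite object'' you were unsure about: it is $\Sigma'\circ\Sigma$ with $\Sigma,\Sigma'$ independent $\Mallows(\Zed,1/q)$, not a single bi-infinite permutation.

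From here the paper does not run an $m$-dependent or mixing CLT. Instead it exploits the \emph{regenerative} structure: the simultaneous regeneration times $T_i=\inf\{j>T_{i-1}:\Pi([j])=\Pi'([j])=[j]\}$ chop $\Pi'\circ\Pi$ into i.i.d.\ blocks, and $\Ee T_1^2<\infty$ is checked by dominating the pair of arc chains by a one-dimensional birth--death chain and invoking Lemma~\ref{lem:MCret}. The Gut--Janson stopped random walk CLT (Theorem~\ref{thm:regen_clt}) then gives the scalar CLT for any linear combination $\sum_i a_i C_i(\Pi'\circ\Pi)$, and Cram\'er--Wold yields the multivariate statement with explicit $\mu_{2i}$ and $Q_\ell$ in terms of the block law. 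Your $m$-dependent route could in principle be pushed through \emph{after} the two-sided independence coupling is in place, but it is strictly harder to execute and gives less information about the constants; without that coupling, the long-range $i\leftrightarrow n+1-i$ dependence in your $\tilde\Sigma_n\Sigma_n$ prevents any naive local-dependence argument from getting off the ground.
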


We will describe the limiting distributions for odd cycles in the case when $q>1$ in terms of the 
bi-infinite analogue of the Mallows distribution that was introduced by Gnedin and Olshanski~\cite{Gnedin}. 
This is a random bijection $\Sigma : \Zed \to \Zed$, whose distribution we'll denote by $\Mallows(\Zed,q)$. 
See Section~\ref{sec:prelim} for more discussion and relevant facts.

Throughout the paper $r,\rho$ denote the bijections of $\Zed$ defined by $r(i) := -i$ and $\rho(i) := 1 - i$. 

\begin{thm}\label{thm:odd_cycles}
Let $q>1$ and $\Pi_{n}\sim \on{Mallows}(n, q)$ and $\Sigma \sim\on{Mallows}(\bZ, 1/q)$. 
We have 

$$ (C_1(\Pi_{2n+1}), C_3(\Pi_{2n+1}),\ldots )
 \xrightarrow[n\to\infty]{d}    \quad (C_1(r \circ \Sigma), C_3(r \circ \Sigma),\ldots ) 
 $$ 
 
\noindent
and 

$$ (C_1(\Pi_{2n}), C_3(\Pi_{2n}),\ldots )   \xrightarrow[n\to\infty]{d}    
\quad (C_1(\rho \circ \Sigma), C_3(\rho \circ \Sigma),\ldots ). 
$$ 

\noindent
Moreover, the two limiting distributions above are distinct for all $q>1$.
\end{thm}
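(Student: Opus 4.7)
The key idea is a reversal identity: for $q>1$, writing $w_0$ for the reversal $w_0(i)=n+1-i$, we have $\inv(w_0\pi)=\binom{n}{2}-\inv(\pi)$ for every $\pi\in S_n$. Hence
\[
\Pi_n \stackrel{d}{=} w_0\circ \Sigma_n, \qquad \Sigma_n\sim\on{Mallows}(n,1/q),
\]
and $\Sigma_n$ has parameter $1/q<1$. For such parameters the displacements $\Sigma_n(i)-i$ have exponentially decaying tails uniform in $n$ and $i$, and by the Gnedin--Olshanski results on the bi-infinite Mallows model, the restriction of $\Sigma_n$ to any bounded window about a chosen reference point, after suitably translating coordinates, converges in distribution to the corresponding restriction of $\Sigma\sim\on{Mallows}(\bZ,1/q)$. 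These will be the basic inputs.

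The decisive geometric observation is that odd cycles of $\Pi_n=w_0\circ\Sigma_n$ are trapped near the centre of $[n]$. Writing $\Sigma_n(i)=i+D_i$ and iterating $\Pi_n=w_0\circ\Sigma_n$, a short induction shows that $\Pi_n^k(i)$ equals $i$ plus a signed sum of $k$ displacements along the orbit when $k$ is even, but $(n+1-i)$ plus such a signed sum when $k$ is odd. Hence the existence of an odd cycle of length $k$ through $i$ forces $|2i-(n+1)|\le k\cdot \max_j|D_j|$; together with the uniform tail bound on the displacements, this pins all odd cycles of $\Pi_n$ to a window of bounded size about the centre, with probability tending to $1$ as the window grows. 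The centre itself depends on the parity of $n$: for $n=2N+1$ the translation $j=i-(N+1)$ turns $w_0$ into $r(j)=-j$, while for $n=2N$ the translation $j=i-N$ turns $w_0$ into $\rho(j)=1-j$. Combining the local convergence of $\Sigma_n$ to $\Sigma$ in these recentred coordinates with the localisation of odd cycles then yields the claimed joint distributional limit in each parity class.

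I expect the main technical difficulty to be making the localisation step quantitative: one needs to rule out odd cycles whose support contains a point far from the centre, uniformly over all odd cycle lengths under consideration. The exponential displacement tails should suffice, but some care is required to upgrade marginal local convergence of $\Sigma_n$ to $\Sigma$ into joint convergence of the whole vector $(C_1,C_3,\dots)$.

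For distinctness of the two limits, I would compare expected numbers of fixed points. A fixed point of $r\circ\Sigma$ at $j$ means $\Sigma(j)=-j$, i.e.\ displacement $D_j=-2j$, which is even; a fixed point of $\rho\circ\Sigma$ at $j$ means $D_j=1-2j$, which is odd. Translation invariance of the bi-infinite Mallows (if $\Sigma\sim\on{Mallows}(\bZ,1/q)$ then so is $j\mapsto\Sigma(j+a)-a$ for every $a\in\bZ$) makes $(D_j)_{j\in\bZ}$ stationary with common marginal $D:=\Sigma(0)$, and therefore
\[
\bE[C_1(r\circ\Sigma)]=\Pee(D\text{ is even}), \qquad \bE[C_1(\rho\circ\Sigma)]=\Pee(D\text{ is odd}).
\]
These two probabilities sum to $1$, and coincide precisely when $\bE[(-1)^D]=0$. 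To rule this out I would compute the marginal law of $D$ explicitly from the Gnedin--Olshanski description and verify that $\bE[(-1)^D]$ is a nonzero (in fact positive) function of $q$ on the whole range $q>1$.
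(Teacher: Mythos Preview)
Your overall strategy for the convergence statements---rewrite $\Pi_n$ as $w_0\circ\Sigma_n$ with $\Sigma_n\sim\Mallows(n,1/q)$, recentre so that $w_0$ becomes $r$ or $\rho$ according to parity, and pass to the bi-infinite limit $\Sigma$---is the same as the paper's. The paper, however, carries out the localisation step differently and more cleanly: rather than bounding displacements along an orbit, it uses the \emph{symmetric regeneration times} $S_0<S_1<\dots$ for $\Sigma$, i.e.\ the successive $s$ with $\Sigma([-s,s])=[-s,s]$. These are shown to be a.s.\ finite (return times of a product of two positive recurrent chains), and then $r\circ\Sigma$ \emph{deterministically} preserves $[-S_0,S_0]$ while swapping $[S_{i-1}+1,S_i]$ with $[-S_i,-S_{i-1}-1]$ for $i\ge1$. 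Hence every cycle outside $[-S_0,S_0]$ is even, and all odd cycles (of every length at once) lie in the fixed finite window $[-S_0,S_0]$. Your displacement inequality $|2i-(n+1)|\le k\cdot\max_j|D_j|$ does not achieve this on its own: it gives no control on the cycle length $k$, so it does not rule out long odd cycles whose support drifts away from the centre even though every individual displacement is small. You would in the end need something equivalent to the regeneration structure to close this gap; this is exactly the ``main technical difficulty'' you flagged, and the paper resolves it by the $S_i$ rather than by displacement tails.

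The more serious issue is your plan for distinctness. Your reduction is correct and in fact appears in the paper as Theorem~\ref{thm:ceco}\ref{itm:cecopart1}: one has $\Ee C_1(r\circ\Sigma)=\Pee(\Sigma(0)\text{ even})$ and $\Ee C_1(\rho\circ\Sigma)=\Pee(\Sigma(0)\text{ odd})$. But the paper explicitly records (in the final section on further work) that showing these two probabilities differ for \emph{all} $q>1$---equivalently, that $\Ee[(-1)^{\Sigma(0)}]\neq 0$ on the whole range---is an open problem the authors were unable to settle from the Gnedin--Olshanski formula. So the step ``compute the marginal law of $D$ explicitly and verify $\Ee[(-1)^D]\neq 0$'' is not a routine verification; it is precisely the obstacle. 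The paper instead proves distinctness via Theorem~\ref{thm:asymp}, comparing the \emph{tail asymptotics} $\Pee(C_1(r\circ\Sigma)\ge m)$ and $\Pee(C_1(\rho\circ\Sigma)\ge m)$ as $m\to\infty$ and showing one is $o$ of the other, with the direction alternating in the parity of $m$. That argument is substantially more involved than a first-moment comparison, but it works for every $q>1$.
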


The permutations $r \circ \Sigma$ and $\rho \circ \Sigma$ almost surely have only finitely many odd cycles, 
as we will see in more detail later on. 
%In particular the infinite dimensional vectors occurring as the limit 
%objects in Theorem~\ref{thm:odd_cycles} almost surely have bounded support. So, in a sense we can just think of them as discrete 
%random variables in disguise. 

Next, we study the properties of the constants $m_1,m_2,\dots$ occurring in Theorem~\ref{thm:normal}.
The first part of the next result gives an interpretation of these constants in terms of the $\Mallows(\Zed,q)$ distribution.

\begin{thm}\label{thm:mi}
Let $0<q<1$ and $\Sigma \sim \on{Mallows}(\bZ, q)$, and let $m_1,m_2,\dots$ be as provided by Theorem~\ref{thm:normal}. 
\begin{enumerate}
 \item\label{itm:m1part1} For $i=1,2,\dots$ we have %
 $$ m_i = (1/i) \cdot \Pee( 0 \text{ lies in an $i$-cycle of $\Sigma$ } ). $$%
 \noindent
 In particular %
 $$m_1 = \Pee( \Sigma(0)=0 ). $$%
 \item\label{itm:m1part2} $\sum_{i=1}^\infty i \cdot m_i = 1$.
 
 \item\label{itm:m1part3} We have %
 \begin{align}
(m_1,m_2,m_3,\ldots) &\to (1,0,0,\ldots)\qquad\text{ as }q\downarrow 0, \\
(m_1,m_2,m_3,\ldots) &\to (0,0,0,\ldots)\qquad\text{ as }q\uparrow 1,
\end{align}%
\noindent
where the convergence can be taken with respect to the $L_1$-norm.
Moreover,%
\begin{align}
m_1 &= 1- 2q + o(q) \quad & \text{ as }q\downarrow 0,\label{eq:akdhbjhe}\\
m_1 &= \frac{1-q}{4} + o(1-q) \quad & \text{ as }q\uparrow 1.\label{eq:asdasdas}
\end{align}%
\end{enumerate}
\end{thm}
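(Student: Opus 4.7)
The plan is to prove (i) first, which identifies $m_i$ in terms of the bi-infinite Mallows permutation $\Sigma$; (ii) and (iii) will then follow from this formula together with properties of $\Sigma$.

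For (i), the starting point is the identity
\[
  \Ee\bigl[ i\cdot C_i(\Pi_n) \bigr] = \sum_{k=1}^{n} \Pee\bigl(k \text{ lies in an } i\text{-cycle of }\Pi_n\bigr),
\]
which holds because each $i$-cycle contains exactly $i$ elements. For $k$ in the ``bulk''---at distance at least some $\omega(n)\to\infty$ from both endpoints of $[n]$---the event that $k$ lies in an $i$-cycle is determined by the finite orbit $k, \Pi_n(k),\dots,\Pi_n^{i-1}(k)$, and since $\on{Mallows}(n,q)$ with $0<q<1$ has exponentially decaying displacements, this orbit stays in a bounded window around $k$ with high probability. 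A local coupling of $\Pi_n$ to a translated copy of $\Sigma\sim\on{Mallows}(\bZ,q)$ in such a window then shows that the probability above converges, uniformly in bulk $k$, to $\Pee(0 \text{ lies in an } i\text{-cycle of }\Sigma)$. The boundary positions contribute only $O(\omega(n)) = o(n)$. Together with $\Ee[C_i(\Pi_n)]/n\to m_i$---which follows from Theorem~\ref{thm:normal} (convergence in probability) and the deterministic bound $C_i(\Pi_n)\le n/i$ (which upgrades the convergence to $L^1$ by bounded convergence)---this yields $m_i = \tfrac{1}{i}\Pee(0 \text{ in } i\text{-cycle of }\Sigma)$; specialising to $i=1$ gives $m_1 = \Pee(\Sigma(0) = 0)$.

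For (ii), the deterministic identity $\sum_{i\ge 1} i\cdot C_i(\Pi_n) = n$ combined with (i) gives
\[
  \sum_{i\ge 1} i\,m_i = \sum_{i\ge 1}\Pee\bigl(0 \text{ in } i\text{-cycle of }\Sigma\bigr) = \Pee\bigl(0 \text{ lies in some finite cycle of }\Sigma\bigr).
\]
It remains to show that this last probability equals $1$, i.e., that $0$ lies in a finite cycle of $\Sigma$ almost surely. This should follow from properties of the bi-infinite Mallows reviewed in Section~\ref{sec:prelim}: $\Sigma$ decomposes a.s.\ into finite ``blocks'' (maximal intervals $I\subset\bZ$ with $\Sigma(I) = I$), which forces each cycle to be finite.

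For (iii), the qualitative convergences follow from (i)--(ii). As $q\downarrow 0$, $\Sigma$ concentrates on the identity and $m_1 \to 1$; the sandwich $\sum_{i\ge 2}m_i \le \sum_{i\ge 2} i\cdot m_i = 1 - m_1 \to 0$ then gives $L^1$-convergence to $(1,0,0,\dots)$. As $q\uparrow 1$, the typical displacement in $\Sigma$ diverges with scale $1/(1-q)$, so $m_i \to 0$ for each fixed $i$; the $L^1$-convergence to the zero sequence follows from $\sum_i m_i = \Ee[1/L]$ (with $L$ the length of the cycle through $0$) together with $L\to\infty$ in probability as $q\uparrow 1$, via bounded convergence. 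For \eqref{eq:akdhbjhe}, expand $\Pee(\Sigma(0)\neq 0)$ at leading order in $q$: the only contributions at order $q^1$ come from the two adjacent swaps $(-1,0)$ and $(0,1)$, each contributing $q + o(q)$, yielding $m_1 = 1 - 2q + o(q)$.

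The main obstacle is the quantitative expansion \eqref{eq:asdasdas}. My approach would be a scaling argument: writing $\beta = 1-q$, the rescaled displacement $\beta\cdot\Sigma(0)$ should converge in distribution as $\beta\downarrow 0$ to a random variable with a continuous density $f$ on $\bR$, giving $\Pee(\Sigma(0)=0) = \beta\cdot f(0) + o(\beta)$ and reducing the task to verifying $f(0) = 1/4$. One could identify $f$ either from a scaling limit of the Gnedin--Olshanski construction of $\Sigma$, or from an explicit form for $\Pee(\Sigma(0) = j)$ obtained by a careful passage to the limit from the finite-Mallows marginals $\Pee(\Pi_n(\lfloor n/2\rfloor) = \lfloor n/2 \rfloor + j)$ and then expanding at $q=1$. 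Pinning down $f(0) = \tfrac14$ is the step I'd expect to require the most care.
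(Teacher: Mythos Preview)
Your proofs of parts~\ref{itm:m1part1} and~\ref{itm:m1part2} follow essentially the same route as the paper: compute $\Ee C_i(\Pi_n)$ as a sum over positions, couple bulk positions to $\Sigma$ (the paper does this via Lemma~\ref{lem:convergence_infinite} and Part~\ref{itm:gnedin_Z_finite} of Lemma~\ref{lem:gnedin}), use boundedness of $C_i(\Pi_n)/n$ to pass from Theorem~\ref{thm:normal} to convergence of means, and then invoke Lemma~\ref{cor:no_infinite} for the absence of infinite cycles. The qualitative convergences in~\ref{itm:m1part3} and the $L_1$ upgrade are also handled the same way in spirit; the paper gets $L_1$ from pointwise convergence plus the tail bound $\sum_{j\ge i} m_j \le 1/i$ coming from~\ref{itm:m1part2}, which is a cleaner version of your $\Ee[1/L]$ argument.

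One minor point: your argument that $m_i\to 0$ as $q\uparrow 1$ via ``typical displacement diverges'' is not quite a proof---a short cycle through $0$ does not force small displacements along the cycle. The paper instead bounds directly: during the iterative sampling of $\Pi_n$, the conditional probability that step $i$ closes an $r$-cycle is at most $(1-q)/(1-q^{n-i+1})\le 2(1-q)$ for $i\le n-\sqrt n$, giving $m_r\le 2(1-q)$ for every $r$.

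The genuine divergence is at~\eqref{eq:asdasdas}. The paper does not attempt a scaling limit of $\Sigma(0)$. Instead it uses the Gladkich--Peled arc chain machinery to derive the exact formula (Lemma~\ref{lem:m_1_b})
\[
m_1=\sum_{s\ge 0}\nu_s\,q^{2s}(1-q),
\]
where $\nu$ is the stationary distribution~\eqref{eq:nu} of the $(\infty,q)$ arc chain. The analysis then reduces to understanding where $\nu$ puts its mass as $q\uparrow 1$: from the recursion $\nu_{s+1}/\nu_s=q^{-1}\bigl(q^s/(1-q^s)\bigr)^2$ one sees that $\nu$ concentrates (in the sense that its mass outside the set $S=\{s:|q^s-\tfrac12|<\eps\}$ is $o(1)$) near $s$ with $q^s\approx \tfrac12$, hence $q^{2s}\approx\tfrac14$, and the formula gives $m_1=(1-q)/4+o(1-q)$. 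This is where the constant $\tfrac14$ comes from---it is $(\tfrac12)^2$, the value of $q^{2s}$ at the peak of $\nu$. Your scaling-limit proposal for $(1-q)\Sigma(0)$ is plausible and would ultimately recover the same constant, but as you note it is incomplete; the arc-chain route gives the $1/4$ with no guesswork.
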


Combining Part~\ref{itm:m1part1} of the above theorem with Theorem 5.1 of~\cite{Gnedin}, we can 
write 

$$ m_1 = \nulfieen(-;q;q,q^3) \cdot (1-q) \cdot \prod_{i=1}^\infty (1-q^i), $$

\noindent
where ${}_r\phi_s$ denotes the $q$-hypergeometric function -- see~\cite{GasperRahman} for the definition and background. 
An alternative expression, based on the work of Gladkich and Peled~\cite{Peled}, for $m_1$ is given in Lemma~\ref{lem:m_1_b} below.
Figure~\ref{fig:m1} shows a plot of $m_1$ versus $q$ together with the results of computer simulations.

\begin{figure}[h!]
\centering
\begingroup
  \fontfamily{enhanced}%
  \selectfont
  \makeatletter
  \providecommand\color[2][]{%
    \GenericError{(gnuplot) \space\space\space\@spaces}{%
      Package color not loaded in conjunction with
      terminal option `colourtext'%
    }{See the gnuplot documentation for explanation.%
    }{Either use 'blacktext' in gnuplot or load the package
      color.sty in LaTeX.}%
    \renewcommand\color[2][]{}%
  }%
  \providecommand\includegraphics[2][]{%
    \GenericError{(gnuplot) \space\space\space\@spaces}{%
      Package graphicx or graphics not loaded%
    }{See the gnuplot documentation for explanation.%
    }{The gnuplot epslatex terminal needs graphicx.sty or graphics.sty.}%
    \renewcommand\includegraphics[2][]{}%
  }%
  \providecommand\rotatebox[2]{#2}%
  \@ifundefined{ifGPcolor}{%
    \newif\ifGPcolor
    \GPcolorfalse
  }{}%
  \@ifundefined{ifGPblacktext}{%
    \newif\ifGPblacktext
    \GPblacktexttrue
  }{}%
  % define a \g@addto@macro without @ in the name:
  \let\gplgaddtomacro\g@addto@macro
  % define empty templates for all commands taking text:
  \gdef\gplbacktext{}%
  \gdef\gplfronttext{}%
  \makeatother
  \ifGPblacktext
    % no textcolor at all
    \def\colorrgb#1{}%
    \def\colorgray#1{}%
  \else
    % gray or color?
    \ifGPcolor
      \def\colorrgb#1{\color[rgb]{#1}}%
      \def\colorgray#1{\color[gray]{#1}}%
      \expandafter\def\csname LTw\endcsname{\color{white}}%
      \expandafter\def\csname LTb\endcsname{\color{black}}%
      \expandafter\def\csname LTa\endcsname{\color{black}}%
      \expandafter\def\csname LT0\endcsname{\color[rgb]{1,0,0}}%
      \expandafter\def\csname LT1\endcsname{\color[rgb]{0,1,0}}%
      \expandafter\def\csname LT2\endcsname{\color[rgb]{0,0,1}}%
      \expandafter\def\csname LT3\endcsname{\color[rgb]{1,0,1}}%
      \expandafter\def\csname LT4\endcsname{\color[rgb]{0,1,1}}%
      \expandafter\def\csname LT5\endcsname{\color[rgb]{1,1,0}}%
      \expandafter\def\csname LT6\endcsname{\color[rgb]{0,0,0}}%
      \expandafter\def\csname LT7\endcsname{\color[rgb]{1,0.3,0}}%
      \expandafter\def\csname LT8\endcsname{\color[rgb]{0.5,0.5,0.5}}%
    \else
      % gray
      \def\colorrgb#1{\color{black}}%
      \def\colorgray#1{\color[gray]{#1}}%
      \expandafter\def\csname LTw\endcsname{\color{white}}%
      \expandafter\def\csname LTb\endcsname{\color{black}}%
      \expandafter\def\csname LTa\endcsname{\color{black}}%
      \expandafter\def\csname LT0\endcsname{\color{black}}%
      \expandafter\def\csname LT1\endcsname{\color{black}}%
      \expandafter\def\csname LT2\endcsname{\color{black}}%
      \expandafter\def\csname LT3\endcsname{\color{black}}%
      \expandafter\def\csname LT4\endcsname{\color{black}}%
      \expandafter\def\csname LT5\endcsname{\color{black}}%
      \expandafter\def\csname LT6\endcsname{\color{black}}%
      \expandafter\def\csname LT7\endcsname{\color{black}}%
      \expandafter\def\csname LT8\endcsname{\color{black}}%
    \fi
  \fi
    \setlength{\unitlength}{0.0500bp}%
    \ifx\gptboxheight\undefined%
      \newlength{\gptboxheight}%
      \newlength{\gptboxwidth}%
      \newsavebox{\gptboxtext}%
    \fi%
    \setlength{\fboxrule}{0.5pt}%
    \setlength{\fboxsep}{1pt}%
\begin{picture}(7200.00,5040.00)%
    \gplgaddtomacro\gplbacktext{%
      \csname LTb\endcsname%%
      \put(814,704){\makebox(0,0)[r]{\strut{}$0$}}%
      \put(814,1527){\makebox(0,0)[r]{\strut{}$0.2$}}%
      \put(814,2350){\makebox(0,0)[r]{\strut{}$0.4$}}%
      \put(814,3173){\makebox(0,0)[r]{\strut{}$0.6$}}%
      \put(814,3996){\makebox(0,0)[r]{\strut{}$0.8$}}%
      \put(814,4819){\makebox(0,0)[r]{\strut{}$1$}}%
      \put(946,484){\makebox(0,0){\strut{}$0$}}%
      \put(1532,484){\makebox(0,0){\strut{}$0.1$}}%
      \put(2117,484){\makebox(0,0){\strut{}$0.2$}}%
      \put(2703,484){\makebox(0,0){\strut{}$0.3$}}%
      \put(3289,484){\makebox(0,0){\strut{}$0.4$}}%
      \put(3875,484){\makebox(0,0){\strut{}$0.5$}}%
      \put(4460,484){\makebox(0,0){\strut{}$0.6$}}%
      \put(5046,484){\makebox(0,0){\strut{}$0.7$}}%
      \put(5632,484){\makebox(0,0){\strut{}$0.8$}}%
      \put(6217,484){\makebox(0,0){\strut{}$0.9$}}%
      \put(6803,484){\makebox(0,0){\strut{}$1$}}%
    }%
    \gplgaddtomacro\gplfronttext{%
      \csname LTb\endcsname%%
      \put(209,2761){\rotatebox{-270}{\makebox(0,0){\strut{}$m_1$}}}%
      \put(3874,154){\makebox(0,0){\strut{}$q$}}%
      \csname LTb\endcsname%%
      \put(5816,4646){\makebox(0,0)[r]{\strut{}exact}}%
      \csname LTb\endcsname%%
      \put(5816,4426){\makebox(0,0)[r]{\strut{}simulations}}%
    }%
    \gplbacktext
    \put(0,0){\includegraphics{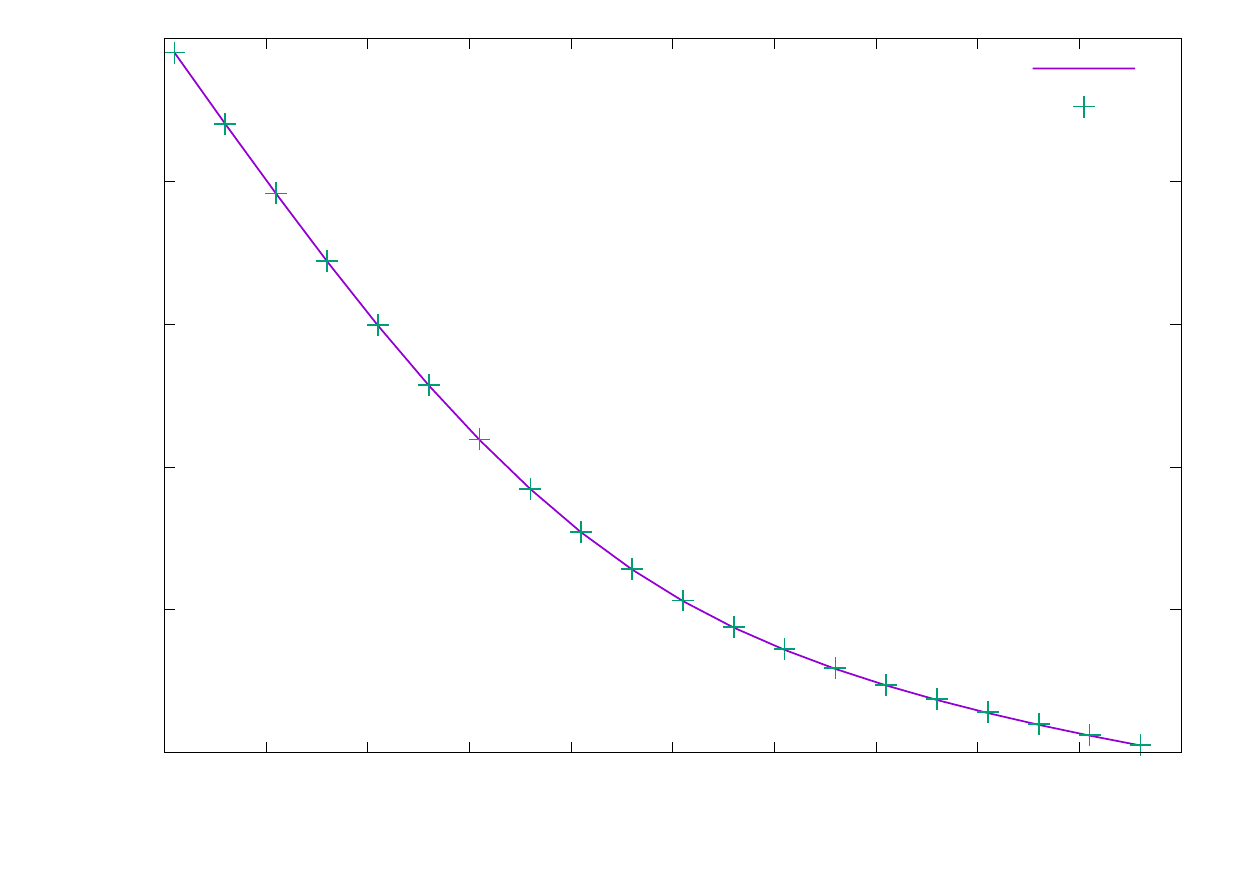}}%
    \gplfronttext
  \end{picture}%
\endgroup
\caption{\label{fig:m1} A plot of $m_1$ versus $q$. 
The crosses correspond to the average number of 1-cycles
in $10000$ samples of the $\on{Mallows}(1000,q)$ distribution.}
\end{figure}

\noindent
The next result provides similar results for the constants appearing in Theorem~\ref{thm:clt_q_gt_one}.

\begin{thm}\label{thm:mu2i}
Let $q>1$, let $\mu_2,\mu_4,\dots$ be as provided by Theorem~\ref{thm:clt_q_gt_one}, and let 
$\Sigma,\Sigma' \sim \on{Mallows}(\bZ, 1/q)$ be independent.
\begin{enumerate}	
\item\label{itm:mu2ipart1} For $i=1,2,\dots$ we have%
$$ \mu_{2i} = \frac{1}{2i} \cdot \Pee( \text{$0$ is in an $i$-cycle of $\Sigma'\circ\Sigma$} ), $$ %
and in particular%
$$ \mu_2 = \frac12 \cdot \sum\limits_{i\in\Zed} \bP \left[ \Sigma(0) = i \right]^2. $$ %
\item\label{itm:mu2ipart2} $\sum_{i=1}^{\infty} 2i \cdot \mu_{2i} = 1$. 
\item\label{itm:mu2ipart3} We have %
\begin{align}
(\mu_2,\mu_4,\mu_6,\ldots)&\to (0,0,0,\ldots )\qquad\text{ as }q\downarrow 1,\\
(\mu_2, \mu_4,\mu_6,\ldots)&\to (1/2,0,0,\ldots )\qquad\text{ as }q\to \infty,
\end{align} %
where the convergence can be taken with respect to the $L_1$-norm. 
\end{enumerate}
\end{thm}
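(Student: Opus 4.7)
The plan is to analyze even cycles via the involution $w_0\in S_n$, $w_0(k)=n+1-k$. Since $\inv(\pi\circ w_0)=\binom{n}{2}-\inv(\pi)$, the permutation $\sigma_n:=\Pi_n\circ w_0$ has the $\on{Mallows}(n,1/q)$ distribution, and as $1/q<1$ it is concentrated near the identity. Squaring gives
\[
\Pi_n^2 \;=\; \sigma_n\circ w_0\circ\sigma_n\circ w_0 \;=\; \sigma_n\circ\tau_n, \qquad \tau_n:=w_0\circ\sigma_n\circ w_0 \;\sim\;\on{Mallows}(n,1/q),
\]
the distributional claim for $\tau_n$ holding because conjugation by $w_0$ preserves $\inv$. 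Note that $\sigma_n$ and $\tau_n$ are \emph{not} independent --- they are two different windows of the same sample. The main input for Part~(i) is a \emph{joint} local limit: for each $c\in(0,1)$ and $k_n\sim cn$, the pair of windowed restrictions $j\mapsto\sigma_n(k_n+j)-k_n$ and $j\mapsto\tau_n(k_n+j)-k_n$ should converge in distribution to $(\Sigma,\Sigma')$, two independent $\on{Mallows}(\bZ,1/q)$ permutations. One-window local convergence to the bi-infinite Mallows is by now classical; the joint statement reduces to asymptotic independence of $\sigma_n$ on two windows around the macroscopically separated positions $k_n$ and $n+1-k_n$, which should follow from the geometric-tail decay of Mallows displacements.

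Granted the joint local limit, a direct computation shows that near a bulk $k_n$ the permutation $\Pi_n^2$ converges to $j\mapsto \Sigma(-\Sigma'(-j))=(\Sigma\circ r\circ\Sigma'\circ r)(j)$. Since conjugation by $r$ also preserves $\inv$, $r\circ\Sigma'\circ r$ is again $\on{Mallows}(\bZ,1/q)$-distributed and independent of $\Sigma$, so after relabelling the local limit has the law of $\Sigma\circ\Sigma'$, which by exchangeability of the iid pair $(\Sigma,\Sigma')$ is the same in distribution as $\Sigma'\circ\Sigma$. Combining this with
\[
C_{2i}(\Pi_n) \;=\; \frac{1}{2i}\sum_{k=1}^n\ind{k\text{ lies in a $2i$-cycle of }\Pi_n},
\]
with the fact that ``$k$ in a $2i$-cycle of $\Pi_n$'' differs from ``$k$ in an $i$-cycle of $\Pi_n^2$'' only when $i$ is odd and $k$ already lies in an $i$-cycle of $\Pi_n$ --- a correction that, by Theorem~\ref{thm:odd_cycles}, contributes only $O(1)$ to the sum --- and with negligibility of the $o(n)$ boundary contributions, yields $\mu_{2i}=\tfrac{1}{2i}\Pee(0\text{ lies in an $i$-cycle of }\Sigma'\circ\Sigma)$. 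The special formula for $\mu_2$ then drops out by writing $\Pee(\Sigma'(\Sigma(0))=0)=\sum_j\Pee(\Sigma(0)=j)\Pee(\Sigma'(j)=0)$ and using $\Sigma'\stackrel{d}{=}(\Sigma')^{-1}\stackrel{d}{=}\Sigma$, so that $\Pee(\Sigma'(j)=0)=\Pee(\Sigma(0)=j)$.

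Part~(ii) starts from the deterministic identity $\sum_k k\,C_k(\Pi_n)=n$: taking expectations and dividing by $n$, the even-index terms converge to $2i\,\mu_{2i}$ by Theorem~\ref{thm:clt_q_gt_one}, and the odd-index terms vanish since $\Ee[C_{2j+1}(\Pi_n)]=O(1)$ by Theorem~\ref{thm:odd_cycles}. The exchange of limit and infinite sum is justified by a uniform tail bound of the form $\Ee[C_k(\Pi_n)]\le C n\cdot f(k)$ with $\sum_k k\,f(k)<\infty$, stemming from geometric decay of the probability that a given position lies in a long cycle when $q>1$. For Part~(iii), the limits of $(\mu_{2i})_{i\geq 1}$ are read off from Part~(i): as $q\to\infty$ one has $\Pee(\Sigma(0)=0)\to 1$ so $\Sigma\circ\Sigma'\to\mathrm{id}$ in every finite window, giving $\mu_2\to 1/2$ and $\mu_{2i}\to 0$ for $i\geq 2$; as $q\downarrow 1$ the displacement fully delocalizes, $\Pee(\Sigma(0)=j)\to 0$ for every fixed $j$, and hence $\Pee(0\in i\text{-cycle of }\Sigma'\circ\Sigma)\to 0$ for each fixed $i$. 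In both cases the $L^1$ statement is upgraded from pointwise convergence using the conservation law $\sum_{i\geq 1}2i\mu_{2i}=1$ of Part~(ii): the tail $\sum_{i>K}\mu_{2i}$ is bounded by $(2K)^{-1}\sum_{i>K}2i\mu_{2i}\le (2K)^{-1}$, uniformly in $q$.

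The hardest step is the joint local convergence, i.e.\ the asymptotic independence of two windows of the \emph{same} Mallows sample $\sigma_n$ around the far-apart positions $k_n$ and $n+1-k_n$. The one-window case is by now standard, but quantitative decoupling of distant windows inside a single sample requires a careful argument, most naturally via a one-at-a-time insertion coupling or via the regenerative structure in the Gnedin--Olshanski construction of Mallows on $\bZ$.
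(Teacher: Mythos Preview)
Your overall strategy is correct and very close in spirit to the paper's own proof: the key identity $\Pi_n^2 = (\Pi_n\circ r_n)\circ(r_n\circ\Pi_n)$ with both factors $\Mallows(n,1/q)$-distributed is exactly what the paper uses, and the claim that near a bulk position these two factors become asymptotically independent copies of $\Mallows(\Zed,1/q)$ is precisely the content of the paper's Lemma~\ref{lem:PinPicpl2sided} and Corollary~\ref{cor:PinPicpl2sidedq_geq_one}. The paper establishes this decoupling concretely, via a \emph{two-sided} iterative sampling procedure for $\Mallows(n,1/q)$ (alternately revealing images of $i$ and $n+1-i$ using independent truncated geometrics), which makes the asymptotic independence of the two halves transparent. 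This is exactly the ``regenerative structure'' you allude to at the end; you have correctly located the crux but not supplied the construction.

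A few smaller differences. For Part~\ref{itm:mu2ipart1} the paper works through the regenerative blocks $\Ycal_j=\Xcal_j\cup r_n[\Xcal_j]$ from the proof of Theorem~\ref{thm:clt_q_gt_one}; since $\Pi_n$ swaps $\Xcal_j$ and $r_n[\Xcal_j]$ inside each block, all block-cycles are even, so the odd-cycle correction you invoke never arises. Your global approach via $\Pi_n^2$ is fine but slightly less clean for this reason. For Part~\ref{itm:mu2ipart2}, the paper does not use the deterministic identity $\sum_k kC_k=n$ plus a tail bound (the uniform bound you assert is not obvious: even cycles of $\Pi_n$ are \emph{not} short when $q>1$); instead it revisits the CLT structure with weights $a_i=2i$ and shows directly that $\Ee Y_1/(2\Ee X_1)\to 1$ as $\ell\to\infty$ by a finite-expectation argument on the block length $X_1$. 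A cleaner route than either, once you have Part~\ref{itm:mu2ipart1}, is to observe that the joint regenerative structure for $(\Sigma,\Sigma')$ forces $\Sigma'\circ\Sigma$ to have only finite cycles almost surely, so $\sum_i 2i\mu_{2i}=\sum_i\Pee(0\text{ in an }i\text{-cycle of }\Sigma'\circ\Sigma)=1$. For Part~\ref{itm:mu2ipart3} at $q\downarrow 1$, your argument handles $i=1$ but is incomplete for $i\geq 2$ (you need $\Pee(0\text{ in an }i\text{-cycle of }\Sigma'\circ\Sigma)\to 0$, which requires more than pointwise delocalization of $\Sigma(0)$); the paper instead gives a direct bound $\mu_{2r}\le 2(1-1/q)$ by analyzing one step of the sampling algorithm for $r_n\circ\Pi_n$.
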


Again, combining Part~\ref{itm:mu2ipart1} with Theorem 5.1 in~\cite{Gnedin} gives an expression for $\mu_2$ as an explicit function 
of $q$. %%%% nog een expliciete expressie geven ...
Figure~\ref{fig:mu2} provides a plot of $\mu_2$ versus $q$ together with the results of computer simulations.
We mention that Pitman and Tang (\cite{PitmanTang}, Proposition 3.3) give a result for so-called regenerative random permutations, 
that is closely related to parts~\ref{itm:m1part1} and~\ref{itm:m1part2} of Theorems~\ref{thm:mi} and~\ref{thm:mu2i}.

\begin{figure}[h!]
\centering
\begingroup
  \fontfamily{enhanced}%
  \selectfont
  \makeatletter
  \providecommand\color[2][]{%
    \GenericError{(gnuplot) \space\space\space\@spaces}{%
      Package color not loaded in conjunction with
      terminal option `colourtext'%
    }{See the gnuplot documentation for explanation.%
    }{Either use 'blacktext' in gnuplot or load the package
      color.sty in LaTeX.}%
    \renewcommand\color[2][]{}%
  }%
  \providecommand\includegraphics[2][]{%
    \GenericError{(gnuplot) \space\space\space\@spaces}{%
      Package graphicx or graphics not loaded%
    }{See the gnuplot documentation for explanation.%
    }{The gnuplot epslatex terminal needs graphicx.sty or graphics.sty.}%
    \renewcommand\includegraphics[2][]{}%
  }%
  \providecommand\rotatebox[2]{#2}%
  \@ifundefined{ifGPcolor}{%
    \newif\ifGPcolor
    \GPcolorfalse
  }{}%
  \@ifundefined{ifGPblacktext}{%
    \newif\ifGPblacktext
    \GPblacktexttrue
  }{}%
  % define a \g@addto@macro without @ in the name:
  \let\gplgaddtomacro\g@addto@macro
  % define empty templates for all commands taking text:
  \gdef\gplbacktext{}%
  \gdef\gplfronttext{}%
  \makeatother
  \ifGPblacktext
    % no textcolor at all
    \def\colorrgb#1{}%
    \def\colorgray#1{}%
  \else
    % gray or color?
    \ifGPcolor
      \def\colorrgb#1{\color[rgb]{#1}}%
      \def\colorgray#1{\color[gray]{#1}}%
      \expandafter\def\csname LTw\endcsname{\color{white}}%
      \expandafter\def\csname LTb\endcsname{\color{black}}%
      \expandafter\def\csname LTa\endcsname{\color{black}}%
      \expandafter\def\csname LT0\endcsname{\color[rgb]{1,0,0}}%
      \expandafter\def\csname LT1\endcsname{\color[rgb]{0,1,0}}%
      \expandafter\def\csname LT2\endcsname{\color[rgb]{0,0,1}}%
      \expandafter\def\csname LT3\endcsname{\color[rgb]{1,0,1}}%
      \expandafter\def\csname LT4\endcsname{\color[rgb]{0,1,1}}%
      \expandafter\def\csname LT5\endcsname{\color[rgb]{1,1,0}}%
      \expandafter\def\csname LT6\endcsname{\color[rgb]{0,0,0}}%
      \expandafter\def\csname LT7\endcsname{\color[rgb]{1,0.3,0}}%
      \expandafter\def\csname LT8\endcsname{\color[rgb]{0.5,0.5,0.5}}%
    \else
      % gray
      \def\colorrgb#1{\color{black}}%
      \def\colorgray#1{\color[gray]{#1}}%
      \expandafter\def\csname LTw\endcsname{\color{white}}%
      \expandafter\def\csname LTb\endcsname{\color{black}}%
      \expandafter\def\csname LTa\endcsname{\color{black}}%
      \expandafter\def\csname LT0\endcsname{\color{black}}%
      \expandafter\def\csname LT1\endcsname{\color{black}}%
      \expandafter\def\csname LT2\endcsname{\color{black}}%
      \expandafter\def\csname LT3\endcsname{\color{black}}%
      \expandafter\def\csname LT4\endcsname{\color{black}}%
      \expandafter\def\csname LT5\endcsname{\color{black}}%
      \expandafter\def\csname LT6\endcsname{\color{black}}%
      \expandafter\def\csname LT7\endcsname{\color{black}}%
      \expandafter\def\csname LT8\endcsname{\color{black}}%
    \fi
  \fi
    \setlength{\unitlength}{0.0500bp}%
    \ifx\gptboxheight\undefined%
      \newlength{\gptboxheight}%
      \newlength{\gptboxwidth}%
      \newsavebox{\gptboxtext}%
    \fi%
    \setlength{\fboxrule}{0.5pt}%
    \setlength{\fboxsep}{1pt}%
\begin{picture}(7200.00,5040.00)%
    \gplgaddtomacro\gplbacktext{%
      \csname LTb\endcsname%%
      \put(814,704){\makebox(0,0)[r]{\strut{}$0$}}%
      \put(814,1527){\makebox(0,0)[r]{\strut{}$0.1$}}%
      \put(814,2350){\makebox(0,0)[r]{\strut{}$0.2$}}%
      \put(814,3173){\makebox(0,0)[r]{\strut{}$0.3$}}%
      \put(814,3996){\makebox(0,0)[r]{\strut{}$0.4$}}%
      \put(814,4819){\makebox(0,0)[r]{\strut{}$0.5$}}%
      \put(946,484){\makebox(0,0){\strut{}$0$}}%
      \put(2117,484){\makebox(0,0){\strut{}$5$}}%
      \put(3289,484){\makebox(0,0){\strut{}$10$}}%
      \put(4460,484){\makebox(0,0){\strut{}$15$}}%
      \put(5632,484){\makebox(0,0){\strut{}$20$}}%
      \put(6803,484){\makebox(0,0){\strut{}$25$}}%
    }%
    \gplgaddtomacro\gplfronttext{%
      \csname LTb\endcsname%%
      \put(209,2761){\rotatebox{-270}{\makebox(0,0){\strut{}$\mu_2$}}}%
      \put(3874,154){\makebox(0,0){\strut{}$q$}}%
      \csname LTb\endcsname%%
      \put(2530,4646){\makebox(0,0)[r]{\strut{}exact}}%
      \csname LTb\endcsname%%
      \put(2530,4426){\makebox(0,0)[r]{\strut{}simulations}}%
    }%
    \gplbacktext
    \put(0,0){\includegraphics{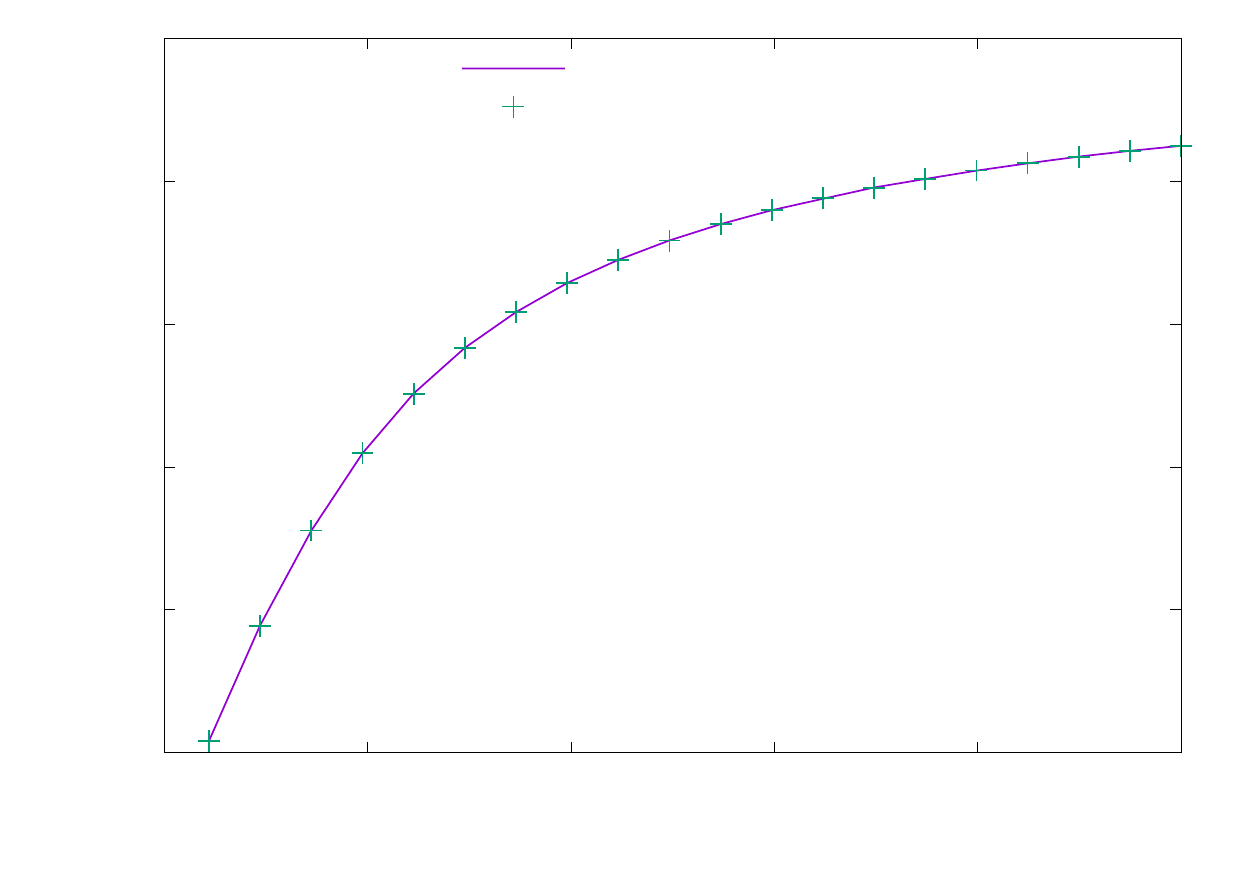}}%
    \gplfronttext
  \end{picture}%
\endgroup
\caption{A plot of $\mu_2$ versus $q$. 
The simulations were done sampling a $\on{Mallows}(1000,q)$ distribution $10000$ times, and taking the average number of 
2-cycles.\label{fig:mu2}}
\end{figure}

Next we provide some results on the asymptotic expected number of 1-cycles when $q>1$.
For notational convenience let us write

\begin{equation}\label{eq:cecodef}
c_e := \Ee C_1( \rho\circ\Sigma ), \quad %\lim_{n\to \infty} \bE C_1(\on{Mallows}(2n,q)),\qquad\text{ and }\\
c_o := \Ee C_1( r\circ\Sigma ), %\lim_{n\to \infty} \bE C_1(\on{Mallows}(2n + 1,q)).
\end{equation}%
where again $\Sigma\sim\Mallows(\Zed,1/q)$ and $r,\rho$ are given by $r(i)=-i,\rho(i)=1-i$.

\begin{thm}\label{thm:ceco}
	Let $q > 1$ and $c_e,c_o$ as given by~\eqref{eq:cecodef} and $\Sigma \sim \on{Mallows}(\bZ, 1/q)$ . 
\begin{enumerate}
 \item\label{itm:cecopart1} We have
	\begin{align}
		c_e = \bP \left[ \Sigma(0) \textup{ odd} \right] \qquad \text{and}\qquad
		c_o = \bP \left[ \Sigma(0) \textup{ even} \right].
	\end{align}
\item\label{itm:cecopart2} We have% 
%
%$$ \frac{1}{1+q} \leq c_e,c_o \leq \frac{q}{1+q}, $$
%
%and in particular
\begin{align}
\lim_{q\downarrow 1 } c_e &=	\lim_{q\downarrow 1 }c_o = \frac12, \\
\lim_{q\to\infty} c_e&=0, \\ 
\lim_{q\to\infty} c_o&=1.
\end{align}
Moreover, as $q \to \infty$ we have% 
$$ c_o = 1 - 2/q + o(1/q) \quad \text{ and } \quad c_e = 2/q + o(1/q) \quad \text{ as $q\to\infty$. } $$% 
\end{enumerate}
\end{thm}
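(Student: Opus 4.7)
The plan for part~(i) is a first-moment computation combined with the translation-invariance of $\Mallows(\Zed, p)$. An integer $i$ is a fixed point of $r \circ \Sigma$ precisely when $\Sigma(i) = -i$, and of $\rho \circ \Sigma$ precisely when $\Sigma(i) = 1 - i$. Writing each cycle count as an indicator sum and applying Tonelli,
$$ c_o = \Ee C_1(r\circ\Sigma) = \sum_{i \in \Zed} \Pee[\Sigma(i) = -i], \qquad c_e = \Ee C_1(\rho\circ\Sigma) = \sum_{i \in \Zed} \Pee[\Sigma(i) = 1-i]. $$
The $\Mallows(\Zed, p)$ distribution is invariant under conjugation by the translations $\tau_k(i) := i+k$, which means $\Pee[\Sigma(i) = j] = \Pee[\Sigma(0) = j-i]$ (a standard fact to be taken from Section~\ref{sec:prelim}). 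Substituting $j = -i$ and $j = 1-i$ and reindexing turns the two sums into $\Pee[\Sigma(0) \in 2\Zed]$ and $\Pee[\Sigma(0) \in 1 + 2\Zed]$ respectively, which is exactly part~(i).

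For part~(ii) with $q \to \infty$ the parameter $p := 1/q$ of $\Sigma$ tends to $0$. Expansion~\eqref{eq:akdhbjhe} of Theorem~\ref{thm:mi}, applied with $p$ in place of $q$, yields $\Pee[\Sigma(0) = 0] = 1 - 2p + o(p)$. Together with the geometric-type decay $\Pee[\Sigma(0) = k] = O(p^{|k|})$ that can be read off the explicit formula in Theorem~5.1 of~\cite{Gnedin}, this implies $\Pee[|\Sigma(0)| \geq 2] = O(p^2)$. Combining with the reflection symmetry $\Pee[\Sigma(0) = k] = \Pee[\Sigma(0) = -k]$ and conservation of total probability forces $\Pee[\Sigma(0) = \pm 1] = p + o(p)$. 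Plugging these into part~(i) gives $c_o = 1 - 2/q + o(1/q)$ and $c_e = 2/q + o(1/q)$, and in particular the limits $c_o \to 1$, $c_e \to 0$.

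The remaining regime $q \downarrow 1$, equivalently $p \uparrow 1$, is the delicate one: $\Pee[\Sigma(0) = k] \to 0$ for every fixed $k$, so the marginal of $\Sigma(0)$ spreads out and the goal becomes showing that its parity becomes asymptotically uniform. I plan to use the explicit $q$-hypergeometric expression for $\Pee[\Sigma(0) = k]$ from Theorem~5.1 of~\cite{Gnedin} to show that as $p \uparrow 1$ the ratio $\Pee[\Sigma(0) = k+2] / \Pee[\Sigma(0) = k]$ tends to $1$ uniformly over a window of length $\Theta(1/(1-p))$ carrying $1 - o(1)$ of the total mass. Combined with the reflection symmetry this will force $\Pee[\Sigma(0) \in 2\Zed]$ and $\Pee[\Sigma(0) \in 1 + 2\Zed]$ both to $1/2$, and part~(i) then delivers $c_e, c_o \to 1/2$. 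This $p \uparrow 1$ asymptotic is the main obstacle: part~(i) is essentially a single line and the $q \to \infty$ regime reduces to elementary expansions, whereas the flattening regime requires a genuinely quantitative handle on the spreading of the Gnedin–Olshanski marginal that certifies the parity imbalance vanishes as $p \uparrow 1$.
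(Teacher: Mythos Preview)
Your argument for part~(i) and for the $q\to\infty$ regime of part~(ii) is correct and essentially identical to the paper's: linearity of expectation, the translation invariance $\Pee[\Sigma(i)=j]=\Pee[\Sigma(0)=j-i]$, and then the expansion $m_1(1/q)=\Pee[\Sigma(0)=0]=1-2/q+o(1/q)$ together with the displacement bound $\Pee[|\Sigma(0)|\geq 2]=O(1/q^2)$.

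The $q\downarrow 1$ part, however, has a genuine gap. Controlling the \emph{step-two} ratio $\Pee[\Sigma(0)=k+2]/\Pee[\Sigma(0)=k]\to 1$ on a window of full mass only tells you that the marginal is nearly constant on the even integers and nearly constant on the odd integers \emph{separately}; it puts no constraint linking the two constants. Reflection symmetry $\Pee[\Sigma(0)=k]=\Pee[\Sigma(0)=-k]$ does not help either, since it maps even displacements to even and odd to odd. So nothing in your outline prevents $\Pee[\Sigma(0)\text{ even}]$ from converging to some value other than $1/2$. What you actually need is control of the \emph{step-one} ratio.

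The paper obtains this in a single stroke, bypassing the explicit Gnedin--Olshanski formula entirely. For $\Sigma\sim\Mallows(\Zed,p)$ with $0<p<1$, one shows
\[
p\,\Pee[\Sigma(0)=j]\ \leq\ \Pee[\Sigma(0)=j+1]\ \leq\ \frac{1}{p}\,\Pee[\Sigma(0)=j]
\]
for every $j$, by the elementary bijection that post-composes with the transposition $(j\ j{+}1)$: this changes the number of inversions by exactly one, so it changes the Mallows weight by a factor in $\{p,1/p\}$. Summing over even (respectively odd) $j$ and rearranging gives, with $p=1/q$,
\[
\frac{1}{1+q}\ \leq\ c_e,\,c_o\ \leq\ \frac{q}{1+q},
\]
which squeezes both to $1/2$ as $q\downarrow 1$. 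This is both shorter and more robust than an analysis of the $q$-hypergeometric marginal, and it delivers the step-one ratio you were missing.
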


We note that Part~\ref{itm:cecopart1} of the above theorem gives that in particular 

$$c_e + c_o = 1,$$ 

\noindent
for all $q>1$. 
Again, Theorem 5.1 in~\cite{Gnedin} allows us to convert the probabilities given in Part~\ref{itm:cecopart1} of the above theorem
into explicit functions of $q$.
Plots of $c_e$ and $c_o$ as a function of $q$, together with the results of 
computer simulations are shown in Figure \ref{fig:fixed}.

\begin{figure}
\centering
\begingroup
  \fontfamily{enhanced}%
  \selectfont
  \makeatletter
  \providecommand\color[2][]{%
    \GenericError{(gnuplot) \space\space\space\@spaces}{%
      Package color not loaded in conjunction with
      terminal option `colourtext'%
    }{See the gnuplot documentation for explanation.%
    }{Either use 'blacktext' in gnuplot or load the package
      color.sty in LaTeX.}%
    \renewcommand\color[2][]{}%
  }%
  \providecommand\includegraphics[2][]{%
    \GenericError{(gnuplot) \space\space\space\@spaces}{%
      Package graphicx or graphics not loaded%
    }{See the gnuplot documentation for explanation.%
    }{The gnuplot epslatex terminal needs graphicx.sty or graphics.sty.}%
    \renewcommand\includegraphics[2][]{}%
  }%
  \providecommand\rotatebox[2]{#2}%
  \@ifundefined{ifGPcolor}{%
    \newif\ifGPcolor
    \GPcolorfalse
  }{}%
  \@ifundefined{ifGPblacktext}{%
    \newif\ifGPblacktext
    \GPblacktexttrue
  }{}%
  % define a \g@addto@macro without @ in the name:
  \let\gplgaddtomacro\g@addto@macro
  % define empty templates for all commands taking text:
  \gdef\gplbacktext{}%
  \gdef\gplfronttext{}%
  \makeatother
  \ifGPblacktext
    % no textcolor at all
    \def\colorrgb#1{}%
    \def\colorgray#1{}%
  \else
    % gray or color?
    \ifGPcolor
      \def\colorrgb#1{\color[rgb]{#1}}%
      \def\colorgray#1{\color[gray]{#1}}%
      \expandafter\def\csname LTw\endcsname{\color{white}}%
      \expandafter\def\csname LTb\endcsname{\color{black}}%
      \expandafter\def\csname LTa\endcsname{\color{black}}%
      \expandafter\def\csname LT0\endcsname{\color[rgb]{1,0,0}}%
      \expandafter\def\csname LT1\endcsname{\color[rgb]{0,1,0}}%
      \expandafter\def\csname LT2\endcsname{\color[rgb]{0,0,1}}%
      \expandafter\def\csname LT3\endcsname{\color[rgb]{1,0,1}}%
      \expandafter\def\csname LT4\endcsname{\color[rgb]{0,1,1}}%
      \expandafter\def\csname LT5\endcsname{\color[rgb]{1,1,0}}%
      \expandafter\def\csname LT6\endcsname{\color[rgb]{0,0,0}}%
      \expandafter\def\csname LT7\endcsname{\color[rgb]{1,0.3,0}}%
      \expandafter\def\csname LT8\endcsname{\color[rgb]{0.5,0.5,0.5}}%
    \else
      % gray
      \def\colorrgb#1{\color{black}}%
      \def\colorgray#1{\color[gray]{#1}}%
      \expandafter\def\csname LTw\endcsname{\color{white}}%
      \expandafter\def\csname LTb\endcsname{\color{black}}%
      \expandafter\def\csname LTa\endcsname{\color{black}}%
      \expandafter\def\csname LT0\endcsname{\color{black}}%
      \expandafter\def\csname LT1\endcsname{\color{black}}%
      \expandafter\def\csname LT2\endcsname{\color{black}}%
      \expandafter\def\csname LT3\endcsname{\color{black}}%
      \expandafter\def\csname LT4\endcsname{\color{black}}%
      \expandafter\def\csname LT5\endcsname{\color{black}}%
      \expandafter\def\csname LT6\endcsname{\color{black}}%
      \expandafter\def\csname LT7\endcsname{\color{black}}%
      \expandafter\def\csname LT8\endcsname{\color{black}}%
    \fi
  \fi
    \setlength{\unitlength}{0.0500bp}%
    \ifx\gptboxheight\undefined%
      \newlength{\gptboxheight}%
      \newlength{\gptboxwidth}%
      \newsavebox{\gptboxtext}%
    \fi%
    \setlength{\fboxrule}{0.5pt}%
    \setlength{\fboxsep}{1pt}%
\begin{picture}(7200.00,5040.00)%
    \gplgaddtomacro\gplbacktext{%
      \csname LTb\endcsname%%
      \put(814,704){\makebox(0,0)[r]{\strut{}$0$}}%
      \put(814,1527){\makebox(0,0)[r]{\strut{}$0.2$}}%
      \put(814,2350){\makebox(0,0)[r]{\strut{}$0.4$}}%
      \put(814,3173){\makebox(0,0)[r]{\strut{}$0.6$}}%
      \put(814,3996){\makebox(0,0)[r]{\strut{}$0.8$}}%
      \put(814,4819){\makebox(0,0)[r]{\strut{}$1$}}%
      \put(946,484){\makebox(0,0){\strut{}$1$}}%
      \put(1922,484){\makebox(0,0){\strut{}$5$}}%
      \put(3142,484){\makebox(0,0){\strut{}$10$}}%
      \put(4363,484){\makebox(0,0){\strut{}$15$}}%
      \put(5583,484){\makebox(0,0){\strut{}$20$}}%
      \put(6803,484){\makebox(0,0){\strut{}$25$}}%
    }%
    \gplgaddtomacro\gplfronttext{%
      \csname LTb\endcsname%%
      \put(209,2761){\rotatebox{-270}{\makebox(0,0){\strut{}}}}%
      \put(3874,154){\makebox(0,0){\strut{}$q$}}%
      \csname LTb\endcsname%%
      \put(1342,4646){\makebox(0,0)[r]{\strut{}$c_e$}}%
      \csname LTb\endcsname%%
      \put(1342,4426){\makebox(0,0)[r]{\strut{}$c_o$}}%
    }%
    \gplbacktext
    \put(0,0){\includegraphics{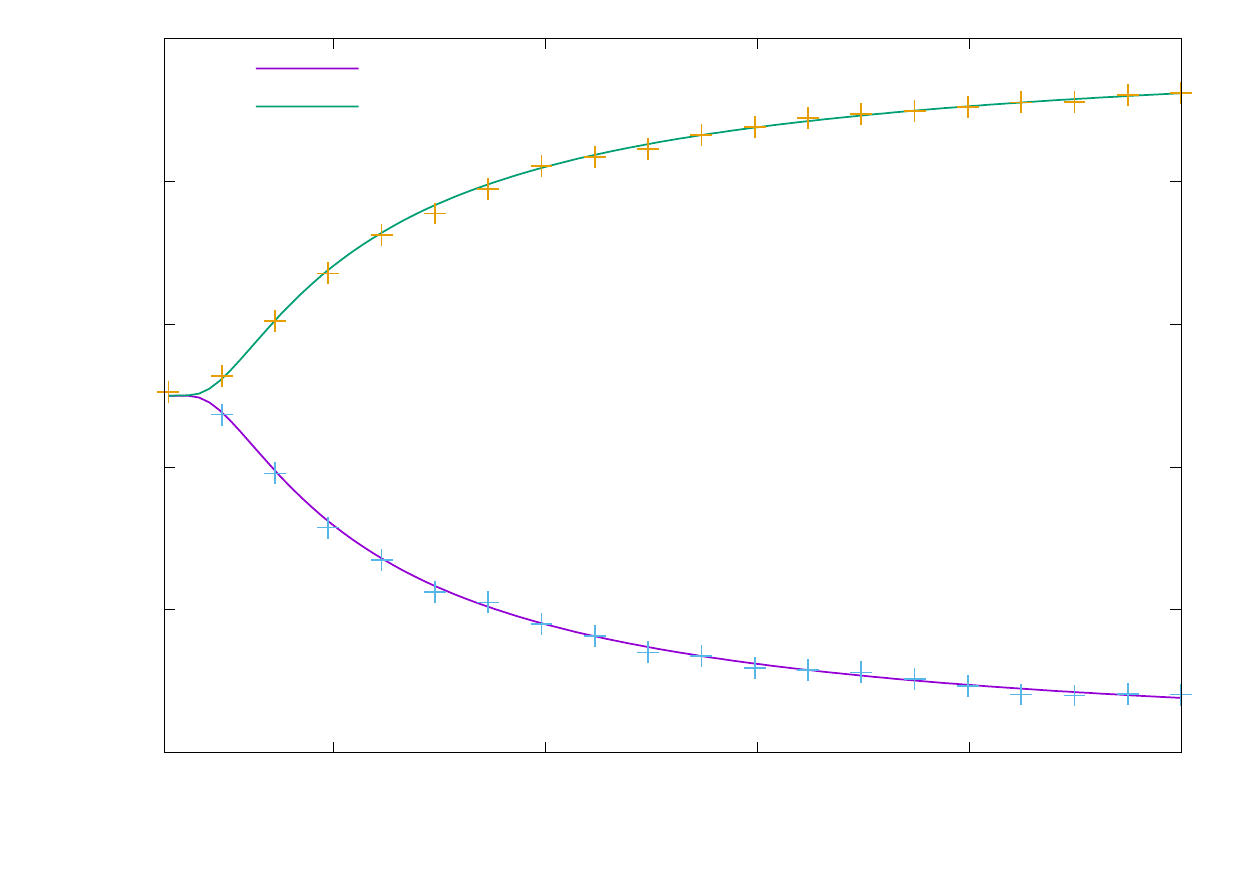}}%
    \gplfronttext
  \end{picture}%
\endgroup
\caption{\label{fig:fixed} The graph of $c_o$ and $c_e$ for $q>1$. 
Simulations were done for $n=1000$ and $n=1001$, each sampled $10000$ times.}
\end{figure}

As mentioned previously, when $q=1$ we retrieve the uniform distribution on $S_n$, for which 
the expected number of 1-cycles equals one.
So the fact that the limits for $q\downarrow 1$ of $c_e,c_o$ equal $1/2$ is pretty curious.
Of course there is no contradiction, since our results apply to the situation where $q>1$ is fixed and we send $n$ to infinity. 
Our results however do suggest something interesting must be going on 
in the ``phase change'' when $q=q(n)$ is a function of $n$ that approaches one from above as $n\to\infty$.

Our final (main) result highlights an interesting ``oscillating'' behaviour in the probability measures 
corresponding the limit of $C_1(\Pi_{2n})$, respectively $C_1(\Pi_{2n+1})$, when $q>1$.
The probability that $\Pi_n$ has at least $m$ one cycles is a lot larger when the parities of $m$ and $n$ agree
than when they don't (for $m$ large but fixed and $n\to\infty$).

\begin{thm}\label{thm:asymp}
For $0<q<1$ we have, as $k\to\infty$

$$ \begin{array}{rcl} 
\Pee\left( C_1( \rho \circ\Sigma) \geq 2k \right) & \ll & \Pee\left( C_1( r \circ\Sigma) \geq 2k \right), \\[2ex]
\Pee\left( C_1( \rho \circ\Sigma) \geq 2k+1 \right) & \gg & \Pee\left( C_1( r \circ\Sigma) \geq 2k+1 \right). 
\end{array} $$

\end{thm}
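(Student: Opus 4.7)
The plan is to compare the two tails by identifying, in each case, the minimum-cost configuration of $\Sigma$ that achieves the required number of fixed points, and then leveraging the explicit description of the bi-infinite Mallows measure to extract leading-order asymptotics.

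I would first rewrite each event as the existence of a witnessing index set: $\{C_1(r\circ\Sigma)\geq m\}$ is the event that there exist $i_1<\cdots<i_m$ with $\Sigma(i_j)=-i_j$ for all $j$, and analogously $\{C_1(\rho\circ\Sigma)\geq m\}$ with $-i_j$ replaced by $1-i_j$. Via a Bonferroni-style inclusion--exclusion (or equivalently factorial moments), each tail is controlled by a sum of cylinder probabilities $\Pee(\Sigma|_I = \phi_I)$ over index sets $I \in \binom{\bZ}{m}$, where $\phi_I$ is the prescribed fixed-point pattern. These cylinder probabilities can be computed using Theorem 5.1 of~\cite{Gnedin}, producing explicit $q$-series expressions; one then passes from the finite-$n$ Mallows consistency to probabilities for events supported on a bounded window.

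The key step is the identification of dominant configurations together with the parity mechanism. Since $\Sigma \sim \Mallows(\bZ,q)$ with $0<q<1$ concentrates near the identity, the cheapest way to realize many fixed points is to pack them into a tight cluster around $0$ (for $r\circ\Sigma$) or around $1/2$ (for $\rho\circ\Sigma$). Crucially, integer subsets symmetric about $0$ have odd size, because $0$ is self-paired under $i\mapsto -i$, while integer subsets symmetric about $1/2$ have even size, because integer points come in proper pairs $\{i,1-i\}$. Consequently the natural (minimum-cost) configuration for $r\circ\Sigma$ is negation on a symmetric interval $[-k,k]$, fixing $2k+1$ indices; the natural configuration for $\rho\circ\Sigma$ is reflection $i\mapsto 1-i$ on $[1-k,k]$, fixing $2k$ indices. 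When the threshold $m$ matches the natural parity, the tail has access to the cheap configuration; when $m$ has the opposite parity, the cheapest valid configuration carries $\Theta(k)$ additional inversions, because one is forced to alter the cluster's structure (shifting by one, inserting a swap through $0$, or extending to the next symmetric interval). This gap of order $k$ in the exponent of $q$, combined with the prefactors obtained from the $q$-series, produces the stated asymptotic ratios --- and the direction flips precisely when $m$ crosses from $2k$ to $2k+1$, yielding the oscillation.

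The main obstacle will be rigorously ruling out ``exotic'' configurations --- asymmetric intervals, unions of distant clusters, or small perturbations of the natural configuration --- that could potentially match the leading exponent; a careful inversion-count argument, together with the shift-invariance of $\Mallows(\bZ,q)$ and a sharp tail bound on the displacements $\Sigma(i)-i$, should show that any such alternative costs strictly more in $q$. A secondary technical difficulty is extracting the precise leading-order constants from the Gnedin--Olshanski $q$-hypergeometric expressions; one should expect identities in the Jacobi-triple-product family to appear when assembling the contributions of translates of the natural cluster into a single infinite product, from which the ratios in the theorem can be read off unambiguously.
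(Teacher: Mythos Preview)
Your parity heuristic --- that integer sets symmetric about $0$ have odd size while those symmetric about $1/2$ have even size --- is exactly the right intuition, and the paper's argument is organised around it. However, you have misidentified the mechanism that produces the extra $\Theta(k)$ in the exponent when the parity is wrong. It is \emph{not} that one is forced to use a slightly different cluster shape: in fact, for $r\circ\Sigma$ one can perfectly well prescribe $\Sigma(i)=-i$ on an asymmetric interval such as $\{-k+1,\dots,k\}$ of size $2k$, and the inversion count of that local pattern is essentially the same as for the symmetric block. The extra cost arises instead because $\Sigma$ is almost surely \emph{balanced}: the number of $i<0$ with $\Sigma(i)\geq 0$ equals the number of $i\geq 0$ with $\Sigma(i)<0$. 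Prescribing an asymmetric cluster of $2k$ fixed points of $r\circ\Sigma$ creates an imbalance of one between left-to-right and right-to-left crossings, which forces an additional index $x$ outside the cluster with $|\Sigma(x)-x|=\Omega(k)$; it is this extra large displacement that supplies the missing factor of $q^{\Theta(k)}$. Your proposal does not mention balancedness, and without it the upper bound simply does not go through.

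There is also a concrete technical issue with your proposed route. Theorem~5.1 of Gnedin--Olshanski gives only the one-point marginal $\Pee(\Sigma(0)=d)$; it does not provide the multi-point cylinder probabilities $\Pee(\Sigma|_I=\phi_I)$ that a Bonferroni/factorial-moment computation would require, so the line ``these cylinder probabilities can be computed using Theorem~5.1'' does not work as stated. The paper avoids this entirely: it bounds cylinder probabilities by passing to the finite model $\Pi_n\sim\Mallows(I_n,q)$ and running the iterative sampling procedure step by step to obtain an explicit upper bound (their Lemma on $p_{\underline{i},\underline{j}}$ and the function $\Psi$), then sums over all choices of $\underline{i},\underline{j}$ and over the location of the extra crossing forced by balancedness, reducing the problem to a sequence of geometric-type sums. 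No $q$-hypergeometric identities or Jacobi triple product are used. If you wish to salvage your approach, the missing ingredients are (i) a way to bound joint cylinder probabilities for $\Sigma$ directly, and (ii) incorporating the balancedness constraint into the enumeration of admissible configurations.
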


\noindent
(The notation $g(k) \ll f(k)$ means that the ratio $g(k)/f(k)$ tends to zero as $k\to\infty$.)

\subsection*{Sketches of some ideas used in the proofs.}

The proofs of Theorems~\ref{thm:normal} and Theorem~\ref{thm:clt_q_gt_one} are adaptations 
of a proof technique developed by Basu and Bhatnagar~\cite{basu2016limit} to prove a Gaussian 
limit law for the length of the longest monotone subsequence of a Mallows permutation, and in fact Theorem~\ref{thm:normal} closely follows the original proof.
The intuition behind it is that if $\Pi_n \sim\Mallows(n,q)$ with $0<q<1$ 
then given that $\Pi_n[ \{1,\dots,j\} ] = \{1,\dots,j\}$ the remainder 
of the permutation behaves like a Mallows random permutation of length $n-j$.
As it turns out, there will typically be linearly many such $j$.

A very rough sketch of the argument giving Theorem~\ref{thm:normal} is as follows.
If $T_1<\dots<T_k=n$ are such that $\Pi_n[ \{1,\dots,T_i\} ] = \{1,\dots,T_i\}$ 
then each cycle must lie completely in $\{T_{i-1}+1,\dots,T_i\}$ for some $i$
(setting $T_0=0$).
This allows us to show that the cycle counts behave approximately like a stopped two-dimensional random walk.
This refers to the situation where $(X_1,Y_1), (X_2, Y_2), \dots$ are i.i.d.~and we
are interested in $\sum_{i=1}^\tau Y_i$ where 
$\tau := \inf\{ k : X_1 + \dots + X_k > n \}$. 
Here the $X_i$ correspond to $T_i-T_{i-1}$ and $Y_i$ counts the number of 
cycles contained in the interval $\{T_{i-1}+1,\dots,T_i\}$.
A convenient result of Gut and Janson~\cite{GutStopped} allows us to derive that 
the mentioned sum is approximately Gaussian after suitable rescaling. 
The same argument applies to arbitrary linear combinations of the cycle counts, so that 
we can employ the Cramer-Wold device to deduce that the -- suitably rescaled -- vector of cycle counts is 
multivariate Gaussian.

The proof of Theorem~\ref{thm:clt_q_gt_one} goes along the same lines. 
Now it turns out that there are linearly many $T_1<\dots<T_k$ such that 
$\Pi_n[ \{1,\dots,T_i\} ] = \{n+1-T_i, \dots, n\}$ and 
$\Pi_n[ \{n+1-T_i, \dots, n\}] = \{1,\dots,T_i\}$.
(Almost) every even cycle must then be contained in some set 
$\{T_{i-1}+1,\dots,T_i\} \cup \{n+1-T_i, \dots, n-T_{i-1}\}$, and we
can adapt the proof strategy that gave Theorem~\ref{thm:normal} to work also here. We mention that Theorem \ref{thm:clt_q_gt_one} can also be proved by starting from the center of the permutation, rather than the sides, see \cite{HeCycles}.

In the proof of Theorem~\ref{thm:odd_cycles}, we rely on results of Gnedin and Olshanski~\cite{Gnedin} that show
that ``locally'', for $0<q<1$, the finite Mallows permutation resembles the bi-infinite Mallows permutation $\Sigma$ 
defined and analyzed in~\cite{Gnedin}.
An elementary, but crucial, observation is that $\Pi_n \isd \Mallows(n,q)$ if and only if $r_n\circ\Pi_n \isd \Mallows(n,1/q)$
where $r_n(i) := n+1-i$. (See the next section for the explanation.) 
%This allows us to relate the case of $q>1$ to $0<q<1$.
Note that if $n$ is odd then $r_n$ leaves $(n+1)/2$ invariant, but when $n$ is even no element of $\{1.\dots,n\}$ 
is invariant ($n/2$ and $n/2+1$ are flipped). 
For $q>1$, the relation with the $\Mallows(n,1/q)$ distribution translates to 
$\Pi_n \sim \Mallows(n,q)$ being ``approximated'' by $r \circ \Sigma$ with $r(i):=-i$ when $n$ is odd; and 
$\rho\circ\Sigma$ with $\rho(i) := 1-i$ when $n$ is even.
In particular we for instance have that the number of 1-cycles (fixed points) of $\Pi_n$ 
approximately behaves like the number of $i\in\Zed$ such that $\Sigma(i)=-i$ when $n$ is odd, and the 
number of $i\in\Zed$ such that $\Sigma(i) = 1-i$ when $n$ is even. 

For the proof of Theorem~\ref{thm:mi} we again use that the $\Mallows(n,q)$ distribution locally 
looks like the bi-infinite Mallows model. 
The main intuition is the elementary observation that the number of $i$-cycles 
equals $1/i$ times the number of points that are in $i$-cycles.
That $\sum i m_i = 1$ is then more or less immediate from the observation that, almost surely, all cycles of 
$\Sigma$ have finite length. The statements about the limits as $q\downarrow0$ and $q\uparrow 1$ can be derived
by using explicit expressions for the expected number of 1-cycles that follow by combining 
our work with results of Gnedin and Olshanski~\cite{Gnedin} and Gladkich and Peled~\cite{Peled}.

The idea behind the proof of Theorem~\ref{thm:mu2i} is very similar, but more technical. 
When $q>1$ then the $\Mallows(n,q)$ model
is approximated well locally by the composition of two independent bi-infinite Mallows models.

The first part of Theorem~\ref{thm:ceco} will follow from the aforementioned fact that, when $q>1$, the expected number
of 1-cycles is well-approximated by the number of $i\in\Zed$ such that $\Sigma(i)=-i$ when $n$ is odd, and the 
number of $i\in\Zed$ such that $\Sigma(i) = 1-i$ when $n$ is even.
The limit of $1/2$ for the expected number of $1$-cycles when $q\downarrow 1$ will follow from the 
relatively elementary observation that $1/q < \Pee( \Sigma(0)=j+1 ) / \Pee( \Sigma(0) = j ) < q$ for all $j \in\Zed$.
For the other limits we again analyze the various explicit expressions in $q$.

The proof of Theorem~\ref{thm:asymp} is technically involved, but the intuition behind it is relatively easy to explain.
When $k$ is large the ``most likely'' way in which $r \circ \Sigma$ will have at least $2k+1$ fixed points
is if $\Sigma(-k)=k, \dots, \Sigma(k)=-k$, or some minor perturbation of this configuration.
For $\rho\circ\Sigma$ the ``most likely'' way to have $2k+1$ fixed points is something 
like $\Sigma(-k)=1+k,\Sigma(-k+1)=k, \dots, \Sigma(k)=1-k$, or some minor perturbation of that situation.
However, as shown by Gnedin and Olshanski~\cite{Gnedin}, $\Sigma$ is almost surely balanced : 
the number of $i<0$ with $\Sigma(i)\geq 0$ is finite and equals the number of $i\geq 0$ with $\Sigma(i)<0$. 
This forces the existence of one more $i \in\Zed$ with $|\Sigma(i)-i| = \Omega(k)$ -- which makes the
probability exponentially smaller.
The intuition for $2k$ fixed points similar.

\begin{rem}
	Let us note that Theorems \ref{thm:normal}, \ref{thm:clt_q_gt_one}, and \ref{thm:odd_cycles} all hold for a more general class of permutation statistics.
	
	Let $w\in S_n$, and suppose that $w=w_1w_2$ where $w_1$ sends $[1,i]$ to itself and $[i+1,n]$ to itself. Say that a function $f:\cup _n S_n\to \mathbf{R}^d$ is \emph{additive} if $f(w)=f(w_1)+f(w_2)$ for all $w=w_1w_2$ decomposing in this way, where $f(w)$ for $w$ a permutation on an interval $[i,j]$ is defined by shifting down the permutation to $[1,j-i+1]$. Then Theorem \ref{thm:normal} holds for any (non-trivial) additive function satisfying $|f(w)|\leq Cn^k$ for $w\in S_n$, where $C$ and $k$ are constants. The proof is exactly the same, with the key being that $f(w)$ decomposes into a sum of independent pieces in the same way as the number of cycles. The other assumptions are to ensure that the moments are finite, and that the variance is non-zero.
	
	Let $w\in S_n$, and suppose that $w=w_1w_2$, where $w_1$ sends $[i,n-i]$ to itself and $w_2$ sends $[1,i]$ to $[n-i+1,n]$ and vice versa. Say that $f:\cup_n S_n\to\mathbf{R}^d$ is \emph{anti-additive} if $f(w)=f(w_1)+f(w_2)$ for all $w=w_1w_2$ decomposing in this way. Then Theorems \ref{thm:clt_q_gt_one} and \ref{thm:odd_cycles} holds for any (non-trivial) anti-additive function satisfying $|f(w)|\leq Cn^k$ for $w\in S_n$, where $C$ and $k$ are constants. Again, the proof is the same, writing $f(w)$ as a sum of independent pieces.
	
	Here, non-trivial means that $f$, when restricted to permutations of size $n$ for which no non-trivial decomposition of the form $w=w_1w_2$ exists, is non-constant. This assumption is needed to ensure that the variances $\beta_{ii}$ and $\beta_{ii}'$ are non-zero. Almost any reasonable function satisfies this, but note in particular that $f(w)=n$ the size of the permutation does not satisfy this.
	
	Note that if $f(w)$ is either additive or anti-additive, then so is $f(w^{-1})$. Thus, the theorems also apply to joint statistics for a Mallows permutation and its inverse, giving another proof of Theorem 1.2 of \cite{HeDescents}.
\end{rem}

\section{Notation and preliminaries\label{sec:prelim}}

Here we collect some notation, definitions and results from the literature that we will use in our proofs.
Throughout the paper we use $[n]:=\{1,\dots,n\}$ to denote the set consisting of the first $n$ natural numbers, and $[a,b]:=\{a,\dotsc,b\}$ for $a<b$.
If $f(n), g(n)$ are two functions depending on the parameter $n$, we will use 
$f(n) = o\left( g(n) \right)$ to denote that $f(n)/g(n) \to 0$, we will use 
$f(n) = O\left( g(n) \right)$ to denote that there exists a constant $C>0$ such that $f(n) \leq C\cdot g(n)$, 
we will use $f(n) = \Omega( g(n) )$ to denote 
that there exist a constant $c>0$ such that $f(n) > c \cdot g(n)$, and 
$f(n) = \Theta\left( g(n)\right)$ to denote that $f(n) = O( g(n) )$ and $f(n) = \Omega( g(n) )$.
We will use $\Bi(n,p)$ to denote the binomial distribution with parameters $n$ and $p$ and we use $\Geo(p)$ to the note the 
geometric distribution with parameter $p$.
So $X\sim\Geo(p)$ means that

$$\Pee(X=k) = p (1-p)^{k-1}, $$

\noindent 
for all $k\in\eN$. We use $\TGeo(n,p)$ to denote the {\em truncated} geometric distribution, truncated at $n$.
That is, if $Y \sim \TGeo(p)$ and $X \sim\Geo(p)$ then

$$ \Pee( Y = k ) = \Pee( X = k | X \leq n ) = \frac{p(1-p)^{k-1}}{1-(1-p)^n} \quad (k=1,\dots, n). $$

As is usual in the literature on the Mallows distribution, we denote by

$$ Z(n,q) := \sum_{\sigma\in S_n} q^{\inv(\sigma)}, $$

\noindent
the denominator in~\eqref{eq:Mallowsdef}. By a standard result in enumerative combinatorics (see Corollary 1.3.13 in~\cite{StanleyVol1}) 
we have

$$ Z(n,q) = \prod_{i=1}^n \frac{1-q^i}{1-q}. $$

% \noindent
% (An alternative, more probabilistic, way to see this identity is via the iterative procedure for 
% generating the Mallows random permutation $\Pi_n$ that we will describe shortly.)
An elementary observation is that the indices $i,j \in [n]$ form an inversion for $\pi \in S_n$ 
if and only if $\pi(i), \pi(j)$ form an inversion for $\pi^{-1}$.
In particular 

$$ \inv(\pi^{-1}) = \inv(\pi). $$

\noindent
Similarly, letting $r_n \in S_n$ denote the ``reversal map'' given by $r_n(i) := n+1-i$,
we have that $i,j \in [n]$ are an inversion in $\pi$ if and only if 
they are not an inversion in $r_n \circ \pi$. The same holds true for $\pi\circ r_n$.
In other words 

$$ \inv(r_n\circ\pi) = \inv(\pi\circ r_n) = {n\choose 2} - \inv(\pi), $$  

\noindent
and hence also 

$$ \inv(r_n\circ \pi \circ r_n ) = \inv(\pi). $$

\noindent
As a direct consequence of these observations and the definition of the Mallows probability measure, we have:

\begin{cor}
Let $q>1$ and $\Pi_n\sim\Mallows(n,q)$ and let $r_n$ be given by $r_n(i) = n+1-i$. The following hold.
\begin{enumerate}
 \item $\Pi_n^{-1} \isd \Pi_n$, and;
 \item $r_n\circ\Pi_n\circ r_n \isd \Pi_n$, and;
 \item $r_n\circ \Pi_n \isd \Mallows(n,1/q)$, and;
 \item $\Pi_n\circ r_n \isd \Mallows(n,1/q)$.
\end{enumerate}
\end{cor}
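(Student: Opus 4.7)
The plan is to verify each of the four distributional identities directly from the definition of the Mallows measure in~\eqref{eq:Mallowsdef} and the three inversion-count identities established in the paragraphs immediately preceding the corollary. There is essentially no obstacle: each part reduces to observing that a certain map $S_n \to S_n$ is a bijection (in fact an involution, in (i) and (ii)) and then plugging in the corresponding inversion identity to rewrite the weight $q^{\inv(\pi)}$.

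More concretely, I would proceed as follows. For each part, the map appearing on the left hand side gives a bijection of $S_n$, so for any target $\sigma \in S_n$ I can solve for the preimage and write the probability of the new random variable at $\sigma$ as the probability of $\Pi_n$ at that preimage, then substitute~\eqref{eq:Mallowsdef}. For part~(i), the preimage of $\sigma$ under $\pi\mapsto \pi^{-1}$ is $\sigma^{-1}$, and $\inv(\sigma^{-1})=\inv(\sigma)$ gives $\bP(\Pi_n^{-1}=\sigma)=q^{\inv(\sigma)}/Z(n,q)$. For part~(ii), the preimage of $\sigma$ under the conjugation $\pi\mapsto r_n\circ\pi\circ r_n$ is $r_n\circ\sigma\circ r_n$ (since $r_n$ is an involution), and $\inv(r_n\circ\sigma\circ r_n)=\inv(\sigma)$ gives the claim.

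For parts~(iii) and~(iv), the preimage of $\sigma$ under $\pi\mapsto r_n\circ\pi$ is $r_n\circ\sigma$, and $\inv(r_n\circ\sigma)=\binom{n}{2}-\inv(\sigma)$ gives
\[
\bP(r_n\circ\Pi_n=\sigma) \;=\; \frac{q^{\binom{n}{2}-\inv(\sigma)}}{Z(n,q)} \;=\; \frac{q^{\binom{n}{2}}}{Z(n,q)}\cdot(1/q)^{\inv(\sigma)},
\]
so the law is proportional to $(1/q)^{\inv(\sigma)}$ and therefore equals $\Mallows(n,1/q)$; the constant of proportionality must then be $1/Z(n,1/q)$, which also gives the pleasant identity $Z(n,q)=q^{\binom{n}{2}}Z(n,1/q)$ as a free byproduct. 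Part~(iv) is identical, using the preimage $\sigma\circ r_n$ and the companion identity $\inv(\sigma\circ r_n)=\binom{n}{2}-\inv(\sigma)$.

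The only thing that requires any care is to note that each of the four maps $\pi\mapsto \pi^{-1}$, $\pi\mapsto r_n\circ\pi\circ r_n$, $\pi\mapsto r_n\circ\pi$, and $\pi\mapsto \pi\circ r_n$ is a bijection from $S_n$ to itself, so summing the weights after the change of variable is unambiguous; this is immediate since each map has an explicit two-sided inverse in $S_n$ (itself in the first two cases, and left/right multiplication by $r_n$ in the last two). Consequently there is no hard step; the corollary is a clean transcription of the three inversion identities collected just above it, and the hypothesis $q>1$ plays no role — the same proof works for every $q>0$.
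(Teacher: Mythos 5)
Your proposal is correct and follows essentially the same route as the paper: the corollary is derived there as a direct consequence of the inversion identities $\inv(\pi^{-1})=\inv(\pi)$, $\inv(r_n\circ\pi)=\inv(\pi\circ r_n)=\binom{n}{2}-\inv(\pi)$, and $\inv(r_n\circ\pi\circ r_n)=\inv(\pi)$, with the paper's own justification of parts (iii) and (iv) being exactly your change-of-variables computation $\Pee(r_n\circ\Pi_n=\pi)=\Pee(\Pi_n=r_n\circ\pi)\propto(1/q)^{\inv(\pi)}$. Your added observations (the identity $Z(n,q)=q^{\binom{n}{2}}Z(n,1/q)$ and the fact that the hypothesis $q>1$ is not needed) are accurate but do not change the argument.
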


\noindent
(To see the third and fourth parts of the lemma, note
that $\Pee( r_n\circ\Pi_n = \pi ) = \Pee( \Pi_n = r_n \circ \pi ) 
= q^{{n\choose 2}-\inv(\pi)} / Z(n,q)$ is proportional
to $(1/q)^{\inv(\pi)}$, and similarly for $\Pi_n \circ r_n$.)
The last two parts of the corollary provide a way to 
express the Mallows distribution with $q>1$ in terms of the Mallows distribution with $0<q<1$. 
We will rely on this a lot in our proofs of the results for $q>1$.

For $0<q<1$, there is an iterative procedure for generating $\Pi_n \in \Mallows(n,q)$, going 
back to the work of Mallows~\cite{Mallows}.
We let $Z_1,\dots,Z_n$ be independent with $Z_i \sim \TGeo(n+1-i,1-q)$. %% (so in particular $Z_n = 1$ almost surely).
We now set %
\begin{align}\label{eq:MallowsIter}	
\Pi_n(1) &= Z_1,\\
\Pi_n(i)&= \text{the $Z_i$-th smallest number in the set }[n]\setminus \{ \Pi(1),\ldots, \Pi(i-1)\}, \qquad \text{ for $1<i\leq n$}.
\end{align}%

Put differently, having determined  $\Pi_n(1),\ldots, \Pi_n(i-1)$, we determine $\Pi_n(i)$ by writing
$[n] \setminus \{\Pi_n(1),\ldots, \Pi_n(s-1)\}$ in increasing order as $\{j_1 , j_2, \ldots , j_{n-s+1} \}$, and setting 
set $\Pi_n(i) := j_{Z_i}$. %
To see that this procedure indeed generates a random element of $S_n$ chosen according to the $\Mallows(n,q)$ distribution, 
we can argue as follows. 
We first note that for each $\pi\in S_n$ there is exactly one choice of $(k_1,\dots,k_n)\in[n]\times[n-1]\times\dots\times[1]$ 
such that setting $Z_1=k_1,\dots,Z_n=k_n$ results in $\Pi_n = \pi$
(and vice versa each choice of $k_1,\dots,k_n$ determines a unique element of $S_n$). 
In particular 

$$ \Pee( \Pi_n = \pi ) = \prod_{i=1}^n \Pee( Z_i = k_i ) \propto q^{\sum_{i=1}^n (k_i-1)}, $$

\noindent
where the symbol $\propto$ denotes ``proportional to'', and hides a multiplicative term not depending on $k_1,\dots,k_n$.
We now note that for each $i \in \eN$ we must have% 
$$ \left| \{ j : i < j \text{ and } \pi(i) > \pi(j) \}\right| = k_i - 1. $$
%
%in $\pi$ there must be precisely $k_i-1$ inversions involving the element $i$ and one of 
%the larger elements $i+1,\dots,n$. 
In other words, $\inv(\pi) = k_1+\dots+k_n-n$, which shows that we've indeed sampled according to the $\Mallows(n,q)$-distribution.
% \noindent
%A two--sided adaptation of the above sampling procedure is given in Section \ref{sec:proof_q_qt_one}. \par

\begin{figure}
	\centering
	\includegraphics{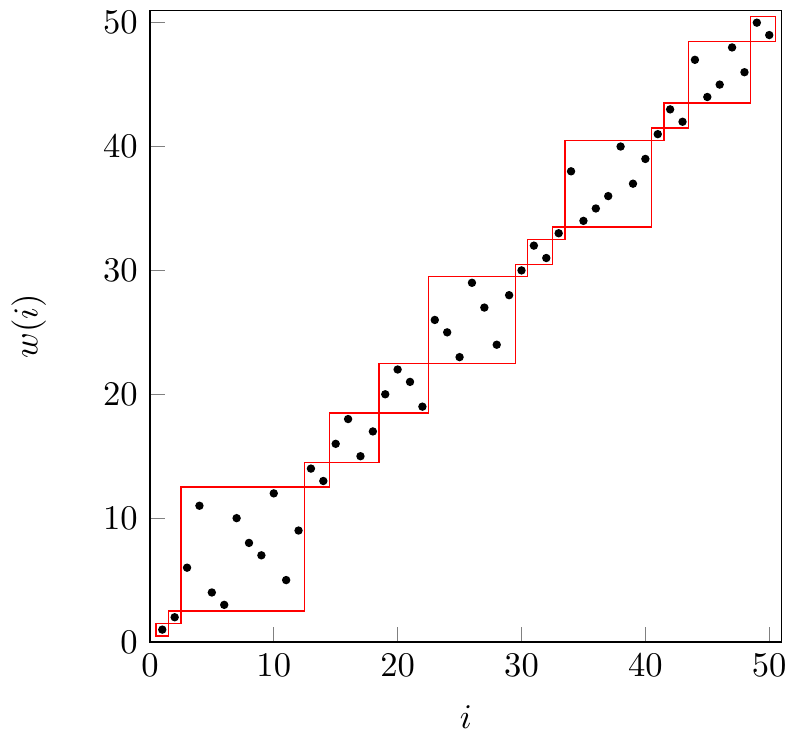}
	\caption{Sample of $\Mallows(\eN,q)$. The red squares indicate the first time that an interval is sent to itself, and the lengths and contents of the red squares are independent and identically distributed.}
	\label{fig:mallows process}
\end{figure}

There is a natural extension of the Mallows model to random functions $\Pi : \eN\to\eN$, called the 
{\em Mallows process} by some authors. 
Similarly to the iterative procedure for generating $\Pi_n\sim\Mallows(n,q)$ described above, we 
let $Z_1,Z_2,\ldots$ be an infinite sequence of i.i.d. $\on{Geom}(1-q)$ random variables
and we iteratively construct an infinite sequence $\Pi$ of natural numbers by setting
\begin{align}\label{eq:akjkjfekew}	
\Pi(1) &= Z_1,\\
\Pi(i)&= \text{the $Z_i$-th smallest number in the set }\bN \setminus \{ \Pi(1),\ldots, \Pi(i-1)\}, \qquad \text{ for $i > 1$}.
\end{align}
We denote the probability distribution of $\Pi$ generated in this manner by $\on{Mallows}(\bN, q)$ (see also Figure \ref{fig:mallows process}).

For a non-empty, finite $A \subseteq \eR$ and a bijection $\pi : A \to A$ we define $\inv(\pi)$ in exactly the 
same way as for bijections from $[n]$ to itself. The distribution of the random bijection $\Pi_A : A \to A$ satisfying 

$$ \Pee( \Pi_A = \pi ) = \frac{q^{\inv(\pi)}}{Z(n,q)}, $$

\noindent
will be denoted by $\Mallows(A,q)$. 
For $\sigma : B \to B$ a bijection with $B \subseteq \eR$ and $A \subseteq B$ finite (but $B$ may be infinite) we
denote by $\sigma_A$ the bijection we obtain by setting 

$$ \sigma_A(a) := a_{(i)} \text{ if $\sigma(a)$ is the $i$-th smallest element of $\sigma[A]$, } $$

\noindent
where $a_{(i)}$ is the $i$-th smallest element of $A$.

As shown by Basu and Bhatnagar (\cite{basu2016limit}, Lemma 2.1) and independently Crane and DeSalvo (\cite{CraneDesalvo2017}, Lemma 5.2), 
if $\Pi \sim \Mallows(\eN,q)$ with $0<q<1$ and $I = \{a,\dots,a+n\} \subseteq \eN$ 
is a finite ``interval'' of consecutive integers then

$$ \Pi_{I} \isd \Mallows(I,q). $$

\noindent
Moreover, as is easily seen from the definitions, if $I= \{a+1,\ldots, a+n\} \subseteq \Zed$ is such an interval 
then

$$ \Sigma \sim \on{Mallows}(I,q) \quad \text{ if and only if } \quad 
s^{(a)} \circ \Sigma \circ s^{(-a)} \isd \on{Mallows}(n,q). $$

\noindent 
Here and in the rest of the paper $s$ denotes the {\em shift map} given by $i\mapsto i+1$ and 
$f^{(n)}$ denotes the $n$-fold composition of the function $f$ with itself
and $f^{(-n)}$ denotes the $n$-fold composition of the inverse $f^{-1}$ of $f$ with itself.
(So in particular $s^{(k)}$ is the map $i \mapsto i+k$ and $s^{(-k)}$ is the map $i\mapsto i-k$.)

We next discuss Gnedin and Olshanski's bi-infinite Mallows model.
For every $0<q<1$, this is a random bijection of $\Zed$, whose distribution we will denote by $\Mallows(\Zed,q)$.
The work of Gnedin and Olshanski provides several definitions, but all of them are
rather involved. So we refer the reader to the original paper~\cite{Gnedin} for the precise definition and mention
only the properties and relevant facts we will be using in what follows.

We will need the following notion of convergence. For a sequence $\sigma_1 : I_1\to I_1,\sigma_2: I_2\to I_2,\ldots$ of bijections
of subsets of $\bZ$ with the property that $\Zed = \bigcup_n I_n$, we write 
$\sigma_n \to \sigma$ if for every $i\in \bZ$ there is an $n=n(i)$ such that $n'\geq n$ implies $\sigma_n(i)=\sigma(i)$. 

% Again for $\sigma :\Zed \to \Zed$ a bijection and $I \subseteq \Zed$ finite, we define $\sigma_I$ 
% to be the permutation of $I$ that inherits the relative order of $\sigma$ on $I$.

As in \cite{Gnedin}, we call a permutation $\pi$ of $\bZ$ balanced if 
\begin{equation}
	|\{   (i ,\pi(i)) \mid i < 1 \leq \pi(i)    \}| = |\{   (i ,\pi(i)) \mid \pi(i) < 1 \leq i    \}| < \infty.
\end{equation}
As noted in \cite{Gnedin}, we may replace the $1$ above by any fixed number in $\bZ$ and obtain an equivalent 
definition of balanced permutations. 
The random permutation $\Sigma\sim\on{Mallows}(\bZ, q)$ is almost surely balanced.

The $q$--Pochhammer symbols $(a;q)_n$ and $(a;q)_{\infty}$ are defined as
\begin{align}\label{eq:pochhammer}
	(a;q)_n = \prod_{i=1}^n (1- aq^{i-1})\qquad\text{and} \qquad	(a;q)_\infty &= \prod_{i=1}^\infty (1- aq^{i-1}),
\end{align}

\noindent
Recall that, thoughout the paper, we will use $s,r,\rho$ to denote the maps given by

$$ s(i) := i+1, \quad r(i) := -i, \quad \rho(i) := 1-i. $$

\noindent
The following lemma lists the facts on the $\Mallows(\Zed,q)$ distribution we will be relying on in our proofs below.

\begin{lem}\label{lem:gnedin}
Let $0<q<1$ and $\Sigma \sim \on{Mallows}(\bZ, q)$. We have
\begin{enumerate}
		% \item \label{item:gnedin_exch} The distribution $\on{Mallows}(\bZ, q)$ is $q$--exchangeable, (\cite{Gnedin}, Thm 3.3);
\item\label{itm:gnedin_Z_finite} If $I\subseteq\Zed$ is a finite set of consecutive integers then $\Sigma_{I} \isd \Mallows(I,q)$
(\cite{Gnedin}, comments below Theorem 6.1);
\item\label{item:gnedin_converge} If $I_1\subseteq I_2\subseteq ..$ are finite sets of consecutive integers with 
$\bigcup_n I_n = \bZ$, then 
$\Sigma_{I_n} \to \Sigma$ almost surely (\cite{Gnedin}, Proposition 7.6); 
\item \label{item:gnedin_inverse}$\Sigma^{-1}\isd \on{Mallows}(\bZ, q)$ (\cite{Gnedin}, Corollary 3.4);
\item \label{item:gnedin_shift}$s \circ \Sigma \circ s^{-1} \isd \on{Mallows}(\bZ, q)$
(\cite{Gnedin}, Lemma 4.4);
\item \label{item:gnedin_reflect} $\rho\circ\Sigma\circ\rho \isd \Mallows(\Zed,q)$ (\cite{Gnedin}, Corollary 3.5);
\item \label{item:gnedin_disp} $\bP \left[ | \Sigma(0)| > m \right] = \Theta(q^m)$ 
(\cite{Gnedin}, Remark 5.2);
\item\label{itm:gnedin_displacements} For each $d \in \Zed$ we have 

$$ \Pee\left[ \Sigma(0)=d  \right] = (1-q) (q;q)_\infty \sum\limits_{r,l\geq 0;\atop r-l = d} \frac{q^{rl + r + l}}{ (q;q)_r (q;q)_l }. $$

(\cite{Gnedin}, Theorem 5.1).
\end{enumerate}
\end{lem}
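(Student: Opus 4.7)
The lemma is a compilation of known facts about the bi-infinite Mallows distribution, each of which is already established in Gnedin and Olshanski~\cite{Gnedin}. The plan is not to reprove any of these facts from scratch, but rather to verify that each of the seven items is a direct translation of the cited results into the notation used in the present paper. So the bulk of the ``proof'' consists of matching conventions (e.g.\ the precise indexing of $\Sigma$, the definitions of $s,r,\rho$, and the convention that $\Mallows(I,q)$ denotes the Mallows measure on bijections $I\to I$ with weight $q^{\inv}$).

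First I would set up the translation: \cite{Gnedin} works with a specific construction of $\Sigma$ via a projective limit / $q$-exchangeable sequence, and I would quickly recall (or refer back to the preceding pages of Section~\ref{sec:prelim}) that this construction matches the one used here, in particular that the restriction $\Sigma_I$ from the paper coincides with the ``window'' functional considered in \cite{Gnedin}. With that in hand, parts \ref{itm:gnedin_Z_finite} and \ref{item:gnedin_converge} follow immediately from the cited consistency and almost-sure convergence statements (Theorem 6.1 and Proposition 7.6 of~\cite{Gnedin}); the only thing to check is that the finite-window distribution described there is precisely $\Mallows(I,q)$ in the sense of the definition we gave above, which is just a matter of unwinding normalizations.

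Next, parts \ref{item:gnedin_inverse}--\ref{item:gnedin_reflect} are invariance/symmetry statements. I would verify each by applying the cited result (Corollary~3.4, Lemma~4.4, Corollary~3.5 of~\cite{Gnedin} respectively) and noting that our maps $s,\rho$ agree with their analogues; in particular for~\ref{item:gnedin_reflect}, since $\rho(i)=1-i$ is an involution, $\rho\circ\Sigma\circ\rho$ is well-defined and the claim is just Gnedin--Olshanski's reflection symmetry. Part~\ref{item:gnedin_disp} quoting Remark~5.2 of~\cite{Gnedin} gives the $\Theta(q^m)$ tail bound for displacements, and part~\ref{itm:gnedin_displacements} is exactly the explicit series in Theorem~5.1 of~\cite{Gnedin} after substituting the normalization $(q;q)_\infty$.

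There is no real obstacle here: since every clause is a restatement of a numbered result from~\cite{Gnedin}, the main (minor) care point will be making sure the shift/reflection conventions agree, so that, for instance, ``shift by one'' in their notation corresponds to our $s^{(1)}$, and their choice of ``origin'' in defining balancedness matches our convention. Once that bookkeeping is done, each item of the lemma is immediate from the corresponding citation, so I would simply present the proof as a list of seven one-line arguments, each pointing to the specific statement in~\cite{Gnedin} and, where needed, inserting a short sentence confirming the notational match.
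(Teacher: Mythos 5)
Your proposal matches the paper exactly: the paper gives no proof of this lemma at all, treating each of the seven items as a direct citation of the indicated result in Gnedin--Olshanski~\cite{Gnedin} (with the conventions for $s$, $r$, $\rho$, and $\Sigma_I$ fixed in the surrounding text of Section~\ref{sec:prelim}), which is precisely the citation-plus-notation-matching argument you describe. So your proof is correct and takes essentially the same approach as the paper.
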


\noindent
We remark that it follows from Part~\ref{item:gnedin_shift} that 
also 

$$s^{(k)} \circ \Sigma \circ s^{(-k)} \isd \Sigma, $$%
for all $k \in \Zed$. It now also follows that

$$ \Sigma(i)-i \isd \Sigma(0), $$ 

\noindent
for all $i \in \Zed$.
Similarly, since $\rho = r \circ s^{-1} = s \circ r$, we can combine~\ref{item:gnedin_shift} and~\ref{item:gnedin_reflect} to derive that  

$$ r \circ \Sigma \circ r = s^{-1} \circ \rho \circ \Sigma \circ \rho \circ s
\isd s^{-1} \circ \Sigma \circ s \isd \Sigma. $$
Finally, let us also remark that the $\Mallows(\Zed,q)$ process can be thought of as a stationary version of the $\Mallows(\eN,q)$ process, as described in \cite{PitmanTang}. In particular, we will use the following result on renewal processes, applied to the Mallows process (\cite{G13}, Equation 5.72).

\begin{prop}
    \label{prop: waiting time}
    Let $X_i$ be a sequence of independent and identically distributed random variables taking values in $\eN$, with $\Ee X_i=\mu>0$, and let $\tau(n)=\inf\{t\colon \sum_{i=1}^t X_i>n\}$. Then $X_{\tau(n)}\to X^*$ in distribution, where $X^*$ has the size-bias distribution of the $X_i$, which means $\Pee(X^*=x)=x\Pee(X_i=x)/\mu$.
\end{prop}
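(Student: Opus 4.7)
The plan is to compute $\Pee(X_{\tau(n)}=x)$ explicitly and extract its limit as $n\to\infty$ from the discrete renewal theorem. Writing $S_t := X_1 + \dots + X_t$ (with $S_0 := 0$), the event $\{\tau(n) = t,\, X_t = x\}$ coincides with $\{n - x < S_{t-1} \leq n,\ X_t = x\}$, since $\tau(n)=t$ is equivalent to $S_{t-1} \leq n$ and $S_{t-1}+X_t > n$. Because $X_t$ is independent of $(X_1,\dots,X_{t-1})$ with the same law as $X_1$, summing over $t \geq 1$ would give
\begin{equation}
\Pee(X_{\tau(n)} = x) \;=\; \Pee(X_1 = x) \cdot \sum_{t \geq 1} \Pee(n - x < S_{t-1} \leq n).
\end{equation}

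Next I would introduce the discrete renewal mass $U(k) := \sum_{t \geq 0} \Pee(S_t = k)$ and rewrite the right-hand sum by swapping summations as $\sum_{k=n-x+1}^{n} U(k)$. Blackwell's renewal theorem for $\eN$-valued step distributions of finite mean $\mu$ and aperiodic support states that $U(k) \to 1/\mu$ as $k \to \infty$. Since $x$ is fixed and $n \to \infty$, it follows that $\sum_{k=n-x+1}^{n} U(k) \to x/\mu$, so
\begin{equation}
\lim_{n\to\infty} \Pee(X_{\tau(n)} = x) \;=\; \frac{x \, \Pee(X_1=x)}{\mu},
\end{equation}
which is exactly the size-biased distribution of $X_1$.

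The main obstacle is the appeal to Blackwell's theorem, which is non-elementary and requires that the support of $X_1$ have greatest common divisor $1$. For the applications in the paper the step distribution has full support on $\eN$ (in particular the value $1$ is attained with positive probability), so aperiodicity is automatic; in the genuinely periodic case the stated limit holds only along $n$ in a fixed residue class modulo the period. A more self-contained alternative would be to couple $(X_i)_{i\geq 1}$ with the associated stationary (delayed) renewal process in which the first interval has the size-biased law and is started at a uniform position within it: in that process the interval covering any deterministic $n$ is exactly size-biased by construction, and one transfers the conclusion by merging the two processes after finitely many steps. This route still hinges implicitly on aperiodicity, but replaces Blackwell's theorem with a direct coupling argument.
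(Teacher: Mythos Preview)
The paper does not supply its own proof of this proposition: it is quoted directly from the literature (cited there as Equation~5.72 of~\cite{G13}), so there is nothing to compare against at the level of argument.

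Your argument is correct and is in fact the standard derivation of the waiting-time paradox from the discrete Blackwell renewal theorem. The decomposition
\[
\Pee(X_{\tau(n)}=x)=\Pee(X_1=x)\sum_{k=n-x+1}^{n}U(k),\qquad U(k):=\sum_{t\ge 0}\Pee(S_t=k),
\]
is exactly right, and $U(k)\to 1/\mu$ gives the size-biased limit termwise. Your caveat about aperiodicity is well placed: Blackwell's theorem in the lattice case gives $U(k)\to d/\mu$ along $k$ in the support lattice of span $d$, so the unqualified statement of the proposition implicitly assumes $d=1$. As you note, in every use the paper makes of this result the step distribution has $\Pee(X_1=1)>0$, so this is harmless. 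The alternative you sketch via coupling to the stationary delayed renewal process is also a standard route and is essentially equivalent.
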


We will also use some of the tools developed in \cite{Peled}. 
In particular we will use the arc chain $\{\kappa_t \}_{t= 0}^n$ corresponding to $\Pi_n\sim\Mallows(n,q)$ with $0<q<1$, 
defined by
\begin{equation}
	\kappa_t := |\{i \in [t] \mid \Pi_n(i) > t \}|.
\end{equation}
We speak of the $(n,q)$ arc chain. We have the following.

\begin{lem}[\cite{Peled}, Proposition 3.3]\label{lem:peled_kappa}
Let $\Pi_n \sim \on{Mallows}(n,q)$ with $0<q<1$. The arc chain $(\kappa_t)_{t=0,\dots,n}$ of $\Pi_n$ is a time-inhomogeneous Markov chain
with transition probabilities

$$ \Pee( \kappa_{t+1} = j | \kappa_t=k ) =
\begin{cases}
 \left(  \frac{1-q^{k}}{1 - q^{n-t}} \right)^2 & \text{ if $j=k-1$, } \\[2ex]
 \frac{q^{k} - q^{n-t}}{1 - q^{n-t}}\cdot \frac{ 2 - q^{k} - q^{k + 1} }{ 1-q^{n-t} } & \text{ if $j=k$, }\\[2ex]
 \frac{q^{k} - q^{n-t}}{1 - q^{n-t}} \cdot \frac{q^{k+1} - q^{n-t}}{1 - q^{n-t}} & \text{ if $j=k+1$, } \\[2ex]
 0 & \text{ otherwise.}
\end{cases}.
$$
% 
% \begin{align}
% \Pee\left[ \kappa_{t+1} = k - 1\,|\,\kappa_t=k \right] &= \left(  \frac{1-q^{k}}{1 - q^{n-t}} \right)^2,\\ 
% \label{eq:kappa_transition}\Pee\left[ \kappa_{t+1} = k \,| \kappa_t=k \right] &=  
% \frac{q^{k} - q^{n-t}}{1 - q^{n-t}}\cdot \frac{ 2 - q^{k} - q^{k + 1} }{ 1-q^{n-t} }  ,\\ 
% \Pee\left[ \kappa_{t+1} = k + 1\,|\,\kappa_t=k \right] &= \frac{q^{\kappa_t} - q^{n-t}}{1 - q^{n-t}} \cdot \frac{q^{\kappa_t+1} - q^{n-t}}{1 - q^{n-t}}.
% 	\end{align}
\end{lem}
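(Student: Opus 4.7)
My plan to prove Lemma~\ref{lem:peled_kappa} is to analyze the iterative construction of the Mallows distribution, deriving the Markov property and the explicit transition probabilities in one sweep. The starting point is the identity
$$ \kappa_{t+1} = \kappa_t + \ind{\Pi_n(t+1) > t+1} - \ind{\Pi_n^{-1}(t+1) \leq t}, $$
obtained by comparing the defining sets of $\kappa_t$ and $\kappa_{t+1}$: enlarging the domain from $[t]$ to $[t+1]$ adds the position $t+1$, contributing $+1$ iff $\Pi_n(t+1)>t+1$, while raising the threshold from $t$ to $t+1$ removes whichever element of $[t]$ (if any) maps to the value $t+1$. This forces $\kappa_{t+1}-\kappa_t\in\{-1,0,+1\}$, and the three transitions correspond to the joint sign patterns of the two indicators.

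Writing $S_t := \Pi_n([t])$, I would next observe that $|[t]\setminus S_t|=\kappa_t$, and that these $\kappa_t$ elements are precisely the smallest $\kappa_t$ elements of $[n]\setminus S_t$. Since the iterative construction of $\Pi_n$ gives $\Pi_n(t+1)$ as the $Z_{t+1}$-th smallest element of $[n]\setminus S_t$ for an independent $Z_{t+1}\sim\TGeo(n-t,1-q)$, this yields
$$ \Pee[\Pi_n(t+1)\leq t\,|\, S_t] \;=\; \Pee[Z_{t+1}\leq \kappa_t] \;=\; \frac{1-q^{\kappa_t}}{1-q^{n-t}}, $$
and variations of the same argument (distinguishing whether $t+1\in S_t$) also pin down $\Pee[\Pi_n(t+1)=t+1\,|\, S_t]$ and $\Pee[\Pi_n(t+1)>t+1\,|\, S_t]$.

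The step I expect to be the main conceptual obstacle is handling the second indicator $\ind{t+1\in S_t}$, which is an event about $S_t$ rather than about the position $t+1$. A direct attempt to compute $\Pee[t+1\in S_t\,|\, \kappa_t=k]$ from the iterative construction would require an unwieldy enumeration of admissible $k$-element subsets of $[t+1,n]$, weighted by conditional Mallows probabilities. The trick that collapses this is the inverse-symmetry $\Pi_n^{-1}\isd \Pi_n$ together with the observation that $\kappa_t$ takes the same value on $\Pi_n$ and $\Pi_n^{-1}$: applying the identity of the previous paragraph to $\Pi_n^{-1}$ in place of $\Pi_n$ gives
$$ \Pee[t+1\in S_t\,|\, \kappa_t=k] \;=\; \Pee[\Pi_n^{-1}(t+1)\leq t\,|\, \kappa_t=k] \;=\; \frac{1-q^k}{1-q^{n-t}}. $$

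Finally, since $Z_{t+1}$ is by construction independent of $(Z_1,\ldots,Z_t)$, and hence of the pair $(S_t,\ind{t+1\in S_t})$, I would combine the previous two steps to factor the joint probabilities. For example, the decrease transition requires both $Z_{t+1}\leq \kappa_t$ and $t+1\in S_t$, which combine to give
$$ \Pee[\kappa_{t+1}=k-1\,|\, \kappa_t=k] \;=\; \left(\frac{1-q^k}{1-q^{n-t}}\right)^2. $$
The increase transition uses $\{Z_{t+1}>k+1\}\cap\{t+1\notin S_t\}$, with the observation that $t+1$ is itself the $(k+1)$-st smallest element of $[n]\setminus S_t$ exactly when $t+1\notin S_t$; and the stay probability follows either by summing its three remaining sub-cases or by subtracting from $1$. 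Because every resulting expression depends on the history only through $\kappa_t$, the same computation simultaneously yields the Markov property and the stated transition formulas.
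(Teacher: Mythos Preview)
The paper does not actually prove this lemma; it is quoted from Gladkich--Peled \cite{Peled} and no argument is supplied beyond the citation. So there is no ``paper's own proof'' to compare against, and your proposal must be judged on its own merits.

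Your argument is correct and rather elegant. The decomposition
\[
\kappa_{t+1}=\kappa_t+\ind{\Pi_n(t+1)>t+1}-\ind{t+1\in S_t}
\]
is the right starting point, and the use of the inversion symmetry $\Pi_n^{-1}\isd\Pi_n$ together with the pathwise identity $\kappa_t(\Pi_n)=\kappa_t(\Pi_n^{-1})$ to compute $\Pee[t+1\in S_t\mid\kappa_t=k]$ is a clean shortcut: it turns an awkward conditional (about the \emph{range} $S_t$) into the tractable one $\Pee[\Pi_n(t+1)\le t\mid\kappa_t=k]$ already handled by the iterative sampling. The factorisation via independence of $Z_{t+1}$ from $\sigma(Z_1,\dots,Z_t)$ then gives the stated formulas.

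One point deserves a sentence more of care: your final remark that ``every resulting expression depends on the history only through $\kappa_t$'' is what you need for the Markov property, but as written you have only computed conditionals given $\kappa_t$, not given the full path $(\kappa_0,\dots,\kappa_t)$. The fix is immediate, though: since $\kappa_s(\Pi_n)=\kappa_s(\Pi_n^{-1})$ for \emph{every} $s$, the whole path is inversion-invariant, so the same symmetry gives
\[
\Pee[t+1\in S_t\mid \kappa_0,\dots,\kappa_t]=\Pee[\Pi_n(t+1)\le t\mid \kappa_0,\dots,\kappa_t]=\frac{1-q^{\kappa_t}}{1-q^{n-t}},
\]
the last equality because $\Pee[\Pi_n(t+1)\le t\mid \Pi_n(1),\dots,\Pi_n(t)]=\Pee[Z_{t+1}\le\kappa_t]$ already depends only on $\kappa_t$. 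With this small addition your argument is complete.
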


\begin{lem}[\cite{Peled}, Lemma 3.4]\label{lem:prop_fixed_kappa}
	Let $\Pi_n \sim \on{Mallows}(n,q)$ and $\kappa$ be its arc chain. Then
	\begin{equation}
		\bP [\Pi_n(t+1) = t+1 \ | \kappa_t=k] = 
		\frac{   (q^{k} - q^{k+1} )(q^{k} - q^{n-t}) }{ \left( 1-q^{n-t} \right)^2  },\qquad 0\leq t\leq n.
	\end{equation}
\end{lem}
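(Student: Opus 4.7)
The plan is to decompose the event $\{\Pi_n(t+1)=t+1\}$ as the intersection of two events: (a) the value $t+1$ is \emph{unseen} at time $t$, meaning $t+1\notin \Pi_n([t])$; and (b) given (a), the Mallows iterative construction places $t+1$ at position $t+1$. Event (b) will be handled by the iterative construction of $\Pi_n$, and the conditional probability of (a) given $\kappa_t=k$ will be read off from the arc-chain transitions already supplied by Lemma~\ref{lem:peled_kappa}.

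For step (b), I would first use the fact that the Mallows law factorises when we condition on $\Pi_n([t])=S$: a direct computation gives $\inv(\pi)=\inv(\pi|_{[t]})+\inv(\pi|_{[t+1,n]})+C(S)$ for some constant $C(S)$ depending only on $S$, so given $\Pi_n([t])=S$ the restriction $\Pi_n|_{[t+1,n]}\colon [t+1,n]\to [n]\setminus S$ is itself Mallows in the sense of relative order. Its iterative construction picks $\Pi_n(t+1)$ as the $Z$-th smallest element of $[n]\setminus S$, where $Z\sim \TGeo(n-t,1-q)$. If $\kappa_t=k$ and $t+1\notin S$, then the rank of $t+1$ in $[n]\setminus S$ is exactly $k+1$, because the $k$ unseen values lying in $[t]$ are all below $t+1$. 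Hence
\begin{equation*}
\bP\left[\Pi_n(t+1)=t+1 \,\big|\, \kappa_t=k,\ t+1\notin\Pi_n([t])\right]=\bP[Z=k+1]=\frac{(1-q)q^{k}}{1-q^{n-t}}.
\end{equation*}

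For step (a), I would use the bookkeeping identity
\begin{equation*}
\kappa_{t+1}=\kappa_t-\mathbb{1}[t+1\in\Pi_n([t])]+\mathbb{1}[\Pi_n(t+1)>t+1],
\end{equation*}
which shows that the transition $\kappa_t=k\to \kappa_{t+1}=k+1$ occurs precisely when $t+1$ is unseen \emph{and} $\Pi_n(t+1)>t+1$. The same iterative argument gives
\begin{equation*}
\bP\left[\Pi_n(t+1)>t+1\,\big|\,\kappa_t=k,\ t+1\notin\Pi_n([t])\right]=\bP[Z>k+1]=\frac{q^{k+1}-q^{n-t}}{1-q^{n-t}},
\end{equation*}
and dividing the transition probability supplied by Lemma~\ref{lem:peled_kappa} by this factor produces
\begin{equation*}
\bP\left[t+1\notin\Pi_n([t]) \,\big|\, \kappa_t=k\right]=\frac{q^{k}-q^{n-t}}{1-q^{n-t}}.
\end{equation*}
Multiplying the conditional probabilities obtained in (a) and (b) yields the stated formula.

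The main obstacle is step (a), since the event ``$t+1$ is unseen'' is not measurable with respect to $\kappa_t$ alone, so some extra input is needed. Pulling it out of Lemma~\ref{lem:peled_kappa} is very short; a self-contained alternative is to observe that, conditional on $\kappa_t=k$, the set of unseen values in $[t+1,n]$ is a random $(n-t-k)$-subset with probability proportional to $q^{-\sum j}$ and to compute the probability that $t+1$ lies in it via a $q$-Vandermonde-type identity, but that route is appreciably longer.
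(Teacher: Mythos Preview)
The paper does not give its own proof of this lemma; it is quoted directly from Gladkich--Peled~\cite{Peled} (their Lemma~3.4) without argument. So there is nothing to compare against in the present paper, and your proposal supplies a proof where the authors simply invoked the reference.

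Your argument is correct. The two ingredients---independence of $Z_{t+1}$ from $(Z_1,\dots,Z_t)$ for step~(b), and the bookkeeping identity $\kappa_{t+1}=\kappa_t-\mathbb{1}[t+1\in\Pi_n([t])]+\mathbb{1}[\Pi_n(t+1)>t+1]$ for step~(a)---are exactly what is needed, and deriving $\bP[t+1\notin\Pi_n([t])\mid\kappa_t=k]$ by dividing the $k\to k+1$ transition probability from Lemma~\ref{lem:peled_kappa} by $\bP[Z_{t+1}>k+1]$ is efficient. One small caveat: when $k=n-t-1$ the divisor $\bP[Z_{t+1}>k+1]$ vanishes, so the division is $0/0$; in that edge case you can instead use the $k\to k-1$ transition, dividing $\bigl(\tfrac{1-q^k}{1-q^{n-t}}\bigr)^2$ by $\bP[Z_{t+1}\le k]=\tfrac{1-q^k}{1-q^{n-t}}$ to get $\bP[t+1\in\Pi_n([t])\mid\kappa_t=k]=\tfrac{1-q^k}{1-q^{n-t}}$ and hence the same final formula. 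With that patch the proof is complete.
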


\noindent
(We mention that we have slightly adapted the statements from~\cite{Peled} in the above two lemmas.)

Analogously to the arc chain $\kappa_t$ for $\Pi_n\sim\Mallows(\eN,q)$, we can define the arc-chain for 
$\Pi\sim\Mallows(\eN,q)$ with $0<q<1$ setting 

$$ \hat\kappa_t := |\{ i \in [t] : \Pi(i) > t \}|. $$

\noindent
We speak of the $(\infty,q)$-arc chain.
It is straightforward to verify 
that $(\hat\kappa_t)_{t\geq 0}$ forms a Markov chain with the following transition probabilities.

$$ \Pee( \hat{\kappa}_{t+1} = j | \hat{\kappa}_t = k )
= \begin{cases}
   (1 - q^{k})^2 & \text{ if $j=k-1$, } \\
   2q^{k} -q^{2 k} - q^{2 k + 1} & \text{ if $j=k$, } \\
   q^{2k + 1} & \text{ if $j=k+1$, } \\
   0 & \text{otherwise.}
  \end{cases}
$$

\noindent
Alternatively, this can be seen by combining Lemma~\ref{lem:peled_kappa} with Lemma~\ref{lem:PinPicpl} 
below. As a side remark, we mention that Gladkich and Peled~\cite{Peled} 
{\em defined} the $(\infty,q)$-arc chain directly via the transition probabilities given here, without 
explicitly mentioning the connection to the definition we give here.

As explained in~\cite{Peled} (Section 3.2), the Markov chain $\hat{\kappa}_t$ has a 
unique stationary distribution $\nu$ given by%
\begin{equation}\label{eq:nu}
\nu_s := \frac{  \prod_{i=1}^s  \frac{q^{2i-1}}{ (1-q^i)^2  }    }{ \sum_{t\geq 0 } \prod_{i=1}^t  \frac{q^{2i-1}}{ (1-q^i)^2  } }.
\end{equation}%
Here, as usual, an empty product equals $1$. 

The following result provides a useful link between the $(n,q)$ and the $(\infty,q)$ arc chains.

\begin{prop}[\cite{Peled}, Proposition 3.8, ]\label{prop:convergence_kappa}
	Set $t=t(n)$. 
	If both $t\to \infty$ and $n-t \to \infty$ then the law of $\kappa_t$ converges 
	to the stationary distribution $\nu_s$ as $n$ tends to infinity with $q$ fixed.
\end{prop}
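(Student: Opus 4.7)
The plan is to compare the $(n,q)$-arc chain $\kappa$ directly with the time-homogeneous $(\infty,q)$-arc chain $\hat\kappa$ via a step-by-step coupling, and then appeal to ergodicity of $\hat\kappa$. The key observation is that as $q^{n-s}\to 0$ (which happens uniformly over $s\leq t$ provided $n-t\to\infty$), the transition probabilities of $\kappa$ at time $s$ approach those of $\hat\kappa$, so the two chains can be coupled to agree with high probability throughout the entire run up to time $t$.

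First I would verify that for every state $k$ and time $s\leq t$, the total variation distance between $\bP(\kappa_{s+1}=\cdot \mid \kappa_s = k)$ and $\bP(\hat\kappa_{s+1}=\cdot \mid \hat\kappa_s = k)$ is at most $C\cdot q^{n-s}$ for some constant $C=C(q)$ independent of $k$. This is a direct computation using the formula in Lemma~\ref{lem:peled_kappa} together with the explicit transitions of $\hat\kappa$ listed immediately after it: each ratio of the form $(1-q^k)/(1-q^{n-s})$ or $(q^k-q^{n-s})/(1-q^{n-s})$ equals its $q^{n-s}\to 0$ limit plus an error of size $O(q^{n-s})$ with constant controlled by $(1-q)^{-2}$, while the bounded factors $(1-q^k)^2$ and $q^k$ do not spoil uniformity in $k$.

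Next I would construct the coupling. Both chains start deterministically at $0$, since $\kappa_0=\hat\kappa_0=0$ from their definitions. At each step $s$, given $\kappa_s=\hat\kappa_s=k$, I take a maximal coupling of the two transition distributions so that the next states agree with probability $1-O(q^{n-s})$; once the chains ever separate, let them continue under their own kernels so the marginals stay correct. A union bound then gives
\begin{equation}
\bP\!\left(\kappa_s \neq \hat\kappa_s \text{ for some } s\leq t\right) \;\leq\; \sum_{s=0}^{t-1} C\, q^{n-s} \;=\; O\!\left(q^{n-t}\right),
\end{equation}
which tends to $0$ under the hypothesis $n-t\to\infty$. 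In particular $\dtv(\kappa_t,\hat\kappa_t)\to 0$.

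Finally, the chain $\hat\kappa$ on $\bZ_{\geq 0}$ is irreducible and aperiodic (from $q^{2k+1}>0$, $(1-q^k)^2>0$ for $k\geq 1$, together with the positive self-loop probabilities), and positive recurrent since $\nu$ given by~\eqref{eq:nu} is a bona fide probability measure on $\bZ_{\geq 0}$. Standard convergence for ergodic countable-state Markov chains therefore yields $\hat\kappa_t \xrightarrow{d} \nu$ as $t\to\infty$, and combining with the coupling bound gives $\kappa_t\xrightarrow{d}\nu$ whenever $t,n-t\to\infty$. I expect the main technical obstacle to be the uniform-in-$k$ TV bound of the first step: carefully organizing the three pairs of transition probabilities so that the error is $O(q^{n-s})$ with a constant that does not blow up with $k$. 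Once that estimate is in hand, the coupling and the appeal to standard Markov chain convergence are essentially automatic.
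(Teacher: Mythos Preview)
Your argument is correct. Note, however, that the paper does not supply its own proof of this proposition: it is imported verbatim as Proposition~3.8 of~\cite{Peled}, so there is no in-paper argument to compare against.

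For what it is worth, your coupling approach is the natural one and lines up with how such statements are typically proved. The key estimate---that the one-step transition kernels of $\kappa$ and $\hat\kappa$ at a common state $k$ differ in total variation by $O(q^{n-s})$ uniformly in $k$---is exactly right: each factor $(1-q^k)/(1-q^{n-s})$ and $(q^k-q^{n-s})/(1-q^{n-s})$ deviates from its $q^{n-s}\to 0$ limit by at most $2q^{n-s}/(1-q^{n-s})$, and the remaining factors are bounded by $1$, so no dependence on $k$ enters the constant. The geometric sum $\sum_{s=0}^{t-1} q^{n-s}\leq q^{n-t+1}/(1-q)$ then gives the coupling probability, and the ergodicity of $\hat\kappa$ (irreducible via $q^{2k+1}>0$ and $(1-q^k)^2>0$, aperiodic via the positive holding probability $1-q$ at state $0$, positive recurrent since the series defining $\nu$ in~\eqref{eq:nu} has terms of order $q^{s^2}$) finishes the job. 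One small point you might make explicit is that while the chains remain coupled the common state satisfies $\kappa_s\leq \min(s,n-s)$, so the transition formulas of Lemma~\ref{lem:peled_kappa} are always being applied in their valid range; this is automatic but worth a sentence.
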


Given two discrete probability distributions $\mu_1$ and $\mu_2$ on a countable set $\Omega$, their total variation distance 
is defined as

$$ d_{\on{TV}}(\mu_1,\mu_2) = \max_{ A \subseteq \Omega  }| \mu_1(A) - \mu_2(A) |. $$

\noindent
A useful alternative expression is 

\begin{equation}\label{eq:dtvalt} 
d_{\on{TV}}(\mu_1,\mu_2) = \frac12 \sum_{x\in \Omega} |  \mu_1(x) - \mu_2(x) |
= \sum_{x : \mu_1(x)>\mu_2(x) } \mu_1(x)-\mu_2(x). 
\end{equation}

\noindent
(For a proof, see for instance Proposition 4.2 in \cite{Peres}.)
As is common, we will interchangeably use the notation 
$\dtv(X,Y) := \dtv(\mu,\nu)$ if $X\sim\mu_1$ and $Y\sim\mu_2$.

A {\em coupling} of two probability measures $\mu, \nu$ is a joint probability measure for a pair of random variables $(X,Y)$ 
satisfying $X\isd\mu, Y\isd\nu$.
We will also speak of a coupling of $X,Y$ as being a probability space for $(X',Y')$ with $X'\isd X, Y'\isd Y$.
Another useful characterization of the total variational distance is as follows.

\begin{lem}\label{lem:coupling}
Let $\mu$ and $\nu$ be two probability distributions on the same countable set $\Omega$. Then
\begin{equation}
d_{TV}(\mu, \nu) = \inf \{ \bP \left[ X\neq Y\right] \mid (X,Y) \text{ is a coupling of $\mu$ and $\nu$}  \}.
\end{equation}
There is a coupling that attains this infimum. 
\end{lem}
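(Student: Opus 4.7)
The plan is to prove both the inequality $d_{TV}(\mu,\nu) \leq \inf \bP[X\neq Y]$ and the existence of a coupling attaining equality, which together give the claim.

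For the lower bound, I would fix any coupling $(X,Y)$ with $X\sim\mu, Y\sim\nu$, and observe that for every event $A \subseteq \Omega$ we have $\{X \in A\} \subseteq \{Y \in A\} \cup \{X \neq Y\}$, so $\mu(A) - \nu(A) = \bP[X\in A] - \bP[Y\in A] \leq \bP[X\in A, Y\notin A] \leq \bP[X\neq Y]$. The symmetric argument yields $\nu(A)-\mu(A) \leq \bP[X\neq Y]$, and then maximizing over $A$ gives $d_{TV}(\mu,\nu) \leq \bP[X\neq Y]$, so taking the infimum over couplings establishes one direction.

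For the other direction, I would construct the standard maximal coupling explicitly. Set
\[
p := \sum_{x \in \Omega} \min(\mu(x),\nu(x)),
\]
and note that using the alternative expression~\eqref{eq:dtvalt} one has $1 - p = d_{TV}(\mu,\nu)$, since $\mu(x) - \min(\mu(x),\nu(x)) = (\mu(x)-\nu(x))_+$. If $p=1$ then $\mu=\nu$ and the identity coupling works trivially, so assume $p<1$. Now define the joint law of $(X,Y)$ as follows: with probability $p$, draw a single value $Z = x$ from the distribution $\min(\mu(x),\nu(x))/p$ and set $X=Y=Z$; with probability $1-p$, draw $X$ from $(\mu(x)-\min(\mu(x),\nu(x)))/(1-p)$ and independently draw $Y$ from $(\nu(x)-\min(\mu(x),\nu(x)))/(1-p)$.

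A direct computation shows that the marginals of $X$ and $Y$ are $\mu$ and $\nu$ respectively; for example, $\bP[X=x] = p \cdot \min(\mu(x),\nu(x))/p + (1-p) \cdot (\mu(x)-\min(\mu(x),\nu(x)))/(1-p) = \mu(x)$. Moreover $X=Y$ automatically on the event of probability $p$, so $\bP[X \neq Y] \leq 1 - p = d_{TV}(\mu,\nu)$. Combined with the lower bound this forces equality, and since this particular coupling attains the infimum the infimum is in fact a minimum. The only mild subtlety, which I would handle carefully, is checking that the ``residual'' distributions used in the second step are well-defined probability measures (they sum to one because $\sum_x (\mu(x)-\min(\mu(x),\nu(x))) = 1-p$ by the definition of $p$), but no further technicalities arise because $\Omega$ is countable and everything is defined termwise.
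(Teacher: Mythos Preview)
Your argument is correct: the lower bound via $\mu(A)-\nu(A)\leq\bP[X\neq Y]$ and the explicit maximal coupling construction are exactly the standard proof, and you have handled the edge case $p=1$ and the well-definedness of the residual distributions. The paper does not actually prove this lemma but simply cites \cite{Peres}, Proposition~4.7 and Remark~4.8; your write-up is essentially the argument found there, so there is nothing to compare.
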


\noindent
(For a proof, see for instance~\cite{Peres}, Proposition 4.7 and Remark 4.8.)

For the proofs of the normal limiting laws in Theorem~\ref{thm:normal} and Theorem~\ref{thm:clt_q_gt_one} we
will make use of a result on stopped two-dimensional random walks by Gut and Janson~\cite{GutStopped}
that seems tailor made for our purposes.
Here we consider an i.i.d. sequence $(X_1,Y_1),(X_2,Y_2),\ldots$ 
and for $t>0$ we define $\tau(t)$ as the first $k$ such that $X_1+\dots+X_k$ exceeds $t$:%
\begin{equation}\label{eq:taundef} 
\tau(t) = \inf \{ k \geq 1 : X_1+\dots+X_k > t\}. 
\end{equation}%
\noindent
The result of Gut and Janson we'll use states that:

\begin{thm}[\cite{GutStopped}, Theorem 3]\label{thm:regen_clt}
Let $(X_1,Y_1), (X_2,Y_2),\dots$ be an i.i.d.~sequence and let
$\tau(n)$ be as given by~\eqref{eq:taundef}.
Suppose that $\Ee X_1>0$, that $\Var(X_1), \Var(Y_1)<\infty$ and $\on{Var}(  Y_1\bE X_1 -  X_1  \bE Y_1 ) >0$. Then 
	\begin{equation}
		\frac{   \sum_{i=1}^{\tau(t)} Y_i - (\frac{\bE Y_1}{\bE X_1})t}{  \sqrt{t}  } \quad 
		\xrightarrow[t\to\infty]{d}\quad  
		\cN\left(0,\frac{\Var(  Y_1\bE X_1 -  X_1  \bE Y_1 )}{(\bE X_1)^{3}}\right) .
	\end{equation}
\end{thm}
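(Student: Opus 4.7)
The plan is to reduce the stopped sum to an ordinary random sum of mean-zero variables and then apply a CLT for random sums, combined with a renewal-type estimate for the overshoot.

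Set $S_k := X_1+\dots+X_k$, $T_k := Y_1+\dots+Y_k$, and let $\mu := \bE X_1$, $\lambda := \bE Y_1/\bE X_1$. First I would write the identity
\begin{equation*}
T_{\tau(t)} - \lambda t \;=\; \sum_{i=1}^{\tau(t)} W_i \;+\; \lambda \bigl( S_{\tau(t)} - t \bigr),
\qquad W_i := Y_i - \lambda X_i.
\end{equation*}
The variables $W_1,W_2,\dots$ are i.i.d., mean zero, and satisfy $\Var(W_1)=\Var(Y_1\bE X_1 - X_1\bE Y_1)/\mu^{2}>0$ by assumption. So the proof breaks into two tasks: a CLT for $\sum_{i\le\tau(t)}W_i$ with the random index $\tau(t)$, and a negligibility statement for the overshoot $S_{\tau(t)}-t$.

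For the random-index CLT, I would use the standard SLLN for renewal processes to get $\tau(t)/t \to 1/\mu$ almost surely (which follows from $S_{\tau(t)-1}\le t<S_{\tau(t)}$ together with $S_k/k\to\mu$, using $\mu>0$). Then an application of Anscombe's theorem to the i.i.d.\ mean-zero sequence $W_i$ gives
\begin{equation*}
\frac{1}{\sqrt{\tau(t)}}\sum_{i=1}^{\tau(t)} W_i \;\xrightarrow[t\to\infty]{d}\; \cN\bigl(0,\Var(W_1)\bigr),
\end{equation*}
and Slutsky's lemma, together with $\sqrt{\tau(t)/t}\to 1/\sqrt\mu$ in probability, yields
\begin{equation*}
\frac{1}{\sqrt{t}}\sum_{i=1}^{\tau(t)} W_i \;\xrightarrow[t\to\infty]{d}\; \cN\bigl(0,\Var(W_1)/\mu\bigr)
\;=\; \cN\Bigl(0, \Var(Y_1\bE X_1-X_1\bE Y_1)/\mu^{3}\Bigr),
\end{equation*}
which is exactly the claimed limit.

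It remains to show $(S_{\tau(t)}-t)/\sqrt t \to 0$ in probability; this is the step I expect to be the main subtlety, because $S_{\tau(t)}-t$ is an overshoot that one has to control uniformly in $t$. The bound $0\le S_{\tau(t)}-t \le X_{\tau(t)}$ reduces the question to showing $X_{\tau(t)}/\sqrt t \to 0$ in probability. Since $X_i\ge 0$ with $\Var(X_1)<\infty$ (in particular $\bE X_1^{2}<\infty$), Proposition~\ref{prop: waiting time} tells us that $X_{\tau(t)}$ converges in distribution to the size-biased law of $X_1$, which is a proper probability distribution and in particular tight; hence $X_{\tau(t)}=O_{\Pee}(1)$ and dividing by $\sqrt t$ sends it to zero in probability. (If one prefers a more hands-on estimate, a union bound $\Pee(X_{\tau(t)}>\eps\sqrt t) \le \sum_{k\le 2t/\mu}\Pee(X_k>\eps\sqrt t) + \Pee(\tau(t)>2t/\mu)$, together with $\bE X_1^{2}<\infty$ and the SLLN for $\tau(t)$, gives the same conclusion.) Combining this with the display above via Slutsky's lemma finishes the proof.
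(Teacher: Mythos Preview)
The paper does not give its own proof of this statement: Theorem~\ref{thm:regen_clt} is quoted from Gut and Janson (\cite{GutStopped}, Theorem~3) as a black box in the preliminaries section, so there is nothing to compare your argument against in the paper itself.

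That said, your proof sketch is essentially the standard one and is correct in outline. A couple of small points are worth flagging. First, you write ``since $X_i\ge 0$'' when invoking Proposition~\ref{prop: waiting time}, but the theorem as stated only assumes $\Ee X_1>0$; nonnegativity is not part of the hypotheses. Fortunately your overshoot bound $0<S_{\tau(t)}-t\le X_{\tau(t)}$ does not need $X_i\ge 0$ (it follows from $S_{\tau(t)-1}\le t<S_{\tau(t)}$), and Proposition~\ref{prop: waiting time} is in any case stated in the paper only for $\eN$-valued variables, so it is not directly applicable here. Your parenthetical alternative is the right fix: from $\Ee X_1^2<\infty$ one gets $x^2\,\Pee(|X_1|>x)\to 0$, hence $t\,\Pee(|X_1|>\eps\sqrt{t})\to 0$, and combined with $\Pee(\tau(t)>2t/\mu)\to 0$ this gives $X_{\tau(t)}/\sqrt{t}\to 0$ in probability. (Equivalently, Lemma~\ref{lem:max_iid} applied to $\max_{i\le \tau(t)}X_i$ does the job.) With that adjustment the argument via the decomposition into $\sum W_i$ plus overshoot, Anscombe's theorem, and Slutsky is complete.
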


For the proofs of the normal limiting laws in Theorem~\ref{thm:normal} and Theorem~\ref{thm:clt_q_gt_one} it will 
be convenient to use the Cramer-Wold device.
A proof can for instance be found in~\cite{Billingsley} (Theorem 29.4).

\begin{thm}[Cramer-Wold device]\label{thm:cramer_wold}
	For random vectors $X_n = (X_{n,1},\ldots, X_{n,k})$ and $Y = (Y_1,\ldots, Y_k)$, a necessary and sufficient 
	condition for $X_n \overset{d}{\longrightarrow} Y$ is that 
	$\sum_{u=1}^k t_u X_{n,u} \overset{d}{\longrightarrow} \sum_{u=1}^k t_u Y_u$ for each $(t_1,\ldots, t_k)\in \bR^k$. 
\end{thm}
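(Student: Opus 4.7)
The plan is to prove this classical result by reducing each direction to a standard fact about characteristic functions, combined with the multivariate version of L\'evy's continuity theorem.

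For the forward direction (necessity), the key observation is that for each fixed vector $(t_1,\ldots,t_k)\in\bR^k$ the linear functional $\phi_t:\bR^k\to\bR$ given by $\phi_t(x_1,\ldots,x_k)=\sum_{u=1}^k t_u x_u$ is continuous. Since $X_n\xrightarrow{d} Y$, the continuous mapping theorem immediately yields $\phi_t(X_n)\xrightarrow{d} \phi_t(Y)$, which is exactly the claim. This step is entirely routine.

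For the reverse direction (sufficiency), I would invoke the multivariate L\'evy continuity theorem: a sequence $X_n$ of $\bR^k$-valued random vectors converges in distribution to $Y$ if and only if the characteristic functions $\varphi_{X_n}(t):=\bE\bigl[e^{i\langle t,X_n\rangle}\bigr]$ converge pointwise to $\varphi_Y(t)$ for every $t\in\bR^k$ (and the limit is continuous at $0$, which holds automatically here as $\varphi_Y$ is the characteristic function of a genuine random vector). To verify pointwise convergence, fix any $t=(t_1,\ldots,t_k)\in\bR^k$. By hypothesis $\sum_{u=1}^k t_u X_{n,u}\xrightarrow{d} \sum_{u=1}^k t_u Y_u$ in $\bR$, so the one-dimensional L\'evy theorem gives pointwise convergence of the corresponding one-dimensional characteristic functions. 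Evaluating these at the argument $1\in\bR$ yields
\begin{equation}
\varphi_{X_n}(t)=\bE\Bigl[\exp\bigl(i\textstyle\sum_{u=1}^k t_u X_{n,u}\bigr)\Bigr]
\;\longrightarrow\;
\bE\Bigl[\exp\bigl(i\textstyle\sum_{u=1}^k t_u Y_u\bigr)\Bigr]=\varphi_Y(t),
\end{equation}
and since $t\in\bR^k$ was arbitrary the multivariate L\'evy theorem gives $X_n\xrightarrow{d} Y$.

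The main obstacle, if one wanted a fully self-contained proof, would be the multivariate L\'evy continuity theorem itself, which typically requires a tightness argument (showing that $\{X_n\}$ is tight because each coordinate sequence is tight, invoking Prohorov's theorem, and then identifying any subsequential limit via its characteristic function). For the purposes of this paper I would simply cite the multivariate version (as in Billingsley, Theorem~29.4) and treat it as a black box; no further work beyond the two short reductions above is needed.
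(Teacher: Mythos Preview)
Your proof is correct and is precisely the standard characteristic-function argument. The paper itself does not prove this statement at all; it merely cites Billingsley (Theorem~29.4), whose proof is exactly the one you have written.
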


Several times, we are going to rely on the following result of Basu and Bhatnagar~\cite{basu2016limit}.

\begin{lem}[\cite{basu2016limit}, Lemma 5.5]\label{lem:max_iid}
	Let $W_1,W_2,\ldots$ be an i.i.d. sequence of random variables with $\bE W_i^2 <\infty$. Then
	\begin{equation}
		\frac{\max_{1\leq i \leq n} W_i}{\sqrt{n}} \quad \to\quad  0
	\end{equation}
	in probability.
\end{lem}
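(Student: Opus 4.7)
The plan is to show convergence in probability by controlling both the upper and lower deviations of $\max_{1\leq i\leq n} W_i/\sqrt{n}$ separately. Fix $\epsilon>0$; we will argue that $\Pee(\max_{1\leq i\leq n} W_i > \epsilon\sqrt{n}) \to 0$ and that $\Pee(\max_{1\leq i\leq n} W_i < -\epsilon\sqrt{n}) \to 0$ as $n\to\infty$.

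For the upper tail, the first step is a union bound, yielding
$$ \Pee\left( \max_{1\leq i\leq n} W_i > \epsilon\sqrt{n}\right) \;\leq\; n\,\Pee(W_1 > \epsilon\sqrt{n}) \;\leq\; n\,\Pee(|W_1| > \epsilon\sqrt{n}). $$
Here Markov's inequality alone only gives $\Pee(|W_1| > t) \leq \Ee W_1^2/t^2$, which would leave a bound of $\Ee W_1^2/\epsilon^2$ rather than $o(1)$. The key step is therefore to upgrade Markov to $t^2\Pee(|W_1|>t)=o(1)$ as $t\to\infty$, which is where the finite second moment is actually used. This follows from
$$ t^2 \Pee(|W_1|>t) \;\leq\; \Ee\bigl[W_1^2\,\mathbf{1}_{\{|W_1|>t\}}\bigr], $$
together with dominated convergence: the right-hand integrand is bounded by the integrable random variable $W_1^2$ and tends to zero pointwise as $t\to\infty$. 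Plugging in $t=\epsilon\sqrt{n}$ and multiplying by $n$ gives the desired $o(1)$ bound.

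For the lower tail, independence is decisive: since the $W_i$ are i.i.d.,
$$ \Pee\left( \max_{1\leq i\leq n} W_i < -\epsilon\sqrt{n}\right) \;=\; \Pee(W_1 < -\epsilon\sqrt{n})^n. $$
Because $W_1$ is a.s. finite we have $\Pee(W_1 < -\epsilon\sqrt{n})\to 0$, so for all sufficiently large $n$ this probability is at most $1/2$, whence the $n$-th power is at most $2^{-n}\to 0$. Combining the two tail estimates gives $\Pee(|\max_{1\leq i\leq n} W_i|/\sqrt{n} > \epsilon)\to 0$ for every $\epsilon>0$, which is exactly convergence in probability to $0$.

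The only mildly subtle point is the sharpening of Markov's inequality mentioned above; without it the naive union bound just misses. Everything else is routine, so I expect no genuine obstacle.
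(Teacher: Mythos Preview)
Your argument is correct. The sharpening $t^2\Pee(|W_1|>t)\leq \Ee[W_1^2\mathbf 1_{\{|W_1|>t\}}]\to 0$ via dominated convergence is exactly what is needed to make the union bound succeed, and the lower-tail step is fine (indeed, already $\Pee(W_1<-\eps\sqrt n)\leq\Pee(|W_1|\geq\eps\sqrt n)\to 0$ since $W_1$ has finite second moment).

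As for comparison: the paper does not supply its own proof of this lemma; it is quoted from Basu and Bhatnagar~\cite{basu2016limit} (their Lemma~5.5). Your self-contained argument is the standard one and would serve as a fine proof here.
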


We will also make use of the following fact. Even though it seems pretty standard we have not been able to find a convenient reference.
We therefore provide a short proof.

\begin{lem}\label{lem:MCret}
Suppose that $(W_t)_{t\geq 0}$ is a Markov chain with state space $\{0\}\cup\eN$, started in state $W_0=0$, 
and whose transition probabilities satisfy $p_{i,j} = 0$ if and only if $|i-j|\neq 1$ and 
$\displaystyle \liminf_{i\to\infty} p_{i,i-1} > 1/2$.
Let 

$$ T := \inf\{ t \geq 1 : W_t = 0 \}. $$

\noindent
Then $\Ee T^k < \infty$ for all $k \in \eN$.
\end{lem}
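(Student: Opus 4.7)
The key observation is that above some finite level the chain has a uniform negative drift toward $0$, so the plan is to establish an exponential Lyapunov bound, convert it into exponential moments for the time to re-enter a finite strip near $0$, and then close by a renewal-style excursion argument.

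First I would pick $N\in\eN$ and $\delta\in(0,\tfrac12)$ such that $p_{i,i-1}\geq\tfrac12+\delta$ for every $i\geq N$; such an $N$ exists by the hypothesis $\liminf_i p_{i,i-1}>\tfrac12$. Next I would pick $c>0$ small enough that
\[
\rho \ :=\ \bigl(\tfrac12-\delta\bigr) e^{c} + \bigl(\tfrac12+\delta\bigr) e^{-c} \ <\ 1,
\]
which is possible since this expression equals $1$ at $c=0$ and has derivative $-2\delta<0$ there. Setting $V(i):=e^{ci}$ and using that $p_{i,i+1}=1-p_{i,i-1}\leq\tfrac12-\delta$ for $i\geq N$, I obtain the geometric drift
\[
\Ee\bigl[ V(W_{t+1})\mid W_t=i\bigr] \ =\ V(i)\bigl( p_{i,i+1} e^{c}+p_{i,i-1}e^{-c}\bigr)\ \leq\ \rho\, V(i), \qquad \text{for all } i\geq N.
\]

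Let $\sigma:=\inf\{t\geq 0: W_t\leq N-1\}$. Then $M_t:=\rho^{-(t\wedge\sigma)}V(W_{t\wedge\sigma})$ is a non-negative supermartingale under $\Pee_i$, and a standard Fatou-lemma argument at $t\to\infty$ gives $\Ee_i[\rho^{-\sigma}]\leq V(i)$ whenever $i\geq N$; in particular $\sigma$ is almost surely finite and has finite exponential moments from every initial state.

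Finally, I would decompose $T$ by excursions relative to the strip $\{0,\dots,N-1\}$. Starting from $W_1=1$, the chain either reaches $0$ while remaining in $\{0,\dots,N-1\}$ -- a finite-state irreducible piece whose hitting times all have finite exponential moments -- or it first hits $N$, again in exponential-moment time; from $N$ the chain returns to the strip in exponential-moment time by the bound on $\sigma$ above. The number $K$ of ``up-trips'' to $N$ before $T$ is stochastically dominated by a $\Geo(\alpha)$ random variable, where $\alpha>0$ is the probability of descending directly $N-1\to N-2\to\dots\to 0$ (positive by the ``only if'' direction of the transition hypothesis). By the strong Markov property, $T$ is bounded by a sum of $O(K)$ independent pieces each with finite exponential moments, so $T$ itself has finite exponential moments; in particular $\Ee T^k<\infty$ for every $k\in\eN$. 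The main technical point is the supermartingale step that upgrades the geometric drift into genuine exponential moments for $\sigma$; once that is in hand, everything else is routine renewal-style book-keeping.
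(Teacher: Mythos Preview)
Your argument is correct and in fact yields more than is asked: you obtain a genuine exponential moment for $T$, whereas the paper only establishes $\Ee T^k<\infty$ for each fixed $k$. The routes are genuinely different. The paper works directly with the one-step hitting times $T_i:=\inf\{t:W_t=i-1\mid W_0=i\}$: first, for $i$ above a threshold $i_0$, a coupling with a $p$-biased simple random walk and a Chernoff bound give $\Pee(T_i>t)=e^{-\Omega(t)}$, hence all moments of $T_i$ are finite; then the one-step recursion $T_i\stackrel{d}{=}1+\mathbf{1}_{\{\text{step up}\}}(T_{i+1}'+T_i')$ is expanded via the multinomial theorem and solved by a double induction (on the moment order $k$, then downward on $i$ from $i_0$ to $1$). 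Your approach replaces this elementary but somewhat ad hoc recursion with the standard Foster--Lyapunov machinery: a geometric drift for $V(i)=e^{ci}$ above level $N$, the supermartingale $\rho^{-(t\wedge\sigma)}V(W_{t\wedge\sigma})$ to convert drift into exponential moments for the re-entry time $\sigma$, and then a regenerative decomposition over excursions in and out of the finite strip $\{0,\dots,N-1\}$. Your method is more portable (it does not really use the nearest-neighbour structure beyond bounding the drift) and gives the stronger conclusion; the paper's method is more self-contained and avoids invoking any Lyapunov or supermartingale results. One small point worth making explicit in your final paragraph: the ``pieces'' in your decomposition have \emph{uniformly} bounded exponential moments (there are only finitely many starting configurations, namely state $1$, state $N-1$, or state $N$), which together with the geometric tail of $K$ is what makes the geometric-series bound on $\Ee e^{\lambda T}$ go through.
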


\begin{proof}
Let $T_i$ denote the number of steps to reach $i-1$, in the chain starting from $W_0=i$.
Let $i_0 \in \eN, p>1/2$ be such that $p_{i,i-1} > p$ for all $i\geq i_0$.
We have 

$$ \Pee( T_i > t ) \leq \Pee( \Bi( t, p ) \leq t/2 ) = \exp\left[ - \Omega(t) \right], $$

\noindent
for each $i\geq i_0$, using the Chernoff inequality (see for instance~\cite{PurpleBook}, Corollary 2.3).
This implies

$$ \Ee T_i^k \leq \sum_t t^k\cdot\Pee( T_i \geq t ) < \infty, $$

\noindent
for all $i \geq i_0$ and $k\in \eN$.

Starting from $W_0=0$, we of course move to state 1 with probability one in the first step, 
giving

$$ \Ee T^k = \Ee\left(1+T_1\right)^k = \sum_{j=0}^k {n\choose j} \Ee T_1^j, $$

\noindent
for all $k \in \eN$. In particular, it suffices to show $\Ee T_1^k < \infty$ for all $k \in \eN$.

Similarly, by considering the first step of the chain we see for each $i\geq 1$ and $k\in\eN$:

\begin{equation}\label{eq:eenhoorn} \begin{array}{rcl} 
\Ee T_i^k 
& = & \displaystyle 
p_{i,i-1} + p_{i,i-1} \cdot \Ee\left( 1 + T_i' + T_{i+1}' \right)^k\\[2ex]
& = & \displaystyle 
p_{i,i-1} + p_{i,i+1} \cdot \sum_{0\leq k_1,k_2\leq k, \atop k_1+k_2 \leq k} 
{k \choose k_1,k_2,k-(k_1+k_2)} \Ee T_i^{k_1} \Ee T_{i+1}^{k_2}, 
\end{array} \end{equation}

\noindent
where we take $T_i'\isd T_{i-1}, T_{i+1}'\isd T_{i+1}$ independent in the first line.
(To see the first inequality, note that in the first step we move to $i-1$ with probability $p_{i,i-1}$. 
If, on the other hand, we move to $i+1$ in the first step then we first have to wait until we reach state $i$ again, 
and then we have to wait until we reach $i-1$ from $i$).
Rewriting~\eqref{eq:eenhoorn}, we obtain

$$ \Ee T_i^k = \frac{1}{1-p_{i,i-1}} \cdot 
\left( p_{i,i-1} + p_{i,i+1} \cdot \sum_{{0\leq k_1,k_2\leq k, \atop k_1+k_2 \leq k,}\atop k_1\neq k} 
{k \choose k_1,k_2,k-(k_1+k_2)} \Ee T_i^{k_1} \Ee T_{i+1}^{k_2}\right). $$

\noindent
We can thus apply induction on $k$ to show that $\Ee T_{i_0-1}^k < \infty$ for all $k \in \eN$.
Repeating the argument, we also have $\Ee T_i^k < \infty$ for $i=i_0-2, i_0-3,\dots$ and so on until $i=1$.
\end{proof}

\section{The proof of Theorem~\ref{thm:normal}} 

We define a sequence of {\em regeneration times }$T_0 <T_1 < T_2<\ldots$ as follows:
\begin{align}
	T_0 &:=0,\\
	T_i &:=  \inf\{ j > T_{i-1} \text{ s.t. }  \Pi( [j] )=[j] \} \qquad (i=1,2,\dots),.
\end{align}

In Section 4 of \cite{basu2016limit}, Basu and Bhatnagar show that $T_1$ has finite second moment. 

\begin{lem}\label{lem:Rplus} $\Ee T_1^2 < \infty$.
\end{lem}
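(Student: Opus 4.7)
The plan is to identify $T_1$ with essentially the first-return time to state $0$ of the $(\infty,q)$-arc chain $(\hat\kappa_t)_{t\geq 0}$, and then apply Lemma~\ref{lem:MCret} to conclude finiteness of moments of all orders. Since $\hat\kappa_t = |[t]\setminus\Pi([t])|$, we have $\hat\kappa_t = 0$ exactly when $\Pi([t]) = [t]$, so $T_1 = \inf\{t\geq 1 : \hat\kappa_t = 0\}$. Starting from $\hat\kappa_0 = 0$, this is precisely the first-return time to $0$ of the Markov chain $(\hat\kappa_t)$.

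The transition probabilities recorded just after Lemma~\ref{lem:prop_fixed_kappa} show that $(\hat\kappa_t)$ is birth--death on $\{0\}\cup\eN$ with $p_{k,k+1} = q^{2k+1}$ and $p_{k,k-1} = (1-q^k)^2$, together with a strictly positive self-loop probability $p_{k,k} = 2q^k - q^{2k} - q^{2k+1}$. Since $p_{k,k-1}\to 1$ as $k\to\infty$, the drift hypothesis $\liminf_k p_{k,k-1} > 1/2$ of Lemma~\ref{lem:MCret} holds with huge room to spare; the only wrinkle is that the lemma as stated excludes self-loops. To handle this I would pass to the jump chain $(\tilde\kappa_s)$ obtained by deleting self-loops, which is a genuine nearest-neighbour chain with $\tilde p_{k,k\pm 1} = p_{k,k\pm 1}/(1-p_{k,k})$ still satisfying $\tilde p_{k,k-1}\to 1$. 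Lemma~\ref{lem:MCret} then gives $\Ee\tilde T_1^{\,m} < \infty$ for every $m\in\eN$, where $\tilde T_1$ is the first-return time of the jump chain to $0$.

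To recover $T_1$, note that $T_1 \leq \sum_{s=0}^{\tilde T_1 - 1} H_s$, where $H_s$ is the holding time of $(\hat\kappa_t)$ at the $s$-th state visited by the jump chain (strict inequality can occur since in the original chain a single self-loop at $0$ already returns us to state $0$ at time $t=1$). Conditional on the jump chain's trajectory, the $H_s$ are independent geometric variables with parameters $1 - p_{k,k} = (1-q^k)^2 + q^{2k+1} \geq \delta := \min(q,(1-q)^2) > 0$, so each $H_s$ is stochastically dominated by an i.i.d.\ $\Geo(\delta)$ variable $G_s$ that can be constructed independently of $(\tilde\kappa_s)$ on an enlarged probability space. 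The elementary inequality $\bigl(\sum_{s<N} G_s\bigr)^2 \leq N\sum_{s<N} G_s^2$ and conditioning on $\tilde T_1$ then yield $\Ee T_1^2 \leq \Ee\tilde T_1^2 \cdot \Ee G_1^2 < \infty$. The main obstacle is this small amount of bookkeeping to split off the self-loops; the drift estimate itself is immediate, since $p_{k,k+1} = q^{2k+1}$ decays so rapidly that the chain hardly ever wanders far from $0$, and the same argument in fact delivers $\Ee T_1^{\,m} < \infty$ for every $m\in\eN$.
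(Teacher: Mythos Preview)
Your argument is correct. The identification of $T_1$ with the first-return time of the $(\infty,q)$-arc chain $(\hat\kappa_t)$ to state $0$ is right, the reduction to the jump chain to strip off self-loops is handled cleanly, and the final domination by an i.i.d.\ geometric sum together with the Cauchy--Schwarz step $\bigl(\sum_{s<N}G_s\bigr)^2\leq N\sum_{s<N}G_s^2$ gives $\Ee T_1^2\leq \Ee\tilde T_1^2\cdot\Ee G_1^2<\infty$ as claimed. The coupling that makes the $G_s$ independent of the jump chain (and hence of $\tilde T_1$) while still dominating the $H_s$ pathwise is standard and works exactly as you indicate.

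The paper, however, does not prove this lemma at all: it simply cites Lemmas~4.1 and~4.5 of Basu--Bhatnagar~\cite{basu2016limit}. So your proof is genuinely different in that it is self-contained within the present paper's toolbox. In fact your strategy---dominate the arc chain by a birth--death chain whose downward probability tends to $1$, then invoke Lemma~\ref{lem:MCret}---is precisely the method the paper itself uses to prove the two-sided analogue Lemma~\ref{lem:Rplus1}. What you gain is a uniform treatment of both lemmas via Lemma~\ref{lem:MCret} and, as you note, finiteness of \emph{all} moments of $T_1$ for free; what the citation buys is brevity and a direct appeal to the original source where the regeneration structure was first analysed.
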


\noindent
(We combine Lemmas 4.1 and 4.5 of~\cite{basu2016limit}.)
We also define the interarrival times

$$ X_i = T_i - T_{i-1}, $$ 

\noindent
and 

$$ \tau(n) = \inf\{ t : T_t > n \}. $$ 

\noindent 
Looking at the description of $\Pi$ in Section~\ref{sec:prelim}, it is not difficult to see that 
conditional on the event $T_1 = t$, the bijection $i \mapsto \Pi(i+t)-t$ is distributed like $\Pi$.
It follows that the interarrival times $X_1, X_2, \dots$ are i.i.d.
Moreover, writing $\Xcal_i := \{T_{i-1}+1,\dots,T_i\}$ we see that 
$\Pi$ maps $\Xcal_i$ bijectively onto $\Xcal_i$, and in fact the permutations $\Sigma_1 : [X_1]\to[X_1],\Sigma_2: [X_2]\to[X_2], \dots$ 
given by 

$$ \Sigma_i(j) := \Pi(T_{i-1}+j) - T_{i-1} \quad \text{ for $j=1,\dots,X_i$, } $$

\noindent
are i.i.d.~as well. %%(To be completely clear: the domain of $\Sigma_i$ is itself random.)

With this regenerative structure, the following lemma follows.

\begin{lem}\label{lem:PinPicpl}
	For $0<q<1$ and $n\in\eN$, let $\Pi_n \sim \Mallows(n,q)$ and $\Pi\sim\Mallows(\eN,q)$.
	There exists a coupling of $\Pi_n$ and $\Pi$ satisfying
	
	$$ \Pee( \Pi_n(i) = \Pi(i) \text{ for all $1\leq i \leq n-\log^2 n$ } ) = 1 - o(1),$$
	and in fact, $\log^2 n$ can be replaced with any function going to $\infty$ with $n$.
\end{lem}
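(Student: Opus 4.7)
My plan is to exhibit an explicit coupling via the iterative constructions of $\Pi_n$ and $\Pi$ given in Section~\ref{sec:prelim}. I will take a single i.i.d.\ sequence $W_1, W_2, \dots \sim \Geo(1-q)$ and feed it into both constructions simultaneously. For $\Pi$, use $Z_i := W_i$ directly in~\eqref{eq:akjkjfekew}. For $\Pi_n$, set $Z_i^{(n)} := W_i$ whenever $W_i \leq n+1-i$, and otherwise resample $Z_i^{(n)}$ independently from $\TGeo(n+1-i, 1-q)$. Since $\TGeo(n+1-i, 1-q)$ is exactly the law of $\Geo(1-q)$ conditioned on $\{\,\cdot \leq n+1-i\,\}$, a short calculation confirms that the marginal of $Z_i^{(n)}$ is the required $\TGeo(n+1-i, 1-q)$, so feeding these into~\eqref{eq:MallowsIter} produces a genuine $\Pi_n \sim \Mallows(n,q)$.

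The core combinatorial observation is that this coupling forces $\Pi_n$ and $\Pi$ to agree coordinate-by-coordinate for as long as no truncation is triggered. Define
$$ I^\star := \min\bigl\{\,j \geq 1 : W_j > n+1-j\,\bigr\} - 1, $$
with the convention $I^\star := n$ if no such $j \leq n$ exists. I will then prove by induction on $1 \leq i \leq I^\star$ the joint statement that $\Pi(i) \in [n]$ and $\Pi_n(i) = \Pi(i)$. The induction step just unpacks the definitions: under the hypothesis the set $\{\Pi(1),\dots,\Pi(i-1)\} = \{\Pi_n(1),\dots,\Pi_n(i-1)\}$ is an $(i-1)$-subset of $[n]$, so $\eN\setminus\{\Pi(1),\dots,\Pi(i-1)\}$ and $[n]\setminus\{\Pi_n(1),\dots,\Pi_n(i-1)\}$ coincide on their $n-i+1$ smallest elements, and the bound $Z_i = W_i \leq n+1-i$ guarantees that the $Z_i$-th smallest of each is the same element of $[n]$.

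It then remains to verify $\Pee(I^\star < n - \log^2 n) = o(1)$. Using $\Pee(W_j > k) = q^k$, a direct union bound yields
$$ \Pee\bigl(I^\star < n - \log^2 n\bigr) \;\leq\; \sum_{j=1}^{n-\log^2 n} \Pee(W_j > n+1-j) \;=\; \sum_{k=\log^2 n + 1}^{n} q^k \;\leq\; \frac{q^{\log^2 n + 1}}{1-q}, $$
which tends to $0$. Replacing $\log^2 n$ by any $\omega_n \to \infty$ leaves the geometric tail $q^{\omega_n+1}/(1-q)\to 0$ unchanged, which gives the ``in fact'' clause of the lemma. I do not anticipate a serious obstacle in any of these steps; the one place requiring a little care is the inductive bookkeeping, where the strengthened hypothesis $\Pi(i) \in [n]$ has to be carried along, since otherwise the ``remaining sets'' in the two iterative constructions could fail to agree.
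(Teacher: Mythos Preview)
Your proof is correct, but it takes a different route from the paper's. The paper couples $\Pi_n$ with $\Pi$ by simply taking $\Pi_n := \Pi_{[n]}$ (the relative-order restriction of $\Pi$ to $[n]$, which is $\Mallows(n,q)$ by the Basu--Bhatnagar/Crane--DeSalvo fact recalled in Section~\ref{sec:prelim}), and then observes that $\Pi$ and $\Pi_{[n]}$ agree on $[1,T_{\tau(n)-1}]$; the remaining gap $n-T_{\tau(n)-1}\le X_{\tau(n)}$ is controlled via Proposition~\ref{prop: waiting time}, which says $X_{\tau(n)}$ converges in distribution to the size-biased law of $X_1$. Your approach instead couples the two iterative constructions step-by-step through a common $\Geo(1-q)$ sequence, and bounds the failure probability by a bare union bound on geometric tails. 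Your argument is more elementary and self-contained (it uses neither the regenerative structure nor the renewal-theoretic waiting-time result), and it delivers an explicit rate $O(q^{\omega_n})$ for the error. The paper's version, by contrast, ties the lemma directly into the regeneration framework that drives the rest of the proofs of Theorems~\ref{thm:normal} and~\ref{thm:clt_q_gt_one}. Both work; yours is the cleaner standalone proof of this particular lemma.
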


\begin{proof}
	Let $\Pi\sim \Mallows(\eN,q)$, and recall that $\Pi_{[n]}\sim \Mallows(n,q)$. We claim that this coupling works. Indeed, it's clear that $\Pi(i)=\Pi_{[n]}(i)$ up until $T_{\tau(n)-1}$. But $n-T_{\tau(n)-1}\leq X_{\tau(n)}$, and $X_{\tau(n)}$ converges to a limiting distribution by Proposition \ref{prop: waiting time}. In particular, the probability that it is larger than $\log^2 n$ (or any function going to $\infty$) goes to $0$.	
\end{proof}

By this last lemma, with probability $1-o(1)$, the 
number of $i$-cycles in $\Pi_n$ differs by at most $2\log^2 n$ 
from the number of $i$-cycles of $\Pi$ that are completely contained in $[n]$ (for each $i=1,\dots,\ell$).

Fix an $\ell \in \bN$, and let $a_1,\ldots, a_\ell$ be a sequence of real numbers, not all zero. 
For $\pi$ a permutation, we define $\varphi(\pi) := \sum_{j=1}^\ell a_j C_j(\pi)$ 
and let $Y_i=\varphi(\Sigma_i)$.

We plan to apply Theorem~\ref{thm:regen_clt} to the i.i.d.~sequence $(X_1,Y_1), (X_2,Y_2), \dots$.
For this we first need to establish the conditions of that theorem are met.

\begin{lem}\label{lem:other_conditions}
We have $\Ee X_1 > 0$ and $\Var(X_1), \Var(Y_1) < \infty$. 
\end{lem}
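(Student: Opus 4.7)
The plan is to dispatch each of the three assertions separately, with all three following quickly from Lemma~\ref{lem:Rplus} once the right bound on $Y_1$ is in place.

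For $\Ee X_1 > 0$, the key observation is simply that $X_1 = T_1$ is a positive integer-valued random variable (since $T_1 \geq 1$ by definition), so $\Ee X_1 \geq 1$. For $\Var(X_1) < \infty$, I would just quote Lemma~\ref{lem:Rplus}: we have $\Ee X_1^2 = \Ee T_1^2 < \infty$, so in particular $\Var(X_1) < \infty$. These two steps are essentially immediate.

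The remaining step, $\Var(Y_1) < \infty$, is the only one requiring an actual argument, but it is still short. The idea is to reduce it to Lemma~\ref{lem:Rplus} via a crude bound on $Y_1$ in terms of $X_1$. Since $\Sigma_1$ is a permutation of a set of size $X_1$, and each element of that set belongs to exactly one cycle, we have $\sum_{j\geq 1} j \cdot C_j(\Sigma_1) = X_1$, which gives in particular $C_j(\Sigma_1) \leq X_1$ for every $j$. Consequently
\[
|Y_1| \;=\; \Bigl|\sum_{j=1}^\ell a_j C_j(\Sigma_1)\Bigr| \;\leq\; \Bigl(\sum_{j=1}^\ell |a_j|\Bigr) \cdot X_1,
\]
so that $\Ee Y_1^2 \leq \bigl(\sum_{j=1}^\ell |a_j|\bigr)^2 \cdot \Ee X_1^2 < \infty$ by Lemma~\ref{lem:Rplus}. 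This yields $\Var(Y_1) < \infty$ and completes the proof.

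I do not expect any serious obstacle: the only mild point is to remember to use the trivial cycle-counting bound $\sum_j j\,C_j(\Sigma_1) = X_1$ rather than trying to estimate the individual $C_j(\Sigma_1)$ more carefully, which would be unnecessary work here.
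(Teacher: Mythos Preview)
Your proof is correct and essentially identical to the paper's: both note $X_1\geq 1$ for positivity, cite Lemma~\ref{lem:Rplus} for $\Var(X_1)<\infty$, and bound $|Y_1|$ by a constant times $X_1$ to reduce $\Var(Y_1)<\infty$ to the same lemma. The only cosmetic difference is that the paper uses the slightly sharper inequality $|\varphi(\Sigma_1)|\leq \max_i|a_i|\cdot X_1$ (via $\sum_j C_j(\Sigma_1)\leq X_1$) whereas you use $\sum_j|a_j|\cdot X_1$, but this is immaterial.
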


\begin{proof}
Since $X_1 \geq 1$ by definition, we trivially have $\Ee X_1 > 0$.
As $|\varphi(\Sigma_1)| \leq  \max_i |a_i| \cdot X_1$, it suffices to show that $\on{Var}(X_1) < \infty$.
But this has already been established by Lemma~\ref{lem:Rplus}.
\end{proof}

\begin{lem}\label{lem:varZt}
$\on{Var}(Y_1 \Ee X_1  -  X_1\Ee Y_1) >0$.
\end{lem}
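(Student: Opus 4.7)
The plan is to argue by contradiction. Suppose $\on{Var}(Y_1\Ee X_1 - X_1\Ee Y_1) = 0$. Since this random variable has mean $0$, and since $\Ee X_1 > 0$ by Lemma~\ref{lem:other_conditions}, the vanishing of its variance would force
\[
Y_1 \;=\; c \cdot X_1 \quad \text{almost surely,}
\]
where $c := \Ee Y_1 / \Ee X_1$. I will rule this out by exhibiting finitely many events of positive probability on which $X_1$ and $Y_1$ take prescribed values that jointly force $a_1 = \cdots = a_\ell = 0$.

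For each integer $k \geq 1$, consider the event
\[
E_k := \{\,\Pi(1)=2,\ \Pi(2)=3,\ \ldots,\ \Pi(k-1)=k,\ \Pi(k)=1\,\}.
\]
By the iterative construction of $\Pi\sim\on{Mallows}(\eN,q)$, where each step is a geometric random variable that assigns positive mass to every positive integer, the event $E_k$ has positive probability. On $E_k$ one checks directly that $\Pi([k]) = [k]$ while $\Pi([j]) = \{2,\ldots,j+1\} \neq [j]$ for every $j<k$, so $X_1 = k$; moreover, $\Sigma_1$ is the single $k$-cycle $(1\,2\,\ldots\,k)$.

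Consequently, on $E_k$ we have $Y_1 = a_k$ when $k \leq \ell$, and $Y_1 = 0$ when $k = \ell+1$ (since in that case $\Sigma_1$ consists of a single cycle of length $\ell+1 > \ell$, so contributes nothing to $\varphi$). Imposing $Y_1 = cX_1$ on $E_1, \ldots, E_\ell$ yields $a_k = kc$ for $k = 1,\ldots,\ell$, and imposing it on $E_{\ell+1}$ yields $(\ell+1)c = 0$. The latter gives $c = 0$, which then forces $a_1 = \cdots = a_\ell = 0$, contradicting the hypothesis that not all $a_j$ are zero.

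The structural point driving the argument is that $\varphi$ only records cycles of length at most $\ell$, so an $(\ell+1)$-cycle is an invisible witness: it contributes $0$ to $Y_1$ while still contributing to $X_1$, which is precisely what forces the proportionality constant to vanish. The only mildly technical step is verifying $\bP(E_k) > 0$, and this is immediate from the iterative Mallows construction.
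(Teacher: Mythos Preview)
Your proof is correct and follows essentially the same approach as the paper's: both argue by contradiction, use that $\Sigma_1$ can be a single $k$-cycle with positive probability for every $k$, exploit a cycle of length greater than $\ell$ to force the proportionality constant (equivalently, $\Ee Y_1$) to vanish, and then use a cycle of length $i\leq\ell$ with $a_i\neq0$ to reach a contradiction. Your version is slightly more explicit in writing down the events $E_k$ and organizes the contradiction a touch more cleanly (deducing $c=0$ from a single event $E_{\ell+1}$ rather than comparing two lengths $\ell+1$ and $\ell+2$), but the substance is identical.
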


\begin{proof}
We first note that, for each $i \in \eN$, there is a positive
probability that $\Sigma_1$ consists of a single $i$-cycle.
(This happens for instance when $Z_1=i, Z_2=1, \dots Z_i=i-1$.)

Aiming for a contradiction, assume that $Y_1 \Ee X_1  -  X_1\Ee Y_1$ is almost surely constant.
Whenever $\Sigma_1$ consists of a single $>\ell$ cycle, we have $Y_1=0$.
In particular $Y_1 \Ee X_1  -  X_1\Ee Y_1$ can equal both $(\ell+1) \Ee Y_1$ and $(\ell+2) \Ee Y_1$
with positive probability. The quantity $Y_1 \Ee X_1  -  X_1\Ee Y_1$ being an almost sure constant
now implies $\Ee Y_1 = 0$.

Let $1\leq i \leq \ell$ be such that $a_i \neq 0$. 
There is a positive probability that $Y_1 \Ee X_1 = 0$
and a positive probability that $Y_1 \Ee X_1 = a_i \Ee X_1$.
But that implies $a_i = 0$, contradicting the choice of $i$.

It follows $Y_1 \Ee X_1  -  X_1\Ee Y_1$ is not almost surely constant. 
In other words, $\Var( Y_1 \Ee X_1  -  X_1\Ee Y_1 ) > 0$.
\end{proof}

Having established Lemmas~\ref{lem:other_conditions} and~\ref{lem:varZt}, we can apply 
Theorem~\ref{thm:regen_clt} to conclude that 

$$ \frac{ \sum_{i=1}^{\tau(n)} Y_i -  \left(\frac{\bE Y_1}{ \bE X_1}\right)  n }{ \sqrt{n} } \quad 
\xrightarrow[n\to\infty]{d}\quad \cN(0,  (\bE X_1)^{-3}  \on{Var}( Y_1 \bE X_1  -  X_1\bE Y_1)). $$

By Lemma~\ref{lem:PinPicpl} and the definition of $\tau(n)$ we have

\begin{equation}\label{eq:C_k_squeeze} \left| \varphi(\Pi_n) - \sum_{j=1}^{\tau(n)}Y_j\right| \leq 
\max_{i=1,\dots,\ell} |a_i| \cdot \left(  X_{\tau(n)} + \log^2(n) \right),
\end{equation}

\noindent
with probability $1-o(1)$, under the coupling provided by Lemma~\ref{lem:PinPicpl}.
Moreover, applying Lemmas~\ref{lem:max_iid} and~\ref{lem:Rplus} we have that, with probability $1-o(1)$, the RHS of~\eqref{eq:C_k_squeeze}
is $o(\sqrt{n})$.
We can conclude:

$$ \frac{ \varphi(\Pi_n) -  \left(\frac{\bE Y_1}{ \bE X_1}\right)  n }{ \sqrt{n} } \quad 
\xrightarrow[n\to\infty]{d}\quad 
\cN\left(0, \frac{\on{Var}\left( Y_1\bE X_1 - X_1\bE Y_1\right)}{\left(\Ee X_1\right)^3} \right). $$

\noindent
Recalling that $Y_1 = \sum_{i=1}^\ell a_i C_i(\Sigma_1)$ and setting $m_i := \frac{\Ee C_i(\Sigma_1)}{\Ee X_1}$, we can write 

$$ \frac{ \varphi(\Pi_n) -  \left(\frac{\bE Y_1}{ \bE X_1}\right)  n }{ \sqrt{n} }
= a_1 \frac{C_1(\Pi_n) - m_1 n}{\sqrt{n}} + \dots 
+ a_\ell \frac{C_\ell(\Pi_n) - m_\ell n}{\sqrt{n}}. $$

\noindent
Setting 

$$ U_i := \frac{C_i(\Sigma_1)\Ee X_1 - X_1\Ee C_i(\Sigma_1)}{\left(\Ee X_1\right)^{3/2}}, $$

\noindent 
we see that 

$$ \frac{Y_1\Ee X_1-X_1\Ee Y_1}{\left(\Ee X_1\right)^{3/2}} = a_1 U_1 + \dots + a_\ell U_\ell. $$

Therefore, if we set $P_{ij} := \Cov( U_i, U_j )$ then 
%$$ P_{ij} := \Cov\left( \frac{C_i(\Sigma_1)\Ee X_1 - X_1\Ee C_i(\Sigma_1)}{\left(\Ee X_1\right)^{3/2}}, 
%\frac{C_j(\Sigma_1)\Ee X_1 - X_1\Ee C_j(\Sigma_1)}{{\left(\Ee X_1\right)^{3/2}}}\right), $$
%
%\noindent
%we see that 

$$ \frac{\on{Var}\left( Y_1 \bE X_1  -  X_1\bE Y_1\right)}{\left(\Ee X_1\right)^3}  = \sum_{i=1}^\ell \sum_{j=1}^\ell a_ia_j P_{ij}. $$

% $$ %(\bE X_1)^{-3} \on{Var}( Y_1 \bE X_1  -  X_1\bE Y_1)) =
% \begin{array}{rcl}
% \displaystyle 
% \frac{\on{Var}\left( Y_1 \bE X_1  -  X_1\bE Y_1\right)}{\left(\Ee X_1\right)^3} 
% & = & \displaystyle 
% \sum_{i=1}^\ell \sum_{j=1}^\ell a_ia_j \Cov\left( \frac{C_i(\Sigma_1)\Ee X_1 - X_1\Ee C_i(\Sigma_1)}{\left(\Ee X_1\right)^{3/2}}, 
% \frac{C_j(\Sigma_1)\Ee X_1 - X_1\Ee C_j(\Sigma_1)}{{\left(\Ee X_1\right)^{3/2}}}\right) \\
% & =: & \displaystyle 
% \sum_{i=1}^\ell \sum_{j=1}^\ell a_ia_j P_{ij}, 
% \end{array} $$

\noindent
This shows that if $(N_1,\dots,N_\ell) \sim \cN_\ell( \underline{0}, P_\ell )$ then 

$$ a_1 N_1 + \dots + a_\ell N_\ell \isd 
\cN\left(0, \frac{\on{Var}\left( Y_1 \bE X_1  -  X_1\bE Y_1\right)}{\left(\Ee X_1\right)^3} 
\right). $$

\noindent
We've thus shown that

$$ a_1 \frac{C_1(\Pi_n) - m_1 n}{\sqrt{n}} + \dots 
+ a_\ell \frac{C_\ell(\Pi_n) - m_\ell n}{\sqrt{n}}
\quad \xrightarrow[n\to\infty]{\text{d}} \quad a_1 N_1 + \dots + a_\ell N_\ell, $$

\noindent
for all $a_1,\dots,a_\ell$. An application of Theorem~\ref{thm:cramer_wold} now allows us to conclude

$$ \left( \frac{C_1(\Pi_n)-m_1 n}{\sqrt{n}}, \dots, \frac{c_\ell(\Pi_n)-m_\ell n}{\sqrt{n}} \right) \quad 
\xrightarrow[n\to\infty]{d} \quad 
(N_1,\dots,N_\ell), $$

\noindent
completing the proof of Theorem~\ref{thm:normal}. \hfill $\blacksquare$

\section{The proof of Theorem~\ref{thm:clt_q_gt_one}\label{sec:normal_q_geq_one}}

The proof is very similar to the proof of Theorem~\ref{thm:normal}.
We first introduce a two--sided sampling procedure in the case $0<q<1$ for a $\on{Mallows}(n,q)$ distributed permutation $\Pi_n$ 
taking $\floor{\frac{n}{2}}$ iterations. 
During iteration $i\geq 1$ we determine the images of $i$ and $n-i+1$. 
Again we take $Z_1,\dots,Z_n$ independent with $Z_i \sim \TGeo(n+1-i,q)$.
In the first iteration we set%
\begin{align}	
\Pi_n(1) &= Z_1,\\
\Pi_n(n)&= \text{the $Z_2$-th largest number in the set }[n]\setminus \{ \Pi(1) \}. 
\end{align}%
In the $i$-th iteration we set 
\begin{align}\label{eq:MallowsIter}	
\Pi_n(i)&= \text{the $Z_{2i-1}$-th smallest element of }[n]\setminus \{ \Pi_n(j) : j<i \text{ or } j > n+1-i \}, \\
\Pi_n(n+1-i)&=\text{the $Z_{2i}$-th largest element of }[n]\setminus \{ \Pi_n(j) : j\leq i \text{ or } j > n+1-i \}
\end{align}%
(If $n$ is odd then after $\floor{\frac{n}{2}}$ iterations, the image of $\lceil\frac{n}{2}\rceil$ has formally speaking not yet been 
determined, but of course there will be only one possible element of $[n]$ left.)

That this adapted procedure indeed produces a random permutation sampled according to the $\Mallows(n,q)$ measure follows
analogously the the corresponding argument for the original sampling procedure : 
For every $\pi \in S_n$ there is a choice of 
$(k_1,\dots,k_n) \in [n]\times[n-1]\times\dots\times[1]$ such that 
$\{\Pi_n = \pi\} = \{ Z_1=k_1,\dots,Z_n=k_n\}$. Again
$\Pee( Z_1=k_1,\dots,Z_n=k_n ) \propto q^{k_1+\dots+k_n-n}$. We also again have
$\inv(\pi) = k_1+\dots+k_n-n$, because when we are determining $\Pi_n(i)$ with $i \leq n/2$ then 
the number of $i<j<n+1-i$ such that $i,j$ form an inversion is precisely
$Z_{2i-1}-1$, and similarly for $\Pi_n(n+1-i)$.

Recall that we use $r_n$ to denote the map $i \mapsto n+1-i$.
Analogously to Lemma~\ref{lem:PinPicpl}, we have

\begin{lem}\label{lem:PinPicpl2sided}
Let $0<q<1$ and $\Pi_n \sim \Mallows(n,q)$ and let $\Pi,\Pi' \sim\Mallows(\eN,q)$ be independent.
There is a coupling for $\Pi_n, \Pi,\Pi'$ such that 

$$ \Pee\left( \begin{array}{l} 
\Pi_n(i) = \Pi(i) \text{ for all $1\leq i \leq n/2-\log^2 n$, and } \\
\Pi_n(i) = (r_n \circ \Pi' \circ r_n)(i)  \text{ for all $n/2+\log^2 n\leq i \leq n$}       
              \end{array}
 \right)
= 1 - o(1). $$
Moreover, the $\log^2 n$ can be replaced with any function going to $\infty$ with $n$.
\end{lem}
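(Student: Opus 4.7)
The plan follows the strategy of Lemma~\ref{lem:PinPicpl}, now applied to the two-sided sampling procedure described at the start of Section~\ref{sec:normal_q_geq_one}. I would let $Z_1,Z_2,\ldots$ be i.i.d.\ $\Geo(1-q)$ random variables and use these (truncated to the pool size as needed) to drive the two-sided sampling of $\Pi_n$. Simultaneously, I would use the odd-indexed variables $Z_1,Z_3,\ldots$ to generate $\Pi\sim\Mallows(\eN,q)$ via the one-sided iteration~\eqref{eq:akjkjfekew}, and the even-indexed variables $Z_2,Z_4,\ldots$ to generate an independent $\Pi'\sim\Mallows(\eN,q)$ analogously. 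By independence of the $Z_i$, the permutations $\Pi$ and $\Pi'$ are independent.

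The key step is to show that the following two events both hold with probability $1-o(1)$. Event (A): the largest regeneration time $T^L$ of $\Pi$ that is at most $n/2$ satisfies $T^L\geq n/2-\log^2 n$. Event (B): for every $k\leq n/2-\log^2 n$ the untruncated $Z_{2k}\leq n/2-k+1$. Event (A) is handled exactly as in the proof of Lemma~\ref{lem:PinPicpl}: Lemma~\ref{lem:Rplus} gives $\Ee X_1^2<\infty$ for the interarrival times, and Proposition~\ref{prop: waiting time} then implies that the overshoot $X_{\tau^L(n/2)}=T^L_{\tau^L(n/2)}-T^L$ converges in distribution and is therefore $\leq\log^2 n$ with probability $1-o(1)$. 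Event (B) follows from the geometric tail $\Pee(Z_{2k}>m)=q^m$ and a union bound: $\Pee(B^c)\leq\sum_{k=1}^{n/2-\log^2 n}q^{n/2-k+1}=O(q^{\log^2 n})=o(1)$. Since (A) and (B) depend on disjoint subsequences of the $Z_i$, they are independent, and hence their intersection also holds with probability $1-o(1)$.

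Under (A) and (B), I would argue by induction on $j$ that for all $j\leq n/2-\log^2 n$, the variables $Z_{2j-1}$ and $Z_{2j}$ fall within their truncation windows, that $\Pi_n(j)=\Pi(j)$, and that $\Pi_n(n+1-j)>n/2$. The inductive step rests on the observation that the two-sided pool used to pick $\Pi_n(j)$ differs from the infinite-process pool used to pick $\Pi(j)$ only by values exceeding $n/2$: by the inductive hypothesis the set of right picks so far, $R_{j-1}:=\{\Pi_n(n+2-j),\ldots,\Pi_n(n)\}$, lies in $(n/2,n]$, and the further missing values $\{n+1,n+2,\ldots\}$ also exceed $n/2$, while (A) forces $\Pi(j)\leq T^L\leq n/2$. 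Therefore the $Z_{2j-1}$-th smallest element of each pool agrees and equals $\Pi(j)$. For the right pick, the two-sided pool at iteration $j$ still contains at least $n/2-j+1$ elements of $[n/2+1,n]$, so event (B) guarantees that its $Z_{2j}$-th largest element lies in $[n/2+1,n]$ and in particular exceeds $n/2$, closing the induction.

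The symmetric claim $\Pi_n(i)=(r_n\circ\Pi'\circ r_n)(i)$ for $i\geq n/2+\log^2 n$ follows by running precisely the same argument on $r_n\circ\Pi_n\circ r_n$, which is again $\Mallows(n,q)$-distributed by the corollary in Section~\ref{sec:prelim}, with the roles of $\Pi$ and $\Pi'$ interchanged. The chief technical obstacle is keeping track of the interaction between the left and right halves of the two-sided sampling; this is overcome via event (B), which exploits the geometric tail to force the right picks to stay in $[n/2+1,n]$ throughout the first $n/2-\log^2 n$ iterations and thereby cleanly decouples the two halves.
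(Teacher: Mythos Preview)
Your approach is correct and takes a genuinely different route from the paper.

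The paper proceeds indirectly: it first invokes Lemma~\ref{lem:PinPicpl} to couple $\Pi,\Pi'$ with two independent $\Mallows(\lfloor n/2\rfloor,q)$ permutations $\Pi_{n/2},\Pi_{n/2}'$, and then couples $\Pi_n$ to this pair by using the fact (cited from~\cite{HeDescents}) that the pattern permutations $(\Pi_n)_{[n/2]}$ and $(\Pi_n)_{[n/2+1,n]}$ are themselves independent $\Mallows(\lfloor n/2\rfloor,q)$. The discrepancy is then bounded by the length of the minimal interval $[a,b]$ straddling $n/2$ with $\Pi_n([a])=[a]$ and $\Pi_n([b,n])=[b,n]$, which is dominated by $X_{\tau(n/2)}$ and handled via Proposition~\ref{prop: waiting time}. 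Your argument instead builds the coupling directly inside the two-sided sampling procedure, sharing the geometric variables between $\Pi_n$ and the pair $(\Pi,\Pi')$, and controls the interaction between the two halves through the tail event~(B). Your approach is more hands-on and avoids the external input that restricted patterns are independent; the paper's approach is more modular, reusing the one-sided Lemma~\ref{lem:PinPicpl} as a black box.

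Two small remarks. First, event~(B) is in fact superfluous: if you strengthen the inductive hypothesis to include $\Pi_n(n+1-j')=n+1-\Pi'(j')$ for $j'<j$ and assume the analogue~(A$'$) for $\Pi'$, then $\Pi_n(n+1-j')>n/2$ follows automatically from $\Pi'(j')\leq T^{L'}\leq n/2$, and the single induction proves both halves at once under~(A) and~(A$'$) alone. Second, your phrasing of the symmetric step---``run the same argument on $r_n\circ\Pi_n\circ r_n$, which is again $\Mallows(n,q)$-distributed''---is slightly misleading: the distributional fact is irrelevant, since you must argue about the \emph{specific} coupled $\Pi_n$. What you actually need is to rerun the induction in reflected coordinates under~(A$'$) (and, if you keep your formulation, the analogue~(B$'$) bounding the odd-indexed $Z_{2k-1}$). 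This works, but note the minor asymmetry that in reflected coordinates the right pick precedes the left pick within each iteration; this only shifts pool sizes by one and causes no trouble.
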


\begin{proof}
The proof is similar to the proof of Lemma~\ref{lem:PinPicpl}. By Lemma~\ref{lem:PinPicpl} applied to $\Pi$ and $\Pi'$, we can couple $\Pi,\Pi'$ with independent $\Mallows(n/2,q)$ permutations, $\Pi_{n/2},\Pi'_{n/2}$ (for simplicity, we write $n/2$ even if $n$ is odd, where it should be rounded either up or down as needed), such that $\Pee(\Pi_{n/2}(i)=\Pi(i),\Pi_{n/2}'(i)=\Pi'(i)\text{ for all }1\leq i\leq n/2-\log^2 n)=1-o(1)$. We now claim that we can couple $\Pi_n$ with $\Pi_{n/2}, \Pi_{n/2}'$ such that  
$$ \Pee\left( \begin{array}{l} 
\Pi_n(i) = \Pi_{n/2}(i) \text{ for all $1\leq i \leq n/2-\log^2 n$, and } \\
\Pi_n(i) = (r_n \circ \Pi'_{n/2} \circ r_n)(i)  \text{ for all $n/2+\log^2 n\leq i \leq n$}       
\end{array}
\right)
= 1 - o(1), $$
and this would immediately finish the proof, since any coupling with these bivariate marginals would satisfy the lemma.

To see the claim, note that $(\Pi_n)_{[n/2]}$ and $(\Pi_n)_{[n/2+1,n]}$ are independent $\Mallows(n/2,q)$ (see e.g. Lemma 2.3 of \cite{HeDescents}), and so can be coupled to perfectly agree with $\Pi_{n/2}$ and $r_n \circ \Pi'_{n/2} \circ r_n$ on the intervals $[n/2]$ and $[n/2,n]$. Now if $\Pi_n([k])=[k]$ for $k\leq n/2$, then $\Pi_n$ and $\Pi_{n/2}$ agree on that interval, since then $\Pi_n(i)=(\Pi_n)_{[1,n/2]}(i)=\Pi_{n/2}(i)$ for $i\in [k]$, and similarly for $r_n \circ \Pi'_{n/2} \circ r_n$ and intervals of the form $[k,n]$. Thus, the number of $i$ with disagreements is bounded by the length of the smallest interval $[a,b]$ with $a\leq n/2$, $b\geq n/2$, and $\Pi_n([a])=[a]$ and $\Pi_n([b,n])=[b,n]$. However, this random variable is stochastically dominated by $X_{\tau(n/2)}$, since if we couple $\Pi_n$ with a $\Mallows(\eN,q)$ process, then the length of the interval is $b-a=X_{\tau(n/2)}$ unless $X_{\tau(n/2)}>n$, in which case $X_{\tau(n/2)}$ is strictly larger. But now we are done, since $X_{\tau(n/2)}$ converges to a limiting distribution by Proposition \ref{prop: waiting time}, and so the probability that $\Pi_n$ disagrees with either $\Pi_{n/2}$ or $r_n \circ \Pi'_{n/2} \circ r_n$ for a growing number of locations goes to $0$.
\end{proof}

Recall that $r_n \circ \Pi_n \isd \Mallows(n,1/q)$ if $\Pi_n\sim\Mallows(n,q)$.
We have the following corollary.

\begin{cor}\label{cor:PinPicpl2sidedq_geq_one}
Let $q>1$, $\Pi_n \sim \on{Mallows}(n,q)$ and $\Pi,\Pi' \sim \on{Mallows}(\bN, 1/q)$ be independent. 
Let $ m = \floor{\frac{n}{2} - (\log n)^2}$. There exists a coupling between $\Pi_n$ and $\Pi, \Pi'$ such that that
\begin{equation}\label{eq:coupling_bound}
\bP \left[ (r_n\circ \Pi_n,  \Pi_n \circ r_n) \text{ and } (\Pi, \Pi') 
\text{ agree on $1,\ldots,m$}\right] =1- o(1). 
\end{equation} 
\end{cor}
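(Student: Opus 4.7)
The plan is to reduce Corollary~\ref{cor:PinPicpl2sidedq_geq_one} to Lemma~\ref{lem:PinPicpl2sided} by exploiting the duality $r_n \circ \Pi_n \isd \Mallows(n, 1/q)$ from the corollary in Section~\ref{sec:prelim}. Set $\Psi_n := r_n \circ \Pi_n$. Since $q > 1$ gives $1/q \in (0,1)$, the permutation $\Psi_n$ is $\Mallows(n,1/q)$-distributed and Lemma~\ref{lem:PinPicpl2sided} applies. Hence there is a coupling of $\Psi_n$ with two independent $\Pi, \Pi' \sim \Mallows(\bN, 1/q)$ such that, with probability $1-o(1)$,
\begin{equation}
\Psi_n(i) = \Pi(i) \text{ for } 1 \leq i \leq n/2 - \log^2 n, \qquad \Psi_n(i) = (r_n \circ \Pi' \circ r_n)(i) \text{ for } n/2 + \log^2 n \leq i \leq n.
\end{equation}

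The first identity immediately gives $(r_n \circ \Pi_n)(i) = \Psi_n(i) = \Pi(i)$ for $i \in [m]$, because $m = \lfloor n/2 - (\log n)^2 \rfloor \leq n/2 - \log^2 n$. For the second part, the key algebraic manipulation is that $r_n$ is an involution, so
\begin{equation}
\Pi_n \circ r_n = r_n \circ \Psi_n \circ r_n.
\end{equation}
For $i \in [m]$, the index $r_n(i) = n+1-i$ lies in $[n-m+1, n] \subseteq \{j : n/2+\log^2 n \leq j \leq n\}$, so the second half of the coupling gives $\Psi_n(r_n(i)) = (r_n \circ \Pi' \circ r_n)(r_n(i)) = r_n(\Pi'(i))$, and therefore
\begin{equation}
(\Pi_n \circ r_n)(i) = r_n(\Psi_n(r_n(i))) = r_n(r_n(\Pi'(i))) = \Pi'(i) \qquad \text{for } 1 \leq i \leq m.
\end{equation}

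Thus the very same coupling that Lemma~\ref{lem:PinPicpl2sided} produces (reinterpreted via $\Pi_n = r_n \circ \Psi_n$) simultaneously matches $r_n \circ \Pi_n$ to $\Pi$ and $\Pi_n \circ r_n$ to $\Pi'$ on $[m]$, and the event where this agreement holds has probability $1-o(1)$. The only thing to check in addition is that the index bounds from Lemma~\ref{lem:PinPicpl2sided} indeed cover all of $[m]$ on both sides, which follows from the definition of $m$. There is no real obstacle here; the substantive content is contained in Lemma~\ref{lem:PinPicpl2sided}, and the corollary is just a straightforward rewriting obtained by conjugating with the involution $r_n$.
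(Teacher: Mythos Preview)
Your proof is correct and follows essentially the same approach as the paper: set $\Psi_n = r_n\circ\Pi_n$ (the paper calls it $\Pi_n^*$), apply Lemma~\ref{lem:PinPicpl2sided} at parameter $1/q$, and unwind the two identities using that $r_n$ is an involution. The algebraic bookkeeping matches the paper's almost line for line.
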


\begin{proof}
We let $\Pi_n^* := r_n \circ \Pi_n$. Then $\Pi_n^* \isd \Mallows(n,1/q)$ and we 
can couple $\Pi_n^*$ with $\Pi,\Pi'\sim\Mallows(\eN,1/q)$ independent as in the previous lemma.
With probability $1-o(1)$, we have 

$$ (r_n\circ\Pi_n)(i) = \Pi_n^*(i) = \Pi(i), $$

\noindent 
and

$$(r_n\circ \Pi_n \circ r_n)(i) = \Pi_n^*(r_n(i)) = (r_n\circ\Pi'\circ r_n)(r_n(i))
= (r_n\circ\Pi')(i), $$

\noindent 
for all $i\leq m$. 
Now notice that $(r_n\circ \Pi_n \circ r_n)(i) = (r_n\circ \Pi')(i)$ if and only if
$(\Pi_n\circ r_n)(i) = \Pi'(i)$.
\end{proof}

For $q>1$, we let $\Pi_n\sim\Mallows(n,q)$ and $\Pi, \Pi'\sim\Mallows(\eN,1/q)$ be coupled as in 
Corollary~\ref{cor:PinPicpl2sidedq_geq_one}. 
We define 

\begin{align}
	T_0 &= 0,\\
	T_{i}& =\inf\{  j > T_{i-1} : \Pi([j]) = \Pi'([j]) = [j]  \} \qquad (i=1,2,\dots),\\
\end{align}

\noindent
We next show the analogue of Lemma~\ref{lem:Rplus}.

\begin{lem}\label{lem:Rplus1}
$\Ee T_1^2 < \infty.$
\end{lem}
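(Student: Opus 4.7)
The key observation is that, because $\Pi$ and $\Pi'$ are independent, the pair of their arc chains $(\hat\kappa_t,\hat\kappa_t')$ is a time-homogeneous Markov chain on $(\{0\}\cup\eN)^2$ whose transitions factor as the product of the two marginal transitions (the ones listed just before Proposition~\ref{prop:convergence_kappa}, taken with parameter $1/q<1$ since $\Pi,\Pi'\sim\Mallows(\eN,1/q)$). Since $\Pi([j])=[j]$ is equivalent to $\hat\kappa_j=0$, and likewise for $\Pi'$, the regeneration time $T_1$ is precisely the first time $t\geq 1$ at which $(\hat\kappa_t,\hat\kappa_t')$ returns to $(0,0)$. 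The claim thus reduces to bounding the second moment of a return time of an explicit, irreducible, positive-recurrent Markov chain.

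To control this return time I would use a Foster-Lyapunov argument with Lyapunov function $V(a,b):=q^{a}+q^{b}$. A direct computation from the arc-chain transitions shows that, for each coordinate separately, $\Ee[q^{\hat\kappa_{t+1}}|\hat\kappa_t=a]-q^a=\tfrac{q-1}{q}(2-q^a)$, which is strictly negative as soon as $q^{a}>2$. Summing over the two coordinates, $\Ee[V(\hat\kappa_{t+1},\hat\kappa'_{t+1})\,|\,(\hat\kappa_t,\hat\kappa'_t)=(a,b)]-V(a,b)\leq -\varepsilon$ for some $\varepsilon>0$ and all $(a,b)$ outside the finite set $F:=\{(a,b):a\leq a_0\text{ and }b\leq a_0\}$, where $a_0:=\lceil \log_q 2\rceil$. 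By the $L^{2}$ version of Foster's criterion -- together with the fact that one-step jumps of the joint chain are bounded -- the hitting time of $F$ from any starting state has finite expectation and finite second moment, controlled in terms of $V$.

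It remains to handle excursions inside $F$. Since $F$ is finite and the joint chain is irreducible, the hitting time of $(0,0)$ from any state of $F$ has finite moments of all orders. Combining these two estimates with a first-step analysis starting from $(0,0)$ then yields $\Ee T_1^{2}<\infty$. The main obstacle is choosing the correct Lyapunov function: the naive choice $V(a,b)=a+b$ fails because, when one coordinate equals $0$, it can only stay or move upwards, producing a small positive drift that dominates the negative drift contributed by the other coordinate when that coordinate too is small. The exponential-type function $V(a,b)=q^{a}+q^{b}$ circumvents this by making the negative drift kick in as soon as either coordinate exceeds $\log_q 2$, leaving only a finite ``bad region'' around the origin to be analysed by hand.
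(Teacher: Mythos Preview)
Your plan differs from the paper's and is morally correct, but the key technical step has a gap. The paper instead observes that $\lambda_t:=\max(\hat\kappa_t,\hat\kappa'_t)$ satisfies $\Pee(\lambda_{t+1}=i-1\mid \lambda_t=i,\dots)\ge(1-(1/q)^i)^4$, so $\lambda_t$ is stochastically dominated by a one-dimensional birth--death chain $(W_t)$ on $\{0\}\cup\eN$ with $p_{i,i-1}=(1-(1/q)^i)^4\to 1$; the elementary Lemma~\ref{lem:MCret} then gives all moments of the return time directly, with no appeal to general drift machinery.

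The gap in your argument is the invocation of an ``$L^2$ version of Foster's criterion together with bounded jumps of the chain'' for $V(a,b)=q^a+q^b$. What such criteria actually require is bounded (or at least moment-bounded) increments of $V$, not of the underlying chain; here the upward jump of $q^{\hat\kappa}$ at level $a$ equals $q^a(q-1)$, which is unbounded in $a$. Two repairs are available. First, your own computation in fact gives the stronger \emph{geometric} drift condition $\Ee[V(X_{t+1})\mid X_t]=q^{-1}V(X_t)+4(1-q^{-1})$, and geometric ergodicity then yields exponential moments of the return time to any finite set. Second---and contrary to your final paragraph---the linear function $V(a,b)=a+b$ does work: the one-coordinate drift $f(a):=\Ee[\hat\kappa_{t+1}-a\mid\hat\kappa_t=a]$ is monotone decreasing from $f(0)=1/q$ to $-1$, so for small $\varepsilon>0$ the set $\{(a,b):f(a)+f(b)>-\varepsilon\}$ is contained in a finite box (once $a$ is large enough that $f(a)<-1/q-\varepsilon$, one has $f(a)+f(b)<-\varepsilon$ regardless of $b$). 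With this linear $V$ the increments are bounded by $2$, and the standard route to $\Ee\tau_F^2<\infty$ goes through cleanly.
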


\begin{proof}
Let $\hat{\kappa}, \hat{\kappa}'$ denote the arc-chains for $\Pi$, respectively $\Pi'$, as defined in Section~\ref{sec:prelim}.
Note that $\Pi([j])=[j]$ if and only if $\hat{\kappa}_{j} = 0$ and 
$\Pi'([j]) = [j]$ if and only if $\hat{\kappa}_{j}' = 0$.
Thus $T_1$ is the first return to the origin of the two-dimensional random walk $(\hat{\kappa}_t,\hat{\kappa}_t')$, started
at $(\hat{\kappa}_0,\hat{\kappa}_0')=(0,0)$. 
For convenience, let us write $\lambda_t := \max(\hat{\kappa}_t, \hat{\kappa}_t')$.
We observe that 

$$ \Pee( \lambda_{t+1} = i-1 | \lambda_t = i, \lambda_{t-1} = i_{t-1}, \dots,
\lambda_1 = i_1 ) \geq \Pee( \hat{\kappa}_{t+1} = i-1 | \hat{\kappa}_t = i )^2 =
\left(1-(1/q)^i\right)^4, $$

\noindent
for all $i\geq 1$ and $i_1,\dots, i_{t-1} \in \{0\}\cup \eN$ and all $t$.
So, while $(\lambda_t)_{t\geq 0}$ is itself not necessarily a Markov chain, it is stochastically dominated by the 
chain $(W_t)_{t\geq 0}$ on $\{0\}\cup\eN$ with starting state
$W_0=0$, and transition probabilities 

$$ \Pee( W_{t+1} = j | W_{t}=i ) = \begin{cases}
                                    \left(1-(1/q)^i\right)^4 & \text{ if $j=1-1$ and $i\geq 1$, } \\
                                    1 - \left(1-(1/q)^i\right)^4 & \text{ if $j=1+1$ and $i \geq 0$, }\\
                                    0 & \text{ otherwise.}
                                   \end{cases}
$$

\noindent 
The result now immediately follows from Lemma~\ref{lem:MCret}.
\end{proof}

We also define $X_i := T_i - T_{i-1}$ and 
$\tau(t) = \inf\{j : T_j > t\}$ for all $t>0$.
Again it can be easily seen from the iterative procedure generating $\Pi$ and $\Pi'$ than 
$X_1, X_2,\dots$ are i.i.d.
Moreover, if we define the maps $\Sigma_1, \Sigma_1' : [X_1]\to[X_1], \Sigma_2,\Sigma_2' : [X_2]\to[X_2], \dots$ by setting

$$ \Sigma_i(j) := \Pi( T_{i-1} + j )-T_{i-1}, \quad \Sigma_i'(j) :=  \Pi'( T_{i-1} + j )-T_{i-1}, $$

\noindent
then $(\Sigma_1,\Sigma_1'), (\Sigma_2,\Sigma_2'),\dots$ are i.i.d.~as well.

We write $\Xcal_i := \{ T_{i-1}+1,\dots, T_i\}$.
Observe that, with probability $1-o(1)$, for each $i$ such that $T_i < n/2-\log^2 n$ we have 

$$ \Pi_n\left[\Xcal_i\right] = r_n\left[ \Pi\left[\Xcal_i\right] \right] = r_n\left[\Xcal_i\right], $$%
$$ \Pi_n\left[ r_n\left[\Xcal_n\right]\right] = \Pi'\left[\Xcal_i\right] = \Xcal_i, $$

\noindent
by Corollary~\ref{cor:PinPicpl2sidedq_geq_one}. In other words, writing 
$\Ycal_i := \Xcal_i \cup r_n\left[\Xcal_i\right]$, we have 

$$ \Pi_n\left[ \Ycal_i \right] = \Ycal_i, $$

\noindent
for each $i$ such that $T_i < n/2-\log^2 n$.
In particular, every cycle of $\Pi_n$ is either 
completely contained in one of $\Ycal_1, \dots, \Ycal_{\tau(n/2)-1}$ or it contains some number 
between $\min(n/2-\log^2 n, T_{\tau(n/2)-1})$ and $\max(n/2+\log^2 n, n+1-T_{\tau(n/2)})$.
We observe that the number of cycles of $\Pi_n$ length $2i$ contained in $\Ycal_i$
equals the number of cycles of $\Pi_n\circ\Pi_n$ of length $i$ contained in $\Xcal_i$.
Now note that on $\Xcal_i$ we have $\Pi_n\circ\Pi_n = (\Pi_n\circ r_n) \circ (r_n\circ \Pi_n) 
= \Pi'\circ\Pi$. In particular, the number cycles of $\Pi_n\circ\Pi_n$ of length $i$ contained in $\Xcal_i$
equals the number of cycles of $\Sigma_i'\circ\Sigma_i$ of length $i$.

We fix $a_{1},\dots,a_\ell \in \eR$, not all zero, and set

$$ \varphi(\pi) := \sum_{i=1}^\ell a_i C_{2i}(\pi), \quad \psi(\pi) := \sum_{i=1}^\ell a_i C_i(\pi), $$

$$ Y_i := \psi( \Sigma_i'\circ\Sigma_i ). $$

By the previous

\begin{equation}\label{eq:aapie} 
\left| \varphi(\Pi_n) - \sum_{i=1}^{\tau(n/2)} Y_i \right| \leq 
2 \max_i |a_i| \cdot \left( X_{\tau(n/2)} + \log^2 n \right) = o\left( \sqrt{n} \right), 
\end{equation}

\noindent
with probability $1-o(1)$ (where the last equality holds 
because $X_{\tau(n/2)} = o(\sqrt{n})$ with 
probability $1-o(1)$ by Lemma~\ref{lem:max_iid} and Lemma~\ref{lem:Rplus1}).
Completely analogously to Lemmas~\ref{lem:other_conditions} and~\ref{lem:varZt} we have

\begin{lem}
$\Ee X_1 > 0$ and $\Var X_1, \Var Y_1 < \infty$.\hfill $\blacksquare$
\end{lem}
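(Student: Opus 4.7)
The plan is to mirror the argument of Lemma~\ref{lem:other_conditions}, since the statement is the direct analogue in the two--sided (composition) setting and the ingredients are already in place.

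First, since by definition $X_1 = T_1 \geq 1$, we immediately obtain $\Ee X_1 \geq 1 > 0$. For the finiteness of $\Var X_1$, I would invoke Lemma~\ref{lem:Rplus1}, which asserts $\Ee T_1^2 < \infty$; because $X_1 = T_1$, this gives $\Ee X_1^2 < \infty$ and therefore $\Var X_1 < \infty$.

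For $\Var Y_1$, the idea is to control $|Y_1|$ by $X_1$ and then reuse the bound on $\Ee X_1^2$. Recall that $Y_1 = \psi(\Sigma_1' \circ \Sigma_1) = \sum_{i=1}^{\ell} a_i C_i(\Sigma_1' \circ \Sigma_1)$, where $\Sigma_1, \Sigma_1'$ are bijections of $[X_1]$, hence $\Sigma_1' \circ \Sigma_1$ is a permutation of $[X_1]$. Since the total number of cycles of any length in a permutation of $[X_1]$ is at most $X_1$, we obtain
\begin{equation}
|Y_1| \;\leq\; \max_{1\leq i\leq \ell} |a_i| \cdot \sum_{i=1}^{\ell} C_i(\Sigma_1' \circ \Sigma_1) \;\leq\; \max_{1\leq i\leq \ell} |a_i| \cdot X_1.
\end{equation}
Consequently $\Ee Y_1^2 \leq \bigl(\max_i |a_i|\bigr)^2 \Ee X_1^2 < \infty$, so $\Var Y_1 < \infty$.

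No step of this proof is genuinely difficult; the only subtlety to flag is that $\Sigma_1$ and $\Sigma_1'$ are defined on the same index set $[X_1]$ (by construction of the regeneration times from the pair of arc chains), so their composition makes sense as a permutation of $[X_1]$ and the elementary bound on the number of cycles applies. Everything else is an immediate consequence of Lemma~\ref{lem:Rplus1}. \hfill $\blacksquare$
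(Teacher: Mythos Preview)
Your proposal is correct and follows exactly the approach the paper intends: the paper itself gives no separate proof here, simply writing ``Completely analogously to Lemmas~\ref{lem:other_conditions} and~\ref{lem:varZt}'' and ending with a $\blacksquare$. Your argument reproduces that analogy precisely --- $X_1=T_1\geq 1$ gives $\Ee X_1>0$, Lemma~\ref{lem:Rplus1} gives $\Ee X_1^2<\infty$, and the bound $|Y_1|\leq \max_i|a_i|\cdot X_1$ (since $\Sigma_1'\circ\Sigma_1$ is a permutation of $[X_1]$) then yields $\Var Y_1<\infty$.
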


\begin{lem}
$\Var\left( Y_1\Ee X_1 - X_1\Ee Y_1\right) > 0$. \hfill $\blacksquare$
\end{lem}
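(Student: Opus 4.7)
The plan is to mirror the argument of Lemma~\ref{lem:varZt}, adapted to the two-sided setting. I would assume for contradiction that $W := Y_1\Ee X_1 - X_1\Ee Y_1$ is almost surely equal to some constant $c \in \bR$, and derive a contradiction by exhibiting three explicit ``witness'' configurations of positive probability which force $W$ to take at least two different values.

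The key enabling fact is that, since $\Pi$ and $\Pi'$ are independent $\on{Mallows}(\bN, 1/q)$ processes and each generating variable $Z_i \sim \on{Geo}(1-1/q)$ places positive mass on every positive integer, any prescribed pair $(\sigma, \sigma') \in S_k \times S_k$ arises as $(\Pi|_{[k]}, \Pi'|_{[k]})$ with strictly positive probability. For each $k \geq 1$ I would use the witness $\sigma = \on{id}_{[k]}$ together with the single $k$-cycle $\sigma' = (1\,2\,\cdots\,k)$. Since $\sigma'([j]) = \{2,\dots,j+1\} \neq [j]$ for every $j<k$, on this event $T_1$ is forced to equal exactly $k$, so $X_1 = k$ and $\Sigma_1' \circ \Sigma_1 = \sigma'$ is a single $k$-cycle. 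Applying this with $k = \ell+1$ and with $k = \ell+2$ both force $Y_1 = 0$ (the only cycle has length exceeding $\ell$), whence $c = -(\ell+1)\Ee Y_1 = -(\ell+2)\Ee Y_1$, so $\Ee Y_1 = 0$ and $c = 0$. Then, fixing any $i \in [\ell]$ with $a_i \neq 0$ and applying the witness with $k = i$ yields $Y_1 = a_i$, and therefore $c = a_i \Ee X_1 \neq 0$ (since $\Ee X_1 \geq 1$), which is the desired contradiction.

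I do not anticipate any serious obstacle. The one point that requires a moment's care is verifying that the witness configurations really do pin down $T_1 = k$ rather than allowing some smaller regeneration time, but this reduces to the elementary observation that the cyclic permutation $\sigma'$ never stabilizes a proper prefix of $[k]$. The argument is essentially identical in spirit to Lemma~\ref{lem:varZt}; the only substantive change is that the relevant cycle structure is now that of the composition $\Sigma_1' \circ \Sigma_1$ rather than of $\Sigma_1$ alone, which is harmless because we are free to take one of the two permutations to be the identity.
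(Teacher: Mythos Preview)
Your proposal is correct and is precisely the analogue of Lemma~\ref{lem:varZt} that the paper intends; the paper itself omits the proof, stating only that it follows ``completely analogously.'' Your choice of witnesses $(\Sigma_1,\Sigma_1')=(\mathrm{id}_{[k]},(1\,2\,\cdots\,k))$ is a clean way to realize a single $k$-cycle for $\Sigma_1'\circ\Sigma_1$ while forcing $T_1=k$, and the rest of the argument carries over verbatim.
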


We can thus conclude from Theorem~\ref{thm:regen_clt} that

$$ \frac{ \sum_{i=1}^{\tau(n/2)} Y_i - \left(\frac{\Ee Y_1}{\Ee X_1}\right) \cdot (n/2)}{\sqrt{n/2}} 
\xrightarrow[n\to\infty]{\text{d}} \cN\left( 0, \frac{\Var\left( Y_1\Ee X_1 - X_1\Ee Y_1\right)}{\left(\Ee X_1\right)^3}\right). 
$$

\noindent
Setting $\mu_{2i} := \frac{\Ee C_i(\Sigma_1'\circ \Sigma_1)}{2\Ee X_1}$ and using~\eqref{eq:aapie}
this gives

$$ a_1\cdot\frac{C_{2}(\Pi_n) - \mu_2 n}{\sqrt{n}}+\dots+a_\ell\cdot\frac{C_{2\ell}(\Pi_n) - \mu_{2\ell} n}{\sqrt{n}}
\xrightarrow[n\to\infty]{\text{d}}
a_1 N_1 + \dots + a_\ell N_\ell, $$

\noindent
where $(N_1,\dots,N_\ell) \isd \cN(\underline{0}, P)$ with 

$$ P_{ij} := \Cov\left( \frac{C_i(\Sigma_1'\circ\Sigma_1)\Ee X_1 - X_1\Ee C_i(\Sigma_1'\circ\Sigma_1)}{
\sqrt{2} \cdot \left(\Ee X_1\right)^{3/2}}, 
\frac{C_j(\Sigma_1'\circ\Sigma_1)\Ee X_1 - X_1\Ee C_j(\Sigma_1'\circ\Sigma_1)}{{\sqrt{2}\cdot\left(\Ee X_1\right)^{3/2}}}\right). $$

\noindent
Again the result follows by an application of the Cramer-Wold device.

\section{The proof of Theorem~\ref{thm:odd_cycles}}

%%The following quantitative version of part~\ref{item:gnedin_converge} of Lemma~\ref{lem:gnedin} will be very helpful for us.
Part \ref{item:gnedin_converge} of Lemma \ref{lem:gnedin} says that, almost surely, $\Sigma_{I_n} \to \Sigma$ for $I_n := \{-n,\dots,n\}$.
It however leaves open how fast the convergence is. 
The following lemma shows that in fact, with high probability, for the vast majority of elements of $I_n$, the values
of $\Sigma$ and $\Sigma_{I_n}$ agree. This will be very helpful for us.

\begin{lem}\label{lem:convergence_infinite}
Let $0<q<1$ and $\Sigma \sim \on{Mallows}(\bZ, q)$ and $I_n := \{-n,\ldots,n\}, J_n := \{-n+1,\dots,n\}$. 
Almost surely there exists a (random) $N \in \bN$ such that 
$\Sigma_{I_n}(i)=\Sigma_{J_n}(i)=\Sigma(i)$ for all $n\geq N$ and $i$ with $|i| \leq n - \log^2 n$.
\end{lem}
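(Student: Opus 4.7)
The plan is to introduce \emph{separation points} of $\Sigma$, namely the integers $k\in\bZ$ satisfying $\Sigma[\{j<k\}]=\{j<k\}$, and to show that suitable separation points exist near both endpoints of $I_n$ for every sufficiently large $n$, almost surely. Let $S:=\{k\in\bZ : \Sigma[\{j<k\}]=\{j<k\}\}$; by the balanced property (a consequence of Lemma~\ref{lem:gnedin}) this set is almost surely unbounded in both directions.

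First I would prove the following combinatorial reduction: if there exist $k_-,k_+\in S$ with $-n\leq k_-\leq -n+\log^2 n$ and $n-\log^2 n+1\leq k_+\leq n+1$, then $\Sigma$ maps $A:=\{k_-,\dots,k_+-1\}$ bijectively to itself; the elements of $I_n\setminus A$ on the left (resp.\ right) are sent by $\Sigma$ strictly into $(-\infty,k_-)$ (resp.\ $[k_+,\infty)$); and a direct computation of the rank of $\Sigma(i)$ in $\Sigma[I_n]$ versus its rank in $I_n$ then shows $\Sigma_{I_n}(i)=\Sigma(i)$ for every $i\in A$. Since $A\supseteq\{i:|i|\leq n-\log^2 n\}$, this yields the conclusion for $I_n$; the argument for $J_n$ is identical with the left window shifted to $[-n+1,-n+\log^2 n]$.

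Next I would analyse the distribution of $S$. By shift-invariance (Lemma~\ref{lem:gnedin}~\ref{item:gnedin_shift}), $S$ is a stationary random subset of $\bZ$. The regenerative structure of the Mallows process further implies that, conditional on $k\in S$, the restriction of $\Sigma$ to $\{k,k+1,\dots\}$ is distributed as a shifted $\on{Mallows}(\eN,q)$ process; hence the gaps between consecutive elements of $S$ are i.i.d.\ copies of the first return time $T_1$ of the arc chain $\hat\kappa_t$ to $0$. A Lyapunov/drift argument on the arc chain --- whose transitions $p_{k,k-1}=(1-q^k)^2$ and $p_{k,k+1}=q^{2k+1}$ yield overwhelming downward drift for large $k$, and whose stationary distribution $\nu_s$ in~\eqref{eq:nu} decays super-exponentially --- would give an exponential tail $\bP[T_1>L]\leq Ce^{-cL}$, and by size-biasing the gap containing any fixed integer has the same exponential tail. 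Hence $\bP[\text{no separation point in an interval of length }\log^2 n]\leq Ce^{-c\log^2 n}=Cn^{-c\log n}$, which is summable in $n$; a union bound over the two windows near $\pm n$ combined with the Borel--Cantelli lemma then finishes the proof.

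The main obstacle is the exponential-tail estimate on $T_1$: Lemma~\ref{lem:Rplus} and Lemma~\ref{lem:MCret} only furnish $\Ee T_1^k<\infty$ for every $k$, and this is genuinely insufficient, since polynomial tails $\bP[T_1>L]\lesssim L^{-k}$ yield $\sum_n \log^{-2k} n = \infty$ for every fixed $k$. Producing at least stretched-exponential tails therefore requires a more delicate drift analysis of the arc chain, for instance via an exponential Lyapunov function tuned to the super-exponential decay of $\nu_s$.
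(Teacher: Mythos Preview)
Your separation-point strategy is sound at the combinatorial level, and you have correctly isolated its weak point: to apply Borel--Cantelli you need the probability that an interval of length $\log^2 n$ contains no separation point to be summable in $n$, and finite moments of $T_1$ (which is all that Lemmas~\ref{lem:Rplus} and~\ref{lem:MCret} supply) give only polylogarithmic decay, hence a divergent series. An exponential or stretched-exponential tail for $T_1$ is indeed obtainable from the arc chain (the downward drift $p_{k,k-1}=(1-q^k)^2\to 1$ is overwhelmingly strong), but you would have to prove it from scratch; nothing in the paper gives it to you.

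The paper's own argument bypasses this entirely by abandoning separation points in favour of \emph{displacements}. It defines the event
\[
B_n=\Bigl\{\,|\Sigma(i)-i|<5\log_{1/q}n\ \text{for all }i\in I_n,\ \text{and}\ |\Sigma(j)-j|\le 5\log_{1/q}n+|j|-n\ \text{for all }j\notin I_n\,\Bigr\},
\]
and uses the already-available tail bound $\bP[|\Sigma(0)|>m]=\Theta(q^m)$ (Lemma~\ref{lem:gnedin}~\ref{item:gnedin_disp}) together with the union bound to get $\bP[B_n^c]=O(n^{-4})$, which is summable. On $B_n$ one checks directly that for $|i|<n-10\log_{1/q}n$ the rank of $\Sigma(i)$ inside $\Sigma[I_{n+a}]$ is independent of $a\ge 0$, so $\Sigma_{I_n}(i)=\Sigma(i)$ by Lemma~\ref{lem:gnedin}~\ref{item:gnedin_converge}. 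This route is shorter and uses only the displacement estimate, avoiding any tail analysis of the regeneration times; your approach would ultimately work but requires establishing a new moment-type input that the paper never needs.
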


\begin{proof}
We define the event $B_n$ by %
\begin{equation}
B_n := \left\{ \begin{array}{l}|\Sigma(i) - i | < 5\log_{1/q} n \quad\text{for all $i \in I_{n}$, and }\\
|\Sigma(j)-j| \leq 5\log_{1/q} n + |j| - n\quad \text{for all }j\notin  I_n	               
\end{array} \right\}.
\end{equation}%
By Lemma \ref{lem:gnedin}, Part \ref{item:gnedin_disp} we have
	\begin{equation}\label{eq:event_BN}
\bP \left[ {B^c_n} \right] = O \left(   (2n+1) q^{5\log _{1/q} n } + 2 \sum_{j>n}q^{5\log _{1/q}n + j - n} \right) = O(n^{-4}).
	\end{equation}

We will show that $B_n$ implies the conclusion of the lemma. 
Let $a\geq 0$ and consider the intervals $I_{n+a}$. 
Let $i$ be such that $|i| < n - 10 \log_{1/q} n$. If $B_n$ holds, then for all $j < -n$ we have 
$\Sigma(j) < -n + 5 \log_{1/q} n \leq \Sigma(i)$. 
Similarly $j> n$ implies $\Sigma(j) > n - 5 \log_{1/q} n \geq \Sigma(i)$. Thus $B_n$ implies
	\begin{equation}
		|\{ j\in I_{n+a} \mid \Sigma (j)  < \Sigma (i)  \}| = a + |\{ j\in I_n \mid \Sigma (j)  < \Sigma (i)  \}|.
	\end{equation}
	As 
	\begin{equation}
		\Sigma_{I_{n+a}} (i) = -n-a + |\{ j\in I_{n+a} \mid \Sigma (j)  < \Sigma (i)  \}|,
	\end{equation}
the sequence $\Sigma_{I_{n+a}}(i)$ is constant for all $a\geq 0$. By Lemma \ref{lem:gnedin}, Part \ref{item:gnedin_converge}, 
with probability one there is some $n'$ such that for all $n''\geq n'$ we have $\Sigma_{I_{n''}}(i)=\Sigma(i)$. 
There is some $a\geq 0$ such that $n' \leq n+a$, so that in particular we must have $\Sigma_{I_n}(i) = \Sigma(i)$.
Similarly, $\Sigma_{J_n}(i) = \Sigma(i)$.
By the Borel--Cantelli Lemma $B_n$ holds for all but finitely many $n$. 
\end{proof}

We also require the following Markov chain representation for the times $T_i$. Let $\Pi\sim\Mallows(\eN,q)$. Consider the process
$$
M_n=\max_{1\leq i\leq n}\Pi(i)-n
$$
on $\eN$. This is a positive recurrent Markov process -- see \cite{basu2016limit}. 
The Markov process can be described in terms of the geometric random variables defining 
the Mallows process. Specifically, the walk can be described as moving from $M_n$ to
$M_{n+1}=\max(M_n,Z_{n})-1$ where the $Z_n$ are independent geometric random variables. 
Let $R_i$ denote the hitting time of $i$ and let $R_i^+$ denote the return time at $i$. Then if the chain is started from $0$, $R_0^+$ is distributed as the size of an excursion in the Mallows process. This Markov chain was introduced in \cite{basu2016limit} to study the moments of the $T_i$. We are now ready for the proof of Theorem~\ref{thm:odd_cycles}.

We start by considering $\Pi_{2n+1}$. We let $\Sigma\sim \Mallows(\bZ,1/q)$, and 
let $S_0$ be the smallest integer (we will show that this exists) such that $\Sigma([-S_0,S_0])=[-S_0,S_0]$, 
and then let $S_i$ be defined inductively as the smallest number 
larger than $S_{i-1}$ such that $\Sigma$ preserves $[-S_i,S_i]$. 
To see that all these values are finite almost surely, we note that the times 
$T_i$ for $i\in \bZ$ for which $\Sigma([T_i+1,T_{i+1}])=[T_{i}+1,T_i]$, with the convention that $T_0$ 
contains $0$, forms a stationary renewal process, with the $T_i$ for $i\neq 0$ the same as for 
the $\Mallows(\eN,1/q)$ process, and $T_0$ having its size-bias distribution 
(see Theorem 3.2 of \cite{PitmanTang}). 
Then $T_0$ is finite almost surely, and given $T_0$, the two sides are independent and behave 
like $\Mallows(\eN,1/q)$ processes. Then the $S_i$ correspond to a simultaneous renewal on both sides 
of this process, with $S_0$ being the first time this occurs, which are the return times in a product of 
two independent copies of the positive recurrent Markov chain $M_i$ defined above, which is thus also 
positive recurrent. Thus, all $S_i$ are finite almost surely. By definition, $\Sigma\circ r$ preserves $[-S_0,S_0]$ 
and exchanges $[S_{i-1}+1,S_i]$ and $[-S_i,-S_{i-1}-1]$.

We then immediately see that there are no infinite cycles in $\Sigma\circ r$, and the odd cycles must be contained 
in the interval $[-S_0,S_0]$. Thus, the $C_{2i+1}(\Pi_{[-n,n]})\to C_{2i+1}(\Pi_{[-S_0,S_0]})$ almost surely.
The result for $\Pi_{2n+1}$ follows by noting that $C_{2i+1}(\Pi_{2n+1})\isd C_{2i+1}(\Pi_{[-n,n]})$
by Lemma~\ref{lem:gnedin} and the observations that precede it.
	
The proof of the result for $\Pi_{2n}$ follows in exactly the same manner, except centered at $\frac{1}{2}$ rather 
than $0$, and using $\rho$ instead of $r$.\par

Technically speaking, the proof of Theorem~\ref{thm:odd_cycles} is not yet complete, as we have not
yet shown the random vectors $(C_1(r\circ\Sigma), C_3(r\circ\Sigma),\dots)$ and 
 $(C_1(\rho\circ\Sigma), C_3(\rho\circ\Sigma),\dots)$ have distinct probability distributions.
 This will however follow immediately from Theorem~\ref{thm:asymp}, which we will prove in a little while.

We also note the following consequence which will be useful later.
\begin{lem}\label{cor:no_infinite}
	Let $\Sigma \sim \on{Mallows}(\Zed,q)$ for $0<q<1$. Almost surely, $\Sigma$ has no cycles of infinite length.
\end{lem}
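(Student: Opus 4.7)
The claim is an immediate consequence of the renewal structure already exhibited in the course of proving Theorem~\ref{thm:odd_cycles}. The strategy is to produce, almost surely, a doubly-infinite increasing sequence of integers $\ldots < T_{-1} < T_0 < T_1 < \ldots$ with $T_i \to \pm\infty$ such that $\Sigma$ maps each finite block $[T_i+1,T_{i+1}]$ bijectively onto itself. Once such a decomposition is in hand, the orbit of any $k \in \bZ$ under $\Sigma$ is trapped in the unique block $[T_i+1,T_{i+1}]$ containing $k$, hence is finite.

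To produce the $T_i$ I would proceed in two steps. First, for the one-sided model $\Pi \sim \Mallows(\eN,q)$, I would invoke the Markov chain $M_n = \max_{1\leq i\leq n}\Pi(i)-n$ recalled at the end of the previous section. It is positive recurrent, so its return times to $0$ are almost surely finite and the inter-renewal times of the one-sided renewal process of endpoints $k$ with $\Pi([k])=[k]$ are i.i.d.\ and a.s.\ finite; in particular there are infinitely many renewals and they tend to $\infty$. Second, I would quote the identification from the proof of Theorem~\ref{thm:odd_cycles} (based on Theorem~3.2 of~\cite{PitmanTang}) that the analogous renewal process $(T_i)_{i\in\bZ}$ for the bi-infinite $\Sigma \sim \Mallows(\bZ,q)$ is obtained by placing two independent copies of the one-sided process back-to-back across a size-biased block $[T_0+1,T_1]$ containing $0$. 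Since the size-biased block is a.s.\ finite and each side is an ordinary renewal process with finite increments, the full family $(T_i)_{i\in\bZ}$ is a.s.\ well-defined with $T_i\to\pm\infty$.

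Once the $T_i$ are in place, the proof concludes in one line: $\bZ$ is the disjoint union of the finite intervals $[T_i+1,T_{i+1}]$ for $i\in\bZ$, each of which is $\Sigma$-invariant, so every cycle of $\Sigma$ has length at most $T_{i+1}-T_i<\infty$. I do not foresee any genuine obstacle: the only substantive input is the existence and a.s.\ finiteness of the bi-infinite renewal times, which has already been used (and attributed) in the preceding proof.
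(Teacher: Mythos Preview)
Your proposal is correct and follows essentially the same route as the paper: both arguments trap each cycle of $\Sigma$ inside one of the finite blocks furnished by the renewal structure established in the proof of Theorem~\ref{thm:odd_cycles}. The only cosmetic difference is that the paper phrases this in terms of the symmetric times $S_i$ (so the blocks are $[-S_0,S_0]$ and the shells $[-S_i,S_i]\setminus[-S_{i-1},S_{i-1}]$), whereas you use the two-sided renewal times $T_i$ directly; either choice yields the same one-line conclusion.
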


\begin{proof}
	Any cycle must be contained in an interval $[-S_0,S_0]$, $[S_{i-1}+1,S_i]$, or $[-S_i,-S_{i-1}-1]$, all of which are finite almost surely.
\end{proof}

\section{The proof of Theorem~\ref{thm:mi}}

\subsection{The proof of Part~\ref{itm:m1part1} of Theorem~\ref{thm:mi}}

% 
% 
% Recall that $f^{(n)}$ denotes the $n$-fold composition of $f$ with itself.
% The following observation will help to study (long) cycles of $\Sigma\sim\Mallows(\Zed,q)$.
% 
% \begin{lem}\label{lem:cycle_bound}
% Let $\Sigma \sim \on{Mallows}(\Zed,q)$ with $0<q<1$, and let $i \in \bN$. Then 
% \begin{equation}
% \bP \left[ \max\{\Sigma(0), \Sigma^{(2)}(0), \dots,\Sigma^{(i)}(0)\} \geq m \right]  = O(m \,q^{m/i}),
% \end{equation}
% as $m\to\infty$. 
% \end{lem}
% 
% \begin{proof}
% Let $I_m = \{-m,\ldots , m\}$. Let $A$ be the event that for all $j\in I_m$ we have $|\Sigma(j) -j| \leq m /i$. By the union bound we have 
% \begin{equation}
% \bP \left[ A^c \right] \leq  O(|I_m|\ q^{m/i}), 
% \end{equation}
% by Lemma \ref{lem:gnedin}, Part \ref{item:gnedin_disp}. If $A$ holds then $\max_{j \in [i]} \{\Sigma^{(j)}(0)\} \leq m$. 
% \end{proof}

Let $0<q<1$, let $\Pi_{n} \sim \Mallows(n,q)$ and $\Sigma \sim \on{Mallows}(\Zed,q)$  and let $i \in \eN$ be fixed. 
%Note that $i C_i(\Pi_n)$ counts the number of points of $[n]$ that lie in an $i$-cycle of $\Pi_n$.
Using that $0 \leq C_i(\Pi_n)/n \leq 1$, it follows from Theorem~\ref{thm:normal} that 

\begin{equation}\label{eq:nootie} \lim_{n\to\infty} \frac{\Ee C_i(\Pi_n)}{n} = m_i. \end{equation}

Setting $I_n := \{-n,\ldots, n\}$, by Part~\ref{itm:gnedin_Z_finite} of Lemma~\ref{lem:gnedin}, we have 
$C_i(\Pi_{2n+1}) \isd C_i(\Sigma_{I_n})$.
We define the events

$$ \begin{array}{l}
E := \{\text{$\Sigma_{I_n}(j) = \Sigma(j)$ for all $-n+\log^2 n\leq j \leq n-\log^2 n$}\}, \\
F := \{\text{$|\Sigma(j)-j| \leq \log^2 n$ for all $j \in I_n$ }\}. 
\end{array} $$

\noindent
Then $\Pee(E) = 1-o(1)$ by Lemma \ref{lem:convergence_infinite}.
By Part~\ref{itm:gnedin_displacements} of Lemma~\ref{lem:gnedin} and the remarks that 
follow that lemma $\Pee\left(F^c\right) = O\left( 
n \cdot q^{\log^2 n} \right) = o(1)$.

It follows that for all $j$ with $|j| < n-i \cdot \log^2 n$ we have 

$$ \left| \Pee( \text{$j$ in an $i$-cycle of $\Sigma_{I_n}$} ) -
\Pee( \text{$j$ in an $i$-cycle of $\Sigma$} ) \right| \leq \Pee( E^c) + \Pee( F^c ) = o(1). $$

The number of elements of $I_n$ in an $i$-cycle of $\Sigma_{I_n}$ equals $i \cdot C_i( \Sigma_{I_n} )$.
We see that

\begin{equation}\label{eq:miesie} \begin{array}{rcl} i \cdot \Ee C_i( \Pi_{2n+1} ) 
& = & \displaystyle 
\sum_{-n\leq j \leq n} \Pee( \text{$j$ in an $i$-cycle of $\Sigma_{I_n}$} ) \\[2ex]
& = & \displaystyle 
\sum_{-n \leq j \leq n } 
\Pee( \text{$j$ in an $i$-cycle of $\Sigma$} ) + o(n) \\[2ex]
& = & \displaystyle
(2n+1) \cdot \Pee( \text{$0$ in an $i$-cycle of $\Sigma$} ) + o(n), 
\end{array} \end{equation}

\noindent
where we use Part~\ref{item:gnedin_shift} of Lemma~\ref{lem:gnedin} (together with the remarks following the lemma) 
for the last line.
Dividing the LHS and RHS of~\eqref{eq:miesie} 
by $2n+1$ and sending $n\to\infty$, and recalling~\eqref{eq:nootie}, proves the result.

\subsection{The proof of Part~\ref{itm:m1part2} of Theorem~\ref{thm:mi}}

By Lemma \ref{cor:no_infinite}, $\Sigma\sim\on{Mallows}(\Zed,q)$ for $0<q<1$ almost surely has no infinite cycles. By Part~\ref{itm:m1part1} of Theorem \ref{thm:mi} we have 
\begin{equation}
\sum_{i=1}^{\infty} i m_i = \sum\limits_{i=1}^\infty \bP \left[ 0 \text{ in an $i$--cycle of $\Sigma$} \right] = 
\bP \left[ \text{$0$ lies in a finite cycle of $\Sigma$} \right] = 1,
\end{equation}
the last equality due to Lemma \ref{cor:no_infinite}.

\subsection{The proof of Part~\ref{itm:m1part3} of Theorem~\ref{thm:mi}}

We start by giving an alternative expression for $m_1$ by employing the tools developed by Gladkich and Peled~\cite{Peled}.

\begin{lem}\label{lem:m_1_b}
	\begin{equation}\label{eq:m_1}
		m_1 = \sum\limits_{s\geq 0} \nu_s q^{2s}(1-q),
	\end{equation}
	with $\nu$ as given in \eqref{eq:nu}. 
\end{lem}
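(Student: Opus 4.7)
The plan is to express $m_1$ as the limiting density of fixed points of $\Pi_n \sim \Mallows(n,q)$ and then evaluate this density using the arc chain machinery developed in~\cite{Peled}. From Part~\ref{itm:m1part1} of Theorem~\ref{thm:mi} (or directly from Theorem~\ref{thm:normal} combined with the trivial bound $C_1(\Pi_n) \leq n$, which gives uniform integrability), we already have
$$ m_1 = \lim_{n\to\infty} \frac{\Ee C_1(\Pi_n)}{n}. $$
So it suffices to evaluate the right-hand side.

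The first step is to write, using linearity of expectation and Lemma~\ref{lem:prop_fixed_kappa} after conditioning on $\kappa_t$,
$$ \Ee C_1(\Pi_n) = \sum_{t=0}^{n-1} \Pee\bigl(\Pi_n(t+1)=t+1\bigr) = \sum_{t=0}^{n-1} \Ee\!\left[\frac{(q^{\kappa_t}-q^{\kappa_t+1})(q^{\kappa_t}-q^{n-t})}{(1-q^{n-t})^2}\right]. $$
Now observe that as $n \to \infty$ with $t$ and $n-t$ both tending to infinity, Proposition~\ref{prop:convergence_kappa} yields $\kappa_t \Rightarrow \nu$, while the factor $q^{n-t}$ tends to $0$. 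Thus the integrand converges pointwise in $\kappa_t$ to $q^{\kappa_t}(1-q)\cdot q^{\kappa_t} = (1-q)q^{2\kappa_t}$, and its expectation converges to $(1-q)\sum_{s\geq 0}\nu_s q^{2s}$.

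To pass from this pointwise statement to the Cesàro average, I would fix $T \in \eN$ and split $\{0,\dots,n-1\}$ into the boundary $\{t<T\}\cup\{t>n-T-1\}$ and the bulk $\{T\leq t\leq n-T-1\}$. Since the integrand is bounded by $1$, the boundary contributes at most $2T$, so $O(T/n)=o(1)$ in the Cesàro average as $n\to\infty$. In the bulk, Proposition~\ref{prop:convergence_kappa} provides an error $\epsilon(T)\to 0$ (as $T\to\infty$) uniform in $t$ with $\min(t,n-t)\geq T$, and $q^{n-t}\leq q^T$ is also uniformly small. Letting $n\to\infty$ first and then $T\to\infty$ yields
$$ m_1 \;=\; \lim_{n\to\infty}\frac{\Ee C_1(\Pi_n)}{n} \;=\; (1-q)\sum_{s\geq 0}\nu_s q^{2s}, $$
which is the claimed identity.

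The main technical point is the uniformity of the convergence $\kappa_t\Rightarrow\nu$ in Proposition~\ref{prop:convergence_kappa} with respect to $t$ (once $\min(t,n-t)\geq T$). Since our integrand is a bounded continuous function of $\kappa_t$, bounded by $1$, this uniformity is essentially automatic via dominated convergence together with the standard Cesàro-averaging bookkeeping outlined above; I do not expect any genuine obstacle here, only the mild annoyance of being careful that the ``double limit'' in $t,n-t\to\infty$ is handled properly before averaging.
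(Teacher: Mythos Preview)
Your argument is correct and uses the same arc-chain ingredients (Lemma~\ref{lem:prop_fixed_kappa} and Proposition~\ref{prop:convergence_kappa}), but the route to $m_1$ differs from the paper's. The paper does not Ces\`aro-average: it first invokes the already-proved identification $m_1=\Pee(\Sigma(0)=0)$ from Part~\ref{itm:m1part1} of Theorem~\ref{thm:mi}, then evaluates the \emph{single} probability $\Pee(\Pi_{2n+1}(n+1)=n+1)$ by conditioning on $\kappa_{n+1}$, applying Proposition~\ref{prop:convergence_kappa} at the midpoint $t=n+1$ and bounded convergence in $s$, and finally uses Lemma~\ref{lem:convergence_infinite} to identify this limit with $\Pee(\Sigma(0)=0)$. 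This avoids entirely the boundary/bulk split and the uniformity-in-$t$ issue you flag. Your approach trades that shortcut for a direct averaging argument starting only from $m_1=\lim_n \Ee C_1(\Pi_n)/n$, which is perhaps more self-contained but a little heavier; the uniformity you need is genuine but harmless, since ``$\kappa_{t(n)}\Rightarrow\nu$ for every sequence $t(n)$ with $\min(t(n),n-t(n))\to\infty$'' is equivalent (by a standard subsequence argument) to the uniform statement that for every $\eps>0$ there exists $T$ with $d_{\on{TV}}(\mathcal{L}(\kappa_t),\nu)<\eps$ whenever $\min(t,n-t)\geq T$.
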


\begin{proof}
Let $\Pi_{2n+1} \sim \on{Mallows}(2n+1,q)$. By Lemma \ref{lem:convergence_infinite} we have that
	\begin{equation}
		\bP \left[ \Pi_{2n+1} (n+1) =  n + 1 \right] \to 
		\bP \left[ \Sigma(0) = 0 \right] ,\qquad \text{ as $n\to \infty$},
	\end{equation}
	where $\Sigma \sim \on{Mallows}(\bZ, q)$. Now,%
\begin{equation}\label{eq:frhrgrberjh}
\begin{array}{c}
\displaystyle 
\lim_{n\to \infty}\bP \left[ \Pi_{2n+1}(n+1) = n + 1 \right] \\
 = \\
 \displaystyle 
 \lim_{n\to \infty} \sum\limits_{s \geq 0} \bP \left[ \kappa_{n+1} = s  \right] \cdot 	
 \bP \left[ \Pi_{2n+1}(n+1) = n + 1\, |\, \kappa_{n+1} = s \right].
\end{array}
\end{equation}%
By Lemma \ref{lem:prop_fixed_kappa} and Proposition \ref{prop:convergence_kappa} we have for all $s \geq 0$ that
	\begin{align}
		\lim_{n\to \infty} \bP \left[ \Sigma_{2n+1}(n+1) = n + 1\, |\, \kappa_{n+1} = s \right] &= q^{2s}(1-q),\\
		\lim_{n\to \infty} \bP \left[ \kappa_{n+1} = s  \right] &= \nu_s. 
	\end{align}
	So the summands on the right hand side of \eqref{eq:frhrgrberjh} converge pointwise to $\nu_s q^{2s}(1-q)$ as $n\to \infty$, and are uniformly bounded by $1$ for all $n$. By the bounded convergence theorem we thus conclude that
	\begin{equation}
		m_1 = \bP \left[ \Sigma(0) = 0 \right] = \sum\limits_{s\geq 0}\nu_s q^{2s}(1-q).
	\end{equation}
\end{proof}

Next, we will show that $m_1 = 1-  2q + O(q^2)$ as $q\downarrow 0$ be analyzing \eqref{eq:m_1}. 
Let $K_q$ denote the denominator in the expression for
$\nu$ given in~\eqref{eq:nu}. Define $t_s = \prod_{i=1}^s q^{2i-1} / (1-q^i)^2$. 
The $t_s$ satisfy for $s\geq 1$ the recursion relation 
	\begin{equation}
		t_{s+1} = t_s\cdot\frac{q^{2}}{(1-q^{s+1})^2 }\leq \frac{q^2 t_s}{(1-q)^2} \leq t_1 \left( \frac{q^2}{(1-q)^2} \right)^{s-1}  =\frac{q}{(1-q)^2}  \left( \frac{q^2}{(1-q)^2} \right)^{s-1}. 
	\end{equation}
	Thus
	\begin{align}
		K_q  = 1 + \sum\limits_{s\geq 1} t_s \leq 1 + \frac{q}{(1-q)^2} \sum\limits_{s\geq 1}  \left( \frac{q^2}{(1-q)^2} \right)^{s-1} .
	\end{align}
	If $q < 1/2$ then $q^2 / (1-q)^2 < 1$, in which case the above equals
	\begin{equation}
		1 + \frac{q}{(1-q)^2} \cdot \frac{1}{ 1 - \frac{q^2}{(1-q)^2} } = 1 + \frac{q}{(1-q)^2 - q^2}=  \frac{1-q}{1 - 2q}.
	\end{equation}
	Then 
	\begin{equation}
		\nu_0 =  \frac{1}{K_q} \geq \frac{1-2q}{1-q},
	\end{equation}
	so that by $m_1 \geq \nu_0 (1-q)$ we obtain 
	\begin{equation}\label{eq:lower_m1}
		m_1 \geq \nu_0 (1-q) \geq 1 - 2q.
	\end{equation}
	We also have the simple bound 
	\begin{equation}
		K_q \geq 1 + t_1 = 1 + \frac{q}{(1-q)^2},
	\end{equation}
	by which 
	\begin{equation}\label{eq:upper}
		m_1 \leq \frac{1-q}{  1 + \frac{q}{(1-q)^2}  } + \sum\limits_{s\geq  1} q^{2s}(1-q) = \frac{(1-q)^3}{1-q+q^2} + O(q^2) .
	\end{equation}
	The function ${(1-q)^3}/{(1-q+q^2)}$ is infinitely differentiable at $q=0$, where its first derivative equals $-2$. Thus
	\begin{equation}
		\frac{(1-q)^3}{1-q+q^2} = 1  - 2q + O(q^2).
	\end{equation}
	Together with \eqref{eq:lower_m1} and \eqref{eq:upper} this completes the the proof that $m_1 = 1 - 2q + O(q^2)$. \par

	We now proceed to show that $m_1 = (1-q)/4 + o(1-q)$ as $q\uparrow 1$. 
	Fix some $ 0 <\epsilon < \frac12$, let $0<q<1$ be so close to $1$ that $\frac{1}{q} \leq (1 + \epsilon)$ and let $S = \{s  \in \bN_{\geq 0} \mid |q^s - \frac12| < \epsilon\}$. If $q^s < \frac12  - \epsilon$, then by \eqref{eq:nu} we have the recursion
	\begin{equation}
		\nu_{s+1} = \nu_s \frac{1}{q} \left(  \frac{q^s}{1-q^s} \right)^2 \leq \nu_s \delta \qquad \text{where }\qquad \delta  \leq (1 + \epsilon) \left(\frac{ \frac12 - \epsilon }{ \frac12 + \epsilon}\right)^2 < 1.
	\end{equation}
	Then 
	\begin{equation}\label{eq:1afsafs}
		\sum\limits_{s>\max(S)}\nu_s \leq \nu_{\max(S)} \sum\limits_{ s > \max(S)} \delta^{s - \max(S)} =  \nu_{\max(S)}\frac{ \delta}{1 - \delta}.
	\end{equation}
	We now claim that $\nu_{\max (S)} = o(1)$ as $q \uparrow 1$. Let $s^* = \ceil{\log_q (\frac12(1 - \epsilon)) }$, then for $k\geq  0$ we have
	\begin{equation}
		\nu_{s^* + k + 1}\leq \nu_{s^*} \frac{1}{q} \left( \frac{q^{s^*+k+1}}{1 - q^{s^* +k + 1}} \right) \leq \nu_{s^*} (1+ \epsilon) \left( \frac{ 1 - \epsilon }{1 + \epsilon} \right) ^2 < \nu_{s^*}.
	\end{equation}
	So
	\begin{equation}
		1 \geq \sum\limits_{s= s^*}^{ \max (S) } \nu_s \geq (\max(S) - s^*)\nu _{\max(S)}.
	\end{equation}
	By
	\begin{equation}
		\max(S) -  s^* = \floor{\log_q\left( \frac{1}{2} + \epsilon \right)} - \ceil{\log_q\left(\frac{1}{2}  + \frac{\epsilon}{2}\right)} \to \infty \qquad\text{as } q \uparrow 1,
	\end{equation}
	we indeed have $\nu_{\max(S)}  = o(1)$ as $q\uparrow 1$. 

	Similarly, if $q^s > \frac12 + \epsilon$, then $\nu_{s+1} \geq \nu_s \tilde{\delta}$, where $\tilde{\delta} \geq \frac{\frac12 + \epsilon}{\frac12 - \epsilon} > 1$. Then 
	\begin{equation}\label{eq:2afsafs}
		\sum\limits_{0\leq s < \min(S)} \nu_s \leq \nu_{\min(S)} \sum\limits_{0\leq s < \min(S)} \tilde{\delta}^{ \min(S)-s}\leq \nu_{\min(S)} \frac{\tilde{\delta}}{1 - \tilde{\delta}}.
	\end{equation}
	By the same reasoning as before, $\nu_{\min(S)}$ is also $o(1)$ as $q\uparrow 1$. We conclude by Lemma \ref{lem:m_1_b} that 
	\begin{equation}
		m_1 = \sum\limits_{s\in S} (1-q)q^{2s} \nu_s + \sum\limits_{s\notin S}\nu_s q^{2s}(1-q)=\sum\limits_{s\in S} (1-q)q^{2s} \nu_s + o(1-q),\quad \text{as }q\uparrow 1.
	\end{equation}
	By the definition of $S$ we also have
	\begin{equation}
		(1 - \epsilon) (1-q)\frac14 \sum\limits_{s\in S}\nu_s \leq \sum\limits_{s\in S} (1-q)q^{2s} \nu_s  \leq (1 + \epsilon) (1-q)\frac14 \sum\limits_{s\in S}\nu_s.
	\end{equation}
	Now $\sum_{s\in S}\nu_s = 1-o(1)$ as $q\uparrow 1$, the result follows. \par
	
	We now show the limits 
	\begin{align}
		(m_1,m_2,\ldots)\label{eq:conv1} \to (0,0 ,\ldots)\qquad\text{ as }q\uparrow 1,\\
		(m_1,m_2,\ldots)\label{eq:conv2} \to (1,0,\ldots)\qquad\text{ as }q\downarrow 0.
	\end{align}

	By Part~\ref{itm:m1part2} of Theorem\ref{thm:mi} and the previous we have for $q\downarrow 0$ the expansion
	\begin{equation}
		\sum_{i=2}^\infty m_i \leq  \sum_{i=2}^{\infty} i m_i = 1 - m_1  = 2q + O(q^2).
	\end{equation}
	So necessarily all the $m_i$ converge to $0$ for $i\geq 2$ as $q\downarrow 0$. \par

	Let $\Pi_n \sim \on{Mallows}(n,q)$. For the limit as $q\uparrow 1$ we count the number of times 
	during the sampling algorithm that $i$ is the maximum element of an $r$--cycle for $r \geq 1$. 
	So suppose that during iteration $i$ of the algorithm there is an element $i' \leq i$ such that 
	having ${\Pi_{n}}(i) = i'$ would create an $r$--cycle. Then there is some $k\geq 0$ such that 
	the probability that ${\Pi_{n}}(i) = i'$ is 
	\begin{equation}
		\frac{(1-q)q^{k-1}}{1-q^{n-s+1}} \leq \frac{1-q}{1-q^{n-s+1}}. 
	\end{equation}
	For $i\leq n- \sqrt{n}$ the above is not more than $2(1-q)$ for $n$ large enough. 
	So the expected number of elements in ${\Pi_{n}}$ that are the maximum element of an $r$--cycle is 
	not more than $O(\sqrt{n}) + 2 (1-q)n$. Thus $ m_r = \lim_{n\to\infty}\bE [C_r(\Pi_n)/ n] \leq 2(1-q) $ so 
	that $\lim_{q\uparrow 1 }m_r = 0$. \par

	This shows the pointwise convergence of the two sequences in Part~\ref{itm:m1part3} of Theorem~\ref{thm:mi}. 
	That the pointwise convergence implies convergence wrt.~the $L_1$ norm follows from the 
	fact that $\sum_{j\geq i} m_j \leq \frac{1}{i}$ for all $i$ by Part~\ref{itm:m1part2} of Theorem~\ref{thm:mi}.
	So, for every $\eps>0$ we can take $i > 2/\eps$ and find a $\delta>0$ so that 
	$m_1, \dots, m_{i} < \eps/(2i)$ whenever $1-\delta<q<1$. 
	If then follows $|(m_1,m_2,\dots) - (0,0,\dots)|_1 \leq \eps$ for all $0<q<\delta$.
	The case when $q\downarrow 0$ can be dealt with analogously.

\section{The proof of Theorem~\ref{thm:mu2i}}

\subsection{The proof of Part~\ref{itm:mu2ipart1} of Theorem~\ref{thm:mu2i}}

We start by proving the existence of a coupling in the same spirit as the couplings used in previous proofs.

\begin{lem}\label{lem:compare_probs}
	Let $0<q<1$ and $\Pi\sim \on{Mallows}(\bN, q)$ and $\Sigma \sim \on{Mallows}(\bZ, q)$. 
	There exists a coupling between $\Pi$ and $\Sigma$ such that 
	\begin{equation}
		\bP\left[ \Pi(i)=\Sigma(i) \text{ for all $i\geq \log^2 n$ } \right] = 1 - o(1).
	\end{equation}
	Moreover, the $\log^2 n$ can be replaced with any function going to $\infty$ with $n$.
\end{lem}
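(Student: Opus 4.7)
The plan is to build the coupling by exploiting the common regenerative structure of the two processes together with a standard renewal-coupling argument.

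Recall that for $\Pi\sim\Mallows(\eN,q)$, Basu--Bhatnagar (Lemma~\ref{lem:Rplus}) show that the regeneration times $0 = T_0 < T_1 < T_2 < \ldots$ (those $t$ with $\Pi([t])=[t]$) form a renewal process with finite--mean interarrival times $X_i = T_i - T_{i-1}$, and that on each excursion interval $(T_{i-1},T_i]$ the restricted permutation is an independent ``irreducible'' Mallows piece. By the shift invariance of $\Sigma$ (Lemma~\ref{lem:gnedin}, Part~\ref{item:gnedin_shift}) together with Proposition~\ref{prop: waiting time}, the analogous regeneration times of $\Sigma \sim \Mallows(\bZ,q)$, namely those $T$ with $\Sigma([T+1,\infty)) = [T+1,\infty)$, form a \emph{stationary} renewal process on $\bZ$ with the same interarrival law, and again the permutation pieces between consecutive regenerations are conditionally independent irreducible Mallows permutations.

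Set $m = \lfloor \tfrac{1}{2}\log^2 n \rfloor$ and let $\rho_\Pi$ (resp.~$\rho_\Sigma$) denote the smallest regeneration time $\geq m$ of $\Pi$ (resp.~$\Sigma$). The interarrival distribution is aperiodic (excursions of lengths $1$ and $2$ both occur with positive probability in the iterative construction~\eqref{eq:akjkjfekew}) and has finite mean, so by the standard coupling argument from renewal theory the ``pure'' renewal process of $\Pi$ and the ``stationary'' renewal process of $\Sigma$ can be coupled so that their renewal epochs coincide from some almost--surely finite time onwards. In particular, we obtain $\bP[\rho_\Pi = \rho_\Sigma =: T^* \leq \log^2 n] = 1 - o(1)$.

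On the agreement event, we identify the two processes on $[T^*+1,\infty)$: after the common regeneration time $T^*$, both $\Pi$ and $\Sigma$ consist of independent, identically distributed irreducible Mallows excursions on the intervals between the successive (coupled) renewal epochs, so we can set $\Pi|_{[T^*+1,\infty)} = \Sigma|_{[T^*+1,\infty)}$. We then sample $\Pi|_{[T^*]}$ and $\Sigma|_{(-\infty,T^*]}$ independently from the correct conditional distributions dictated by the renewal structure, which preserves the marginal laws of $\Pi$ and $\Sigma$. Under this coupling, $\Pi(i) = \Sigma(i)$ for every $i \geq T^*+1$, which on the agreement event covers all $i \geq \log^2 n$. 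The same argument with $m$ replaced by any function tending to infinity slower than the target function yields the final statement.

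The main obstacle is the renewal--coupling step, i.e.~producing a joint realisation in which $\rho_\Pi = \rho_\Sigma$ with probability $1 - o(1)$. This is a classical fact for aperiodic, finite--mean renewal processes, and the two inputs needed --- aperiodicity and integrability of the excursion-length distribution --- are immediate from the iterative construction of $\Pi$ and from Lemma~\ref{lem:Rplus} respectively.
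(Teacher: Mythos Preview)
Your argument is correct and is essentially the same as the paper's proof: both wait for a simultaneous regeneration time of $\Pi$ and $\Sigma$ and then couple the processes to be identical from that point on. The paper phrases the existence of an almost surely finite meeting time via the positive recurrence of the product of two copies of the Markov chain $M_i$ (introduced just before the lemma), whereas you invoke the classical coupling theorem for a pure versus stationary aperiodic, finite-mean renewal process; these are two formulations of the same fact, and your identification of the inputs (aperiodicity of the excursion-length law and Lemma~\ref{lem:Rplus} for integrability) is exactly right.
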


\begin{proof}
We can run both $\Pi$ and $\Sigma$ until both processes regenerate simultaneously, in the sense that 
$\Pi(i)\geq k$ for all $i\geq k$ and $\Sigma(i)\geq k$ for all $i\geq k$. After this time, both processes 
will have the same distribution, and so can be coupled to be equal. The first time that both processes 
regenerate is equal in distribution to the hitting time of $(0,0)$ for the Markov chain defined by taking 
two independent copies of $M_i$, where we start by first running both chains until $\Sigma$ regenerates 
(so the Markov chain starts from a random state $(0,X)$, where $X$ is the state of the second copy of 
the Markov chain stopped at the first time that $\Sigma$ regenerates). 
Since the product chain is still positive recurrent,
%(it has a stationary distribution given by a product of the stationary distributions for each coordinate), 
this hitting time is finite almost surely, and the result follows.
\end{proof}

Let $q>1$ and $\Pi_n\sim\Mallows(n,q)$.
Since $0 \leq C_{2i}(\Pi_n)/n \leq 1$, Theorem~\ref{thm:clt_q_gt_one} implies

$$ \mu_{2i} = \lim_{n\to\infty} \frac{\Ee C_{2i}(\Pi_n)}{n}. $$

Let $\Pi,\Pi'\sim\Mallows(\eN,1/q)$ be independent and coupled with $\Pi_n$ by the coupling 
provided by Corollary~\ref{cor:PinPicpl2sidedq_geq_one}.
We set 

$$ 
U_n := \left|\left\{\text{$i$-cycles of $\Pi'\circ\Pi$ completely contained in $\left[\lfloor n/2\rfloor\right]$}\right\}\right|. 
$$

\noindent
By~\eqref{eq:aapie} in the proof of Theorem~\ref{thm:clt_q_gt_one} (setting $\ell=i, a_i=1, a_1=\dots=a_{i-1}=0$) 
we have that

$$ |C_{2i}(\Pi_n) - U_n| = o(n), $$

\noindent
with probability $1-o(1)$. Since also $0\leq U_n/n \leq 1$, it follows that 

\begin{equation}\label{eq:Umu} 
\lim_{n\to\infty} \frac{\Ee U_n}{n} = \mu_{2i}.  
\end{equation}

Using Lemma~\ref{lem:compare_probs}, we can couple $\Pi,\Pi'$ to $\Sigma,\Sigma'\sim\Mallows(\Zed,1/q)$ in such a way 
that $\Sigma,\Sigma'$ are independent and $\Pee( E ) = 1-o(1)$, where 

$$ E := \{ \Sigma(j) = \Pi(j), \Sigma'(j)=\Pi'(j) \text{ for all $\log^2 n \leq j \leq n -\log^2 n$}\}. $$

\noindent
We also have $\Pee( F ) = 1-o(1)$, where 

$$ F := \{ |\Sigma(j)-j|, |\Sigma'(j)-j| \leq \log^2 n \text{ for all $1\leq j \leq n$} ). $$

\noindent
by an application of Part~\ref{itm:gnedin_displacements} of Lemma~\ref{lem:gnedin} and the union bound. 
Hence, if $A_j$ denotes the event that $j$ is in an $i$-cycle of $\Pi'\circ\Pi$
that is completely contained in $\left[\lfloor n/2\rfloor\right]$, and
$B_j$ denotes the event that $j$ is in an $i$-cycle of $\Sigma'\circ\Sigma$
then 

$$ \left| \Pee(A_j) - \Pee(B_j)\right| \leq \Pee(E^c) + \Pee(F^c) = o(1), $$

\noindent
for all $2i\cdot\log^2 n \leq j \leq n/2 - 2i \log^2 n$.
It follows that 

$$ \begin{array}{rcl} 
\Ee U_n 
& = & \displaystyle \frac{1}{i} \sum_{1\leq j \leq n/2} \Pee( A_j) \\
& = & \displaystyle \frac{1}{i} \sum_{1\leq j \leq n/2} \Pee( B_j) +  o(n) \\
& = & \displaystyle \frac{1}{i} \cdot \lfloor n/2\rfloor \cdot \Pee\left(\text{$0$ in an $i$-cycle of $\Sigma'\circ\Sigma$}\right) + o(n), 
\end{array} $$

\noindent
using Part~\ref{item:gnedin_shift} of Lemma~\ref{lem:gnedin} and the remarks following that lemma for the last identity
(applied to both $\Sigma$ and $\Sigma'$).
Dividing LHS and RHS by $n$ and sending $n\to\infty$ (and recalling~\eqref{eq:Umu}) gives 
$\mu_{2i} = \Pee\left(\text{$0$ in an $i$-cycle of $\Sigma'\circ\Sigma$}\right)$.

Finally, we briefly clarify how the expression $\mu_{2i} = \sum_{i\in\Zed} \Pee( \Sigma(0)=i)^2$ is obtained. 
We have 

$$ \begin{array}{rcl} 
\Pee\left(\text{$0$ in a $1$-cycle of $\Sigma'\circ\Sigma$}\right)
& = & \displaystyle \sum_{i\in\Zed} \Pee( \Sigma(0)=i, \Sigma'(i)=0 ) \\[2ex]
& = & \displaystyle \sum_{i\in\Zed} \Pee( \Sigma(0)=i) \Pee(\Sigma(0)=-i) \\[2ex]
& = & \displaystyle \sum_{i\in\Zed} \Pee( \Sigma(0)=i)^2, \end{array} $$

\noindent 
using that $\Sigma, \Sigma'$ are i.i.d.~and Part~\ref{item:gnedin_shift} of Lemma~\ref{lem:gnedin}, and the remarks following 
that lemma, for the second identity; and in the last identity that

$$-\Sigma(0) = (r\circ\Sigma\circ r)(0) \isd \Sigma(0), $$ 

\noindent
also by the remarks following Lemma~\ref{lem:gnedin}.

\subsection{The proof of Part~\ref{itm:mu2ipart2} of Theorem~\ref{thm:mu2i}}

Let $\ell \geq 1$ and define 
	\begin{equation}
		a_i = 2i,\qquad \text{ for }i=1,\ldots, \ell.
	\end{equation}
Define $\varphi(\Pi_n) = \sum_{i=1}^\ell a_i C_{2i} (\Pi_n)$ and $X_1,Y_1$ as in the proof of Theorem~\ref{thm:clt_q_gt_one}. 
We have $0\leq \varphi(\Pi_n) / n \leq 1$, so that Theorem~\ref{thm:clt_q_gt_one} in fact implies

$$ \bE\left[\frac{\varphi(\Pi_n)}{n}\right] \to \sum_{i=1}^{\ell} a_i \mu_{2i} = \sum_{i=1}^\ell 2i \mu_{2i}, $$ 

while~\eqref{eq:aapie} implies

$$ \bE\left[\frac{\varphi(\Pi_n)}{n}\right] \to \frac{\bE Y_1}{2 \bE X_1}. $$

The function $Y_1$ counts the number of elements in $[X_1]\cup \{n-X_1+1,\ldots ,n\}$ that are contained in even cycles of 
length at most $2\ell$. If $X_1 \leq \ell$ then we have $Y_1 = 2X_1$, as all elements are in cycles of length $\leq 2\ell$.
If $X_1 > \ell$ then certainly $Y_1 \leq 2 X_1$. Let 
	\begin{equation}
		a \be \bE [X_1 1_{\{X_1 \leq \ell\}}]  \qquad\text{ and } 
		\qquad b \be\bE [X_1 1_{\{X_1 > \ell\}}].
	\end{equation}
	We have $\bE X_1 = a+ b$. We also have the bounds $ 2a \leq  \bE X_1 \leq 2a + 2b$.
	As $\bE X_1 < \infty$, for any $\epsilon>0$ we can choose $\ell_0=\ell_0(\eps)$ large enough so that 
	for every $\ell > \ell_0$ we have $b < \epsilon$. 
	As $1\leq \bE X_1$, having chosen $\ell_0$ sufficiently large, we can also ensure $a \geq 1 - \epsilon$ for all 
	$\ell > \ell_0$. In this case
	\begin{equation}
		1\geq \frac{\bE Y_1 }{2 \bE X_1} \geq \frac{a}{a + \epsilon}  = 1 - \frac{ \epsilon}{a + \epsilon} \geq  1 - \epsilon.
	\end{equation}
	So %
$$ \sum_{i=1}^{\infty}2i\, \mu_{2i}=\lim _{\ell \to \infty}   \left(   \frac{\bE Y_1}{2 \bE X_1}  \right) = 1. $$

\subsection{The proof of Part~\ref{itm:mu2ipart3} of Theorem~\ref{thm:mu2i}}

As in the proof of Part~\ref{itm:m1part3} of Theorem~\ref{thm:mi}, it suffices
to prove pointwise convergence (convergence for each $\mu_{2i}$ separately), and the convergence in $L_1$ 
will follow using that $\sum_i 2i \mu_{2i} = 1$.

We first show that $(\mu_2, \mu_4,\ldots)\to (0,0,\ldots )$ as $q\downarrow 1$. 
Let $r\geq 1$ and $\Pi_n \sim \on{Mallows}(n,q)$ where $q>1$.
By Part~\ref{itm:mu2ipart1} of Theorem~\ref{thm:mu2i}, we have %
\begin{equation}\label{eq:miesies}
		\mu_{2r} = \lim_{n\to \infty} \frac{\bE C_{2r}(\Pi_n)}{n}.
\end{equation}%

If $i_1,i_2,\ldots, i_{2r}$ form a $2r$-cycle in $\Pi_n\sim \on{Mallows}(n,q)$ with $q>1$, and
$\Pi_n^* := r_n \circ \Pi_n$ then $\Pi_n^* \isd \Mallows(n,1/q)$ and we must
have 

$$ \Pi_n^*(i_1) = r_n(i_2), \dots, \Pi_n^*(i_{2r-1}) = r_n(i_{2r}), \Pi_n^*(i_{2r}) = r_n(i_1). $$

Let us define

$$ A_j := \{\text{$j$ is the largest element of a $2r$-cycle of $\Pi_n$}\}. $$

\noindent
(To clarify $j$ being the largest element, we of course just mean $j = \max(i_1,\dots,i_{2r})$ if
$i_1,\dots,i_2r$ is a $2r$-cycle as above.)
We consider the iterative procedure for generating $\Pi_n^*$.
When we generate the image of $\Pi_n^*(j)$, having already determined 
$\Pi_n^*(1),\dots,\Pi_n^*(j-1)$ it may not be possible that $j$ is the largest element 
of some $2r$-cycle.
If it is still possible, then we need $\Pi_n^*(j)$ to be some specific value among the still available ones.
(To be precise, $(r_n\circ\Pi_n^*)^{(-2r-1)}(j)$.) 
Since we sample according to a truncated geometric distributions, the probability is thus at most

$$ \Pee( A_j ) \leq \frac{1-(1/q)}{1-(1/q)^{n+1-j}} \leq 2 \cdot (1-1/q),. $$

\noindent
where the last inequality holds for all $j \leq n-\log^2 n$.
To follows that 

$$ \Ee C_{2r}(\Pi_n) = \sum_{j=1}^n \Pee( A_j ) \leq 2n (1-1/q) + o(n). $$

\noindent
dividing by $n$, sending $n\to\infty$ and recalling~\eqref{eq:miesies}, shows
$\mu_{2r} \leq 2(1-1/q)$.
In particular $\lim_{q\downarrow 1} \mu_{2r} = 0$.

	We now proceed to show that $(\mu_2, \mu_4,\mu_6\ldots)\to (1/2,0,0,\ldots )$ as $q\to \infty$. 
	By Part~\ref{itm:mu2ipart2} of Theorem \ref{thm:mu2i} it is enough to show that 
	$\mu_2 \to 1/2$ as $q\to \infty$. 
	Let $\Sigma \sim \on{Mallows}(\bZ, 1/q)$. We have 
	\begin{equation}
		2\mu_2 = \bP \left[ \Sigma(0) = 0 \right] ^2 
		+ 2\sum\limits_{i=1}^\infty \bP \left[ \Sigma(0)=i \right] ^2 
		= \bP \left[ \Sigma(0) = 0 \right]^2 + O\left( \sum\limits_{i\geq 1} q^{-2i} \right).
	\end{equation}
	The last equality is due to Part~\ref{item:gnedin_disp} of Lemma~\ref{lem:gnedin}.  
	By Theorem~\ref{thm:mi} the above is
	\begin{equation}
		(1-2/q + O(1/q^2))^2 + O(1/q^2) = 1 - 4/q + O(1/q^2),\qquad\text{ as }q\to\infty.
	\end{equation}

	\section{The proof of Theorem~\ref{thm:ceco}}

	\subsection{The proof Part~\ref{itm:cecopart1} of Theorem~\ref{thm:ceco}}

Let $\Sigma\sim \on {Mallows}(\bZ, 1/q)$ with $0 < 1/q <1$. 
By definition of $c_e$ we have 

$$ c_e := \sum_{i\in\Zed} \Pee\left[ (r\circ\Sigma)(i) = i \right] = 
\sum_{i= -\infty}^\infty \bP \left[ \Sigma(i)  = -i \right]. $$

\noindent
By the remarks following Lemma \ref{lem:gnedin}, $\Sigma(i)-i \isd \Sigma(0)$. 
Thus 
\begin{equation}
	c_e = \sum\limits_{i= -\infty}^\infty \bP \left[ \Sigma(0)  = -2i \right] = \bP \left[ \Sigma(0)\text{ even} \right]  .
\end{equation}
Similarly 
\begin{equation}
	c_o = \sum_{i\in\Zed} \Pee\left[ (\rho\circ\Sigma)(i)=i \right]
	= \sum_{i\in\Zed} \bP \left[ \Sigma(i)  = 1-i \right]  
	= \sum_{i\in\Zed} \bP \left[ \Sigma(0)  = 1-2i \right]  
	= \bP \left[ \Sigma(0)\text{ odd} \right].
\end{equation}

\subsection{The proof of Part~\ref{itm:cecopart2} of Theorem~\ref{thm:ceco}}

We will make use of the following relatively elementary observation.

\begin{lem}\label{lem:bound_pij}
	Let $0<q<1$ and $\Sigma \sim \on {Mallows}(\bZ, q)$. Then 
	\begin{equation}\label{eq:khbrjhrgbjrehg}
		q \,\bP \left[ \Sigma(0)=j \right]  \leq 
		\bP \left[ \Sigma(0) = j+1 \right]  \leq \frac1q \, \bP \left[ \Sigma(0) = j \right].
	\end{equation}
\end{lem}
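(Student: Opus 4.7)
The plan is to prove the bound first for the finite Mallows measure on a symmetric interval $I = \{-N,\dots,N\}$, and then pass to the limit $N \to \infty$ via Part~\ref{item:gnedin_converge} of Lemma~\ref{lem:gnedin}. The finite case will be handled by a swap bijection on permutations of $I$ that interchanges the sets $\{\sigma(0)=j\}$ and $\{\sigma(0)=j+1\}$ while shifting $\inv(\sigma)$ by exactly $\pm 1$, so that the Mallows weights $q^{\inv(\sigma)}$ transform in a controlled multiplicative way.

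Concretely, for $\sigma : I \to I$ with $\sigma(0) = j$, set $i := \sigma^{-1}(j+1)$ and define $\sigma'$ by $\sigma'(0) = j+1$, $\sigma'(i) = j$, and $\sigma'(k) = \sigma(k)$ for all other $k$; the inverse map is the analogous swap applied to $\sigma'$. A brief case check---only pairs involving the indices $0$ and $i$ can be affected, and because the swapped values $j$ and $j+1$ are consecutive no third image sits strictly between them---shows that the only pair whose inversion status flips is $(0,i)$ itself. Consequently $\inv(\sigma') = \inv(\sigma) + 1$ when $i > 0$ and $\inv(\sigma') = \inv(\sigma) - 1$ when $i < 0$, and $i \neq 0$ since $j \neq j+1$.

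Splitting the sum that defines $\bP[\Sigma_I(0) = j]$ according to the sign of $\sigma^{-1}(j+1)$ and transferring each piece through the bijection yields
\begin{equation*}
\bP[\Sigma_I(0) = j] \;=\; \frac{A}{q} + q\,B,
\end{equation*}
where $A := \bP[\Sigma_I(0) = j+1,\ \Sigma_I^{-1}(j) > 0]$ and $B := \bP[\Sigma_I(0) = j+1,\ \Sigma_I^{-1}(j) < 0]$ satisfy $A + B = \bP[\Sigma_I(0) = j+1]$. Since $0 < q < 1$ gives $q \leq 1/q$, the chain
\begin{equation*}
q\,(A+B) \;\leq\; \frac{A}{q} + q\,B \;\leq\; \frac{A+B}{q}
\end{equation*}
follows immediately, and upon rearrangement these are exactly the two inequalities of~\eqref{eq:khbrjhrgbjrehg} with $\Sigma_I$ in place of $\Sigma$.

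The concluding step is to take $I = \{-N,\dots,N\}$ and let $N \to \infty$: by Part~\ref{item:gnedin_converge} of Lemma~\ref{lem:gnedin} we have $\Sigma_I(0) \to \Sigma(0)$ almost surely, hence $\bP[\Sigma_I(0) = k] \to \bP[\Sigma(0) = k]$ for each fixed $k \in \bZ$, and the two inequalities pass to the limit. No step looks delicate; the only mild obstacle is the inversion-counting case check, which works out cleanly precisely because the two values being exchanged are consecutive integers and thus no third value of $\sigma$ can sit between them.
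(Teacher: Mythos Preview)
Your proof is correct and essentially identical to the paper's: both use the swap bijection $\sigma \mapsto (j\ j{+}1)\circ\sigma$ on the finite model $\Mallows(I_n,q)$, observe that only the pair $\{0,\sigma^{-1}(j{+}1)\}$ changes inversion status because $j$ and $j{+}1$ are consecutive, and then pass to the limit via Part~\ref{item:gnedin_converge} of Lemma~\ref{lem:gnedin}. The only cosmetic difference is that you track the sign of the inversion shift and write the exact identity $\Pee[\Sigma_I(0)=j]=A/q+qB$, whereas the paper simply uses $|\inv(\sigma)-\inv(\phi(\sigma))|=1$ to bound the ratio directly.
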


\begin{proof}
	Fix some $j \in \bZ$, let $n>j$, $I_n = \{-n,\ldots, n\}$ and $\Sigma_{2n+1}\sim \on{Mallows}(  I_n, q)$. Let $A$ be the set of all permutations $\sigma$ of $I_n$ such that $\sigma(0) = j$, and $B$ be the set of all such permutation $\sigma$ with $\sigma(0) = j+1$. Let $\sigma_{j,j+1} = (j\,\ j+1)$ be the permutation of $I_n$ swapping $j$ and $j+1$. Then the map $\phi (\sigma) = \sigma_{j,j+1} \circ \sigma$ is a bijection from $A$ to $B$. To see this, note that $\phi (\phi (\sigma)) = \sigma$ as $\sigma_{j,j+1}$ is its own inverse. Moreover, if $\sigma(0) = j$, then $\phi (\sigma (0)) = \sigma_{j,j+1}(j) = j+1$. We claim that $|\on{inv} (\sigma) - \on{inv}(\phi(\sigma)) |  = 1$: Let $k=\sigma^{-1}(j+1) \neq 0$. All inversions $(a,b)$ with $\{a,b\} \cap \{0,k\} = \emptyset$ are still inversions in $\phi(\sigma)$ as their images are unchanged in $\phi(\sigma)$. Inversions $(a,b)$ of $\sigma$ with $| \{a,b\} \cap \{0,k\}| = 1$ are also unchanged, as for all $s\notin \{0,k\}$ we have $\Sigma_{2n+1}(s) < j$ if and only if $\Sigma_{2n+1}(s) < j+1$. So the only ordered pair that can be an inversion of exactly one of $\sigma$ and $\phi(\sigma)$ is either $(0,k)$ or $(k,0)$, depending on whether or not $0 < k$. Thus we have
	\begin{equation}
		\bP \left[ \Sigma_{2n+1} (0) = j + 1\right] = \sum\limits_{\sigma \in A} \bP \left[ \phi(\sigma) \right] \leq \frac{1}{q} \, \sum\limits_{\sigma \in A} \bP \left[\sigma \right] = \frac{1}{q}\, \bP \left[ \Sigma_{2n+1}(0) = j \right] .
	\end{equation}
	The bound $\bP \left[ \Sigma_{2n+1} (0) = j + 1\right] \geq q\,  \bP \left[ \Sigma_{2n+1}(0) = j \right]  $ follows in the same manner. As $\Sigma_{I_n} \to \Sigma$ with probability $1$ by Lemma \ref{lem:gnedin}, Part \ref{item:gnedin_converge}, the inequality in \eqref{eq:khbrjhrgbjrehg} holds. 
\end{proof}

The last lemma allows us to give short proof of the following explicit bounds on $c_e,c_o$, that will immediately 
imply the value of the $q\downarrow 1$ limits equals $1/2$.

\begin{lem}\label{lem:gjrknrkjgn}
	For all $q > 1$ we have 
	\begin{align}
		\frac{1}{1+q} &\leq c_e, c_o \leq \frac{q}{1+q}.
	\end{align}
\end{lem}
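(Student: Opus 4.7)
The plan is to combine Part~\ref{itm:cecopart1} of Theorem~\ref{thm:ceco}, which expresses $c_e$ and $c_o$ as sums of the point probabilities $\bP[\Sigma(0)=k]$ over odd, respectively even, integers $k$, with the pointwise geometric-type estimate in Lemma~\ref{lem:bound_pij}. Summing the estimate term-by-term will produce two cross-bounds between $c_e$ and $c_o$, and these will then close to the four claimed inequalities via the identity $c_e+c_o=1$.

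Concretely, for $\Sigma\sim\Mallows(\bZ, 1/q)$, Part~\ref{itm:cecopart1} gives
$$ c_e = \sum_{k\in\bZ}\bP[\Sigma(0) = 2k+1], \qquad c_o = \sum_{k\in\bZ}\bP[\Sigma(0) = 2k]. $$
Since $q>1$ we have $1/q\in(0,1)$, so Lemma~\ref{lem:bound_pij} applied with parameter $1/q$ yields
$$ (1/q)\,\bP[\Sigma(0)=j] \;\leq\; \bP[\Sigma(0) = j+1] \;\leq\; q\,\bP[\Sigma(0)=j] \qquad \text{for every } j\in\bZ. $$
Applying the upper bound with $j=2k$ and summing over $k$ produces $c_e \leq q\,c_o$, and applying it with $j=2k-1$ and summing produces $c_o \leq q\,c_e$. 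Substituting $c_o = 1-c_e$ into the first inequality rearranges to $c_e\leq q/(1+q)$ and hence $c_o\geq 1/(1+q)$; the symmetric substitution into the second inequality gives $c_o\leq q/(1+q)$ and $c_e\geq 1/(1+q)$.

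There is no serious obstacle: the argument is essentially the two-line calculation above once Part~\ref{itm:cecopart1} and Lemma~\ref{lem:bound_pij} are in hand. The only bookkeeping point to watch is that $\Sigma$ has the $\Mallows(\bZ, 1/q)$ law rather than the $\Mallows(\bZ, q)$ law, so the multiplier appearing on the right-hand side of the pointwise estimate is $q$ (and not $1/q$); getting this factor the right way around is exactly what makes the bounds come out as $q/(1+q)$ and $1/(1+q)$.
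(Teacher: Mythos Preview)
Your proposal is correct and follows essentially the same route as the paper: both proofs invoke Part~\ref{itm:cecopart1} of Theorem~\ref{thm:ceco} to identify $c_e,c_o$ with $\bP[\Sigma(0)\text{ odd}]$ and $\bP[\Sigma(0)\text{ even}]$, sum the pointwise bound of Lemma~\ref{lem:bound_pij} to obtain the cross-inequalities $c_e\le q\,c_o$ and $c_o\le q\,c_e$, and then use $c_e+c_o=1$ to close. The only cosmetic difference is that the paper phrases the summed inequality as $\frac1q\,\bP[\Sigma(0)\text{ odd}]\le\bP[\Sigma(0)\text{ even}]\le q\,\bP[\Sigma(0)\text{ odd}]$ and adds $\bP[\Sigma(0)\text{ odd}]$ to both sides before rearranging, which is the same algebra.
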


\begin{proof}
	The second equality will follow from the first as $c_e+c_o = 1$. Let $q > 1$ and $\Sigma \sim \on{Mallows}(\bZ, 1/q)$. By Lemma \ref{lem:bound_pij} we have $\frac{1}{q} \bP \left[ \Sigma(0)=j \right]  \leq \bP \left[ \Sigma(0) = j+1 \right]  \leq q \, \bP \left[ \Sigma(0) = j \right]$. Thus
	\begin{align}\label{eq:rkhrgjhbfschkdbcww}
		\frac1q \bP \left[ \Sigma(0) \text{ odd} \right] & \leq \bP \left[ \Sigma(0) \text{ even} \right] \leq q \, \bP \left[ \Sigma(0) \text{ odd} \right].
	\end{align}
	Adding $\bP \left[ \Sigma(0) \text{ odd} \right] $ and rearranging gives the two inequalities
	\begin{align}
		\bP \left[ \Sigma(0)\text{ odd} \right]&  \geq \frac{1}{1+q}  \bP \left[ \Sigma(0) \text{ odd or even} \right]  = \frac{1}{1+q},\\
		\bP \left[ \Sigma(0)\text{ odd} \right]&  \leq \frac{q}{1+q}  \bP \left[ \Sigma(0) \text{ odd or even} \right]  = \frac{q}{1+q}.
	\end{align}
	The same bounds hold for $\bP \left[ \Sigma(0)\text{ even} \right]$ as 
	$\bP \left[ \Sigma(0)\text{ even}\right] = 1 - \bP \left[ \Sigma(0) \text{ odd} \right]$.
\end{proof}

As mentioned, from Lemma \ref{lem:gjrknrkjgn} it follows immediately that 
\begin{equation}
	\lim_{q\downarrow 1} c_e = \lim_{q\downarrow 1} c_o= \frac12.
\end{equation}

It remains to prove the following asymptotic expressions for $c_e,c_o$.

\begin{lem}
	$c_o = 1 - 2/q + O(1/q^2)$ and $c_e = 2/q + O(1/q^2)$ as $q\to \infty$.
\end{lem}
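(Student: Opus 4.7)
The plan is to reduce everything to quantities we have already computed. First, note that since $c_e + c_o = 1$ (by Part~\ref{itm:cecopart1} of Theorem~\ref{thm:ceco}), it suffices to establish the expansion of one of the two; once we show $c_o = 1 - 2/q + O(1/q^2)$, the expansion $c_e = 2/q + O(1/q^2)$ follows immediately.

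Write $p := 1/q$, so that $\Sigma \sim \on{Mallows}(\bZ, p)$ with $0 < p < 1$, and $p \downarrow 0$ as $q \to \infty$. Using Part~\ref{itm:cecopart1} of Theorem~\ref{thm:ceco} and splitting the even values of $\Sigma(0)$ according to whether they are zero, we have
$$
c_o \;=\; \bP\left[\Sigma(0)\text{ even}\right] \;=\; \bP\left[\Sigma(0)=0\right] \;+\; \sum_{k\neq 0} \bP\left[\Sigma(0)=2k\right].
$$
By Part~\ref{itm:m1part1} of Theorem~\ref{thm:mi}, $\bP\left[\Sigma(0)=0\right] = m_1(p)$, and the computation carried out in the proof of Part~\ref{itm:m1part3} of Theorem~\ref{thm:mi} established the (stronger) expansion $m_1(p) = 1 - 2p + O(p^2)$ as $p \downarrow 0$. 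Substituting $p = 1/q$ gives $\bP[\Sigma(0)=0] = 1 - 2/q + O(1/q^2)$.

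For the tail sum, Part~\ref{item:gnedin_disp} of Lemma~\ref{lem:gnedin} provides the geometric bound $\bP\left[|\Sigma(0)| > m\right] = \Theta(p^m) = \Theta(1/q^m)$, so in particular
$$
\sum_{k\neq 0} \bP\left[\Sigma(0)=2k\right] \;\leq\; \bP\left[|\Sigma(0)|\geq 2\right] \;=\; O(1/q^2).
$$
Combining the two displays yields $c_o = 1 - 2/q + O(1/q^2)$, and then $c_e = 1 - c_o = 2/q + O(1/q^2)$.

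The main (and only nontrivial) input is the expansion of $m_1$ near $p = 0$, which is already established in the proof of Theorem~\ref{thm:mi}; the rest is merely a tail estimate. So no real obstacle remains to overcome here.
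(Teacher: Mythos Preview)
Your proof is correct and follows essentially the same route as the paper's own argument: reduce to $c_o$ via $c_e+c_o=1$, split $\bP[\Sigma(0)\text{ even}]$ into the $\Sigma(0)=0$ term (handled by the $m_1$ expansion from Theorem~\ref{thm:mi}) and a tail controlled by the displacement estimate in Lemma~\ref{lem:gnedin}\ref{item:gnedin_disp}. The only cosmetic difference is that the paper writes the tail as $\sum_{i\geq 1}\bP[|\Sigma(0)|=2i]$ rather than bounding it by $\bP[|\Sigma(0)|\geq 2]$, but the content is identical.
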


\begin{proof}
	The second statement again follows from the first by $c_e + c_o = 1$. Let $q>1$ and $\Sigma \sim \on{Mallows}(\bZ, 1 / q )$. We have
	\begin{equation}
		c_o =  \bP \left[ \Sigma(0) \text{ even} \right]  = \bP \left[ \Sigma(0) = 0 \right]  + \sum\limits_{i=1}^{\infty }\bP \left[ |\Sigma(0)| = 2i \right] .
	\end{equation}
	
The quantity $m_1 = m_1( 1/ q)$ is equal to $\bP \left[ \Sigma(0) = 0 \right]$ by Theorem \ref{thm:mi}. 
By Part~\ref{itm:m1part3} of Theorem~\ref{thm:mi} it has asymptotic expansion $1- 2 / q + O(1 /q^2)$ as $q \to \infty$. 
By Part~\ref{item:gnedin_disp} of Lemma~\ref{lem:gnedin}, the sum on the right 
hand side above is $O(1/q^2)$ as $q\to\infty$. Thus $c_o =  1 - 2 /q + O(1/q^2)$. 	
\end{proof}

\section{The proof of Theorem~\ref{thm:asymp}}

We will prove the following more detailed result that implies Theorem~\ref{thm:asymp}.

\begin{prop}\label{prop:asymp}
For $0<q<1$ we have, as $k\to\infty$ 
\begin{enumerate}
\item\label{itm:asymp1} $\Pee\left( C_1( \rho \circ\Sigma) \geq 2k \right) = \Omega\left( q^{{2k \choose 2}} \cdot (1-q)^{2k} \right)$,
\item\label{itm:asymp2} $\Pee\left( C_1( r \circ\Sigma) \geq 2k \right) = o\left( q^{{2k\choose 2}} \cdot (1-q)^{2k} \right)$.
\item\label{itm:asymp3} $\Pee\left( C_1( r \circ\Sigma) \geq 2k+1 \right) = \Omega\left( q^{{2k+1 \choose 2}} \cdot (1-q)^{2k+1} \right)$,
\item\label{itm:asymp4} $\Pee\left( C_1( \rho \circ\Sigma) \geq 2k+1 \right) = o\left( q^{{2k+1\choose 2}} \cdot (1-q)^{2k+1} \right)$.
\end{enumerate}
\end{prop}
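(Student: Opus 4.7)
The plan is to treat the four parts in two pairs: (i) and (iii) are lower bounds obtained by exhibiting a specific ``natural'' configuration, while (ii) and (iv) are upper bounds obtained via a balance argument showing that configurations with the ``wrong parity'' of fixed points incur an extra $q^{\Omega(k)}$ cost.

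For the lower bound (iii), I will consider the event $E = \{\Sigma(i) = -i \text{ for all } i \in [-k,k]\}$, which immediately implies $C_1(r \circ \Sigma) \geq 2k+1$. Since $i \mapsto -i$ is a bijection of $[-k,k]$, the event $E$ forces $\Sigma$ to preserve this interval, and the probability decomposes as
\[
\Pee(E) \;=\; \Pee(\Sigma \text{ preserves } [-k,k]) \cdot \frac{q^{\binom{2k+1}{2}}}{Z(2k+1,q)},
\]
where the conditional distribution $\Pee(\,\cdot \mid \Sigma \text{ preserves } [-k,k])$ of $\Sigma|_{[-k,k]}$ is $\Mallows([-k,k],q)$ (this follows from Part~\ref{itm:gnedin_Z_finite} of Lemma~\ref{lem:gnedin} together with the identity $\Sigma_I = \Sigma|_I$ on the preservation event). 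Using $Z(N,q) = (q;q)_N / (1-q)^N$ and $(q;q)_N \leq 1$, the second factor is at least $q^{\binom{2k+1}{2}}(1-q)^{2k+1}$. To bound the first factor below by a positive constant independent of $k$, I will use the renewal structure of the bi-infinite Mallows model developed by Pitman and Tang~\cite{PitmanTang}: the interval $[-k,k]$ is preserved precisely when both $-k-1$ and $k$ are regeneration points of $\Sigma$, and by stationarity of the associated renewal process together with Blackwell's renewal theorem, the probability of the joint event converges to $1/\mu^2 > 0$ as $k \to \infty$, where $\mu$ is the mean renewal gap. Part (i) follows identically, with $[-k+1,k]$ and $i \mapsto 1-i$ replacing $[-k,k]$ and $i \mapsto -i$.

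For the upper bound (ii), let $F = \{i : \Sigma(i) = -i\}$ denote the fixed-point set of $r \circ \Sigma$. The ``natural'' symmetric configurations for $F$ have odd cardinality (containing $0$), so forcing $|F|$ to be even requires breaking symmetry in a way I will show is exponentially costly via the balanced property of $\Sigma$ recalled in Section~\ref{sec:prelim}. Splitting by sign as $F = F^- \cup F^0 \cup F^+$, the contribution of $F^\pm$ to each side of the balance equation is $|F^\mp|$, so any imbalance $|F^-| \neq |F^+|$ forces a compensating element outside $F$ with displacement spanning zero. Even in the symmetric case $|F^-| = |F^+| = k$ with $|F|=2k$ and $0 \notin F$, the relation $\Sigma(F) = -F = F$ forces $\Sigma(0) \notin F \cup \{0\}$, so that $|\Sigma(0)| > \max\{|i| : i \in F\} \geq k$; this contributes an extra factor $\Theta(q^{k+1})$ by Part~\ref{item:gnedin_disp} of Lemma~\ref{lem:gnedin}. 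Analogous arguments handle non-minimal symmetric configurations, asymmetric configurations (where balance forces an additional displaced element outside $F$), and the sub-event $|F| \geq 2k+1$ (which already yields probability $O(q^{\binom{2k+1}{2}}(1-q)^{2k+1}) = o(q^{\binom{2k}{2}}(1-q)^{2k})$ by a parallel calculation to the lower bound). Summing over configuration types gives $\Pee(|F|\geq 2k) = O(q^{\binom{2k}{2}}(1-q)^{2k}) \cdot q^{\Omega(k)} = o(q^{\binom{2k}{2}}(1-q)^{2k})$. Part (iv) is analogous, with $r$ and $\rho$ interchanged.

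To execute the upper bound rigorously, I plan to reduce to the finite Mallows model via Lemma~\ref{lem:convergence_infinite}: for large $n$, the bi-infinite $\Sigma$ agrees with $\Sigma_{[-n,n]} \sim \Mallows([-n,n],q)$ on a high-probability coupling, so probabilities of events determined by $\Sigma$ on bounded regions become sums of $q^{\inv}/Z$ over completions, and the forced inversions from both the fixed-point set $F$ and the balancing displaced elements can be counted explicitly. The main obstacle will be the combinatorial bookkeeping: one must parametrize all symmetric and asymmetric fixed-point sets $F$ with $|F| \geq 2k$, quantify the extra inversions forced in each case by the balance condition, and verify uniformity of the $q^{\Omega(k)}$ saving across configurations. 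The key technical input throughout is the Gnedin--Olshanski displacement tail $\Pee(|\Sigma(0)| > m) = \Theta(q^m)$, which controls only one element at a time and must be turned into joint-event bounds via the finite-$n$ approximation.
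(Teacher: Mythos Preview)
Your lower-bound arguments for \ref{itm:asymp1} and \ref{itm:asymp3} are correct and somewhat cleaner than the paper's. The paper works on the finite model $\Pi_n\sim\Mallows(I_n,q)$, exhibits the reversal on the central block together with arbitrary permutations on the two flanks, and computes the ratio $Z(n-k+1,q)Z(n-k,q)/Z(2n+1,q)=\Theta((1-q)^{2k})$ before passing to the limit. You instead stay in the bi-infinite model and factor through the preservation event. One remark: the identity $\Pee(\Sigma|_{[-k,k]}=\text{reversal}\mid\Sigma\text{ preserves }[-k,k])=q^{\binom{2k+1}{2}}/Z(2k+1,q)$ does not follow from Part~\ref{itm:gnedin_Z_finite} of Lemma~\ref{lem:gnedin} alone (that lemma is about the unconditional law of the \emph{pattern} $\Sigma_I$); you need the renewal structure to argue that, conditional on both $-k-1$ and $k$ being regeneration times, the restriction is genuinely $\Mallows([-k,k],q)$. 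This is true and not hard, but deserves a line.

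Your upper-bound sketch for \ref{itm:asymp2} and \ref{itm:asymp4} has the same guiding intuition as the paper---the balanced property forces an extra element with large displacement---but as written it contains an error and skips the hard part. First, the step ``$|F^-|=|F^+|=k$ \dots\ the relation $\Sigma(F)=-F=F$'' conflates two different notions: equal cardinalities $|F^-|=|F^+|$ do \emph{not} imply $-F=F$ as sets (take $F=\{-1,2\}$). Second, even granting $-F=F$, the conclusion $|\Sigma(0)|>\max\{|i|:i\in F\}$ is false unless $F$ happens to be the contiguous block $\{-k,\dots,-1,1,\dots,k\}$; for $F=\{-3,-1,1,3\}$ one only gets $\Sigma(0)\in\{\pm2,\pm4,\dots\}$. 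So your displayed argument handles exactly one configuration, and everything else is deferred to ``analogous arguments''. That is precisely where the work lies: the paper needs a chain of lemmas (Lemma~\ref{lem:pij} through the end of the section) to parametrize all configurations $(\underline i,\underline j)$ by their gap sequences, to prove a uniform bound $p_{\underline i,\underline j}\le(1-q)^{\ell+r}q^{\Psi(\underline i,\underline j)}$ with an explicit exponent $\Psi$, and then to sum over all gap sequences, successively eliminating the cases $\Sigma(0)\ne0$, $|\ell-r|>1$, nontrivial gaps among the $i_a$, nontrivial gaps in the middle of the $j_a$, and so on, before finally invoking balancedness via Lemma~\ref{lem:pxy}. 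The displacement-tail bound $\Pee(|\Sigma(0)|>m)=\Theta(q^m)$ is a one-point estimate and, as you note, does not by itself control the joint events you need; the paper replaces it with the finite-$n$ computation in Lemma~\ref{lem:pij}, which is the real engine of the proof.
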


\begin{proofof}{Proposition~\ref{prop:asymp}, Parts~\ref{itm:asymp1} and~\ref{itm:asymp3}}
The proofs of~\ref{itm:asymp1} and~\ref{itm:asymp3} are very similar. We start with~\ref{itm:asymp1}.
Consider $\Pi_n \sim \text{Mallows}( I_n, q )$ with $I_n := \{-n,\dots,n\}$.
By Part~\ref{item:gnedin_converge} of \ref{lem:gnedin}, it suffices to show that 
$\liminf_{n\to\infty} \Pee( (\rho \circ\Pi_n)(i)=i \text{ for $i=-k+1,\dots,k$} )$ 
is lower bounded by $\text{const} \cdot q^{{2k \choose 2}} \cdot (1-q)^k$.

For $J \subseteq \Zed$ let $S_J$ denote the set of all permutations of $J$.
Consider the set of all permutations $\pi \in S_{I_n}$ constructed as follows.
We pick arbitrary permutations $\sigma \in S_{\{-n,\dots,-k\}}, \sigma' \in S_{\{k+1,\dots,n\}}$
and set 

$$ \pi(i) := \begin{cases} 1-i & \text{ if } -k+1 \leq i \leq k, \\
              \sigma(i) & \text{ if } -n \leq i \leq -k, \\
              \sigma'(i) & \text{ if } k+1 \leq i \leq n. 
             \end{cases}
$$

Notice such a permutation satisfies 

$$ C_1(\rho \circ \pi) = 2k, \quad \text{ and } \quad
\text{inv}(\pi) = \text{inv}(\sigma) + \text{inv}(\sigma') + {2k \choose 2}. $$

We have 

$$  \begin{array}{rcl} \Pee( C_1( \rho \circ\Pi_n) \geq 2k ) 
& \geq & \Pee( \Pi_n(i) = 1-i \text{ for all $-k+1\leq i \leq k$ } ) \\[2ex]
& \geq & \displaystyle 
\frac{1}{Z(2n+1,q)} \cdot \left( \sum_{\sigma \in S_{\{-n,\dots,-k\}}} \sum_{\sigma' \in S_{\{k+1,\dots,n\}}}  
q^{\text{inv}(\sigma) + \text{inv}(\sigma') + {2k \choose 2}} \right) \\[3ex]
& = & \displaystyle 
\frac{Z(n-k+1,q) \cdot Z(n-k,q)}{Z(2n+1,q)} \cdot q^{{2k\choose 2}}. 
\end{array} $$

Now we recall that 

$$ Z(m,q) = \prod_{i=1}^m \frac{1-q^i}{1-q} = \Theta\left( \frac{1}{(1-q)^m} \right), $$

\noindent 
as $m \to \infty$. Here we use that $1 > \prod_{i=1}^m (1-q^i) > \prod_{i=1}^\infty (1-q^i) > 0$.
(This last inequality can be easily seen using the Taylor expansion 
$\log(1-x) = x + O(x^2)$.)
So 

$$ \frac{Z(n-k+1,q) \cdot Z_(n-k,q)}{Z(2n+1,q)} = \Theta\left( (1-q)^{2k} \right), $$

\noindent
and Part~\ref{itm:asymp1} follows.

The proof of~\ref{itm:asymp3} is essentially the same as the proof of~\ref{itm:asymp1}. Now we 
put the elements of $\{-k,\dots,k\}$ in reverse order, and put arbitrary permutations on 
$\{-n,\dots,-k-1\}$ and $\{k+1,\dots,n\}$ and the proof carries through with only minor adaptations in notation.
\end{proofof}

We next turn attention to Part~\ref{itm:asymp2} of Proposition~\ref{prop:asymp}.
This proof is a bit more involved, and we break it down into several steps. The first step is the following observation.

\begin{lem}
For $0<q<1$ we have 

$$ \Pee\left( \text{sign}\left(\Sigma(i)\right) \neq \text{sign}(i) \text{ for some $i$ with $|i|\geq k^3$ } \right) = 
o\left( q^{{2k\choose 2}} \cdot (1-q)^{2k} \right), $$

\noindent
as $k \to \infty$.
\end{lem}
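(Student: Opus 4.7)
The plan is to combine the tail bound for a single displacement (Part~\ref{item:gnedin_disp} of Lemma~\ref{lem:gnedin}) with the shift-invariance property $\Sigma(i) - i \isd \Sigma(0)$ (noted in the remarks following Lemma~\ref{lem:gnedin}), and then apply a simple union bound over $|i| \geq k^3$.

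First I would observe the deterministic fact that if $\on{sign}(\Sigma(i)) \neq \on{sign}(i)$, then $\Sigma(i)$ and $i$ lie on opposite sides of $0$ (or one of them is $0$), which forces $|\Sigma(i) - i| \geq |i|$. Therefore, for every fixed $i \in \bZ$ with $|i| \geq k^3$,
\begin{equation*}
\Pee\bigl(\on{sign}(\Sigma(i)) \neq \on{sign}(i)\bigr) \leq \Pee\bigl(|\Sigma(i) - i| \geq |i|\bigr) = \Pee\bigl(|\Sigma(0)| \geq |i|\bigr) = O(q^{|i|}),
\end{equation*}
where the middle equality uses shift invariance and the last uses Part~\ref{item:gnedin_disp} of Lemma~\ref{lem:gnedin}.

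Next I would apply the union bound and sum the resulting geometric series. Specifically,
\begin{equation*}
\Pee\bigl(\exists\, i \text{ with } |i| \geq k^3 \text{ such that } \on{sign}(\Sigma(i)) \neq \on{sign}(i)\bigr) \leq 2 \sum_{m \geq k^3} O(q^m) = O(q^{k^3}).
\end{equation*}

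Finally I would compare growth rates. Since $\binom{2k}{2} = 2k^2 - k$, we have
\begin{equation*}
\frac{q^{k^3}}{q^{\binom{2k}{2}} (1-q)^{2k}} = \frac{q^{k^3 - 2k^2 + k}}{(1-q)^{2k}}.
\end{equation*}
The exponent $k^3 - 2k^2 + k$ grows cubically in $k$, whereas $(1-q)^{-2k}$ grows only exponentially in $k$; since $0 < q < 1$ is fixed, the numerator tends to zero much faster than the denominator tends to infinity, so the ratio tends to $0$ and the bound $o\bigl(q^{\binom{2k}{2}}(1-q)^{2k}\bigr)$ follows. There is no serious obstacle here: the only thing to be careful about is using shift-invariance to reduce every $\Sigma(i)-i$ to $\Sigma(0)$ before applying Part~\ref{item:gnedin_disp}, so that the tail bounds are uniform in $i$.
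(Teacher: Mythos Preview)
Your proposal is correct and is essentially the same argument as the paper's own proof, which simply invokes the displacement bound $\Pee(|\Sigma(i)-i|>m)=\Theta(q^m)$ together with the union bound in a single sentence. You have just spelled out the details the paper leaves implicit: the deterministic inequality $|\Sigma(i)-i|\geq |i|$ when the signs differ, the reduction to $\Sigma(0)$ via shift invariance, the geometric summation giving $O(q^{k^3})$, and the comparison of $q^{k^3}$ against $q^{\binom{2k}{2}}(1-q)^{2k}$.
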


\begin{proof}
 This immediately follows by the result on displacements 
 $\Pee( |\Sigma(i)-i| > m ) = \Theta(q^m)$ as in Lemma \ref{lem:gnedin} part \ref{item:gnedin_disp} and the union bound.
\end{proof}

By this last lemma, when determining the probability $\Pee\left( C_1( r \circ\Sigma) \geq 2k \right)$, 
we can restrict attention to the event that there are $2k$ points $x_1,\dots,x_{2k}$ in the 
interval $(-k^3, k^3)$ such that $\Sigma(x_1)=-x_1,\dots,\Sigma(x_{2k})=-x_{2k}$.

For $0<i_1<i_2<\dots<i_\ell$ and $0<j_1<j_2<\dots<j_r$ let us write 

$$ 
p_{\underline{i},\underline{j}} := 
\Pee\left( \Sigma(-i_1)=i_1,\dots,\Sigma(-i_\ell)=i_\ell, \Sigma(j_1)=-j_1,\dots,\Sigma(j_r)=-j_r \right). $$
% 
% $$ p_{\underline{i},\underline{j}} := 
% \Pee\left(\Sigma(0)=0,\Sigma(-i_1)=i_1,\dots,\Sigma(-i_\ell)=i_\ell, \Sigma(j_1)=-j_1,\dots,\Sigma(j_r)=-j_r \right). $$

We point out that, as $\Sigma^{-1}$ and $\Sigma$ follow the same distribution, we have 

\begin{equation}\label{eq:symm} p_{\underline{j},\underline{i}}=p_{\underline{i},\underline{j}}. 
%, \quad p_{\underline{j},\underline{i}}=p_{\underline{i},\underline{j}}. 
\end{equation}

\begin{lem}\label{lem:pij} 
There is a constant $c$ such that we have
 
 $$ 
 p_{\underline{i},\underline{j}} \leq 
 (1-q)^{\ell+r} \cdot q^{\Psi(\underline{i},\underline{j})},  
 $$
 
 \noindent
 where 
 
 $$ \Psi(\underline{i},\underline{j}) := 
 {\ell\choose 2}+{r\choose 2} + 
 2 \sum_{a=1}^\ell i_a + \frac12 \sum_{c\leq a\leq r-c} \min(a,r-a)\cdot (j_a-j_{a-1}-1). $$
 
\end{lem}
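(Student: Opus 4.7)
The plan is to work in the finite Mallows distribution $\Sigma_{I_N} \sim \on{Mallows}(I_N, q)$ with $I_N := \{-N,\dots,N\}$, and to pass to the limit $N\to\infty$ using Part~\ref{item:gnedin_converge} of Lemma~\ref{lem:gnedin}, which gives
\begin{equation*}
p_{\underline{i},\underline{j}} \;=\; \lim_{N\to\infty} \frac{\sum_{\pi\text{ valid}} q^{\inv(\pi)}}{Z(2N+1, q)},
\end{equation*}
where ``valid'' means $\pi(-i_a)=i_a$ for all $a$ and $\pi(j_b)=-j_b$ for all $b$. The heart of the argument will be a pointwise lower bound $\inv(\pi) \geq \Psi(\underline{i},\underline{j}) + \inv(\pi_U)$ for every valid $\pi$, where $\pi_U: U \to V$ denotes the restriction of $\pi$ to the set $U$ of unconstrained positions, viewed as a bijection to the set $V$ of unconstrained values. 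Once this bound is in hand, the monotonicity of $q \mapsto q^x$ for $q\in(0,1)$ gives
\begin{equation*}
\sum_{\pi\text{ valid}} q^{\inv(\pi)} \;\leq\; q^\Psi \sum_{\pi_U} q^{\inv(\pi_U)} \;=\; q^\Psi \cdot Z(2N+1-\ell-r, q),
\end{equation*}
and the ratio $Z(2N+1-\ell-r,q)/Z(2N+1,q)$ tends to $(1-q)^{\ell+r}$ as $N\to\infty$ via the product formula $Z(n,q) = \prod_{i=1}^n(1-q^i)/(1-q)$, yielding the claim.

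For the pointwise lower bound I would decompose $\inv(\pi) = \inv_1 + \inv_2^- + \inv_2^+ + \inv_3$, where $\inv_1$ counts inversions with both endpoints among the constrained positions $\{-i_a\}\cup\{j_b\}$, while $\inv_2^-$ (resp.~$\inv_2^+$) counts inversions with one endpoint in $\{-i_a\}$ (resp.~$\{j_b\}$) and the other in $U$, and $\inv_3 = \inv(\pi_U)$. The contribution $\inv_1$ is forced: the prescribed images reverse the natural order within each of $\{-i_a\}$ and $\{j_b\}$, and every cross-pair $-i_a < j_b$ has $i_a > -j_b$, giving $\inv_1 = \binom{\ell}{2}+\binom{r}{2}+\ell r$. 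For $\inv_2^-$ I would use the identity that the number of inversions incident to $s \in I_N$ equals $2N + s + \pi(s) - 2P_s$ with $P_s := |\{t<s:\pi(t)<\pi(s)\}|$; at $s=-i_a$, $\pi(s)=i_a$ this reads $2N-2P_{-i_a}$, and since the $\ell-a$ positions $\{-i_{a+1},\dots,-i_\ell\}$ lying left of $-i_a$ all carry values $>i_a$ and so contribute nothing to $P_{-i_a}$, one gets $P_{-i_a} \leq N-i_a-(\ell-a)$, yielding at least $2i_a + 2(\ell-a)$ inversions incident to $-i_a$. Summing over $a$ and applying the double-counting identity $\sum_a |\{\text{inv.\ incident to }-i_a\}| = X + \binom{\ell}{2}$ (where $X$ is the number of inversions with at least one endpoint in $\{-i_a\}$) produces $X \geq 2\sum i_a + \binom{\ell}{2}$, whence $\inv_2^- = X - \binom{\ell}{2} - \ell r \geq 2\sum i_a - \ell r$.

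For $\inv_2^+$ I would run a case analysis: for each unconstrained $x\in\{j_{a-1}+1,\dots,j_a-1\}$, regardless of which of the $r+1$ sub-intervals determined by $-j_r<\cdots<-j_1$ contains $\pi(x)$, the number of inversions involving $x$ and some $j_b$ works out to be at least $\min(a-1, r-a+1)$. Choosing the constant $c\geq 2$ and using the elementary inequality $\frac{1}{2}\min(a,r-a) \leq \min(a-1,r-a+1)$, valid for $c\leq a \leq r-c$, summation over gap positions gives $\inv_2^+ \geq \frac{1}{2}\sum_{c\leq a\leq r-c}\min(a,r-a)(j_a - j_{a-1} - 1)$. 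Combining the three estimates, $\inv_1 + \inv_2^- + \inv_2^+ \geq \Psi(\underline{i},\underline{j})$, so $\inv(\pi) \geq \Psi + \inv(\pi_U)$ as required.

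The main obstacle will be the case analysis for $\inv_2^+$: one must carefully verify that wherever $\pi(x)$ lands among the $r+1$ sub-intervals, the count of inversions with the $j_b$'s is at least $\min(a-1, r-a+1)$, and then select $c$ to sidestep the boundary values of $a$ where the elementary inequality $\frac{1}{2}\min(a,r-a) \leq \min(a-1,r-a+1)$ would otherwise fail. A secondary subtlety is that the refinement $X \geq 2\sum i_a + \binom{\ell}{2}$, strictly sharper than the naive $X \geq 2\sum i_a$, is needed to recover the full $\binom{\ell}{2}$ contribution in $\Psi$; this hinges on the double-counting identity $\sum_a|\{\text{inv.\ incident to }-i_a\}| = X + \binom{\ell}{2}$, which absorbs the $2\binom{\ell}{2}$ slack coming from the forced inversions within $\{-i_a\}$.
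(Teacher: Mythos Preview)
Your proof is correct and follows a genuinely different route from the paper's. The paper argues probabilistically via the iterative sampling procedure for $\Mallows(I_n,q)$: it bounds $\Pee(B_n) = \Pee(\Pi_n(-i_a)=i_a\text{ for all }a)$ step by step, each constraint contributing a truncated-geometric factor of roughly $(1-q)q^{2i_a+\ell-a}$, and then bounds $\Pee(A_n\mid B_n)$ by a ``skipping'' argument. Namely, for each gap position $j$ with $j_a<j<j_{a+1}$, either $\Pi_n(j)>-j_a$ and the choice of $\Pi_n(j)$ must skip the still-available values $-j_r,\dots,-j_{a+1}$, or $\Pi_n(j)<-j_a$ and each of $j_1,\dots,j_a$ must later skip an extra available value; this yields a factor $q^{r-a}+q^a$ per gap position, and the constant $c$ is chosen so that $q^{r-a}+q^a\leq q^{\frac{1}{2}\min(a,r-a)}$.

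Your approach is purely combinatorial: you lower-bound $\inv(\pi)$ directly for every valid $\pi$ via the decomposition $\inv_1+\inv_2^-+\inv_2^+ +\inv(\pi_U)$, and then sum over the free part $\pi_U$ to pick up $Z(2N+1-\ell-r,q)$. The case analysis for $\inv_2^+$ is exactly right (the minimum over the $r+1$ possible intervals for $\pi(x)$ is indeed $\min(a-1,r-a+1)$), as is the double-counting refinement giving $X\geq 2\sum i_a+\binom{\ell}{2}$, which you correctly flag as essential. Your argument is arguably cleaner for this lemma in isolation, and your constant $c=2$ is explicit rather than the paper's implicit ``$c$ large enough that $q^{c/2}<1/2$''. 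On the other hand, the paper's dynamic framework adapts more readily to the follow-up Lemma~\ref{lem:pxy}, where an extra constraint $\Sigma(x)=y$ is inserted and one must track precisely how many additional skips it forces; extending your inversion-counting decomposition to that setting would require reopening the case analysis for $\inv_2^+$ with one more marked position.
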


\begin{proof}
We are again going to consider $\Pi_n \sim \text{Mallows}(I_n,q)$ with $I_n=\{-n,\dots,n\}$ and $n$ large. 
For notational convenience we set 

$$ A_n := \{ \Pi_n(j_1)=-j_1, \dots, \Pi_n(j_r)=-j_r\}, $$

$$ B_n := \{ \Pi_n(-i_1)=i_1, \Pi_n(-i_2)=i_2, \dots, \Pi_n(-i_{\ell})=i_{\ell} \}. $$

By Mallows' iterative procedure for generating $\Pi_n$ we have 

$$ \Pee( \Pi_n(i) = j | \Pi_n(-n)=x_{-n}, \dots, \Pi_n(i-1)=x_{i-1} ) 
= \begin{cases} \left(\frac{1-q}{1-q^{n-i+1}}\right)\cdot q^{j-i+k} & \text{ if } j \not\in \{x_{-n},\dots,x_{i-1}\}, \\
   0 & \text{ otherwise.} 
  \end{cases},
 $$

\noindent 
where 

$$ k = k(x_{-n},\dots,x_{i-1}) := |\{ a : x_a > j \}|. $$

\noindent
It follows that 

$$ \begin{array}{l} \Pee( \Pi_n(-i_\ell) = i_{\ell} ) \leq \left(\frac{1-q}{1-q^{n+i_\ell+1}}\right)q^{2i_\ell}, \\
\Pee( \Pi_n(-i_{\ell-1}) = i_{\ell-1} |  \Pi_n(-i_\ell) = i_{\ell} ) 
\leq \left(\frac{1-q}{1-q^{n+i_{\ell-1} + 1}}\right)q^{2i_{\ell-1}+1}, \\
\Pee( \Pi_n(-i_{\ell-2}) = i_{\ell-2} |  \Pi_n(-i_\ell) = i_{\ell}, \Pi_n(-i_{\ell-1}) = i_{\ell-1} ) 
\leq \left(\frac{1-q}{1-q^{n+i_{\ell-1}+1}}\right)q^{2i_{\ell-2}+2}, \\
\hspace{10ex} \vdots \\ 
\Pee( \Pi_n(-i_1) = i_1 |  \Pi_n(-i_\ell) = i_{\ell}, \dots, \Pi_n(-i_2)=i_2 ) 
\leq \left(\frac{1-q}{1-q^{n+i_1+1}}\right) q^{2i_{1}+\ell-1}. 
\end{array} $$

In other words,

$$ \Pee( B_n ) 
\leq (1-q)^\ell \cdot q^{2 \sum_{a=1}^\ell i_a + {\ell\choose 2}} \cdot 
\prod_{a=1}^\ell \left(\frac{1}{1-q^{n+i_a+1}}\right). 
$$

Note that for $A_n$ to hold, it must be the case 
that 

$$ \begin{array}{l} \Pi_n[\{-n,\dots,j_1-1\}] \cap \{-j_1,\dots,-j_r\} = \emptyset,  \\
\Pi_n[\{-n,\dots,j_2-1\}] \cap \{-j_2,\dots,-j_r\} = \emptyset,\\ 
\hspace{10ex} \vdots \\
\Pi_n[\{-n,\dots,j_r-1\}] \cap \{-j_r\} = \emptyset
\end{array} $$

In particular, when choosing the image of $j_1$ we must ``skip'' at least the first $r-1$ available
numbers, when choosing the image of $j_2$ we must ``skip'' at least $r-2$ available
numbers, and so on.

We let $c$ be a large constant to be specified later on.
Suppose that, for some $c \leq a \leq r-c$, there exists a $j_{a} < j < j_{a+1}$.
If both $A_n$ and $\Pi_n(j)>-j_a$ are to hold then 
when determining $\Pi_n(j)$ we must skip over the (still available) numbers 
$-j_r, \dots, -j_{a+1}$. I.e., we skip over at least the first $r-a$ of the available numbers.
If on the other hand, $A_n$ and $\Pi_n(j)<-j_a$ are to hold then when determining each of 
$j_1,\dots,j_{a}$ must have skipped an additional available number.

It follows that 

$$ \begin{array}{rcl} 
\Pee( A_n | B_n ) & \leq & \left(\frac{1-q}{1-q^{n-j_1+1}}\right) q^{r-1} \cdot 
\left(\frac{1-q}{1-q^{n-j_2+1}}\right) q^{r-2}
\cdot \dots 
\cdot \left(\frac{1-q}{1-q^{n-j_r+1}}\right) q^{0} \\
& & \cdot \prod_{a=c}^{r-c} \prod_{j_{a-1}<j<j_a} 
\left(\frac{q^{r-a} + q^{a}}{1-q^{n-j+1}}\right) \\
& \leq & (1-q)^r q^{{r\choose 2}+\frac12 \sum_{a=c}^{r-c} \min(a,r-a) \cdot (j_a-j_{a-1}-1)} \cdot 
\prod_{a=1}^{i_r} \left(\frac{1}{1-q^{n-a+1}}\right), 
\end{array} $$

\noindent
where in the last line we assume without loss of generality that $c$ has been chosen large enough so that
$q^{c/2} < 1/2$. (Which implies that

$$ q^{r-a} + q^{a} \leq 2 q^{\min(a,r-a)} \leq q^{\frac12 \min(a,r-a)}, $$

for all $c \leq a \leq r-c$.)

Combining the bounds on $\Pee(A_n|B_n)$ and $\Pee(B_n)$ gives

$$ \Pee( A_n\cap B_n ) \leq 
(1-q)^{\ell+r} \cdot q^{\Psi(\underline{i},\underline{j})} \cdot 
\prod_{a=-i_\ell}^{j_r} \left(\frac{1}{1-q^{n-a+1}}\right). $$

\noindent
Since 

$$ \Pee( \Sigma(-i_1)=i_1, \dots, \Sigma(-i_\ell)=i_\ell, \Sigma(j_1)=-j_1,\dots, \Sigma(j_r)=-j_r) 
\leq \limsup_{n\to\infty} \Pee( A_n \cap B_n ), $$

\noindent
the inequality in the lemma follows.
\end{proof}

We make an additional definition:

$$ p_{\underline{i},\underline{j}}^{x\to y} := 
\Pee\left(\Sigma(x)=y, %%\Sigma(0)=0, %% nergens in berekening gebruikt
\Sigma(-i_1)=i_1,\dots,\Sigma(-i_\ell)=i_\ell, \Sigma(j_1)=-j_1,\dots,\Sigma(j_r)=-j_r \right).
$$ 

We observe that, analogously to~\eqref{eq:symm}, we have

\begin{equation}\label{eq:symm2}
 p_{\underline{j},\underline{i}}^{y\to x} = p_{\underline{i},\underline{j}}^{x\to y}. 
\end{equation}

\noindent
Since $ r \circ\Sigma\circ r $ (the map $i \mapsto -\Sigma(-i)$) 
also has the same distribution as $\Sigma$, we have in addition that

\begin{equation}\label{eq:symm3}
 p_{\underline{j},\underline{i}}^{-x\to -y} = p_{\underline{i},\underline{j}}^{x\to y}. 
\end{equation}

\begin{lem}\label{lem:pxy} We have 
\begin{enumerate}
\item\label{itm:pijxy1} If $x<0$ and $y>0$ are such that $|x|,y \not\in \{i_1,\dots,i_\ell\}$ then 

$$ p_{\underline{i},\underline{j}}^{x\to y} 
\leq (1-q)^{\ell+r} q^{\Psi(\underline{i},\underline{j}) + |x|+y}. $$

\item\label{itm:pijxy2} If $x>0$ and $y<0$ are such that $x,|y| \not\in \{j_1,\dots,j_r\}$ and 
$x,|y|>j_{\lceil r/2\rceil}$ then 

$$ p_{\underline{i},\underline{j}}^{x\to y} 
\leq (1-q)^{\ell+r} q^{\Psi(\underline{i},\underline{j}) + r/4}. $$

\end{enumerate}
\end{lem}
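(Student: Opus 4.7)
The plan for both parts is to adapt the proof of Lemma~\ref{lem:pij}, extending its bookkeeping to accommodate the additional constraint $\Sigma(x)=y$. Following that proof, I would work with $\Pi_n\sim\on{Mallows}(I_n,q)$ on $I_n=\{-n,\ldots,n\}$, bound the probability of the enlarged event via the iterative Mallows sampling procedure, and let $n\to\infty$ using Lemma~\ref{lem:gnedin}\ref{item:gnedin_converge}. The key principle, already implicit in Lemma~\ref{lem:pij}, is that at each constraint position the rank of the prescribed value is lower-bounded by one plus the number of not-yet-placed constraint values below it; adding one constraint can only strengthen such bounds at the pre-existing positions, so the main task is to analyse the new position itself and, in Part~\ref{itm:pijxy2}, its interaction with the middle-position case analysis. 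The extra factor of $(1-q)$ produced by the additional constraint is absorbed into $(1-q)^{\ell+r}$ since $1-q<1$.

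For Part~\ref{itm:pijxy1} I would insert $(-|x|,y)$ into the sorted list of negative-side constraints; the hypotheses $|x|,y\notin\{i_1,\ldots,i_\ell\}$ keep the enlarged list consistent. When the iterative procedure reaches position $-|x|$ and must assign the value $y>0$, the rank of $y$ among still-available values is at least $y-(-|x|)+1=y+|x|+1$, contributing an extra factor of $q^{y+|x|}$. At each original position $-i_a$ the Lemma~\ref{lem:pij} bound relies only on earlier constraint values exceeding $i_a$, and this count is only increased in the enlarged list, so those bounds are unchanged. The positive-side analysis of $\Pee(A_n\mid B_n\cap\{\Pi_n(x)=y\})$ is identical to that in Lemma~\ref{lem:pij}. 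Multiplying gives the stated bound.

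For Part~\ref{itm:pijxy2} I would instead insert $(x,y)$ into the sorted list of positive-side constraints; assume $x\in(j_a,j_{a+1})$ (the case $x>j_r$ is analogous and easier). The crucial observation is that for every $c\le\lceil r/2\rceil$ we have $j_c\le j_{\lceil r/2\rceil}<x$ and $y<-j_{\lceil r/2\rceil}\le -j_c$. Thus, when the procedure reaches position $j_c$, the value $y$ is still unplaced and lies below $-j_c$, joining $-j_{c+1},\ldots,-j_r$ in the lower bound on the rank of $-j_c$; that bound increases from $r-c+1$ to $r-c+2$. Summing the extra exponent over $c=1,\ldots,\lceil r/2\rceil$ supplies an additional $q^{\lceil r/2\rceil}$. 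At the new constraint position $x$ I would use only the trivial exponent bound. The middle-position case analysis in the sub-intervals $(j_a,x)$ and $(x,j_{a+1})$ is carried out as in Lemma~\ref{lem:pij}: in $(j_a,x)$ the ``upper'' case now also requires skipping past $y$, so the per-position bound improves to $q^{r-a+1}+q^a$, while in $(x,j_{a+1})$ the standard $q^{r-a}+q^a$ still applies. Hence the extended middle-position contribution to the exponent is at least Lemma~\ref{lem:pij}'s, minus only the single missing term at $x$, whose Lemma~\ref{lem:pij} exponent is at most $\min(a,r-a)/2\le r/4$. Combined with the extra $\lceil r/2\rceil$ coming from the constraint positions, the net extra exponent is at least $\lceil r/2\rceil-r/4\ge r/4$, as required.

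The principal technical obstacle lies in Part~\ref{itm:pijxy2}: one must verify that splitting the interval $(j_a,j_{a+1})$ at $x$ does not weaken the case analysis beyond the single middle-position term it displaces. The outline above handles this by observing that the bound on the left sub-interval actually improves, while the bound on the right sub-interval is unchanged. A little extra care is needed at the boundary when $a$ falls outside the range $c\le a\le r-c$ used in Lemma~\ref{lem:pij}'s middle-position sum, but in that case the displaced term does not appear in $\Psi(\underline{i},\underline{j})$ at all and the desired bound follows at once from the gain $\lceil r/2\rceil\ge r/4$.
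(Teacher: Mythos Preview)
Your proposal is correct and follows essentially the same approach as the paper's own proof: insert the extra constraint into the appropriate side of the Mallows sampling procedure, pick up the additional displacement factor $q^{|x|+y}$ in Part~\ref{itm:pijxy1}, and in Part~\ref{itm:pijxy2} trade the middle-position factor at $x$ (worth at most $r/4$ in the exponent) for the $\lceil r/2\rceil$ extra skips of $y$ forced at the constraint positions $j_1,\dots,j_{\lceil r/2\rceil}$. Your accounting of the middle-position terms in the split interval $(j_a,x)\cup(x,j_{a+1})$, and of the boundary case $a\notin[c,r-c]$, is in fact cleaner than the paper's argument, which contains a few slips (for instance it asserts $m\geq 0.8r$ where the hypothesis $x>j_{\lceil r/2\rceil}$ only gives $m\geq\lceil r/2\rceil$).
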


\begin{proof}
We start with the proof of~\ref{itm:pijxy1}.
The proof is nearly identical to the proof of the previous lemma, and we only mention the 
changes that need to be made.
Now, we define 

$$ C_n := \{ \Pi_n(x)=y \} \cap B_n, $$

and let $m \geq  0$ be such that 
$i_a > x$ for all $a>m$.

We have 

$$ \Pee( \Pi_n(-x) = y | \Pi_{n}(-i_a ) = i_a \text{ for  all $a > m$ } ) \leq \left(\frac{1-q}{1-q^{n+x}}\right)\cdot q^{x+y}, $$

while for $a \leq m$

$$ \Pee( \Pi_n( - i_a ) = i_a | \Pi_n(-i_\ell)=i_{\ell}, \dots, \Pi_n(-i_{a-1})=i_{a-1}, \Pi_n(-x)=y )
\leq \left(\frac{1-q}{1-q^{n+i_a}}\right)\cdot q^{2i_a + \ell-a}. $$

We can conclude that 

$$ \Pee( C_n ) \leq (1-q)^{\ell+1} \cdot q^{2 \sum_{a=1}^\ell i_a + {\ell\choose 2} + x+y} \cdot 
\prod_{a=1}^\ell \left(\frac{1}{1-q^{n+i_a}}\right) \cdot \left(\frac{1}{1-q^{n+x}}\right). $$

The same reasoning as before shows

$$ \Pee( A_n | C_n ) \leq (1-q)^r q^{{r\choose 2}+\frac12 \sum_{a=c}^{r-c} \min(a,r-a) \cdot (j_a-j_{a-1}-1)} \cdot 
\prod_{a=1}^{i_r} \left(\frac{1}{1-q^{n-a}}\right). $$

Hence

$$ 
p_{\underline{i},\underline{j}}^{x\to y} \leq \limsup_{n\to\infty} \Pee( A_n \cap C_n ) \leq 
(1-q)^{\ell+r+1} q^{\Psi(\underline{i},\underline{j}) + x+y}, $$

as claimed in~\ref{itm:pijxy1}.

For the proof of~\ref{itm:pijxy2}, we again proceed similarly.
This time we define

$$ D_n := \{ \Pi_n(j_1)=-j_1, \dots,\Pi_n(j_r)=-j_r, \Pi_n(x)=y \}. $$

Let $m\geq 0.8r$ be such that $j_1, \dots, j_m < x < j_{m+1}$ (where the upper bound is void 
if $m=r$).
We can compute $\Pee( D_n | B_n )$ in a manner analogous to the way we determined $\Pee( A_n | B_n )$ in the proof of 
Lemma~\ref{lem:pij}. When computing $\Pee( A_n | B_n )$ in the term for the $m$-th gap, the choice of $\Pi_n(x)$ contributed a 
factor $\left(\frac{q^{\frac12 \min(m,r-m)}}{1-q^{n-x}}\right)$ in case $m\leq r-c$ and contributed a factor one otherwise.
If we know that $\Pi_n(x) \leq j_{\lfloor r/2\rfloor}$ then we can replace this by $q^{r/2}$ as 
all of $j_1, \dots,j_{\lceil r/2\rceil}$ must have skipped over the additional available number $y$.
This gives

$$ \Pee( D_n | B_n ) \leq \Pee( A_n | B_n ) \cdot q^{ 0.8r - \frac12 \min(m,r-m) }
\geq q^{r/4}. $$

Hence also 

$$ p_{\underline{i},\underline{j}}^{x\to y} \leq \limsup_{n\to\infty}
\Pee( D_n \cap A_n ) \leq q^{r/2} \cdot \limsup_{n\to\infty} \Pee( A_n \cap B_n ), $$

\noindent
and the result follows.
\end{proof}

For notational convenience we introduce a notation for the gaps between consecutive entries of $\underline{i}$ and $\underline{j}$:

$$ g_1 := i_1-1, \quad g_a := i_a - (i_{a-1}+1) \quad (a=2,\dots,\ell), $$

\noindent
and 

$$ h_{1} := j_1-1, \quad h_a := j_a - (j_{a-1}+1) \quad (a=2,\dots,r). $$

\noindent
Notice that $i_a = g_1 + \dots + g_a + a$ so that

$$ \begin{array}{rcl} 
2 \sum_{a=1}^{\ell} i_a 
& = & \displaystyle 
2\cdot{\big(} (g_1 + 1) + (g_1+g_2+2) + (g_1+g_2+g_3+3) + \dots
+ (g_1+\dots+g_\ell+\ell) {\big)} \\
& = & \displaystyle
\ell(\ell+1) + 2 \sum_{a=1}^\ell (\ell+1-a) \cdot g_a. 
\end{array} $$

This gives the following alternative expression for the upper bound on $p_{\underline{i},\underline{j}}$:

$$ \begin{array}{rcl} 
\Psi(\underline{i}, \underline{j} ) 
& = & \displaystyle 
\ell(\ell+1) + {\ell\choose2} + {r\choose2} 
+ 2 \sum_{a=1}^\ell (\ell+1-a) \cdot g_a + \frac12 \sum_{c\leq a\leq r-c} \min(a,r-a)\cdot h_{a}\\
& = & \displaystyle
{\ell+r\choose 2} + \ell(\ell+1-r)
+ 2 \sum_{a=1}^\ell (\ell+1-a) \cdot g_{a} + \frac12 \sum_{c\leq a\leq r-c} \min(a,r-a)\cdot h_{a}.
\end{array} $$

Next we establish that the probability 
that $C_1( r \circ\Sigma) \geq 2k$ and $\Sigma(0)\neq 0$ is small
compared to the target expression $(1-q)^{2k} \cdot q^{{2k\choose 2}}$.

\begin{lem}\label{lem:Sigma_0_fixed}
We have

$$ \sum_{\ell+r=2k} \sum_{0<i_1<\dots<i_\ell<k^3,\atop 0<j_1<\dots<j_r<k^3} p_{\underline{i},\underline{j}}
= o\left( (1-q)^{2k} \cdot q^{{2k\choose 2}} \right), $$

as $k\to\infty$.
\end{lem}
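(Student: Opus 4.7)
The plan is to combine Lemma~\ref{lem:pij} with the symmetry \eqref{eq:symm}, that is $p_{\underline{i},\underline{j}} = p_{\underline{j},\underline{i}}$. Applying Lemma~\ref{lem:pij} in both orders and taking the better of the two bounds gives
\[
p_{\underline{i},\underline{j}} \;\leq\; (1-q)^{\ell+r}\, \min\!\bigl(q^{\Psi(\underline{i},\underline{j})},\, q^{\Psi(\underline{j},\underline{i})}\bigr) \;\leq\; (1-q)^{\ell+r}\, q^{(\Psi(\underline{i},\underline{j})+\Psi(\underline{j},\underline{i}))/2},
\]
where the second inequality uses $\max(A,B) \geq (A+B)/2$ and $0<q<1$. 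This symmetrization step is essential: Lemma~\ref{lem:pij} alone is too weak when $\ell$ is small relative to $r$, because the term $\ell(\ell+1-r)$ in $\Psi(\underline{i},\underline{j})$ can be very negative. Swapping arguments replaces it by $r(r+1-\ell)$, which is then positive and large, and averaging recovers a uniform lower bound on the exponent across all $(\ell,r)$ with $\ell+r=2k$.

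Next, writing $g_a := i_a - i_{a-1} - 1$ and $h_b := j_b - j_{b-1} - 1$ (with $i_0 = j_0 = 0$) for the gap sequences, I would plug in the gap-form of $\Psi$ derived in the paper and use the elementary identity
\[
\ell(\ell+1-r) + r(r+1-\ell) = (r-\ell)^2 + 2k,
\]
valid whenever $\ell + r = 2k$, to obtain
\[
\tfrac12\bigl(\Psi(\underline{i},\underline{j}) + \Psi(\underline{j},\underline{i})\bigr) \;\geq\; \binom{2k}{2} + k + \tfrac{(r-\ell)^2}{2} + \sum_{a=1}^{\ell}(\ell+1-a)\,g_a + \sum_{b=1}^{r}(r+1-b)\,h_b,
\]
dropping the remaining $\min(\cdot,\cdot)$ contributions, which are non-negative.

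Finally, I would bound the total sum by extending each gap variable $g_a, h_b$ to range over all non-negative integers (this enlarges the sum and discards the constraint $i_a, j_b < k^3$), so that the gap sums factorize as
\[
\sum_{\underline{i}} q^{\sum_a (\ell+1-a)\,g_a} \;=\; \prod_{a=1}^{\ell} \frac{1}{1-q^{\ell+1-a}} \;=\; \prod_{b=1}^{\ell} \frac{1}{1-q^{b}} \;\leq\; \frac{1}{(q;q)_{\infty}} \;=:\; C_q,
\]
which is finite and independent of $\ell$, and similarly for $\underline{j}$. Combining these bounds and summing over the $2k+1$ pairs $(\ell, r)$ with $\ell+r=2k$ gives
\[
\sum_{\ell+r = 2k} \sum_{\underline{i},\underline{j}} p_{\underline{i},\underline{j}} \;\leq\; (2k+1)\, C_q^{2}\, (1-q)^{2k}\, q^{\binom{2k}{2}+k} \;=\; o\!\left((1-q)^{2k}\, q^{\binom{2k}{2}}\right),
\]
because $k\,q^{k} \to 0$ as $k \to \infty$. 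The main obstacle is spotting the symmetrization trick: without it the pointwise bound of Lemma~\ref{lem:pij} is not strong enough in all parameter regimes, and a case-by-case analysis in $(\ell,r)$ becomes bookkeeping-heavy; once symmetrized, the problem reduces to the single geometric-series calculation above.
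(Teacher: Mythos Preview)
Your proof is correct and follows the same core strategy as the paper: exploit the symmetry $p_{\underline{i},\underline{j}}=p_{\underline{j},\underline{i}}$ together with Lemma~\ref{lem:pij}, pass to gap variables, and factorize into geometric series. The execution differs slightly. The paper uses the symmetry to restrict to $\ell\geq r$ and then applies Lemma~\ref{lem:pij} in one direction only; since $\Psi$ puts no weight on the boundary gaps $h_a$ with $a<c$ or $a>r-c$, those gaps contribute a harmless but inelegant factor $k^{6c}$. Your averaging of $\Psi(\underline{i},\underline{j})$ and $\Psi(\underline{j},\underline{i})$ instead produces the weight $(r+1-b)\geq 1$ on every gap $h_b$, so every gap sum is a convergent geometric series and no polynomial prefactor appears. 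Both routes reduce to the same decisive observation that the symmetrized quadratic term is at least $k$, which you make explicit via $\ell(\ell+1-r)+r(r+1-\ell)=(\ell-r)^2+2k$; your presentation is a touch cleaner.
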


\begin{proof}
By~\eqref{eq:symm}, the sought sum is at most $2S$ where 

$$ S := \sum_{\ell+r=2k, \atop \ell \geq r} \sum_{0<i_1<\dots<i_\ell<k^3, \atop 0<j_1<\dots<j_r<k^3} 
p_{\underline{i},\underline{j}}.$$

We have

$$ \begin{array}{rcl} 
S 
& \leq  & \displaystyle 
\sum_{\ell+r=2k, \atop \ell \geq r} 
\sum_{0\leq g_1,\dots,g_\ell<k^3, \atop
0 \leq h_1,\dots,h_r<k^3}
(1-q)^{2k} \cdot q^{ {2k\choose 2} + \ell(\ell+1-r)
+ 2 \sum_{a=1}^\ell (\ell+1-a) \cdot g_a + \frac12 \sum_{c\leq a\leq r-c} \min(a,r-a)\cdot h_a} \\
& = & \displaystyle 
(1-q)^{2k} \cdot q^{{2k\choose 2}} \cdot \left( \sum_{\ell+r=2k, \atop \ell \geq r} q^{\ell(\ell+1-r)}
\cdot \prod_{a=1}^\ell \left( \sum_{0\leq g_a <k^3} q^{(\ell+1-a)g_a} \right) 
\cdot \prod_{a=1}^{c-1} \left( \sum_{0\leq h_a < k^3} 1 \right) \right. \\
& & \displaystyle \left.
\cdot \prod_{c\leq a <r/2} \left( \sum_{0\leq h_a<k^3} q^{\frac12 ah_a} \right)
\cdot \prod_{r/2\leq a \leq r-c} \left( \sum_{0\leq h_a<k^3} q^{\frac12 (r-a)h_a} \right)
\cdot \prod_{a=r-c+1}^{r} \left( \sum_{0\leq h_a<k^3} 1 \right) \right).
\end{array} $$

Now we remark that 

$$ \prod_{a=1}^\ell \left( \sum_{0\leq g_a <k^3} q^{(\ell+1-a)g_a}\right) 
\leq \prod_{a=1}^\ell \left( \frac{1}{1-q^{\ell+1-a}}\right)
\leq \prod_{b=1}^\infty \left( \frac{1}{1-q^b} \right) < \infty, $$

\noindent 
and analogously 

$$ \prod_{c\leq a <r/2} \left( \sum_{0\leq h_a<k^3} q^{\frac12 ah_a} \right), 
\prod_{r/2\leq a \leq r-c} \left( \sum_{0\leq h_a<k^3} q^{\frac12 (r-a)h_a} \right) 
\leq \prod_{b=1}^\infty \left( \frac{1}{1-\left(\sqrt{q}\right)^b} \right) < \infty. $$

\noindent
Of course we also have $\sum_{0\leq h_a<k^3} 1 = k^3$.

This gives

$$  
S =
O\left( 
(1-q)^{2k} \cdot q^{{2k\choose 2}} \cdot k^{6c} \cdot \left( \sum_{\ell+r=2k, \atop \ell \geq r} q^{\ell(\ell+1-r)} \right) 
\right). $$

\noindent
As $\ell+r=2k, \ell\geq r$ implies that $\ell(\ell+1-r) \geq k$ we have 

$$ \sum_{\ell+r=2k, \atop \ell \geq r} q^{\ell(\ell+1-r)} \leq (2k+1) q^k. $$

\noindent
We find that

$$ S = O\left(
(2k+1) \cdot k^{6c} \cdot (1-q)^{2k} \cdot q^{{2k\choose 2}+k} \right)
= o\left( (1-q)^{2k} \cdot q^{{2k\choose 2}+k} \right), $$

% 
% \cdot \prod_{a=1}^\ell \left( \frac{1}{1-q^{\ell+1-a} \right) 
% \cdot k^{3c}  \right. \\
% & & \displaystyle \left.
% \cdot \prod_{c\leq a <r/2} \left( \frac{1}{1-q^{a h_a} \right)
% \cdot \prod_{r/2\leq a \leq r-c} \left( \sum_{0\leq h_a<k^3} q^{(r-a) h_a} \right)
% \cdot \prod_{a=r-c+1}^{r} \left( \sum_{0\leq h_a<k^3} 1 \right) \right) \\
% 
% & \leq &  \displaystyle 
% \sum_{\ell+r=2k, \atop \ell \geq r} 
% (1-q)^{2k} \cdot q^{{2k\choose 2} + \ell(\ell+1-r)} \cdot 
% k^{3 \cdot (2c+1)} \cdot \prod_{a=1}^\ell \left(\frac{1}{1-q^a}\right) \cdot \prod_{a=c}^{\lceil r/2\rceil} 
% \left(\frac{1}{1-q^a}\right) \\
% & \leq & \displaystyle 
% \sum_{\ell+r=2k, \atop \ell \geq r} 
% (1-q)^{2k} \cdot q^{\ell(\ell+1)+{\ell\choose2}+{r\choose2}} \cdot 
% k^{3 \cdot (2c+1)} \cdot \left( \prod_{a=1}^\infty \frac{1}{1-q^a}\right)^2. 
% \end{array} $$
% 
% Next, we point out that 
% 
% $$ \ell(\ell+1) + {\ell\choose2}+{r\choose2} = {\ell+r\choose 2} + \ell(\ell+1-r)
% \geq {2k\choose 2} + k, $$
% 
% when $\ell+r=k$ and $\ell\geq r$.
% We see that 
% 
% $$ S = O\left( (2k+1) \cdot k^{6c+3} \cdot (1-q)^{2k} \cdot q^{{2k\choose 2} + k} \right)
% = o\left( (1-q)^{2k} \cdot q^{{2k\choose 2}} \right), $$

\noindent
as required.
\end{proof}

We proceed by showing that the the probability that {\bf a)} $C_1( r \circ\Sigma) \geq 2k$, and; {\bf b)} $\Sigma(0)=0$, and; 
{\bf c)} the number of fixed points of $ r \circ\Sigma$ below zero differs by more than one from the 
number of fixed points of $ r \circ\Sigma$ above zero, is small
compared to the target expression $(1-q)^{2k} \cdot q^{{2k\choose 2}}$.

\begin{lem}
We have 

$$ \sum_{\ell+r=2k-1,\atop |\ell-r|>1} \sum_{0<i_1<\dots<i_\ell<k^3, \atop 0<j_1<\dots<j_r<k^3} 
p_{\underline{i},\underline{j}}
= o\left( (1-q)^{2k} \cdot q^{{2k\choose 2}} \right), $$

as $k\to\infty$.
\end{lem}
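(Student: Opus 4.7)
The plan is to mirror the structure of the proof of Lemma~\ref{lem:Sigma_0_fixed}, the essential new point being that the ``imbalance'' term $\ell(\ell+1-r)$ appearing in the alternative form of $\Psi$ becomes large as soon as $|\ell-r|\geq 3$. The heuristic from the introduction --- that a balanced permutation forces at least one extra large displacement when the two sides are asymmetric --- is captured quantitatively by this term, which is already present in the bound from Lemma~\ref{lem:pij}, so the proof need not invoke balance explicitly.

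First, by the symmetry $p_{\underline{i},\underline{j}}=p_{\underline{j},\underline{i}}$ from~\eqref{eq:symm}, I would restrict the sum to pairs with $\ell\geq r+1$. Since $\ell+r=2k-1$ is odd, $\ell-r$ is odd, and the hypothesis $|\ell-r|>1$ becomes $\ell-r\geq 3$. Writing $\ell=k+a$, $r=k-1-a$ with $a\geq 1$, I would then apply Lemma~\ref{lem:pij} in the rewritten form
$$
\Psi(\underline{i},\underline{j})=\binom{\ell+r}{2}+\ell(\ell+1-r)+2\sum_{a=1}^{\ell}(\ell+1-a)g_a+\tfrac{1}{2}\sum_{c\leq a\leq r-c}\min(a,r-a)\,h_a,
$$
and sum over the gap variables $g_a,h_a$ exactly as in the proof of Lemma~\ref{lem:Sigma_0_fixed}: each penalized geometric series is bounded by a universal convergent product, and the $2c$ unpenalized $h_a$-variables contribute an overall polynomial factor of $O(k^{6c})$.

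What remains is to control
$$
\sum_{\substack{\ell+r=2k-1\\ \ell\geq r+3}} q^{\ell(\ell+1-r)}=\sum_{a\geq 1} q^{(k+a)(2a+2)}.
$$
Since $(k+a)(2a+2)\geq 4k+4$ for $a\geq 1$ and grows quadratically in $a$, this series is $O(q^{4k+4})$. Using the identity $\binom{2k-1}{2}+4k+4=\binom{2k}{2}+2k+5$, the whole sum is bounded by
$$
O(k^{6c})\cdot (1-q)^{2k-1}\cdot q^{\binom{2k}{2}+2k+5},
$$
which is visibly $o\!\left((1-q)^{2k}q^{\binom{2k}{2}}\right)$ as $k\to\infty$, as desired.

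I do not anticipate any serious obstacle: the bound of Lemma~\ref{lem:pij}, rewritten in the form used for Lemma~\ref{lem:Sigma_0_fixed}, already isolates the penalty for imbalance through the term $\ell(\ell+1-r)$, and the rest is essentially the same bookkeeping. The only point requiring some care is the comparison of exponents: one must verify that the $q^{4k+4}$ saving from the imbalance comfortably beats the $q^{-(2k-1)}$ cost of passing from $\binom{2k}{2}$ to $\binom{2k-1}{2}$, together with the single missing factor of $(1-q)$, which is why $\ell-r\geq 3$ (rather than merely $\geq 1$) is the relevant threshold.
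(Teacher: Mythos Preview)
Your argument is correct and follows essentially the same route as the paper's proof: both reduce to the computation in Lemma~\ref{lem:Sigma_0_fixed} and isolate the term $q^{\ell(\ell+1-r)}$, using that $\ell+r=2k-1$ with $\ell-r\geq 3$ forces this exponent to be of order $k$. Your numerical bound $(k+a)(2a+2)\geq 4k+4$ is in fact slightly sharper than the paper's stated $\ell(\ell+1-r)\geq 3k+3$, but the conclusion is the same.
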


\begin{proof}
Arguing as in the previous lemma, the sought sum is at most

$$ k^{O(1)} \cdot (1-q)^{2k-1}\cdot q^{{2k-1\choose 2}} \cdot\sum_{\ell+r=2k-1,\atop \ell\geq k+1} 
 q^{\ell(\ell+1-r)}. $$

We remark that when $\ell+r=2k-1, \ell \geq k+1, r\leq k-2$ we have 

$$ {2k-1\choose 2} + \ell(\ell+1-r) 
\geq {2k-1\choose 2} + 3k+3 = {2k\choose 2} + k + 4. $$

\noindent
In particular the sought sum is at most

$$ k^{O(1)} \cdot (1-q)^{2k}  
q^{{2k\choose2}+k} = o\left( (1-q)^{2k}  
q^{{2k\choose2}}\right). $$

\noindent
(The number of choices of $\ell,r$ with $\ell+r=2k, \ell>r+1$ is $O(k)$ and is absorbed in the polynomial term $k^{O(1)}$.)
\end{proof}

By the results so far, we can restrict attention to the situation where $\Sigma(0)=0$ and $\ell=k, r=k-1$.
Next, we establish that the contribution from the situation in which $\Sigma(-i) \neq i$ for some $1\leq i \leq 0.9k$ is negligible.

\begin{lem}
We have 

$$  \sum_{{0<i_1<\dots<i_{k}<k^3, \atop 0<j_1<\dots<j_{k-1}<k^3,} \atop i_a \neq a \text{ for some $1\leq a \leq 0.9k$} } 
p_{\underline{i},\underline{j}}
= o\left( (1-q)^{2k} \cdot q^{{2k\choose 2}} \right), $$

as $k\to\infty$.

\end{lem}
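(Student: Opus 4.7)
The plan is to bound each term using Lemma~\ref{lem:pij} in its gap-variable reformulation. Setting $\ell = k$ and $r = k-1$, and noting $\binom{2k-1}{2} + 2k = \binom{2k}{2} + 1$, we get
$$p_{\underline{i}, \underline{j}} \leq (1-q)^{2k-1}\, q^{\binom{2k}{2} + 1} \cdot \prod_{a=1}^{k} q^{2(k+1-a)\, g_a} \cdot \prod_{c \leq a \leq k-1-c} q^{\frac{1}{2}\min(a,\, k-1-a)\, h_a},$$
where $g_a := i_a - i_{a-1} - 1 \geq 0$ (with $i_0 := 0$) and $h_a := j_a - j_{a-1} - 1 \geq 0$ (with $j_0 := 0$).

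The first key observation is that ``$i_a = a$ for all $1 \leq a \leq 0.9k$'' is equivalent to ``$g_1 = \cdots = g_{\lfloor 0.9k\rfloor} = 0$''. Hence the hypothesis ``$i_a \neq a$ for some $a \leq 0.9k$'' forces $g_b \geq 1$ for some $b \in \{1,\dots,\lfloor 0.9k\rfloor\}$. I would union-bound the sum over such a $b$ and, for each fixed $b$, factor the inner sum as a product of independent geometric series in the individual gap variables. The $g_a$-series for $a \neq b$ and the ``central'' $h_a$-series (with $c \leq a \leq k-1-c$) are each bounded by finite constants depending only on $q$, namely $\prod_{i\geq 1}(1-q^i)^{-1}$ and the analogous expression in $\sqrt{q}$. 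The remaining $2c$ ``edge'' $h_a$-series are bounded trivially by $k^3$ each, contributing at most $k^{6c}$ in total. The $g_b$-series, restricted to $g_b \geq 1$, contributes a factor whose dominant part is $q^{2(k+1-b)}$.

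Summing over $b \leq \lfloor 0.9k\rfloor$, the smallest $q$-exponent appears at $b = \lfloor 0.9k\rfloor$, where $k+1-b \geq 0.1k+1$. Consequently the entire sum is at most
$$k^{O(1)} \cdot (1-q)^{2k-1} \cdot q^{\binom{2k}{2} + 1 + 2(0.1k + 1)} = o\!\left((1-q)^{2k}\, q^{\binom{2k}{2}}\right),$$
since $(1-q)^{-1}$ is a constant in $k$ and $q^{0.2k}$ decays faster than any polynomial in $k$. This argument is essentially the same template as in the two preceding lemmas, the one new ingredient being the observation that a nonzero gap $g_b$ at a position $b$ bounded away from $\ell = k$ forces the extra factor $q^{\Omega(k)}$ needed to absorb the polynomial overhead. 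There is no real conceptual obstacle; the main work is bookkeeping, specifically verifying that the geometric series in the gap variables converge to finite constants (using $0<q<1$) and that the surplus exponent $2(k+1-b)$ indeed grows linearly in $k$ under the restriction $b \leq 0.9k$.
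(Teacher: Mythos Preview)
Your proposal is correct and follows essentially the same approach as the paper's proof. Both arguments apply Lemma~\ref{lem:pij} with $\ell=k$, $r=k-1$, observe that the constraint forces some gap $g_b\geq 1$ with $b\leq 0.9k$, which contributes an extra factor $q^{2(k+1-b)} \leq q^{0.2k}$, and then bound the remaining gap-sums by $k^{O(1)}$ as in Lemma~\ref{lem:Sigma_0_fixed}; the only difference is that you spell out the geometric-series bookkeeping in slightly more detail than the paper, which simply appeals to ``arguing as in previous lemmas''.
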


\begin{proof}
If $i_a \neq a$ for some $1\leq  a \leq 0.9k$ then 
there is also a $1\leq a \leq 0.9k$ for which $g_a\neq 0$.
For this $a$ we have $2 (k+1-a) \cdot g_a \geq 0.2k$.
Also note that

$$ {2k-1\choose 2} + k(k+1-(k-1)) = {2k-1\choose 2} + 2k = {2k\choose 2} + 1. $$

\noindent
Arguing as in previous lemmas, it follows that the sought sum is at most 

$$ k^{O(1)} \cdot (1-q)^{2k-1} \cdot q^{{2k\choose 2} + 0.2k}
= o\left( (1-q)^{2k} q^{{2k\choose 2}} \right). $$

(The polynomial term $k^{O(1)}$ also absorbs the $k$ ways of choosing an index $a$ for which 
$g_a \neq 0$ and the $k^3$ ways of choosing a value for $g_a$.)
\end{proof}

\begin{lem}
We have 

$$  \sum_{{0<i_1<\dots<i_{k}<k^3, \atop 0<j_1<\dots<j_{k-1}<k^3,} \atop 
j_a \neq j_{a-1}+1 \text{ for some $0.1k\leq a \leq 0.9k$} } 
p_{\underline{i},\underline{j}}
= o\left( (1-q)^{2k} \cdot q^{{2k\choose 2}} \right), $$

as $k\to\infty$.

\end{lem}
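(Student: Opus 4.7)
The plan is to mirror the proof of the previous lemma, but exploiting the $h_a$-contribution to $\Psi(\underline{i},\underline{j})$ from Lemma~\ref{lem:pij} instead of the $g_a$-contribution. Fix the constant $c$ provided by Lemma~\ref{lem:pij} and assume $k$ is large enough that $c < 0.1k$, so that every $a\in[0.1k,0.9k]$ lies in the range $c\le a\le r-c$ with $r=k-1$.

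For $\ell=k$ and $r=k-1$, the ``baseline'' portion of $\Psi$ equals
\[
\binom{\ell+r}{2}+\ell(\ell+1-r)=\binom{2k-1}{2}+2k=\binom{2k}{2}+1.
\]
Suppose $h_a\neq 0$ for some index $a$ with $0.1k\le a\le 0.9k$. Then $\min(a,r-a)\ge 0.1k-1$, so the contribution of that gap to the exponent of $q$ in Lemma~\ref{lem:pij} is at least $\tfrac12(0.1k-1)\cdot 1 = 0.05k - O(1)$. Hence for any such configuration,
\[
p_{\underline{i},\underline{j}}\le (1-q)^{2k-1}\cdot q^{\binom{2k}{2}+0.05k-O(1)}\cdot q^{2\sum_a(\ell+1-a)g_a + \frac12\sum_{a'\ne a,\ c\le a'\le r-c}\min(a',r-a')h_{a'}}.
\]

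Now sum separately over the remaining gap variables exactly as in the previous lemmas. The geometric sums $\sum_{g_a\ge 0} q^{(\ell+1-a)g_a}$ factor into $\prod_a (1-q^{\ell+1-a})^{-1}\le \prod_{b\ge 1}(1-q^b)^{-1}<\infty$; the weighted $h_{a'}$-sums for $a'\in[c,r-c]\setminus\{a\}$ factor into a bounded product $\prod_{b\ge 1}(1-q^{b/2})^{-1}<\infty$; the $2(c-1)=O(1)$ boundary gaps $h_{a'}$ with $a'<c$ or $a'>r-c$ each contribute at most $k^3$; and the choice of the offending index $a$ (at most $k$ options) together with the value of $h_a$ (at most $k^3$ options) adds another factor $k^{O(1)}$. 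Combining all of these,
\[
\sum_{\substack{0<i_1<\dots<i_k<k^3\\0<j_1<\dots<j_{k-1}<k^3\\ h_a\neq 0 \text{ for some }0.1k\le a\le 0.9k}} p_{\underline{i},\underline{j}}
\;\le\; k^{O(1)}\cdot (1-q)^{2k-1}\cdot q^{\binom{2k}{2}+0.05k-O(1)},
\]
which is $o\bigl((1-q)^{2k}\cdot q^{\binom{2k}{2}}\bigr)$ since the extra factor $q^{0.05k}$ decays exponentially in $k$ while $k^{O(1)}$ and $(1-q)^{-1}$ are only polynomial / constant.

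I do not anticipate a serious obstacle: the argument is a near-verbatim adaptation of the previous lemma, the only bookkeeping point being that the weight on $h_a$ in $\Psi$ is $\tfrac12\min(a,r-a)$ rather than the $2(\ell+1-a)$ appearing for $g_a$, so the gain is $0.05k$ in place of $0.2k$ — still linear in $k$, which is all that is needed to beat the baseline exponent $\binom{2k}{2}$.
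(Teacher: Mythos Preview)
Your proof is correct and follows essentially the same approach as the paper: both exploit that for $a\in[0.1k,0.9k]$ one has $\tfrac12\min(a,r-a)\ge 0.05k-O(1)$, yielding the extra $q^{0.05k}$ factor, and then absorb the choices of the offending index $a$, the value $h_a$, and the boundary gaps into a $k^{O(1)}$ factor. Your write-up is slightly more explicit about the factored geometric sums, but the argument is the same.
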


\begin{proof}
If $j_a \neq j_{a-1}+1$ for some $0.1k\leq a \leq 0.9k$ then, provided $k$ is sufficiently large,
$c \leq a \leq k-1-c$ and moreover $\min(a,k-1-a) \cdot h_a \geq 0.05 k$.
Arguing as in previous lemmas, the sought sum is therefore at most 

$$ k^{O(1)} \cdot (1-q)^{2k-1} \cdot q^{{2k\choose 2} + 0.05 k}
= o\left( (1-q)^{2k} q^{{2k\choose 2}} \right). $$

(The polynomial term also absorbs the $0.9k$ ways of choosing $a$ and the $k^3$ ways of choosing $h_a$.)
\end{proof}

\begin{lem}
We have 

$$  \sum_{{0<i_1<\dots<i_{k}<k^3, \atop 0<j_1<\dots<j_{k-1}<k^3,} \atop h_1+\dots+h_{\lfloor0.1k\rfloor}\geq 2} 
p_{\underline{i},\underline{j}}
= o\left( (1-q)^{2k} \cdot q^{{2k\choose 2}} \right), $$

as $k\to\infty$.

\end{lem}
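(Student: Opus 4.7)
The plan is to strengthen the exponent bound from Lemma~\ref{lem:pij} by a direct inversion-counting argument that exploits the ``hole'' structure forced by the constraint $h_1+\cdots+h_{\lfloor 0.1k\rfloor}\geq 2$. The basic bound $\Psi(\underline{i},\underline{j})\geq{2k\choose 2}+1$ from Lemma~\ref{lem:pij} omits contributions from gaps $h_a$ with $a<c$, so when the excess is concentrated in these boundary indices it fails to yield an $\Omega(k)$ saving. The refinement is to show directly that each hole contributes $\Omega(k)$ additional inversions to any extension of the prescribed partial permutation, regardless of the value of $\Sigma$ at the hole.

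First I would identify two positive integers $x_1<x_2$ in the gap set $\{1,\ldots,j_{\lfloor 0.1k\rfloor}\}\setminus\{j_1,\ldots,j_{k-1}\}$, whose existence is immediate from the hypothesis. For each admissible pair $(y_1,y_2)$ I would establish
\begin{equation*}
p^{x_1\to y_1,\,x_2\to y_2}_{\underline{i},\underline{j}}\;\leq\;(1-q)^{2k+1}\cdot q^{\Psi(\underline{i},\underline{j})+\Omega(k)},
\end{equation*}
by a refinement of the iterative Mallows argument used in Lemma~\ref{lem:pij}, now tracking the two additional forced images. The combinatorial input is that for each $\nu\in\{1,2\}$, either $y_\nu\geq 0$ (in which case the pairs $(x_\nu,j_b)$ with $j_b>x_\nu$ all form inversions, contributing at least $r-\lfloor 0.1k\rfloor=\Omega(k)$ extra inversions) or $y_\nu<0$ (in which case all pairs $(-i_a,x_\nu)$ form inversions, contributing $k=\Omega(k)$ extra); in either scenario each hole accumulates $\Omega(k)$ additional skips in the Mallows procedure.

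Step three is to sum over $(y_1,y_2)$ and then over $\underline{i},\underline{j}$. For $|y_\nu|$ larger than some multiple of $k$ the displacement tail estimate from part~\ref{item:gnedin_disp} of Lemma~\ref{lem:gnedin} shows that the contribution is negligible, while for $|y_\nu|=O(k)$ there are only $k^{O(1)}$ admissible values, so that
\begin{equation*}
p_{\underline{i},\underline{j}}\;\leq\;k^{O(1)}\cdot(1-q)^{2k+1}\cdot q^{\Psi(\underline{i},\underline{j})+\Omega(k)}.
\end{equation*}
The final sum over $\underline{i},\underline{j}$ is handled exactly as in the preceding lemmas via products of geometric series, together with a further $k^{O(1)}$ factor from the unconstrained boundary gaps, yielding an overall bound of $k^{O(1)}(1-q)^{2k-1}q^{{2k\choose 2}+\Omega(k)}=o\!\left((1-q)^{2k}q^{{2k\choose 2}}\right)$.

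The main obstacle is the refined Mallows-sampling estimate needed to turn the combinatorial inversion-count lower bound into the probability bound on $p^{x_1\to y_1,\,x_2\to y_2}_{\underline{i},\underline{j}}$. In Lemma~\ref{lem:pij} the corresponding bound was obtained by careful bookkeeping of which values remain available at each step of the iterative procedure; the present extension must simultaneously track the two extra forced images and verify that the additional skips they impose at positions $x_1,x_2$ interact correctly with those already forced by the $i_a$ and $j_b$ constraints. A direct adaptation of the proofs of Lemmas~\ref{lem:pij} and~\ref{lem:pxy} should succeed, but care is needed to ensure the uniform $\Omega(k)$ gain in the exponent is achieved without accumulating polynomial losses that overwhelm it.
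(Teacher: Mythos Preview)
Your case analysis in the $y_\nu<0$ branch has a genuine gap. You assert that the pairs $(-i_a,x_\nu)$ contribute $k=\Omega(k)$ \emph{extra} inversions beyond $\Psi(\underline{i},\underline{j})$, but this is not so: these inversions correspond to skips at the positions $-i_a$ in the Mallows sampling, and they are already absorbed in the term $2\sum_a i_a$ of $\Psi$. Concretely, the lower bound $2i_a+(\ell-a)$ on the skip count at $-i_a$ in the proof of Lemma~\ref{lem:pij} comes from lower-bounding $|\{p'\leq -i_a:\pi(p')\geq i_a\}|$ by $\ell-a+1$; knowing additionally that $\pi(x_\nu)=y_\nu<0$ for some $x_\nu>0$ does nothing to raise this, since $x_\nu$ lies on the wrong side of $-i_a$. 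To see the failure explicitly, take $h_1\geq 2$, so that both holes satisfy $x_1,x_2<j_1$, and take $y_1,y_2<-j_r$. Then no $j_b$ precedes either hole, the skip at each $x_\nu$ from the known constraints is at most $1$, and the total improvement over $\Psi$ one can extract is $O(1)$. Summing over the $\Theta(k)$ (or more) admissible values of each $y_\nu$ in the range below $-j_r$ but above the displacement cutoff produces a polynomial factor that no exponential gain is available to kill; the resulting bound is only $k^{O(1)}(1-q)^{2k+1}q^{\Psi}$, which is not $o\bigl((1-q)^{2k}q^{\binom{2k}{2}}\bigr)$.

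The paper bypasses all of this with a one-line device you have overlooked: the symmetry $p_{\underline{i},\underline{j}}=p_{\underline{j},\underline{i}}$ from~\eqref{eq:symm}. Applying Lemma~\ref{lem:pij} to the swapped pair $(\underline{j},\underline{i})$ places the gaps $h_a$ of $\underline{j}$ into the role of the $g$-variables, where their coefficient in $\Psi(\underline{j},\underline{i})$ is $2(k-a)\geq 1.8k$ for every $a\leq 0.1k$, with no restriction $a\geq c$. The hypothesis $h_1+\cdots+h_{\lfloor 0.1k\rfloor}\geq 2$ then immediately yields an exponent gain of at least $3.6k$, and the residual sum over $\underline{i},\underline{j}$ is handled by the same geometric-series estimates as in the earlier lemmas. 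No additional images need to be tracked, and no sign-of-$y$ case split arises.
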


\begin{proof}
For each $p_{\underline{i},\underline{j}}$ occurring in the sought sum, we have 

$$ p_{\underline{i},\underline{j}} = p_{\underline{j},\underline{i}}
\leq (1-q)^{2k} q^{ {2k-1\choose 2} + 2\sum_{a=1}^{k-1} (r-a)h_a
+ \frac12 \sum_{c\leq a\leq k-c} \min(a,k-a)\cdot g_a}, $$

\noindent
by Lemma~\ref{lem:pij}. (Notice the roles of $\ell=k$ and $r=k-1$ and $g_a$ and $h_a$ are switched.
Notice in particular $(k-1)\cdot( (k-1)+1-k ) = 0$.)
The sought sum is thus at most

$$ S := \sum_{1\leq x<y \leq 0.1k} \sum_{{0\leq h_1,\dots,h_{k-1} < k^3, \atop
0\leq g_1,\dots,g_k<k^3,} \atop
h_x+h_y\geq 2}
(1-q)^{2k} q^{ {2k-1\choose 2} + 2\sum_{a=1}^{k-1} (r-a)\Delta_a
+ \frac12 \sum_{c\leq a\leq k-c} \min(a,k-a)\cdot\Delta_a'}. $$

By computations similar to those in previous proofs:

$$ \begin{array}{rcl} 
S & = & 
k^{O(1)} \cdot (1-q)^{2k} \cdot 
q^{ {2k-1\choose 2} + 4 \cdot 0.9 k } \\
& = & 
k^{O(1)} \cdot (1-q)^{2k} \cdot 
q^{ {2k\choose 2} + 1.6 k } \\
& = &  
o\left( (1-q)^{2k} \cdot 
q^{ {2k\choose 2} } \right).
\end{array} $$ 

\end{proof}

We next observe that if $\Sigma(0)=0$ and $\Sigma(-i_1)=i_1, \dots, \Sigma(-i_k)=i_k$ and 
$\Sigma(j_1)=-j_1, \dots, \Sigma(j_{k-1})=-j_{k-1}$ then 
there must be some $x \geq 1$ and $y\leq -1$ such that $\Sigma(x)=y$ and 
$x \not\in \{j_1,\dots,j_{k-1}\}$, and $y \not\in \{-j_1,\dots,-j_{k-1}\}$.
(Since $\Sigma$ is ``balanced''.)

This will allow us to improve over our previous bounds on $p_{\underline{i},\underline{j}}$.

\begin{proofof}{Proposition~\ref{prop:asymp}, Part~\ref{itm:asymp2}}
By the previous lemmas and the observation immediately preceding the present proof, it suffices to show that

$$  S:= \sum p_{\underline{i},\underline{j}}^{x\to y}
= o\left( (1-q)^{2k} \cdot q^{{2k\choose 2}} \right), $$

\noindent
as $k\to\infty$, where the sum is over all $0<j_1<\dots<j_k\leq k^3$ and $0<i_1<\dots<i_{k-1}\leq k^3$
and $0<x<k^3$ and $-k^3<y<0$
such that
\begin{itemize}
\item $i_a=a$ for all $1\leq a \leq 0.9k$, and; 
\item $j_a=j_{a-1}+1$ for all $0.1k\leq a \leq 0.9k$, and;
\item there is at most one $1\leq a \leq 0.1k$ such that 
$j_a\neq j_{a-1}+1$ and if such an $a$ exists then we have
$j_a=j_{a-1}+2$, and;
\item $x, |y| \not\in J := \{j_1,\dots,j_{k-1}\}$.
\end{itemize}

We first notice that if $\underline{i},\underline{j},x,y$ are as described then 
either {\bf a)} $\{j_1,\dots,j_{\lfloor0.9k\rfloor}\} = \{1,\dots,\lfloor0.9k\rfloor\}$ or
{\bf b)} $\{j_1,\dots,j_{\lfloor0.9k\rfloor}\}
= \{1,\dots,\lfloor0.9k\rfloor+1\}\setminus\{z\}$ for some $1\leq z\leq 0.1k$.

In case {\bf a)} we thus have $x,|y| > j_{\lfloor 0.9k\rfloor}$ and in particular

$$ p_{o,\underline{i},\underline{j}}^{x\to y}
\leq (1-q)^{2k-1} \cdot q^{\Psi(\underline{i}\underline{j})+r/4}. $$

The corresponding sum $S_{\bf a}$ thus satisfies

$$ \begin{array}{rcl}
S_{\bf a} 
& := & \displaystyle 
\sum_{{0<i_1<\dots<i_k<k^3, \atop 0<j_1<\dots<j_{k-1}<k^3,}\atop
i_a=j_a=a \text{ for $1\leq a \leq 0.9k$} } \sum_{{0.9k\leq x < k^3,\atop -k^3<y\leq0.9k,} \atop x,|y|\not\in J}
p_{\underline{i},\underline{j}}^{x\to y} \\
& \leq & \displaystyle 
\sum_{\underline{i},\underline{j}} \sum_{{0.9k\leq x < k^3,\atop -k^3<y\leq0.9k,} \atop x,|y|\not\in J}
(1-q)^{2k-1} \cdot q^{\Psi(\underline{i}\underline{j})+r/4} \\
& \leq & \displaystyle 
k^6 \sum_{\underline{i}\underline{j}} (1-q)^{2k-1} \cdot q^{\Psi(\underline{i}\underline{j})+r/4} \\
& \leq & \displaystyle 
k^6 \cdot (1-q)^{2k-1} \cdot q^{{2k\choose 2}+r/4} \sum_{0\leq g_1,\dots,g_k<k^3, \atop 0\leq h_1,\dots,h_{k-1}<k^3}
q^{ 2\sum_{a=1}^k (k+1-a)g_a + \frac12 \sum_{c\leq a\leq k-1-c} \min(a,k-1-a) h_a } \\
& \leq & \displaystyle 
k^{O(1)} (1-q)^{2k-1} q^{{2k\choose 2}+r/4} \\
& = & \displaystyle
o\left( (1-q)^{2k} q^{{2k\choose 2}} \right).
\end{array} $$

(Using the familiar observation that ${2k-1\choose 2} + k(k+1-(k-1)) = {2k\choose 2} + 1$ in the fourth line, and 
computations as in previous lemmas.)

In case {\bf b)} it is possible that either {\bf b-1)} $x,|y|\geq 0.9k$, or
{\bf b-2)} $x=z$ and $y=-z$, or {\bf b-3)} $x>0.9k$ and $y=-z$, or
{\bf b-4)} $x=z$ and $y<-0.9k$.

In the case {\bf b-1)} the same bound on 
$p_{o,\underline{i},\underline{j}}^{x\to y}$ applies as in the case {\bf a)}, and 
via similar computations we obtain that the corresponding contribution to the sum satisfies

$$ \begin{array}{rcl}
S_{\bf b-1} 
& := & \displaystyle 
\sum_{1\leq b \leq 0.1k}
\sum_{{0<i_1<\dots<i_k<k^3, \atop 0<j_1<\dots<j_{k-1}<k^3,}\atop
{i_a=a \text{ for $1\leq a \leq 0.9k$,} \atop 
{j_a=a \text{ for $a\leq b$}, \atop 
j_a=a+1 \text{ for $b< a \leq 0.9k$}}}} 
\sum_{{0.9k\leq x < k_3,\atop -k^3<y\leq0.9k,} \atop x,|y|\not\in J}
p_{\underline{i},\underline{j}}^{x\to y} \\
& & \\
& \leq & \displaystyle 
k^{O(1)} (1-q)^{2k-1} q^{{2k\choose 2}+r/4} \\
& = & \displaystyle
o\left( (1-q)^{2k} q^{{2k\choose 2}} \right).
\end{array} $$

In case {\bf b-2)} we have $p_{\underline{i},\underline{j}}^{x\to y} = p_{\underline{i},\underline{j'}}$, where 
$j_1'=1,\dots, j_{\lfloor 0.9k\rfloor}' = \lfloor 0.9k\rfloor$ and 
$j_a' = j_{a-1}$ for $0.9k \leq a \leq k-1$. 
(I.e.~we've filled the ``gap at $z$'' and made a vector of length $k$.)
So the sum corresponding to case {\bf b-2)} is 

$$ S_{\bf b-2} \leq 
\sum_{0<i_1<\dots<i_k<k^3, \atop 0<j_1'<\dots<j_k'<k^3} p_{\underline{i},\underline{j'}}
= k^{O(1)} (1-q)^{2k} q^{{2k\choose 2} + k} = o\left( (1-q)^{2k} q^{{2k\choose2}}\right). 
$$

To deal with case {\bf b-3)} we note that in this case

$$ \begin{array}{rcl} p_{\underline{i},\underline{j}}^{x\to y} 
& = & p_{\underline{j},\underline{i}}^{y\to x} \\
& \leq & (1-q)^{2k-1} q^{{2k-1\choose2} + 2(k-1-x) + x+|y| + 2\sum_{0.9k\leq a\leq k-1} (k-1-a)h_a
+ \frac12\sum_{0.9k\leq a\leq k-c} (k-a)g_a } \\
& \leq & (1-q)^{2k-1} q^{{2k-1\choose2} + 3.7k - 2 + 2\sum_{0.9k\leq a\leq k-1} (k-1-a)h_a
+ \frac12\sum_{0.9k\leq a\leq k-c} (k-a)g_a} \\
& \leq & 
(1-q)^{2k-1} q^{{2k\choose2} + 1.7k + 2\sum_{0.9k\leq a\leq k-1} (k-1-a)h_a
+ \frac12\sum_{0.9k\leq a\leq k-c} (k-a)g_a},
\end{array}
$$

Hence the corresponding sum satisfies

$$ \begin{array}{rcl}
S_{\bf b-3} 
& := & \displaystyle 
\sum_{1\leq x \leq 0.1k}
\sum_{{0<i_1<\dots<i_k<k^3, \atop 0<j_1<\dots<j_{k-1}<k^3,}\atop
{i_a=a \text{ for $1\leq a \leq 0.9k$,} \atop 
{j_a=a \text{ for $a< x$}, \atop 
j_a=a+1 \text{ for $x\leq a \leq 0.9k$}}}} 
\sum_{-k^3<y\leq0.9k, \atop |y|\not\in J}
p_{\underline{i},\underline{j}}^{x\to y} \\
& & \\
& = & \displaystyle 
o\left( (1-q)^{2k} q^{{2k\choose 2}} \right).
\end{array} $$

Finally we deal with case {\bf b-4)}.
We now use~\eqref{eq:symm3} to see that in this case

$$ \begin{array}{rcl} p_{\underline{i},\underline{j}}^{x\to y}
& = & p_{\underline{j},\underline{i}}^{-x\to -y} \\
& \leq & (1-q)^{2k-1} q^{{2k-1\choose 2} + 2(k-1-x) + x+|y| + 2\sum_{0.9k\leq a\leq k-1}(k-1-a)h_a 
+ \frac12 \sum_{c\leq a\leq k-c} \min(a,k-a)g_a } \\
& \leq &  
(1-q)^{2k-1} q^{{2k-1\choose 2} + 3.7k-2 + 2\sum_{0.9k\leq a\leq k-1}(k-1-a)h_a 
+ \frac12 \sum_{c\leq a\leq k-c} \min(a,k-a)g_a }, 
\end{array} $$

\noindent 
and hence repeating the computations bounding $S_{\bf b-3}$ we find

$$ S_{\bf b-4} 
:=
\sum_{-0.1k\leq y \leq 1}
\sum_{{0<i_1<\dots<i_k<k^3, \atop 0<j_1<\dots<j_{k-1}<k^3,}\atop
{i_a=a \text{ for $1\leq a \leq 0.9k$,} \atop 
{j_a=a \text{ for $a<|y|$}, \atop 
j_a=a+1 \text{ for $|y|\leq a \leq 0.9k$}}}} 
\sum_{0.9k\leq x \leq k^3, \atop x\not\in J}
p_{\underline{i},\underline{j}}^{x\to y} 
=
o\left( (1-q)^{2k} q^{{2k\choose 2}} \right).$$

This establishes that $S=o\left( (1-q)^{2k} q^{{2k\choose 2}} \right)$. 
Proposition~\ref{prop:asymp}, Part~\ref{itm:asymp2} is proved.
\end{proofof}

\begin{proofof}{Proposition~\ref{prop:asymp}, Part~\ref{itm:asymp4}}
   The proof proceeds in the same manner as the proof of Proposition \ref{prop:asymp} Part \ref{itm:asymp2}, we highlight here only the differences. We will now define for sequences $0 < i_1<\ldots < i_\ell$ and $1 < j_1 <\ldots < j_r$ the probability 
   \begin{equation}
      \tilde{p}_{\vec{i},\vec{j}} = \bP \left( \Sigma(-i_1) =i_1 + 1,\ldots, \Sigma(-i_\ell) = i_\ell + 1,\Sigma(j_1)= -j_1+1,\ldots, \Sigma(j_r) = -j_r+1 \right).
   \end{equation}
   For sequences satisfying the above we have $(\rho \circ \Sigma)(i_s) = i_s$ and $(\rho\circ\Sigma)(j_s) = j_s$. For $\tilde{p}_{\vec{i},\vec{j}}$ the bound in Lemma \ref{lem:pij} may be replaced by the stronger bound
   \begin{equation}\label{eq:new_bound_pij}
      \tilde{p}_{\vec{i},\vec{j}} \leq (1-q)^{\ell + r} q^{\ell +r+\Psi(\vec{i},\vec{j})},
   \end{equation}
   the proof is the same, but we now have 
   $$ \Pee( \Pi_n(i) = j + 1 | \Pi_n(-n)=x_{-n}, \dots, \Pi_n(i-1)=x_{i-1} ) 
   = \begin{cases} \left(\frac{1-q}{1-q^{n-i+1}}\right)\cdot q^{j-i+k + 1} & \text{ if } j \not\in \{x_{-n},\dots,x_{i-1}\}, \\
      0 & \text{ otherwise.} 
     \end{cases},
    $$
   which gives the additional $\ell + r$ term in the exponent of $q$ in \ref{eq:new_bound_pij}. Also, as in Lemma \ref{lem:Sigma_0_fixed}, the contribution from such sequences where $\Sigma(1) \neq 0$ or $\Sigma(0) \neq 1$ is $ o\left( q^{{2k + 1 \choose 2}} \cdot (1-q)^{2k + 1} \right)$. Then we need only consider cases with $\Sigma(1) = 0$ and $\Sigma(0) = 1$, so that we consider the case $\ell + r = 2k-1$. The remaining estimates for the different such sequences $i_1,\ldots, i_\ell$ and $j_1,\ldots , j_{r}$ follow in the same manner as the estimates for $r \circ \Sigma$, where now the $q^{\ell + r}$ term in \eqref{eq:new_bound_pij} contributes $q^{2k-1}$. In the above proofs we estimate $\bP \left[ \Sigma(0) = 0 \right] $ by a a coupling with $\Pi_n \sim \on{Mallows}(n,q)$, which we can sample element by element. If during this sampling $0$ does not yet have a preimage when determining the image of $0$, then $\bP \left[ \Sigma(0)=1 \right] \leq q \ \bP \left[ \Sigma(0) = 0 \right]$. Subsequently, when determining the image of $1$, if $\Sigma(0)=1$, then $\bP \left[ \Sigma(1) = 0 \right]$ equals the probability that $\Sigma(0)=0$ in the previous step, as $1$ now needs to skip over exactly all elements below $0$ that are not yet selected. In the estimations of the $r\circ \Sigma$ case, the additional contribution of $q^{\ell  + r}q = q^{2k}$ is exactly as needed as $q^{\binom{2k}{2} + 2k} = q^{\binom{2k+1}{2}}$. 
\end{proofof}

\section{Suggestions for further work}

We expect that much more information can be extracted about the constants $c_e, c_o, m_i, \mu_{2i}$ and 
the probability measures of $C_{2i-1}(r\circ\Sigma)$ and $C_{2i-1}(\rho\circ\Sigma)$ from the explicit 
expressions for them in terms of $q$-hypergeometric series that 
can be obtained from Theorem 5.1 in~\cite{Gnedin}.
There might well be some low-hanging fruit available to someone better versed in $q$-hypergeometric series than 
the present authors. In particular, the plot in Figure~\ref{fig:fixed} strongly 
suggests that $c_o > 1/2 > c_e$ for all $q>1$, but we have not been able to show this rigorously.
We leave it as an open problem for other teams.

A curious phenomenon we've observed is that if $q\downarrow 1$ then the expected number of 1-cycles in the limiting 
distribution tends to $1/2$, which is different from the value of 1 that we get when $q=1$ (which corresponds
to sampling a permutation uniformly at random).
At the moment we do not even have a reasonable intuitive explanation for this phenomenon.
Clearly something interesting must be going on in the phase change regime when $q=q(n)$ is a function of $n$ that 
approaches one from above.
It is also intriguing that the odd and even cycle counts behave so differently for $q>1$, the odd cycle counts 
being ``tight'' and the even ones being linear in $n$, but at $q=1$ there appears to be no trace of this difference. 
We would be very interested to see an analysis of the regime when $q=q(n)\to1$ that can shed some light on these phenomena.

Finally, given the Gaussian fluctuations for fixed $q$, and Poisson fluctuations when $q$ is close to $1$, it would be interesting to understand what happens in the intermediate regime, and whether there is a phase transition. Let us note that the techniques of the renewal theoretic techniques used in this paper are not very helpful, because the lengths between regenerations grow extremely quickly as $q\to 1$.

\subsection*{Acknowledgements}

We thank Daniel Valesin for helpful pointers to the literature. The first author would like to thank Persi Diaconis for bringing the problem to his attention.

\bibliographystyle{plain} 
\bibliography{bibfile}	

\end{document}